\documentclass[11pt, reqno]{amsart}

\usepackage{amsmath,amsfonts,amsthm,amssymb,amscd,mathtools,stmaryrd}
\usepackage{epsfig}
\usepackage{inputenc}
\usepackage{upref}
\usepackage{bm}
\usepackage{calc}
\usepackage[justification=Centering]{caption}
\usepackage{comment}
\usepackage{subcaption}
\usepackage[pagebackref]{hyperref}
\usepackage[nameinlink]{cleveref}
\usepackage[usenames,dvipsnames]{xcolor}
\usepackage[normalem]{ulem}%
\usepackage{wasysym}
\usepackage{mathrsfs}
\usepackage{enumitem}
\usepackage{setspace}
\usepackage{array}
\usepackage{cancel}
\usepackage{tikz,tikz-cd}%
\usepackage{colonequals}
\usepackage[titletoc]{appendix}
\usepackage{overpic}
\usepackage{float} 
\usepackage{xcolor}
\usepackage{relsize}

\hypersetup{
	colorlinks=true,
	linktoc=all,%
	linkcolor=RoyalBlue,
	filecolor=blue,
	citecolor=ForestGreen,%
	urlcolor=blue,
}

\usepackage{stackengine,xcolor,scalerel}
\def\cpm{\mathbin{\ThisStyle{\ensurestackMath{\abovebaseline[-\dimexpr1pt+2.4\LMpt]{%
					\stackunder[-\dimexpr1pt+2.5\LMpt]{\color{blue}\SavedStyle+}{%
						\color{red}\SavedStyle-}}}}}}
\def\cmp{\mathbin{\ThisStyle{\ensurestackMath{\abovebaseline[-\dimexpr1.1pt+0.55\LMpt]{%
					\stackunder[-\dimexpr1pt+2.5\LMpt]{\color{blue}\SavedStyle-}{%
						\color{red}\SavedStyle+}}}}}}

\newcommand{\lowerromannumeral}[1]{(\romannumeral#1\relax)}

\newtheorem{theorem}{Theorem}[section]
\newtheorem*{theorem*}{Theorem}	

\newtheorem{corollary}[theorem]{Corollary}
\newtheorem{lemma}[theorem]{Lemma}
\newtheorem{proposition}[theorem]{Proposition}

\newtheorem{question}[theorem]{Question}
\newtheorem{claim}[theorem]{Claim}

\theoremstyle{definition}

\newenvironment{defx}
{\pushQED{\qed}\definition}
{\popQED\enddefinition}

\newenvironment{example}
{\pushQED{\qed}\examplex}
{\popQED\endexamplex}

\newenvironment{rmk}
{\pushQED{\qed}\remark}
{\popQED\endremark}

\numberwithin{equation}{section}

\DeclareMathOperator{\im}{Im}

\DeclareMathOperator{\Id}{Id}

\DeclareMathOperator{\ad}{ad}
\DeclareMathOperator{\Ad}{Ad}
\DeclareMathOperator{\hol}{hol}
\DeclareMathOperator{\Hom}{Hom}

\DeclareMathOperator{\tr}{tr}
\DeclareMathOperator{\Stab}{Stab}

\DeclareMathOperator{\so}{SO(3)}

\DeclareMathOperator{\coker}{coker}

\DeclareMathOperator{\grad}{grad}

\newcommand{\R}{\mathbb{R}}

\newcommand{\Q}{\mathbb{Q}}

\newcommand{\Z}{\mathbb{Z}}

\newcommand{\CP}{\mathbb{CP}}
\newcommand{\RP}{\mathbb{RP}}

\newcommand{\BF}{\mathbb{F}}

\newcommand{\supp}{\text{supp}}

\newcommand{\CB}{\mathcal{B}}
\newcommand{\SC}{\mathcal{C}}

\newcommand{\D}{\mathcal{D}}

\newcommand{\CG}{\mathcal{G}}

\newcommand{\M}{\mathcal{M}}

\newcommand{\Pa}{\mathcal{P}}

\newcommand{\U}{\mathcal{U}}

\newcommand{\G}{\mathfrak{G}}
\newcommand{\g}{\mathfrak{g}}

\newcommand{\fo}{\mathfrak{o}}

\newcommand{\fri}{\mathfrak{I}}
\newcommand{\fv}{\mathfrak{v}}

\newcommand{\rk}{\mathrm{rk}}

\newcommand{\ind}{\ensuremath{\mathrm{Ind}}}

\let\emptyset\varnothing

\newcommand{\yk}{Y \setminus \nu (K)}
\newcommand{\pp}{\boldsymbol{P}}
\def\<#1>{\mathinner{\langle#1\rangle}}
\def\|#1|{\mathinner{\lVert #1 \rVert}}

\newcommand{\fc}{\mathfrak{C}}
\newcommand{\pd}[1][i]{\partial_{#1}}
\newcommand{\conk}[1][k]{\mathcal{C}_{#1}(Y, K, \pp)}
\newcommand{\lp}[3][k]{\check{L}^2_{#1} (\check{#3}, \mathfrak{g}_{\check{P}}\otimes {#2})}

\newcommand{\conw}[3][k]{\mathcal{C}_{#1, \alpha}(#2, #3, \pp)}
\newcommand{\pR}{[0, +\infty)}
\def\poinc{\text{Poincar\'e }}
\newcommand{\GK}{\Gamma_{L \setminus K}}

\let\emptyset\varnothing

\setlength{\oddsidemargin}{.15in}
\setlength{\evensidemargin}{.15in} \setlength{\textwidth}{6in}
\setlength{\textheight}{8.1in} \setlength{\topmargin}{0pt}
\setlength{\headheight}{.2in} \setlength{\parskip}{0pt}
\setlength{\labelsep}{12pt} \setlength{\parindent}{12pt}
\setlength{\medskipamount}{2ex} \setlength{\smallskipamount}{1ex}

\setcounter{tocdepth}{2}

\title{Surgery Exact Triangles in Instanton Theory}
\author{Deeparaj Bhat}

\address{Department of Mathematics, Columbia University, New York, NY 10027, USA}
\email{d.bhat@columbia.edu}

\begin{document}
	
	\begin{abstract}
		We prove an exact triangle relating knot instanton Floer homology to the instanton homology of surgeries along the knot. To the author's knowledge this is the first such result in instanton homology with integer coefficients and has no simple analogue in Heegaard Floer homology. To illustrate the latter claim, we derive as a consequence of this triangle, building on previous computations of \cite{sca}, \cite{bs-l-space} and \cite{kmknot}, that the \poinc Homology Sphere is not an instanton L-space with $\Z / 2$-coefficients.
	\end{abstract}
	
	\maketitle

	\section{Introduction}
	
	Instanton homology is a Floer homology theory for a class of closed 3-manifolds due to Andreas Floer in \cite{floer}. Floer also introduced a version for knots in 3-manifolds. Since then, many flavours of the same been constructed. We will be using the constructions of Kronheimer and Mrowka \cite{kmknot} in this paper. While being hard to compute from its definition, Floer introduced a surgery exact triangle (see \Cref{thm:floer-exact-triangle}) which aids in its computation. This exact triangle relates the instanton homology of 3-manifolds obtained by surgeries along a knot. There is an analogous exact triangle (the unoriented skein exact triangle) due to Kronheimer and Mrowka \cite{kmknot} for case of knot instanton homology.
	
	We prove a new surgery exact triangle which relates the instanton homology of surgered 3-manifolds and knot instanton homology. To state the main result, fix a closed 3-manifold $Y$ and a \emph{framed} knot $K \subset Y$. From the constructions in \cite{kmknot}, we have the invariant $I^\#$ for 3-manifolds and knots in 3-manifolds used in the statement below.
	
	\begin{theorem}[see \Cref{thm:main}] \label{thm:intro}
		The following is an exact triangle of $\Z$ modules for any $n \in \Z$:
		\begin{equation*}
			\begin{tikzcd}[column sep=small]
				I^\#(Y_n(K); \Z) \arrow[rr, "\psi"] & & I^\#(Y_{n+2}(K); \Z) \arrow[dl, "f_1"] \\
				& I^\#(Y, K; \Z) \arrow[ul, "f_2"]&
			\end{tikzcd}
		\end{equation*}
		The maps $f_1$ and $f_2$ are defined by cobordism maps and $\psi$ is an integer linear combination of cobordism maps.
	\end{theorem}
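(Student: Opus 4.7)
The plan is to adapt the standard framework for surgery exact triangles in instanton Floer homology (originally due to Floer \cite{floer}, with the modern $I^\#$ treatment in \cite{kmknot}) to accommodate the knot instanton homology term as the third vertex. I would break the argument into three steps: construct the three maps concretely from 4-manifold cobordisms; exhibit chain null-homotopies for the three consecutive compositions; and promote this vanishing to exactness via an Ozsv\'ath--Szab\'o style triangle-detection lemma.

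For the maps themselves, I would take $f_2$ to be the cobordism map associated to the 2-handle attachment $W_n : Y \to Y_n(K)$ along $K$ with framing $n$, decorated with the earring making the source compute $I^\#(Y,K)$ in the $K^\natural$ model of \cite{kmknot}; likewise $f_1$ is the map for the time-reversed $(n+2)$-framed cobordism $\bar W_{n+2}: Y_{n+2}(K) \to Y$ with an analogous earring. The map $\psi$ arises from the cobordism $W : Y_n(K) \to Y_{n+2}(K)$ built from two 2-handles along $K$ and its meridian (equivalently, a twist cobordism with $H_2(W) = \Z^2$), and $\psi$ is the prescribed integer linear combination of cobordism maps indexed by the finitely many bundle data on $W$ that contribute nontrivially.

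To establish the null-homotopies for $f_1 \circ \psi$ and $f_2 \circ f_1$, I would glue the relevant cobordisms and analyze the moduli spaces on the composite with a long neck along the intermediate end. The composite cobordisms contain embedded spheres of negative self-intersection arising from cancelling handle pairs, and the blow-up/blow-down formula in instanton theory then produces the desired null-homotopies; matching these formulas across the two compositions is what forces the integer coefficients appearing in $\psi$. For exactness, with the null-homotopies in hand, the triangle-detection lemma reduces the problem to acyclicity of an associated bent complex: concretely, one must show that the mapping cone of $\psi$ is quasi-isomorphic to $I^\#(Y,K)$ via the chain map induced by $f_1$. I would verify this using a two-parameter family of metrics on the three-term composite cobordism, with necks stretched along both intermediate 3-manifolds; the codimension-one strata of the compactified parameter space then assemble into the chain-level identity needed.

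The main technical obstacle will be pinning down the integer linear combination defining $\psi$ and proving the resulting acyclicity of the bent complex. The instanton moduli spaces on these composite cobordisms admit several distinct types of degenerations --- reducible solutions concentrated on the embedded negative spheres, and broken trajectories at either intermediate 3-manifold --- and matching the combinatorics with integer coefficients (rather than $\Z/2$, where many sign issues disappear) requires careful tracking of orientations. Once those combinatorial identities are in place, the passage from vanishing compositions to exactness is a fairly mechanical application of the triangle-detection framework.
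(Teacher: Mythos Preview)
Your proposal misses the central new construction of the paper: the map $\psi$ is \emph{not} induced by a smooth two-handle cobordism with varying bundle data. Instead, the paper builds an \emph{orbifold} cobordism $W_0$ from $Y_n(K)$ to $Y_{n+2}(K)$ by coning off a lens-space boundary component (the trace of rational surgery, \Cref{sec:rational-surgery}), producing a single $\Z/2$ cone point $p$. The integer linear combination defining $\psi$ is then $f_0^+ - f_0^-$, where $f_0^\pm$ are the cobordism maps on $W_0$ indexed by the two flat $\so$ connections $\fo_\pm$ at $p$. Likewise, $f_1$ and $f_2$ are not merely earring-decorated 2-handle maps: they carry genuine singular loci (the cocore and core disks of the 2-handles), so that the composites $W_0\cup W_1$ and $W_1\cup W_2$ contain a singular $(-2)$-disk and a singular $(-2)$-sphere respectively. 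Your ``two 2-handles along $K$ and its meridian'' picture and the invocation of a blow-up/blow-down formula do not produce these objects, and without the orbifold point there is no natural pair of maps whose difference gives $\psi$.

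Downstream, the null-homotopy arguments hinge on a moduli computation you have not anticipated: the reducible singular instantons on $(\mathbb{D}(T^*S^2),S^2)$ with $\kappa=1/8$ (\Cref{prop:key-moduli computation}), which come in a pair related by tensoring with the nontrivial flat real line bundle and are distinguished exactly by $\fo_\pm$. This is what makes $g_0$ and $g_2$ equal to $\pm(1-T^{-1})$ times an energy-ordered map---hence zero only at $T=1$---and forces the use of the Lin triangle-detection lemma (\Cref{prop:triangle}) rather than the standard Ozsv\'ath--Szab\'o lemma in the local-coefficient setting. Finally, the quasi-isomorphism step requires the energy-filtration and metric-perturbation framework of \Cref{sec:energy,sec:energy-order} to show that certain correction terms are strictly upper-triangular with respect to an energy-ordered basis; this is a genuine analytic ingredient (time-independent holonomy perturbations plus metric perturbations \`a la Freed--Uhlenbeck) that your outline does not address.
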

	
	The proof of the above theorem uses ideas from the modern recasting of the proofs as in \cite{int-monopole, kmknot}; see also \cite{sca} for a proof of \Cref{thm:floer-exact-triangle} in this framework. The more refined version \Cref{thm:main} also uses the idea of energy filtration as in the original paper by Floer \cite{flo}. While Floer approaches this issue by puncturing the 4-manifolds defining the maps in his triangle, we use a softer approach by instead combining the metric perturbations, developed by Freed and Uhlenbeck \cite{fu}, and holonomy perturbations, developed by Kronheimer and Mrowka \cite{yaft} in the context of singular instantons. This framework and its consequences are developed in \Cref{sec:energy} and \Cref{sec:energy-order} respectively.
	
	We say a little about the analogous situation in Heegaard Floer theory. As far as the author is aware, there is no analogous knot Floer group to $I^\#(Y, K; \Z)$ in Heegaard Floer homology. Further, to the author's knowledge, there is no simple analogue in Heegaard Floer theory that relates knot homology to homology of surgeries. To put this into perspective, we state a corollary which follows from an iterated special case of \Cref{thm:main}.
	
	\begin{corollary}
		Suppose $L \subset Y$ is a framed $k$ component link in a closed 3-manifold $Y$. Then, there exists a spectral sequence whose first page is
		\[
		\bigoplus_{\Lambda \in \{\pm 1\}^k} I^\# (Y_{\Lambda} (L); \Z)
		\]
		that converges to $I^\# (Y, L; \Z)$. \qed
	\end{corollary}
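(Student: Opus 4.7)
The plan is to apply \Cref{thm:intro} iteratively, one component of $L$ at a time, assembling the resulting chain-level data into a $k$-dimensional hypercube of chain complexes indexed by $\Lambda \in \{\pm 1\}^k$. For each $\Lambda$, the associated vertex is a chain complex $\Ch(Y_\Lambda(L))$ whose homology is $I^\#(Y_\Lambda(L); \Z)$, and the total complex of the cube should be quasi-isomorphic to a chain complex computing $I^\#(Y, L; \Z)$. Filtering by the number of $+1$'s in $\Lambda$ then produces the desired spectral sequence.

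First I would upgrade \Cref{thm:intro} to a chain-level statement: there exist chain-level lifts $\widetilde{\psi}, \widetilde{f}_1, \widetilde{f}_2$ of the cobordism-induced maps together with a quasi-isomorphism
\[
\mathrm{Cone}\bigl(\widetilde{\psi} : \Ch(Y_n(K)) \longrightarrow \Ch(Y_{n+2}(K))\bigr) \simeq \Ch(Y, K).
\]
This refinement is essentially what the proof of \Cref{thm:main} provides, since the exact triangle is established precisely by identifying a mapping cone of cobordism chain maps with a complex computing the third term, and the energy filtration developed in \Cref{sec:energy} and \Cref{sec:energy-order} is exactly the tool that controls the null-homotopies required.

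Next, for each direction $i \in \{1, \ldots, k\}$ and each partial configuration $\Lambda' \in \{\pm 1\}^{k-1}$ on the remaining components, apply the chain-level triangle to $L_i$ viewed as a framed knot in the closed 3-manifold obtained from $Y$ by doing $\Lambda'$-surgery on $L \setminus L_i$. This furnishes the edge maps of the cube. To fill in the 2-faces and higher, one considers parameterized families of metrics and holonomy perturbations on cobordisms interpolating between different orders in which surgeries on distinct components can be carried out; standard Floer-theoretic cobordism arguments then produce higher homotopies forcing $D^2 = 0$ on the total complex $(C_\ast, D)$, where $D = d_{\mathrm{int}} + \sum_i \widetilde{\psi}_i + (\text{higher face terms})$. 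Collapsing one cube direction at a time via the chain-level triangle identifies $H_\ast(C_\ast, D)$ with $I^\#(Y, L; \Z)$.

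Finally, filter $C_\ast$ by the number of $+1$ entries of $\Lambda$. The $E_0$ differential is the internal differential at each vertex, so
\[
E_1 \;\cong\; \bigoplus_{\Lambda \in \{\pm 1\}^k} I^\#(Y_\Lambda(L); \Z),
\]
and the spectral sequence converges to $I^\#(Y, L; \Z)$, with higher differentials induced by the edge maps and higher face terms. The main obstacle is constructing the coherent higher maps along $2$- and higher-dimensional faces of the cube: the $k=1$ case is exactly \Cref{thm:intro}, but for $k \geq 2$ one must build compatible families of metrics and perturbations on stacked cobordisms and verify the gluing/composition relations needed for $D^2 = 0$. This is analogous in spirit to the hypercube constructions appearing in the monopole \cite{int-monopole} and singular instanton \cite{kmknot} settings, and the energy-filtration framework from the body of this paper supplies the requisite analytic estimates.
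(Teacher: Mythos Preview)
Your proposal is correct and matches the paper's intended approach: the corollary is stated with a \qed and the paper only remarks that it ``follows from an iterated special case of \Cref{thm:main}'' (namely $n=-1$, so the two surgered manifolds are $Y_{\pm 1}$), leaving the hypercube/iterated-cone construction implicit. Your outline---upgrading the triangle to the chain-level mapping-cone statement supplied by \Cref{thm:main}, iterating over components, and filtering by the number of $+1$'s---is exactly what that one-line justification unpacks to, and you correctly flag the construction of coherent higher homotopies over the faces of the cube as the genuine work hidden behind the word ``iterated.''
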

	
	In contrast with the link surgery formulae in Heegaard Floer theory as in \cite{linksurgery}, which computes the Heegaard Floer homology of surgery using the link Floer complex, the above instead computes the link instanton homology from the instanton homology of surgeries.
	
	Further, as an application of the formal algebraic properties of \Cref{thm:intro}, we show that the \poinc Homology Sphere is \emph{not} an instanton $L$-space with $\BF_2$ coefficients (see below for definition) in sharp contrast with the Heegaard Floer homology computation.
	
	\begin{defx}
		Let $\mathbb{K}$ be a field. We say $Y$ is an $L$-space with $\mathbb{K}$ coefficients if $\dim_{\mathbb{K}} I^\# (Y; \mathbb{K}) = \chi (I^\#(Y; \Z))$.
	\end{defx}
	
	\begin{rmk}
		If $p$ is a prime then, $Y$ is an instanton $L$-space with $\mathbb{F}_p$ coefficients if and only if $Y$ is an $L$-space with $\mathbb{Q}$ coefficients and $I^\#(Y; \Z)$ contains no $p$-torsion.
	\end{rmk}
	
	\begin{theorem}\label{thm:l-space}
		Let $P$ denote the \poinc homology sphere. Then, the instanton homology group of $P$ is given by
		\[
		I^\# (P; \Z) = \Z_{(0)} \oplus G_{(1)}
		\]
		where $G \neq 0$ is a 2-group and the subscripts indicate absolute $\Z / 2$ grading.
		
		In particular, $P$ is not an instanton L-space with $\BF_2$ coefficients.
	\end{theorem}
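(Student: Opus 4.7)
The plan is to realize the \poinc sphere $P$ as an integer surgery on a trefoil $T \subset S^3$ and apply \Cref{thm:intro}. With the appropriate orientation and sign choice, $P = S^3_{+1}(T)$, so setting $Y = S^3$, $K = T$, and $n = -1$ in \Cref{thm:intro} yields an exact triangle
\begin{equation*}
I^\#(S^3_{-1}(T); \Z) \xrightarrow{\psi} I^\#(P; \Z) \xrightarrow{f_1} I^\#(S^3, T; \Z) \xrightarrow{f_2} \cdots
\end{equation*}
whose two non-$P$ vertices are invariants of manifolds (and a knot in $S^3$) whose framed instanton homology has been computed previously.

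I would then import the required inputs. Both $I^\#(S^3_{-1}(T); \Z)$ and $I^\#(S^3, T; \Z)$ are known explicitly as $\Z/2$-graded abelian groups from the trefoil computations of [sca] and [kmknot], with all torsion $2$-primary. From [bs-l-space] (or any of the rational-coefficient computations of instanton homology of small Seifert fibered spaces), $P$ is an instanton L-space with $\Q$-coefficients, so $I^\#(P; \Q) \cong \Q$. Combined with $\chi(I^\#(P;\Z)) = |H_1(P;\Z)| = 1$, this identifies the free part of $I^\#(P;\Z)$ as a single $\Z$-summand in grading $(0)$. A diagram chase in the exact triangle, split into the two $\Z/2$-parities by the fixed gradings of $\psi$, $f_1$, $f_2$, then substitutes the known values of the other two vertices and solves for the torsion subgroup $G$ of $I^\#(P;\Z)$: it is forced to be a finite $2$-group, and the grading placement of the non-$P$ vertices pins $G$ to grading $(1)$.

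The main obstacle is showing $G \neq 0$, the step that genuinely uses the new triangle rather than merely $\Q$-coefficient information. I would do this by counting $\BF_2$-dimensions: if $G$ vanished then $I^\#(P;\BF_2) \cong \BF_2$ concentrated in grading $(0)$, and substituting into the $\BF_2$-reduction of the above triangle produces a numerical contradiction with the $\BF_2$-ranks of the other two vertices (as read off from [sca] and [kmknot]). Once $G \neq 0$ is established, the non-L-space conclusion is immediate from universal coefficients:
\begin{equation*}
\dim_{\BF_2} I^\#(P; \BF_2) = 1 + 2 \dim_{\BF_2}(G/2G) \geq 3 > 1 = \chi(I^\#(P;\Z)),
\end{equation*}
so $P$ is not an instanton L-space with $\BF_2$-coefficients.
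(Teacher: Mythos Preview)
Your proposal has a genuine gap at the very first step: you assume that $I^\#(S^3_{-1}(T);\Z)$ is known explicitly from \cite{sca}, but it is not. The manifold $S^3_{-1}(T_{2,3})$ is (up to orientation) the Brieskorn sphere $\Sigma(2,3,7)$, not a lens space, and \cite{sca} only computes $I^\#(L(p,q);\Z)$. In fact, the integral $I^\#$ of $\Sigma(2,3,7)$ is exactly the same kind of unknown quantity as $I^\#(P;\Z)$ itself---one expects possible $2$-torsion there too---so your single application of the triangle with $n=-1$ trades one unknown for another and cannot close. Both your diagram chase and your $\BF_2$-rank contradiction rely on this unavailable input.

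The paper's proof circumvents this by applying \Cref{thm:intro} \emph{twice}, with $n=3$ and then $n=1$. The point is that $S^3_5(T_{2,3})\cong L(5,4)$ is a lens space, so the $n=3$ triangle has two known vertices ($A_5=\Z^5_{(0)}$ from \cite{sca} and the knot group from \cite{kmknot}) and yields partial control on $A_3=I^\#(S^3_3(T_{2,3});\Z)$: specifically, $A_3=\Z^3_{(0)}\oplus K_{(1)}$ with $K$ finite (possibly zero). Feeding this into the $n=1$ triangle, together with the rational rank from \cite{bs-l-space}, one gets a short exact sequence forcing $\coker(f_1')$ to be non-free---this is where $G\neq 0$ genuinely comes from, not from an $\BF_2$-rank count. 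The $2$-primary conclusion then follows from \cite[Corollary~1.7]{sca}. If you want to salvage your approach, you would need to replace the $n=-1$ triangle by this two-step bootstrap starting from a surgery that actually is a lens space.
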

	
	Despite this, we expect the triangle to be compatible with sutured instanton theory where torsion information is lost. Indeed, in ongoing work in progress with Zhenkun Li and Fan Ye, we show a variant of the triangle closely related to \Cref{thm:intro} is compatible with sutured instanton Floer theory.
	
	As one expects 2-torsion in $I^\#(Y, K; \Z)$ (see \cite{torsion}), \Cref{thm:intro} provides a mechanism to relate the 2-torsion of the 3-manifold groups. Further, while we have not computed interesting examples, the more general result \Cref{thm:main} also provides a relation between the groups with local coefficients as developed in \cite{yaft}.
	
	The existence of torsion (and subsequently \Cref{thm:l-space}) is of topological interest due to the following result.
	
	\begin{theorem}[{\cite[Theorem 4.6]{bs-stein}}] \label{thm:irred-torsion}
		Suppose $Y$ is a rational homology sphere with $\pi_1(Y)$ cyclically finite and $I^\#(Y; \Z) \neq \Z^{|H_1(Y; \Z)|}$, then there is an irreducible representation $\pi_1(Y) \to SU(2)$.
	\end{theorem}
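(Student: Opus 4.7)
The plan is to prove the contrapositive: assuming every representation $\pi_1(Y) \to SU(2)$ is reducible, I would show $I^\#(Y;\Z) \cong \Z^{|H_1(Y;\Z)|}$.

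First I would identify the critical set of the perturbed Chern--Simons functional that computes $I^\#(Y;\Z)$. Recall $I^\#$ is defined via singular instanton Floer homology on $Y \# T^3$ equipped with a distinguished link whose bundle data forces an admissibility condition. The unperturbed critical points correspond to flat $SO(3)$-connections, which on the $Y$ factor are given by conjugacy classes of representations $\pi_1(Y) \to SU(2)$ (tensored with a fixed irreducible representation on the $T^3$ factor). Under the standing hypothesis, every such representation on $Y$ factors through the abelianization $H_1(Y;\Z)$, so the critical set consists of exactly $|H_1(Y;\Z)|$ points, one for each character of $H_1(Y;\Z)$.

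Second, I would arrange non-degeneracy. Using the holonomy perturbations of Kronheimer--Mrowka combined with the metric perturbations alluded to in \Cref{sec:energy}, I would exhibit a small perturbation for which each of these $|H_1(Y;\Z)|$ reducible critical points is isolated and non-degenerate, and for which no new irreducible critical points are created. The cyclic finiteness hypothesis on $\pi_1(Y)$ is exactly what is needed here: it guarantees that the perturbation space is rich enough to separate the characters of $H_1(Y;\Z)$ via finitely many holonomy cylinders, and that the absence of irreducibles is preserved under perturbation (a small enough perturbation cannot create new critical points). Thus the Floer chain complex $C^\#(Y;\Z)$ is free abelian of rank $|H_1(Y;\Z)|$.

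Third, and this is where I expect the main obstacle, I would show the differential vanishes. The key structural input is that every critical point is \emph{reducible}: each flat connection involved has a $\Z/2$ stabilizer coming from the center of $SU(2)$ together with the specific bundle data used in $I^\#$. Combined with the relative $\Z/2$-grading (which for a rational homology sphere is computed from spectral flow), one can analyze the moduli spaces of instantons connecting two reducibles and identify an automorphism that either forces the signed count to vanish or shows the relative index has the wrong parity. A clean way to organize this is to compare with the Casson-type identity $\chi(I^\#(Y;\Z)) = |H_1(Y;\Z)|$ valid for rational homology spheres: the chain complex already has Euler characteristic $|H_1(Y;\Z)|$ with all generators in grading determined by the sign of the corresponding reducible, so any nontrivial differential would produce a contradiction with the signed count. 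The subtlety is in orienting the moduli spaces consistently, which is the hardest technical step and likely the part of \cite{bs-stein} that carries the real content.

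Granted the vanishing of the differential, $I^\#(Y;\Z)$ is free of rank $|H_1(Y;\Z)|$, contradicting the hypothesis $I^\#(Y;\Z) \neq \Z^{|H_1(Y;\Z)|}$. Hence some irreducible representation $\pi_1(Y) \to SU(2)$ must exist.
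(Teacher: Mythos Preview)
The paper does not give its own proof of this statement; it is quoted from \cite{bs-stein} with only the remark that the argument there (over $\Q$) extends to $\Z$ coefficients. So there is nothing in the present paper to compare your proposal against directly. That said, your contrapositive outline is the correct shape of the Baldwin--Sivek argument, but two of the steps misidentify the mechanism.

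First, your account of cyclic finiteness is wrong, and this is a genuine gap. The condition that $\pi_1(Y)$ is cyclically finite means that every finite cyclic cover of $Y$ is a rational homology sphere; equivalently $H^1(Y;\C_\alpha)=0$ for every representation $\alpha:\pi_1(Y)\to U(1)$. This is precisely the statement that each abelian flat connection on $Y$ is already \emph{non-degenerate} as a critical point of the Chern--Simons functional, so that the $|H_1(Y;\Z)|$ characters give exactly $|H_1(Y;\Z)|$ Morse generators with no perturbation needed. It has nothing to do with richness of the holonomy-perturbation space or with separating characters. Without this input your step two does not go through: a degenerate abelian critical point could, after perturbation, break into several generators, and you would lose the count.

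Second, your first proposed mechanism for the vanishing differential is off. In the $I^\#$ model the generators are flat connections on the admissible bundle over $Y$ connect-summed with the auxiliary piece; when every representation of $\pi_1(Y)$ is abelian, each such generator is the product of a character of $H_1(Y)$ with the fixed irreducible on the auxiliary side, and is therefore \emph{irreducible} as an $SO(3)$-connection. There is no $\Z/2$-stabilizer symmetry on the moduli spaces to exploit. Your alternative route via the Euler characteristic is, however, the one that actually works and is what \cite{bs-stein} does: knowing independently that $\chi(I^\#(Y;\Z))=|H_1(Y;\Z)|$ and that the chain complex has exactly $|H_1(Y;\Z)|$ generators forces every generator into even $\Z/2$-grading, hence the differential (of odd degree) vanishes. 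So your proposal lands on the right argument in the end, but you should drop the stabilizer discussion and correct the role of cyclic finiteness.
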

	
	\begin{rmk}
		Strictly speaking, \cite[Theorem 4.6]{bs-stein} uses $\Q$ coefficients for the homology groups. However, the proof readily implies the version with $\Z$ coefficients stated above.
	\end{rmk}
	
	In light of the above, we pose the following question.
	
	\begin{question}
		Suppose $Y$ is an integer homology sphere and $Y \ncong S^3$. Does $I^\#(Y; \Z)$ contain 2-torsion?
	\end{question}
	
	Very recent work in \cite{torsion} provides partial evidence towards the positive answer to this question and also partial results regarding 2-torsion in $I^\#(S^3, K; \Z)$ obtained by combining sutured techniques with \Cref{thm:intro}.
	
	\subsection{Motivation}
	\label{sec:motivation}
	
	We briefly use the Atiyah-Floer perspective to explain how one may have arrived at the main result of the paper.
	
	Recall that in the Atiyah-Floer perspective, we can write $I(Y) = LF(\M (H_0), \M (H_1))$ where $H_0 \cup_\Sigma H_1 = Y$ is a handle-body decomposition, $LF$ denotes Lagrangian intersection Floer homology\footnote{\label{ft:lagfloer}This may not be well defined in the context here, but this will not be important for our purposes.} for the ambient symplectic manifold\footnote{\label{ft:manifold} We will ignore the precise definitions of these when there are singularities.} $\M (\Sigma)$, the moduli of flat connections upto gauge transformation and $\M(H_i)$ denote the moduli of flat connections (up to gauge transformation) on $H_i$ are Lagrangian submanifolds\footref{ft:manifold}.
	
	Before we explain the heuristic in the context of \Cref{thm:intro}, we first state and discuss the case of the Floer's exact triangle as stated below.
	
	\begin{theorem}[{Floer \cite{flo}; see also \cite{sca}}] \label{thm:floer-exact-triangle}
		The following is an exact triangle of $\Z$ modules where $n \in \Z$:
		\begin{equation*}
			\begin{tikzcd}[column sep=small]
				I^\#(Y_n(K); \Z)_{\omega_n} \arrow[rr] & & I^\#(Y_{n+1}(K); \Z) \arrow[dl] \\
				& I^\#(Y; \Z) \arrow[ul]&
			\end{tikzcd}
		\end{equation*}\qed
	\end{theorem}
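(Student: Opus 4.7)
The plan is to follow the modern instanton-theoretic proof of Scaduto \cite{sca}, which recasts Ozsv\'ath--Szab\'o's argument for the Heegaard Floer surgery exact triangle. First I would identify the three cobordisms associated with the triad of surgery slopes $\{n, n+1, \infty\}$: these are the 2-handle attachment cobordisms $W_1 \colon Y_n(K) \to Y_{n+1}(K)$, $W_2 \colon Y_{n+1}(K) \to Y$, and $W_3 \colon Y \to Y_n(K)$, each equipped with the auxiliary structure used to define $I^\#$ and a choice of bundle data extending the class $\omega_n$. The three arrows in the triangle are the induced cobordism maps.

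The first step is to show that each double composition $f_{i+1}\circ f_i$ is chain-homotopic to zero. The cobordism $W_i \cup W_{i+1}$ contains an embedded 2-sphere of self-intersection $-1$, assembled from the two 2-handle cores and a meridional disk of linking number $1$. Blowing down this sphere expresses the composite as a connect sum with $\overline{\CP^2}$, and a neck-stretching argument on the ASD moduli spaces (together with the observation that reducible connections on $\overline{\CP^2}$ contribute in a controlled way) produces an explicit chain null-homotopy of $f_{i+1}\circ f_i$.

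The second step is to upgrade the vanishing of double compositions to exactness. For this I would apply the Ozsv\'ath--Szab\'o triangle detection lemma in its instanton incarnation: extend the maps and null-homotopies cyclically to a hexagon, assemble them into a mapping-cone total complex, and reduce exactness to the assertion that a certain higher composite of cobordism maps (corresponding to traversing the triad twice) is a chain-homotopy equivalence. This higher composite is the cobordism map associated to a 4-manifold containing two disjoint $(-1)$-spheres whose complement is a product, and it can be identified with the identity on $I^\#$ up to an invertible contribution.

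The main obstacle in carrying this out is the moduli-theoretic bookkeeping: achieving transversality for the ASD equations on the composite cobordisms, ruling out stray reducibles on the blown-down pieces, and controlling bubbling as one stretches the neck through a family of metrics. In the $I^\#$ framework these are handled by the auxiliary $T^3$-structure of Kronheimer--Mrowka \cite{kmknot} together with holonomy perturbations; these are precisely the ingredients that the rest of the paper refines, via the energy filtration and metric perturbations of \Cref{sec:energy}, in order to prove the more delicate \Cref{thm:main}.
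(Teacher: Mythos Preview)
The paper does not give its own proof of this statement: the theorem is stated with a trailing \qed and attributed to Floer \cite{flo} and Scaduto \cite{sca}, so there is no argument in the paper to compare against. Your sketch is a reasonable outline of the Scaduto approach that the paper cites, and indeed the paper explicitly says (in the introduction) that its proof of \Cref{thm:main} ``uses ideas from the modern recasting of the proofs as in \cite{int-monopole, kmknot}; see also \cite{sca} for a proof of \Cref{thm:floer-exact-triangle} in this framework.'' One small imprecision: your description of the triple composite as a cobordism ``containing two disjoint $(-1)$-spheres whose complement is a product'' is not quite how the argument runs in \cite{sca}; the identification with the identity goes through a neck-stretching along an $S^1\times S^2$ hypersurface and an analysis of the resulting fibre product over the character variety, much as in the paper's own treatment of $m_{Q(Z)}$ in \Cref{sec:triple-composite}. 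But since the paper is only citing this result, there is nothing further to assess.
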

	
	We further relax the condition that $H_i$ are handle-bodies in the Atiyah-Floer picture above: We write $Y = (\yk) \cup_{T^2} \nu (K)$ when $K$ is framed. Note that $\M (T^2) \cong T^2 / (\Z/ 2)$ which is homeomorphic to $S^2$ as a topological space with four singular points, called the pillowcase. Any surgery can be obtained by modifying $\M(\nu (K))$ by post composing by the relevant diffeomorphism of $T^2$ which extends to $D^2 \times S^1 \cong \nu (K)$. Indeed, we can write $I(Y_{p/q}(K)) = LF(\M(\yk), L_{p/q})$ for appropriate Lagrangian arcs $L_{p/q}$. Under this interpretation, \Cref{thm:floer-exact-triangle} follows from the \cite{fooo}. See also \cite{seidel} for the case when one of the lagrangian is a circle. \Cref{fig:floer} illustrates this where the ambient manifold is $\M(T^2) \cong S^2$ (with 4 singular points) drawn as a pillowcase and the relevant Lagrangians are in blue.
	
	\begin{figure}[!h]
		\tikzset{every picture/.style={line width=0.75pt}} %
		\begin{tikzpicture}[x=0.5pt,y=0.5pt,yscale=-1,xscale=1]
			\clip (100,10) rectangle (400, 280);
			\draw   (131,22) -- (350.45,22) -- (350.45,241.45) -- (131,241.45) -- cycle ;
			\draw    (131.45,131.05) .. controls (186.45,150.05) and (295.45,149.05) .. (349.45,131.05) ;
			\draw  [dash pattern={on 4.5pt off 4.5pt}]  (131.45,131.05) .. controls (189.45,111.05) and (290.45,111.05) .. (349.45,131.05) ;
			\draw [color={rgb, 255:red, 0; green, 114; blue, 255 }  ,draw opacity=1 ][line width=1.5]    (131,241.45) -- (350.45,241.45) ;
			\draw [shift={(250.72,241.45)}, rotate = 180] [fill={rgb, 255:red, 0; green, 114; blue, 255 }  ,fill opacity=1 ][line width=0.08]  [draw opacity=0] (15.6,-3.9) -- (0,0) -- (15.6,3.9) -- cycle    ;
			\draw [color={rgb, 255:red, 2; green, 117; blue, 253 }  ,draw opacity=1 ][line width=1.5]    (350.45,22) -- (350.45,241.45) ;
			\draw [shift={(350.45,131.72)}, rotate = 90] [fill={rgb, 255:red, 2; green, 117; blue, 253 }  ,fill opacity=1 ][line width=0.08]  [draw opacity=0] (15.6,-3.9) -- (0,0) -- (15.6,3.9) -- cycle    ;
			\draw [color={rgb, 255:red, 0; green, 120; blue, 255 }  ,draw opacity=1 ][line width=1.5]    (131,241.45) -- (350.45,22) ;
			\draw [shift={(247.8,124.65)}, rotate = 135] [fill={rgb, 255:red, 0; green, 120; blue, 255 }  ,fill opacity=1 ][line width=0.08]  [draw opacity=0] (15.6,-3.9) -- (0,0) -- (15.6,3.9) -- cycle    ;
			
			\draw (111.42,-224.12) node [anchor=north west][inner sep=0.75pt]  [rotate=-46.88] [align=left] {};
			\draw (235,251.4) node [anchor=north west][inner sep=0.75pt]  [color={rgb, 255:red, 0; green, 119; blue, 255 }  ,opacity=1 ]  {$L_0^{\omega_0}$};
			\draw (250,120) node [anchor=north west][inner sep=0.75pt]   [align=left] {$\textcolor[rgb]{0,0.46,1}{L_{1}}$};
			\draw (363,131.4) node [anchor=north west][inner sep=0.75pt]    {$\textcolor[rgb]{0,0.47,1}{L_\infty }$};
		\end{tikzpicture}
		\caption{Atiyah-Floer perspective on \Cref{thm:floer-exact-triangle} when $n = 0$.}
		\label{fig:floer}
	\end{figure}
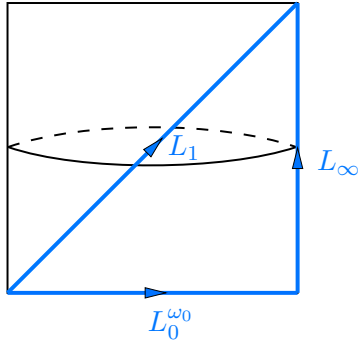
	
	Now, a reasonable guess for $I(Y, K)$, guided by this viewpoint, is $LF(\M(\yk), L_{\tr})$ where $L_{\tr}$ denotes the Lagrangian loop corresponding to the meridional holonomy being traceless. Note that the Lagrangian corresponding to 0-surgery, $L_0$, and $L_{\tr}$ intersect in a single point, denote it by $p$. Then, one notes that $L_{2} \simeq L_{0} \# L_{\tr} \simeq \tau_{L_{\tr}}(L_0)$ whence from \cite{seidel} we know that the following is a distinguished triangle:
	
	\begin{equation*}
		\begin{tikzcd}[column sep=small]
			L_0 \arrow[rr, "\cdotp p"] & & L_{\tr} \arrow[dl, "\cdot q"] \\
			& L_2 \arrow[ul]&
		\end{tikzcd}
	\end{equation*}
	where $q$ denotes the intersection point of $L_2$ and $L_{\tr}$ (see \Cref{fig:atiyah-floer}).
	
	\begin{figure}[!h]
		\tikzset{every picture/.style={line width=0.75pt}} %
		\begin{tikzpicture}[x=0.60pt,y=0.60pt,yscale=-1,xscale=1]
			\clip (100,0) rectangle (400, 250);
			\draw   (131,22) -- (350.45,22) -- (350.45,241.45) -- (131,241.45) -- cycle ;
			\draw    (131.45,131.05) .. controls (186.45,150.05) and (295.45,149.05) .. (349.45,131.05) ;
			\draw  [dash pattern={on 4.5pt off 4.5pt}]  (131.45,131.05) .. controls (189.45,111.05) and (290.45,111.05) .. (349.45,131.05) ;
			\draw [color={rgb, 255:red, 0; green, 114; blue, 255 }  ,draw opacity=1 ][line width=1.5]    (131,22) -- (350.45,22) ;
			\draw [shift={(240.72,22)}, rotate = 180] [fill={rgb, 255:red, 0; green, 114; blue, 255 }  ,fill opacity=1 ][line width=0.08]  [draw opacity=0] (15.6,-3.9) -- (0,0) -- (15.6,3.9) -- cycle    ;
			\draw [color={rgb, 255:red, 0; green, 116; blue, 255 }  ,draw opacity=1 ][line width=1.5]    (131,22) -- (239.66,240.75) ;
			\draw [shift={(189.78,140.33)}, rotate = 243.58] [fill={rgb, 255:red, 0; green, 116; blue, 255 }  ,fill opacity=1 ][line width=0.08]  [draw opacity=0] (15.6,-3.9) -- (0,0) -- (15.6,3.9) -- cycle    ;
			\draw [color={rgb, 255:red, 0; green, 117; blue, 255 }  ,draw opacity=1 ][line width=1.5]  [dash pattern={on 5.63pt off 4.5pt}]  (239.66,240.75) -- (350.45,22) ;
			\draw [shift={(299.57,122.45)}, rotate = 116.86] [fill={rgb, 255:red, 0; green, 117; blue, 255 }  ,fill opacity=1 ][line width=0.08]  [draw opacity=0] (15.6,-3.9) -- (0,0) -- (15.6,3.9) -- cycle    ;
			\draw [color={rgb, 255:red, 208; green, 2; blue, 27 }  ,draw opacity=1 ][line width=1.5]    (240.72,22) .. controls (250.66,59.75) and (249.66,199.75) .. (239.66,240.75) ;
			\draw [shift={(247.69,119.42)}, rotate = 89.98] [fill={rgb, 255:red, 208; green, 2; blue, 27 }  ,fill opacity=1 ][line width=0.08]  [draw opacity=0] (15.6,-3.9) -- (0,0) -- (15.6,3.9) -- cycle    ;
			\draw [color={rgb, 255:red, 208; green, 2; blue, 27 }  ,draw opacity=1 ] [dash pattern={on 4.5pt off 4.5pt}]  (240.72,22) .. controls (230.66,59.75) and (231.66,200.75) .. (239.66,240.75) ;
			\draw [shift={(233.45,138.62)}, rotate = 269.78] [fill={rgb, 255:red, 208; green, 2; blue, 27 }  ,fill opacity=1 ][line width=0.08]  [draw opacity=0] (12,-3) -- (0,0) -- (12,3) -- cycle    ;
			
			\draw (111.42,-224.12) node [anchor=north west][inner sep=0.75pt]  [rotate=-46.88] [align=left] {};
			\draw (234,1.4) node [anchor=north west][inner sep=0.75pt]  [color={rgb, 255:red, 0; green, 119; blue, 255 }  ,opacity=1 ]  {$L_{0}$};
			\draw (178,156.4) node [anchor=north west][inner sep=0.75pt]  [color={rgb, 255:red, 0; green, 119; blue, 255 }  ,opacity=1 ]  {$L_{2}$};
			\draw (252,78.4) node [anchor=north west][inner sep=0.75pt]  [color={rgb, 255:red, 208; green, 2; blue, 27 }  ,opacity=1 ]  {$L_{\tr}$};
		\end{tikzpicture}
		\caption{Atiyah-Floer perspective on \Cref{thm:intro} when $n = 0$.}
		\label{fig:atiyah-floer}
	\end{figure}
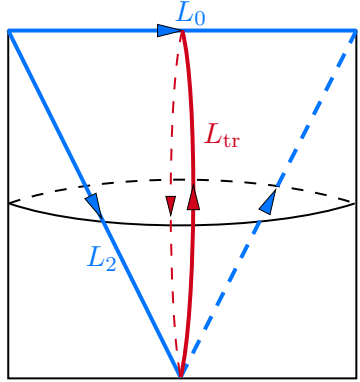
	
	One can verify that the maps $\cdot p$ and $\cdot q$ under the functor $LF(\M(\yk), \cdot)$ correspond to the cobordisms $(Y, K) \to Y_2$  and $Y_0 \to (Y, K)$ for the trace of the surgery (or its reverse). The connections are singular along the co-core and core of the 2-handle respectively. The map $L_2 \to L_1$ is less transparent as the usual representatives of the Lagrangians intersect in two (singular) points. However, the author believes that in the above analogy it corresponds to the trace of a rational cobordism as defined in \Cref{sec:rational-surgery} under the functor $LF(\M(\yk), \cdot)$ perhaps with signs that depend on the connection near the isolated orbifold point. In any case, we will construct the map on the instanton chain complex with this understanding and see that it is the one that appears in \Cref{thm:intro}.
	
	\subsection{Organisation}
	In \Cref{sec:applns}, we give the proof of \Cref{thm:l-space} assuming \Cref{thm:intro}. In \Cref{sec:instantons}, we primarily review the construction of instanton Floer theory and its properties as described in \cite{kmknot}. Additionally, we state and prove some of the technical ingredients needed for the proof of \Cref{thm:main}.
	
	In \Cref{sec:energy-order}, we describe a way to choose a basis with good properties which rests on the new perturbation scheme described in \Cref{sec:energy}. In \Cref{sec:preliminaries}, we state \Cref{thm:main} and setup the needed topological and metric constructions for the proof. In \Cref{sec:moduli-computation}, we state and prove a result about the moduli space of anti-self dual connections that will be the central to the proof of \Cref{thm:main}. Finally, in \Cref{sec:tech}, we reprove a homological algebra result of \cite{lin} suited to our needs and prove \Cref{thm:main}.
	
	In \Cref{appendix:perturbations}, we elaborate on the proof details of \Cref{prop:mtransverse}.
	
	\subsection{Acknowledgements}
	I would like to thank my advisor, Tomasz Mrowka, for introducing me to this circle of ideas and his constant support and encouragement throughout the project. Further, I would like to thank Tye Lidman for raising the question whose answer went on to become the main result of this paper. I would also like to thank Sherry Gong for sharing a draft of her work which influenced some of the proofs presented here and Jiakai Li for helpful discussions at various stages.
	
	The author was partially supported by NSF grants DMS-2105512 and 1440140. The author was also supported by Simons Foundation Award \#994330 (Simons Collaboration on New Structures in Low-Dimensional Topology). Part of the work was carried out while the author was in residence at the Simons Laufer Mathematical Sciences Institute (SLMath), formerly Mathematical Sciences Research Institute (MSRI), in Berkeley, California, during the Fall semester of 2022.

	\section{Proof of \Cref{thm:l-space}} \label{sec:applns}
	
	We first collect the computational results that the proof relies on.
	
	\begin{proposition} \label{prop:facts}
		The following results hold where the grading is the absolute $\Z / 2$ grading:
		\begin{description}[before={\renewcommand\makelabel[1]{$\bullet$ \bfseries ##1}}]
			\item[\cite{kmknot}] $I^\# (S^3, T_{2, 3}; \Z) = \Z_{(0)}^3 \oplus \Z / 2_{(0)} \oplus \Z_{(1)}$.
			\item[\cite{bs-l-space}] $I^\# (S^3_{n} (T_{2, 3}); \Q) = \Q^n_{(0)}$ when $n = 1, 3$.
			\item[\cite{sca}] $I^\# (L(p, q); \Z) = \Z^p_{(0)}$. \qed
		\end{description}
	\end{proposition}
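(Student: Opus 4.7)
\Cref{prop:facts} is a compilation of three computations from the literature, so the proof consists of extracting each statement from its cited source and checking that the stated $\Z/2$-gradings agree with the convention used elsewhere in this paper (parity of the expected dimension of the relevant moduli space on a cylindrical cobordism, in agreement with the Khovanov/Euler-characteristic normalization of $I^\#$).

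For the first bullet, Kronheimer-Mrowka \cite{kmknot} compute $I^\natural$ and $I^\#$ of the right-handed trefoil by iterating the unoriented skein exact triangle on the crossings of a standard diagram, reducing to the unknot and two-component unlink whose invariants are known; the reduced Khovanov spectral sequence then pins down the $\Z/2$-grading. The output is three free and one $2$-torsion generator in even grading together with one free generator in odd grading, which is exactly the statement. I would simply quote this calculation.

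For the second bullet, $S^3_{1}(T_{2,3})$ is the Poincar\'e integer homology sphere and $S^3_{3}(T_{2,3})$ is a Seifert-fibered rational homology sphere with $|H_1|=3$. Baldwin-Sivek \cite{bs-l-space} establish that positive integer surgeries on the right-handed trefoil are rational instanton $L$-spaces, with $I^\#(\,\cdot\,;\Q)$ concentrated in even grading and of total dimension $|H_1|$. The $n=1$ and $n=3$ cases are read off directly from their classification.

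For the third bullet, Scaduto \cite{sca} proves that every lens space is an integer instanton $L$-space by inducting on the continued fraction length of $p/q$ and applying \Cref{thm:floer-exact-triangle} along an unknot whose surgery shortens the continued fraction expansion. The base case is $I^\#(S^3;\Z)=\Z$ concentrated in grading $0$; a direct check that the connecting maps split (using that there is no torsion to obstruct splitting in the relevant step) then propagates the vanishing of odd-graded generators and yields $\Z^p$ in grading $0$. The only genuine subtlety in the assembly is reconciling the $\Z/2$-grading and orientation conventions used in \cite{kmknot}, \cite{bs-l-space} and \cite{sca} with the one adopted here; once these are aligned, each bullet reduces to a direct citation, and this bookkeeping is the main (and mild) obstacle to the proof.
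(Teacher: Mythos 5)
The paper gives no proof of \Cref{prop:facts} at all: it is stated as a compilation of results taken directly from the cited references, with the \qed appearing immediately after the last bullet. Your proposal is therefore consistent with the paper's treatment — it amounts to an expanded commentary on how each cited computation is obtained, and the content you sketch (the skein-triangle computation of $I^\#$ for the trefoil in \cite{kmknot}, the instanton $L$-space classification for trefoil surgeries in \cite{bs-l-space}, and Scaduto's continued-fraction induction using Floer's exact triangle for lens spaces in \cite{sca}) is accurate; your observation that reconciling $\Z/2$-grading conventions is the only real bookkeeping step is also fair.
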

	
	We briefly explain the strategy of the proof now. An application of \Cref{thm:intro} with $n= 3$ is used to get partial information about the torsion in $I^\#(S^3_3(T_{2,3}); \Z)$. Then, an application of \Cref{thm:intro} with this partial information is enough to prove the existence of non-trivial torsion. Finally, we use \cite[Corollary 1.7]{sca} to conclude.
	
	\begin{proof}[Proof of \Cref{thm:l-space}]
		Let $A_n = I^\#(S^3_n(T_{2, 3}; \Z))$. Recall that $S^3_5(T_{2, 3}) \cong L(5, 4)$ and $S^3_1(T_{2, 3}) \cong \Sigma(2, 3, 5)$, the \poinc homology sphere. Then, by \Cref{prop:facts}, $A_5 = \Z_{(0)}^5$. Now, applying \Cref{thm:intro}, 
		\begin{equation*}
			\begin{tikzcd}[column sep=small]
				A_3 \arrow[rr, "\psi"] & & \Z_{(0)}^5 \arrow[dl, "f_1"] \\
				& \Z^3_{(0)} \oplus \Z / 2_{(0)} \oplus \Z_{(1)} \arrow[ul, "f_2"]&
			\end{tikzcd}
		\end{equation*}
		where $\mathrm{deg}(\psi) = \mathrm{deg}(f_1) = 0$ and $\mathrm{deg}(f_2) = 1$. This implies that the following is exact
		\[
		0 \to \coker(f_1)[1] \to A_3 \to \ker(f_1)\to 0.
		\]
		As $\ker(f_1)$ is free, the sequence splits. Further, $\mathrm{Tor}(A_3) = \mathrm{Tor}(\coker(f_1)[1])$ shows that the torsion of $A_3$ is contained in the odd grading. In particular, we have, by \Cref{prop:facts}, $A_3 = \Z_{(0)}^3 \oplus K_{(1)}$ where $K$ is a finite abelian group, possibly trivial. Next, we have
		\begin{equation*}
			\begin{tikzcd}[column sep=small]
				A_1 \arrow[rr, "\psi'"] & & \Z_{(0)}^3 \oplus K_{(1)} \arrow[dl, "f_1'"] \\
				& \Z^3_{(0)} \oplus \Z / 2_{(0)} \oplus \Z_{(1)} \arrow[ul, "f_2'"]&
			\end{tikzcd}
		\end{equation*}
		This, along with $\mathrm{deg} (f_1') = \mathrm{deg}(\psi') = 0$ and $\mathrm{deg}(f_2') = 1$, implies the following short exact sequence:
		\[
		0 \to \coker(f_1')[1] \to A_1 \to \ker(f_1') \to 0.
		\]
		We claim that $A_1$ has non-trivial torsion and that it is contained in odd grading.
		
		First, we observe that $1 \leq \rk(\coker(f_1')) \leq \rk(A_1)$ where the first inequality is for grading reasons. As $\rk(A_1) = 1$ by \Cref{prop:facts}, we see that $\rk(\coker(f_1')) = 1$. Thus, $\ker(f_1') = K_{(1)}$. Hence, the following is a short exact sequence:
		\[
		0 \to \Z_{(0)}^3 \xrightarrow{f_1'} \Z^3_{(0)} \oplus \Z / 2_{(0)} \oplus \Z_{(1)} \to \coker(f_1') \to 0.
		\]
		If $\coker(f_1')$ were free, this sequence would split yielding a contradiction. Let $\coker(f_1') = \Z_{(1)} \oplus H_{(0)}$ for some finite abelian group $H \neq 0$. In conclusion, we have that the following sequence of graded abelian groups is exact:
		\[
		0 \to \Z_{(0)} \oplus H_{(1)} \to A_1 \to K_{(1)} \to 0.
		\]
		
		Thus, $A_1 = \Z_{(0)} \oplus G_{(1)}$ where
		\[
		0 \to H \to G \to K \to 0
		\]
		is an exact sequence of finite abelian groups with $H \neq 0$. Thus, $G \neq 0$. Finally, we conclude that any torsion of $A_1$ must be 2-torsion by \cite[Corollary 1.7]{sca}.
	\end{proof}
	
	We record an interesting corollary of the above proof.
	
	\begin{corollary}
		If $I^\# (S^3_3(T_{2, 3}); \Z)$ is not free, then the torsion is 2-torsion and is contained in odd grading.
	\end{corollary}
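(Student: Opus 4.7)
The plan is to package up the intermediate conclusions already produced during the proof of \Cref{thm:l-space}, and then invoke one further citation. Write $A_3 = I^\#(S^3_3(T_{2,3}); \Z)$ and $A_5 = I^\#(S^3_5(T_{2,3}); \Z) = \Z^5_{(0)}$. The surgery triangle of \Cref{thm:intro} applied at $(Y, K, n) = (S^3, T_{2,3}, 3)$ gives rise to the short exact sequence
\begin{equation*}
0 \to \coker(f_1)[1] \to A_3 \to \ker(f_1) \to 0
\end{equation*}
used in the proof of \Cref{thm:l-space}, and this sequence splits because $\ker(f_1)$ sits inside the free group $A_5 = \Z^5_{(0)}$.

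First I would carry out the rank bookkeeping, which was already done during the proof of \Cref{thm:l-space}. Using $\rk A_3 = 3$ from \Cref{prop:facts}, together with the structure of $A_5$ and $I^\#(S^3, T_{2,3}; \Z) = \Z^3_{(0)} \oplus \Z/2_{(0)} \oplus \Z_{(1)}$, one finds $\rk \ker(f_1) = 2$ and $\coker(f_1) \cong \Z_{(1)} \oplus T$, where $T$ is a finite abelian quotient of $\Z^3_{(0)} \oplus \Z / 2_{(0)}$ by a rank-three subgroup. After the degree shift, the splitting
\begin{equation*}
A_3 \cong \Z^3_{(0)} \oplus K_{(1)}
\end{equation*}
emerges, with $K = T$ a finite abelian group that is non-zero precisely when $A_3$ fails to be free. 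This immediately yields the grading half of the corollary: any torsion of $A_3$ lies in odd $\Z/2$-grading.

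To see that $K$ is annihilated by $2$, I would invoke \cite[Corollary 1.7]{sca}, the same ingredient used at the end of the proof of \Cref{thm:l-space} to constrain the torsion of $A_1 = I^\#(\Sigma(2,3,5); \Z)$. It applies equally well to $A_3$ and forces its torsion to be 2-torsion, completing the proof. The hard part, to the extent there is any, is simply recognising that the proof of \Cref{thm:l-space} has already done all of the rank and splitting work for $A_3$ along the way, so this corollary amounts to a structural summary of that proof together with one citation, with no genuinely new technical obstacle.
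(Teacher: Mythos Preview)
Your proposal is correct and matches the paper's approach: the paper records this corollary explicitly as a consequence ``of the above proof'' of \Cref{thm:l-space}, and your write-up is precisely a recapitulation of that argument --- the split short exact sequence gives $A_3 = \Z^3_{(0)} \oplus K_{(1)}$, whence the grading claim, and \cite[Corollary 1.7]{sca} then constrains the torsion. One small terminological slip: you write that $K$ is ``annihilated by $2$'', but the cited result (and the paper's own usage, cf.\ \Cref{thm:l-space}) only gives that the torsion is a $2$-group, i.e.\ $2$-primary; it need not have exponent $2$.
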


	\section{Instanton Homology}
	\label{sec:instantons}

	We review and set notation for singular instantons and their associated Floer homology groups. The main technical references are \cite{kmknot} and \cite{kmsurf1}. Other useful references are \cite{dym} and \cite{fsorbi}.
	
	\subsection{Singular Connections, Metrics and Chern-Simons Functional}
	\label{sec:con}
	
	Let $Y$ be a closed connected oriented 3-manifold with a knot (link) $K$. Fix the data of a singular bundle on $(Y, K)$ as in \cite[\S 3.1]{kmknot} with principal $\so$-bundle $P$ over $\yk$ (arising from the bundle $P_\Delta$ over $Y_\Delta$) and local system $\Delta$ and an orbifold metric $\check{g}$ with cone angle $\pi$ near $K$. We denote by $\check{Y}$, the manifold $Y$ with this orbifold structure near $K$ and by $\boldsymbol{P}$ the singular bundle data.
	
	In this situation, we have a model singular connection $A_1$ which has holonomy of order $2$ along the meridians of $K$ as described in \cite[\S 2.2]{kmknot} which is a smooth orbifold connection on $\check{Y}$. Thus, we can now define Sobolev spaces: 
	\[ \check{L}^2_k (\check{Y}; \mathfrak{g}_{\check{P}}\otimes \Lambda^q T^* \check{Y}) \] which in turn allows us to define the relevant space of connections $\mathcal{C}_k(Y, K, \pp)$ as an affine space modelled on $\check{L}^2_k (\check{Y}; \mathfrak{g}_{\check{P}}\otimes T^* \check{Y})$ and containing $A_1$.
	
	We equip $\conk$ with an inner product on the tangent space $T_B \conk = \check{L}^2_k (\check{Y}; \mathfrak{g}_{\check{P}}\otimes T^* \check{Y})$ given by 
	\[\<b, b'>_{L^2} = \int_{Y} -\tr(*b \wedge b') \]
	where the Hodge star is the one defined by $\check{g}$.
	
	Analogous definitions apply to the 4-dimensional case; see \cite[\S 2.1-2]{kmknot}.
	
	We have the \textit{Chern-Simons} functional defined with the property that 
	\[ (\grad CS)_B = * F_B\] 
	and so the critical points are flat connections in $\conk$. We denote the set of gauge equivalence class of critical points by $\fc = \fc(Y, K, \pp)$.
	
	\subsection{Gauge Group}
	
	To define the gauge group in this context, we need to ensure that our gauge transformations respect the singular data. Let $Q_\Delta$ denote the reduction of the $\so$ bundle $P_\Delta$ to a $O(2)$ bundle over $K_\Delta$ (note that this is part of the data of the singular bundle) and $Q$ for the reduction over $\nu \setminus K$ where $\nu$ is the tubular neighbourhood of $K$ in $Y$. Let $G(P_\Delta) = P_\Delta \times_{\Ad} SU(2)$ denote a bundle of groups over $Y_\Delta$ with fibres $SU(2)$. This bundle comes with a distinguished sub bundle $H_\Delta$ over $K_\Delta$ with fibres $S^1 \subset SU(2)$ which preserves $Q_\Delta$ with its orientation. Note that the structure group of this bundle is $\pm 1$ and it is in fact the pull back of a bundle of groups $H$ over $K$ with fibres $S^1$ (and structure group $\pm 1$).
	
	Define a topological space $\boldsymbol{G}$ with a map to $Y$ by the following pushout diagram:
	\begin{equation*}
		\begin{tikzcd}
			\pi^* H|_{\nu (K) \setminus K} \arrow[r] \arrow[d] & G(P)|_{Y \setminus K} \arrow[d] \\ 	
			\pi^* H \arrow[r] & \boldsymbol{G} \arrow[ul, phantom, "\ulcorner", very near start]
		\end{tikzcd}
	\end{equation*}
	where $\pi : \nu (K) \to K$ is the projection map.
	
	The fibres of $\boldsymbol{G}$ are $SU(2)$ over $\yk$ and $S^1$ over $K$ and the set of continuous gauge transformations $\CG^{top}$ is the space of continuous sections of $\boldsymbol{G} \to X$. As $\pi^* H|_{\nu(K) \setminus K}$ is a smooth subbundle of $G(P)|_{\nu (K) \setminus K}$, smooth sections of $\boldsymbol{G}$ are well defined. With this, we can define $\CG_{k+1} (Y, K, \pp)$ as the completion of $\CG^{smooth}$ under the usual Sobolev norms.

	\subsection{Weighted Function Spaces and Weighted Operators} 
	\label{sec:weighted-spaces}
	
	We introduce the notion of weighted spaces of connections and gauge group relevant to us. As usual, these do not affect the usual theory of \cite{kmknot} when the connections involved are irreducible. We follow \cite{dym} with the appropriate modifications needed for the singular locus.
	
	Consider $(Z, S) = (Y, K) \times \pR$ as a product orbifold with singularity along $K \times \pR$; we shall denote this, equipped with the product orbifold metric by $\check{Z}^+$. We extend the singular data $\pp$ to $(Z, S)$ in an analogous fashion and define $\conw{Z}{S}$ as the affine space of connections containing the product connection $A_1 + dt$ with tangent space $\lp[k, \alpha]{T^* \check{Z}}{Z}$ where $\alpha \in \R$ denotes the weight; the weight function is the usual one: $e^{\alpha t}$. More explicitly, 
	\[
	\| f |_{\check{L}^2_{k, \alpha}} = \| e^{\alpha t} f|_{\check{L}^2_k}
	\]
	
	We shall always require the weight to be small enough so that it is less than the smallest (in absolute value) eigenvalue of the relevant differential operator. We shall use this notation even when $Z$ has multiple ends (in which case $\alpha$ must be interpreted as a tuple) and ask the weight function to be standard on the ends while smoothly varying and bounded below by a positive constant elsewhere.
	
	The gauge group $ \CG_{k+1, \alpha} (Z, S, \pp)$ is similarly defined as the completion of $\CG^{smooth}$ with respect to the weighted Sobolev space topology. In particular, this means that $\nabla_{A_1} g \in \lp[k, \alpha]{T^* \check{Z}}{Z}$ when $g \in \CG_{k+1, \alpha} (Z, S, \pp)$ and so the gauge group acts on $\conw{Z}{S}$. The action is smooth is by the usual multiplication theorems (cf. \cite{dym}).
	
	The `Coloumb gauge' condition takes a slightly different form in this context:
	\[ \lp[k, \alpha]{T^* \check{Z}}{Z} = \ker(d_{A_1}^{*, \alpha}) \oplus \im (d_{A_1}) \]
	where $d_{A_1}^{*, \alpha}$ denotes the formal adjoint of the covariant derivative $d_{A_1}$ with respect to the \emph{weighted} $L^2$ inner product instead of the usual $L^2$ inner product. See \cite[\S 4.3]{dym} for a discussion when the orbifold is a manifold; to deduce the orbifold version stated above, note that the result holds for the double branched cover and the relevant covering transformation is parallel with respect to $A_1$ by construction and the weight function can be chosen to be invariant with respect to the covering transformation. 
	
	This in turn affects the linearisation of the ASD operator. The relevant operator, incorporating this change, we denote by 
	\[ \D_{A_1, Z, \alpha} \coloneqq - d_{A_1}^{*, \alpha} \oplus d_{A_1}^+  : \lp[k, \alpha]{\Lambda^1}{Z} \to \lp[k-1, \alpha]{(\Lambda^0 \oplus \Lambda^+)}{Z} \] 
	which we shall use whenever any of the limiting connections is reducible. However the zero set of the non-linear ASD operator is not the moduli space but requires a further quotient by the stabiliser of the limiting connection: This merely reflects the fact that the `Coloumb gauge' condition only quotients out by gauge transformations that limit to $\pm 1$ at the ends. The dimension of this moduli space, for instance when $Z$ has a single end $Y$ with the limiting connection being $\rho$ and assuming the action of the stabiliser is free, is $\ind \D_{A, Z, \alpha} - \dim H_{\rho}^0(Y; \mathfrak{g}_{P_Y})$. This is precisely the quantity $\ind^+ P$ in \cite{dym}. 
	
	With this understood, we shall omit $\alpha$ if it is clear that a reducible connection is present when dealing with differential operators and function spaces with the understanding that $\alpha > 0$ and $\alpha \ll 1$.
	
	\subsection{Reducible Connections and Perturbations}
	The perturbations we discuss here were first introduced in \cite{taubes-casson} and were used in the context of Floer groups first in \cite{floer}. These were later adapted to the setting of singular instanton Floer theory in \cite{kmknot}. We state a proposition from \cite[\S 3.3, 3.4]{kmknot} regarding non-integrability in the context of reducible connections.
	
	\begin{proposition}[{\cite[Proposition 3.2]{kmknot}}]
		If the singular data $\pp$ on $(Y, K)$ satisfies the non-integral condition (as defined in \cite[Definition 3.1]{kmknot}), then the Chern-Simons functional on $\conk$ has no reducible critical points. Further, if $\Delta$ is non-trivial on any component of $K$, then the configuration space $\conk$ contains no reducible connections at all.
		\qed
	\end{proposition}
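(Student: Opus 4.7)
The plan is to prove the two claims separately, starting with the topological statement about $\Delta$ and then tackling the more delicate flatness claim.

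For the second statement, the observation is that a reducible connection on the singular bundle data $\pp$ amounts to a global reduction of the principal $SO(3)$-bundle $P \to Y \setminus K$ to an $O(2)$-subbundle compatible with the singular structure. Near $K$, this reduction is already prescribed by the data of $Q_\Delta \to K_\Delta$ from Section \ref{sec:con}, and the local system $\Delta$ records the monodromy of $Q_\Delta$ around loops in the tubular neighbourhood of $K$. To glue this local reduction to a global $O(2)$-reduction over $\yk$, the monodromy class $\Delta$ would have to be trivial on every component of $K$: a non-trivial $\Delta$ on some component flips the orientation of the reduction under a loop linking that component, so no global subbundle exists. Hence $\conk$ contains no reducibles at all under the stated hypothesis.

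For the first statement, a reducible critical point of the Chern-Simons functional corresponds, after gauge fixing, to a flat reducible connection, i.e.\ a homomorphism $\rho : \pi_1(\yk) \to U(1) \hookrightarrow O(2) \hookrightarrow SU(2)/\{\pm 1\}$ compatible with the singular data. The meridional holonomy constraint built into the model connection $A_1$ forces the image $\rho(\mu)$ on each meridian $\mu$ of $K$ to be the order-two element, i.e.\ $\pm 1 \in U(1)$. Lifting to $U(1)$ and taking logarithmic derivatives turns $\rho$ into a closed $\R/\Z$-valued $1$-cocycle on $\yk$ which takes the value $\tfrac12$ on each meridian and whose class in $H^1(\yk; \R/\Z)$ is constrained by the local system $\Delta$. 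Pairing with a suitable surface and using the long exact sequence associated with $\Z \hookrightarrow \R \to \R/\Z$ shows that the existence of such $\rho$ is equivalent to the integrality of a certain characteristic pairing attached to $\pp$. The non-integral condition of \cite[Definition~3.1]{kmknot} is precisely the assertion that this pairing is irrational, giving the required contradiction.

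The main obstacle is the accurate bookkeeping in the cohomological step: one has to track how the singular bundle data $\pp$ (with its local system $\Delta$ and the prescribed meridional holonomy) translates into a $\Z$-twisted cohomology class, and then match this class exactly with the quantity appearing in the definition of the non-integral condition. Since this proposition is cited verbatim from \cite[Proposition 3.2]{kmknot}, we follow their formulation, and the detailed check amounts to unwinding their definitions rather than introducing new analysis.
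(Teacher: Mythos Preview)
The paper does not supply a proof of this proposition: it is quoted directly from \cite[Proposition~3.2]{kmknot} and closed without argument. There is therefore nothing in the paper to compare your sketch against.

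Your outline is in the correct spirit, and you yourself concede at the end that the real content lies in unwinding the definitions from \cite{kmknot}. A few imprecisions are worth flagging. In the second claim you write that a non-trivial $\Delta$ ``flips the orientation of the reduction under a loop linking that component'', but $\Delta$ is a local system on $K$ itself, so its monodromy is detected along the components of $K$, not along meridians linking them; you also tacitly assume that the $O(2)$-reduction coming from a reducible connection must agree near $K$ with the given reduction $Q_\Delta$, which is true (the meridional holonomy singles out a unique axis) but should be stated. In the first claim, your passage to an abelian representation $\rho:\pi_1(\yk)\to U(1)$ silently uses that a positive-dimensional stabiliser in $SO(3)$ forces the holonomy into a circle subgroup; this is correct but deserves a sentence. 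The cohomological step you describe has the right shape, yet the actual matching with the non-integral condition of \cite[Definition~3.1]{kmknot} is where the work lies, and you defer it back to the reference---which, in the end, is exactly what the paper itself does.
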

	
	We now describe a set of perturbations that ensure all relevant moduli spaces are transversely cut out. We follow \cite[\S 3.4]{kmknot} which in turn relies on \cite{yaft}.
	
	Fix a lift of $P$ over $\yk$ to a $U(2)$ bundle $\tilde{P}$ over $\yk$ and a connection $\theta$ on $\det \tilde{P}$. Then, any $B \in \conk$ defines a connection $\tilde{B}$ on $\tilde{P}$ with induced connection on $\det \tilde{P}$ being $\theta$. Given an immersion $q : S^1 \times D^2 \to \yk$, and a choice of base point $p \in S^1$, for every $x \in D^2$, we can define $\mathrm{Hol}_x(\tilde{B}) \in U(2)$ by taking the holonomy of $q^*\tilde{B}$ over $S^1 \times \{ x \}$ (starting at $(p, x)$). Choosing a class function $h : U(2) \to \R$, we get a function 
	
	\[ H_x : \conk \to \R \]
	defined as $H_x(B) = h(\mathrm{Hol}_x(\tilde{B}))$
	
	It is useful to \emph{mollify} this function as follows:
	
	\[ f_q(B) = \int_{D^2} H_x(B) \mu \]
	where $\mu$ is a volume form on $D^2$ supported in the interior with integral $1$.
	
	Analogously, taking a collection of immersions $\boldsymbol{q} = (q_1, \ldots, q_l)$ all agreeing on $p \times D^2$, the holonomy along the $l$ loops gives us $\mathrm{Hol}_x: \mathcal{C}_k \to U(2)^l$ and composing with conjugate invariant function $h: U(2)^l \to \R$ (to give $H_x (B) = h(\mathrm{Hol}_x(\tilde{B}))$) followed by mollifying gives:
	
	\[ f_q : \conk \to \R \] 
	\[ f_q(B) = \int_{D^2} H_x(B) \mu \]
	These smooth gauge invariant functions on $\mathcal{C}$ are \textit{cylinder functions}.
	
	Given this construction, it is explained in \cite{yaft} how one can choose a collection of immersions $\{ \boldsymbol{q^i} \}$ and construct a separable Banach space $\Pa$ of sequences $\pi = \{ \pi_i \}$ with norm
	
	\[ \| \pi |_{\Pa} = \sum_i C_i | \pi_i | \]
	such that for each $\pi \in \Pa$, the sum,
	
	\[ f_{\pi} = \sum_i \pi_i f_{\boldsymbol{q^i}}\]
	is convergent and defines a smooth and bounded function $f_\pi$ on $\SC_k$. Further, the $L^2$ gradient defines a smooth vector field $V_\pi$ on $\SC_k$. The analytic properties needed for these (which can be satisfied by constructing $\Pa$ appropriately) are stated in \cite[Proposition 3.7]{yaft}.
	
	We call such a function $f_\pi$ a holonomy perturbation and we consider the perturbed Chern-Simons function $CS + f_\pi$. The set of gauge equivalence classes of critical points for this is denoted by:
	\[ \fc_\pi = \fc_\pi (Y, K, \pp) \subset \CB_k (Y, K, \pp).\]
	These holonomy perturbations ensure the critical set is non-degenerate. We state the relevant result below.
	
	\begin{proposition}[{\cite[Proposition 3.12]{yaft}}] \label{prop:morse}
		There is a residual set of the Banach space $\Pa$ such for all $\pi$ in this subset, the irreducible critical points of the functional $CS + f_\pi$ in $\conk$ are non-degenerate in the directions transverse to the gauge orbit. \qed 
	\end{proposition}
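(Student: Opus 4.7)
The plan is to prove non-degeneracy via a standard Sard--Smale argument applied to the parameterized critical point equation, with the technical core being the surjectivity of the universal linearization at irreducible critical points.

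First I would assemble the universal moduli space
\[
\M_{\text{univ}} = \{(B, \pi) \in \CB_k(Y, K, \pp) \times \Pa : B \text{ irreducible and } *F_B + V_\pi(B) = 0\},
\]
and show it is a smooth Banach submanifold. Working in Coulomb slice at a solution $(B_0, \pi_0)$, the linearization of the defining equation is the map
\[
L : \ker(d_{B_0}^*) \oplus \Pa \longrightarrow \ker(d_{B_0}^*), \qquad (\delta B, \delta \pi) \longmapsto *d_{B_0}\delta B + DV_{\pi_0}|_{B_0}(\delta B) + V_{\delta\pi}(B_0),
\]
where I project the right-hand side into the Coulomb slice. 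The $\delta B$--part is a self-adjoint elliptic operator, hence Fredholm of index zero with finite-dimensional cokernel equal to its kernel. So the job is to show that variations in $\delta \pi$ fill out this kernel.

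The heart of the argument is therefore the following claim: if $\eta \in \ker(d_{B_0}^*) \cap \ker(*d_{B_0} + DV_{\pi_0}|_{B_0})$ is $L^2$--orthogonal to $V_{\delta\pi}(B_0)$ for every $\delta\pi \in \Pa$, then $\eta = 0$. Unpacking the definition of $f_{\boldsymbol{q}}$ in terms of holonomies, $\langle \eta, V_{\delta\pi}(B_0)\rangle_{L^2}$ is a sum of integrals of the derivative of the class function $h$ along holonomies of $B_0$ around the loops $q^i_t$, paired against parallel transport of $\eta$. Using that the collection $\{\boldsymbol{q}^i\}$ is chosen as in \cite{yaft} to be dense in the space of based loops and that we are free to choose the class functions $h$, a Stone--Weierstrass/density argument identifies the pairing with an integral of $\eta$ against a generic parallel transport; irreducibility of $B_0$ is used here to rule out symmetries that would kill the pairing identically. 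A unique continuation statement (also as in \cite{yaft}) upgrades this pointwise vanishing to $\eta \equiv 0$.

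Once surjectivity of $L$ is established, $\M_{\text{univ}}$ is a smooth Banach manifold, and the projection $\Pi : \M_{\text{univ}} \to \Pa$ is a Fredholm map of index zero (the gauge-invariance and irreducibility ensure we can quotient by $\CG_{k+1}$ cleanly). By Sard--Smale the set of regular values of $\Pi$ is residual in $\Pa$, and a regular value $\pi$ is precisely one for which every irreducible critical point of $CS + f_\pi$ is non-degenerate transverse to the gauge orbit. The main obstacle is the density/unique continuation step for the holonomy perturbations at irreducible connections; this is the only place the specific construction of $\Pa$ enters essentially, and is where I would devote the most care, appealing to the machinery of \cite[\S 3]{yaft} rather than reproving it from scratch.
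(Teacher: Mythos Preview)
The paper does not prove this proposition; it is stated with attribution to \cite[Proposition 3.12]{yaft} and closed immediately with a \qed. Your outline is the standard Sard--Smale argument that underlies the cited result, so there is nothing in the present paper to compare against, and your sketch is consistent with the approach of the original reference.
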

	
	One would want to avoid having to work with reducible critical points by satisfying the non-integrality condition. The next result tells us that one can choose perturbations that do not introduce any reducibles in this situation.
	
	\begin{lemma}[{\cite[Lemma 3.11]{yaft}}] \label{prop:irred}
		Suppose $(Y, K, \pp)$ satisfies the non-integral condition. Then, there exists an $\varepsilon > 0$ such that for all $\pi \in \Pa$ with $\|\pi|_\Pa \leq \varepsilon$, the critical points of $CS + f_\pi$ have no reducible critical points. \qed
	\end{lemma}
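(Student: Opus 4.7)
The plan is to proceed by a compactness argument. If the assertion failed, there would exist a sequence $\pi_n \in \Pa$ with $\|\pi_n|_\Pa \to 0$ together with reducible critical points $[B_n]$ of $CS + f_{\pi_n}$. The first step is to exploit the critical point equation $*F_{B_n} = -V_{\pi_n}(B_n)$: combined with the pointwise and Sobolev bounds on holonomy perturbations from \cite[Proposition 3.7]{yaft}, this yields uniform control on $\|F_{B_n}|_{L^\infty}$ (and hence on $\|F_{B_n}|_{L^2}$), with both quantities tending to zero as $\pi_n \to 0$ in $\Pa$.

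Next I would invoke the version of Uhlenbeck compactness adapted to the singular setting in \cite[\S 3]{kmknot}: since the curvature of $B_n$ is small and uniformly bounded, no bubbling can occur for $n$ large, and after passing to a subsequence and applying suitable gauge transformations in $\CG_{k+1}(Y,K,\pp)$, one obtains convergence $B_n \to B_\infty$ in $\check L^2_k$ to some limit $B_\infty \in \conk$ with $*F_{B_\infty} = 0$. Thus $[B_\infty]$ is a flat critical point of the \emph{unperturbed} Chern--Simons functional.

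The third step is to promote reducibility of the $B_n$ to reducibility of $B_\infty$. Each $B_n$ admits a nontrivial stabilizer in $\CG_{k+1}(Y,K,\pp)$, represented by a gauge transformation $h_n$ with $d_{B_n} h_n = 0$ and $h_n \not\equiv \pm 1$. Viewing $h_n$ as a section of the bundle of groups $\boldsymbol{G} \to Y$ with uniform $L^\infty$ bound, elliptic estimates applied to $d_{B_n} h_n = 0$ give uniform $\check L^2_{k+1}$ control; after extracting a further subsequence one obtains a limit $h_\infty$ with $d_{B_\infty} h_\infty = 0$ and $h_\infty \not\equiv \pm 1$. Hence $B_\infty$ is a reducible flat singular connection, contradicting \cite[Proposition 3.2]{kmknot}.

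The main technical obstacle I expect is the interaction between the orbifold singularity along $K$ and the compactness and stabilizer-limit arguments: Uhlenbeck gauge-fixing and elliptic estimates must be carried out on the singular space $\check Y$ with respect to the weighted, orbifold Sobolev spaces of \Cref{sec:weighted-spaces}, and one must verify that limits of sections of $\boldsymbol{G}$ remain within the admissible gauge group defined by the pushout diagram above. These are precisely the issues addressed in the singular framework of \cite{kmknot,yaft}; once they are invoked, the contradiction closes the argument.
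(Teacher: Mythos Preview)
The paper does not give its own proof of this lemma: it is stated with a terminal \qed\ and attributed to \cite[Lemma 3.11]{yaft}, so there is nothing in the paper to compare against beyond that citation. Your compactness argument is the standard one and is essentially correct as a proof of the cited result.

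One small point deserves tightening. In step three you pass reducibility to the limit by extracting a convergent subsequence of stabilizer elements $h_n$ and asserting $h_\infty \not\equiv \pm 1$. As written this does not follow: nothing prevents $h_n \to \pm 1$ without some normalization. The clean fix is to work infinitesimally. Since a reducible connection in this setting has stabilizer of positive dimension, each $B_n$ admits a nonzero $\xi_n \in \check L^2_k(\check Y; \g_{\check P})$ with $d_{B_n}\xi_n = 0$; normalize $\|\xi_n\|_{L^2} = 1$, use the elliptic estimate for $d_{B_n}$ (uniform once $B_n \to B_\infty$ in $\check L^2_k$) to get $\check L^2_k$ bounds, and extract a limit $\xi_\infty$ with $d_{B_\infty}\xi_\infty = 0$ and $\|\xi_\infty\|_{L^2} = 1$. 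This guarantees the limit is genuinely reducible and closes the contradiction with \cite[Proposition 3.2]{kmknot}. With that adjustment your outline is complete.
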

	
	\subsection{Perturbed Trajectories}
	
	Given a holonomy perturbation $f_\pi$ as above, we get a perturbed gradient trajectory equations on the cylinder. By cylinder, we mean \[ (Z, S) = \R \times (Y, K) \]
	viewed as a four manifold with a (properly) embedded surface $S$. We can equip it with an product orbifold structure along $S$ arising from the orbifold structure in the 3-dimensional case. This data includes an orbifold metric which is just the product metric.
	
	Let $A = B + c dt$ be a connection on this 4-manifold where $B(t)$ is an orbifold connection on $(Y, K) \times \{ t \}$. The perturbation now takes the following form:
	
	\[ \hat{V}_\pi (A) = P_+ (dt \wedge V_\pi(B(t))) \]
	where $P_+$ is the projection onto the self dual 2-forms on the cylinder and $V_\pi(B)$ is a $\mathfrak{g}_{\check{P}}$ valued 1-form on $(Y, K) \times \{t\} $. The perturbed equations are:
	
	\[ F_A^+ + \hat{V}_\pi (A) = 0.\]
	
	For the rest of this section, unless stated otherwise, $\pi$ is chosen such that all the critical points $\fc_\pi$ are irreducible and non-degenerate. Choose connections $B_1$ and $B_0$ in $\conk$ for gauge equivalence classes $\beta_1, \beta_0 \in \fc_\pi$. With these choices, construct $A_o$, a connection on $\R \times Y$, such that $A_o$ agrees with the pull-back of $B_1$ and $B_0$ for large negative and large positive $t$ respectively. The connection $A_o$ determines a path $\gamma : \R \to \CB_k (Y, K, \pp)$, from $\beta_1$ to $\beta_0$ and thus a relative homotopy class  $z \in \pi_1( \CB_k; \beta_1, \beta_0 )$. We can define a Sobolev space of connections 
	
	\[ \SC_{k, \gamma} (Z, S, \pp; B_1, B_0) \]
	as the affine space
	
	\[ \{ A | A - A_o \in \lp[k, A_o]{T^* \check{Z} }{Z} \}, \]
	which depends on the choice of $A_o$.
	
	There is an associated gauge group, the space of sections of the bundle $G(P) \to Z \setminus S$ defined by 
	
	\[ \CG_{k+1} (Z, S, \pp) = \{ g | \nabla_{A_o}g, \ldots, \nabla_{A_o}^k g \in \check{L}^2 (Z \setminus S) \}. \]
	
	The quotient space will be denoted as:
	
	\[ \CB_{k, z} (Z, S, \pp; \beta_1, \beta_0) = \SC_{k, \gamma}(Z, S, \pp; B_1, B_0) / \CG_{k+1}(Z, S, P).\]
	Here $z$, as before, is the class $[\gamma] \in \pi_1 ( \CB_k (Y, K, \pp); \beta_1, \beta_0)$.
	
	The moduli space of solutions to the perturbed ASD equations (or $\pi$-ASD equations) is a subspace of connections up to gauge equivalence:
	
	\[ M_z (\beta_1, \beta_0) = \{ [A] \in \CB_{k, z} (Z, S, \pp; \beta_1, \beta_0) \; | \; F_A^+ + \hat{V}_\pi (A) = 0\\ \}. \]
	
	The total moduli space is defined as:
	
	\[ 
	M (\beta_1, \beta_0) = \bigcup_{ z \in \pi_1(\CB_k, \beta_1, \beta_0)} M_z (\beta_1, \beta_0). 
	\]
	The action of $\R$ by translations on $\R \times Y$ induces an action on $M(\beta_1, \beta_0)$ is free on the subset of non-constant trajectories and acts trivially on constant trajectories. We call the quotient of the space of non-constant trajectories by this $\R$ action to be $\breve{M} (\beta_1, \beta_0)$ and we will denote elements by $[\breve{A}]$.
	
	The linearisation of the $\pi$-ASD equation at a connection $A$ in $\SC_{k, \gamma} (B_1, B_0)$ is:
	\[
	d_A^+ + D \hat{V}_\pi : \lp[k, A]{\Lambda^1}{Z} \to \lp[k-1]{\Lambda^+}{Z}.
	\]
	When $B_1$ and $B_2$ are irreducible and non-degenerate, this operator is \emph{Fredholm} (with gauge fixing); we clarify the precise meaning below. 
	
	For $A$ as above, we write 
	\[
	\D_A = ( d_A^+ + D \hat{V}_\pi ) \oplus -d_A^*
	\]
	which defines an operator
	\[
	\lp[k, A]{\Lambda^1}{Z} \to \lp[k-1]{(\Lambda^+ \oplus \Lambda^0)}{Z}.
	\]
	The earlier statement is to be interpreted as $\D_A$ is a Fredholm operator. Note that the second component of $\D_A$ is the linearised (Coulomb) gauge fixing condition. Thus, if $\D_A$ is surjective, $M_z(\beta_1, \beta_0)$ is a smooth manifold near $[A]$, of dimension $\ind \, \D_A$; we denote the latter by
	\[
	\mathrm{gr}_z(\beta_1, \beta_0)
	\]
	and say that $[A]$ is regular in this case.
	We introduce one final piece of notation:
	\[
	M (\beta_1, \beta_0)_d = \bigcup_{ z \, | \, \mathrm{gr}_z = d} M_z (\beta_1, \beta_0).
	\]
	
	We now state the four dimensional transversality result achieved by the holonomy perturbations.
	
	\begin{proposition}[{\cite[Proposition 3.18]{yaft}}] \label{prop:morse-smale}
		Let $\pi_0$ be chosen such that all critical points $\fc_{\pi_0}$ are non-degenerate and irreducible. Then there exists $\pi \in \Pa$ such that:
		
		\begin{enumerate}[label=(\roman*)]
			\item $f_\pi = f_{\pi_0}$ in a neighbourhood of all the critical points of $CS + f_{\pi_0}$.
			\item $\fc_\pi = \fc_{\pi_0}$.
			\item all moduli spaces $M_z (\beta_1, \beta_0)$ for the perturbation $\pi$ are regular. \qed
		\end{enumerate}
	\end{proposition}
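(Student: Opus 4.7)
The strategy is a standard Sard--Smale argument applied to a universal moduli space parametrised by perturbations. Let $\Pa' \subset \Pa$ denote the closed Banach subspace of perturbations $\tau$ for which $f_\tau$ vanishes identically on a fixed small open neighbourhood $U$ of $\fc_{\pi_0}$. Then every $\pi$ of the form $\pi_0 + \tau$ with $\tau \in \Pa'$ automatically satisfies (i). Non-degeneracy of the Hessian at each $\beta \in \fc_{\pi_0}$ is an open condition on the perturbation, and shrinking $U$ together with a Lojasiewicz-type bound on $CS + f_{\pi_0}$ outside $U$ rules out new critical points appearing, so for a small enough ball in $\Pa'$ condition (ii) holds as well. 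It remains to produce a residual set of such $\tau$ realising (iii).

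For each pair $\beta_1, \beta_0 \in \fc_{\pi_0}$ and each relative homotopy class $z$, form the universal moduli space
\[
\M^{\mathrm{univ}}_z(\beta_1, \beta_0) \;=\; \{(A, \tau) : F_A^+ + \hat{V}_{\pi_0 + \tau}(A) = 0\} \;\subset\; \CB_{k, z}(Z, S, \pp; \beta_1, \beta_0) \times \Pa'.
\]
Because $f_\pi$ depends linearly on $\pi$, the linearisation at a solution $(A, \tau)$, augmented with Coulomb gauge fixing, takes the form
\[
L_{A, \tau}(a, \delta\tau) \;=\; \D_A a \;+\; \bigl(\hat{V}_{\delta\tau}(A),\, 0\bigr),
\]
where $\D_A$ is the Fredholm operator from \Cref{sec:weighted-spaces}. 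Granting that $L_{A, \tau}$ is surjective at every solution, $\M^{\mathrm{univ}}_z$ is a smooth Banach manifold and its projection to $\Pa'$ is Fredholm of index $\mathrm{gr}_z(\beta_1, \beta_0)$. Sard--Smale yields a residual set of regular values, which are exactly the parameters for which $M_z(\beta_1, \beta_0)$ is cut out transversely. Intersecting these residual sets over the countable family of data $(\beta_1, \beta_0, z)$ produces the desired residual subset of $\Pa'$.

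The nontrivial step is surjectivity of $L_{A, \tau}$. By the $L^2$ Fredholm alternative this reduces to showing that any $\phi$ orthogonal to $\im \D_A$ and satisfying $\langle \phi, \hat{V}_{\delta\tau}(A)\rangle_{L^2(Z)} = 0$ for every $\delta\tau \in \Pa'$ must vanish. Orthogonality to $\im \D_A$ forces $\phi \in \ker \D_A^*$, which is elliptic, so $\phi$ is smooth on $Z \setminus S$. Testing against a $\delta\tau$ supported on a single coordinate $\bq^i$ of the universal family expresses the vanishing as an integral of $\phi$ paired with an explicit 2-form built from the holonomy of $A$ around $q^i$. Because the family $\{\bq^i\}$ was constructed in \cite{yaft} to be dense among immersions, one may insert immersions supported in an arbitrarily small open subset of $Z \setminus (S \cup U)$, and by separating the resulting pointwise conditions via a holonomy--variation argument one concludes $\phi \equiv 0$ on an open set of $Z \setminus U$. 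Unique continuation for $\D_A^*$ across the codimension-two singular locus $S$ then propagates the vanishing throughout $Z \setminus U$, after which a second application of unique continuation inside $U$ (where $A$ is a genuine $\pi_0$-gradient trajectory with non-degenerate endpoints) gives $\phi \equiv 0$ globally.

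The main obstacle is precisely this density step: one must show that the directional derivatives $\{\hat{V}_{\delta\tau}(A) : \delta\tau \in \Pa'\}$ span an $L^2$-dense subspace of the self-dual 2-forms orthogonal to $\im \D_A$, \emph{while} insisting that all perturbations vanish in $U$. This is the orbifold and weighted-Sobolev analogue of \cite[Proposition 3.7]{yaft}, and it rests on two ingredients: the flexibility of the countable collection $\{\bq^i\}$, and the unique continuation property for the linearised perturbed ASD operator in the presence of the singular surface $S$. I would defer the careful verification of these two ingredients to \Cref{appendix:perturbations}.
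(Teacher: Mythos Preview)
The paper does not give a proof of this proposition at all: the \qed at the end of the statement signals that it is quoted verbatim from \cite[Proposition~3.18]{yaft} and taken as a black box. So there is no ``paper's own proof'' to compare against.

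Your sketch follows the standard Sard--Smale strategy that \cite{yaft} itself uses, and the overall architecture (universal moduli space, Fredholm projection, residual intersection over countably many $(\beta_1,\beta_0,z)$) is correct. Two remarks. First, your argument for (ii) via a Lojasiewicz bound is not how this is usually handled; in \cite{yaft} one simply uses that non-degeneracy of the Hessian is an open condition in $\Pa$ together with compactness of the critical set, so for $\tau$ small in $\Pa'$ no new critical points can appear and the old ones remain non-degenerate. Second, you have correctly located the only genuine content in the surjectivity/density step, but the argument you outline (insert immersions in arbitrarily small open sets, separate pointwise, invoke unique continuation) is more delicate than you suggest: the holonomy perturbations are nonlocal along the core circles of the $q^i$, so ``supported in an arbitrarily small open subset of $Z$'' is not quite available, and the actual argument in \cite{yaft} works slice-by-slice using the explicit form of $\hat V_\pi$ on a cylinder. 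Since the paper here is content to cite the result, your deferral to an appendix is in the same spirit, but if you intend to write out that appendix you should consult \cite[\S3]{yaft} directly rather than reinvent it.
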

	
	From this point on, we shall assume such a perturbation has been chosen implicitly unless stated otherwise.
	
	\subsection{Orientations and Floer Homology} \label{sec:orientations}
	
	The Fredholm operators $\D_A$ form a family over the space $\CB_{k, z} (\beta_1, \beta_0)$. A key observation is that the determinant line bundle, $\det (\D_A)$, is orientable: One first proves it for $ (X, \Sigma) = (Y, K) \times S^1 $ and the general case follows from excision (cf. \cite[Proposition 2.12]{kmknot}). Thus, $M_z (\beta_1, \beta_0)$ is orientable. Moreover, fixing an orientation on $\CB_{k, z} (\beta_1, \beta_0)$ determines it for $\CB_{k, z'} (\beta_1, \beta_0)$, for any other $z'$: \cite[Proposition 2.14]{kmknot} deals with the closed case, and the proof extends to the current context straightforwardly. In light of this, denote by 
	\[
	\Lambda (\beta_1, \beta_0)
	\]
	the two element set of orientations of $\det(\D_A)$ over $\CB_{k, z} (\beta_1, \beta_0)$ with the understanding that this does not depend on $z$.
	
	One can make similar definitions even when $\beta_i$ are not critical points. One change is that now the Hessian is not guaranteed to be non-degenerate at such points which means that $\D_A$ may fail to be Fredholm on the usual Sobolev spaces. The usual fix to this described in \cite{yaft}: we use weighted Sobolev spaces
	\[
	e^{-\epsilon t} \check{L}^2_{k, A_o}
	\]
	for a small $\epsilon > 0$. When $\beta_i$ are not critical points, we define $\Lambda (\beta_1, \beta_0)$ with this convention.
	
	In this way, one can choose a convenient base-point connection $\theta \in \CB_k (Y, K, \pp)$ and define 
	\[
	\Lambda (\beta) = \Lambda (\theta, \beta)
	\]
	for any critical point $\beta$. If $K$ were oriented and $\Delta$ were trivial, then we can choose $\theta$ to be a reducible connection as in \cite{yaft}. If $\Delta$ is non-trivial, there is no preferred choice of $\theta$ as $\CB_k$ contains no reducibles. As we shall see, this leads to an overall ambiguity in signs when defining certain maps.
	
	With the above definition of $\Lambda (\beta)$, there are identifications
	\[
	\Lambda (\beta_1, \beta_0) = \Lambda(\beta_1) \Lambda(\beta_0),
	\]
	where the product on the right is the product of 2-element sets (see \cite[\S 3.6]{kmknot}). Hence, each connected subset $[\breve{A}] \subset M(\beta_1, \beta_0)_1$ determines an isomorphism $\Lambda(\beta_1) \to \Lambda(\beta_0)$. As in \cite{yaft, kmbook}, we denote by $\Z \Lambda(\beta)$ the group $\Z$ with generators identified $\Lambda (\beta)$, and we write
	\[
	\varepsilon[\breve{A}] : \Z \Lambda (\beta_1) \to \Z \Lambda (\beta_0)
	\]
	for the corresponding isomorphism of groups.
	
	We can now define the Floer homology groups. Given $(Y, K)$ where $K$ is an unoriented link in a closed, oriented, connected 3-manifold $Y$. Fix $\pp$, the data of a non-integral singular bundle. In this situation, we can choose a metric $\check{g}$ and perturbation $\pi$ such that there are no reducibles, every critical point is non-degenerate and all moduli spaces are regular; fix such a choice. Finally fix a base-point $\theta \in \CB_k(Y, K, \pp)$. The chain complex, denoted by $(C_*(Y, K, \pp), \partial)$, of free abelian groups is defined by
	\[
	C_*(Y, K, \pp) = \bigoplus_{\beta \in \fc_\pi} \Z \Lambda(\beta)
	\]
	equipped with the differential
	\[
	\partial = \mathlarger{\sum}_{\substack{(\beta_1, \beta_0, z) \\ \mathrm{gr}_z(\beta_1, \beta_0) = 1}} \mathlarger{\sum}_{[\breve{A}] \in M_z(\beta_1, \beta_0)_1} \varepsilon[\breve{A}].
	\]
	This sum is guaranteed to be finite by \cite[Corollary 3.25]{yaft}. 
	
	\begin{defx}
		In the above setup, we define the instanton Floer group 
		\[
		I(Y, K, \pp)
		\]
		to be the homology of $(C_*(Y, K, \pp), \partial)$.
	\end{defx}
	
	The above definition depends on some choices; the next section on cobordisms provides the material needed to use the standard cobordism style arguments to show that the group is well defined nonetheless. However, there is one difference that essentially stems from the lack of a canonical base-point which will be explained in \Cref{sec:cob-maps}.
	
	\subsection{Complex Orientations} \label{sec:complex orientations} 
	
	We discuss complex orientation here following the discussion in \cite[\S 5.1]{kmknot}.
	
	We briefly recall the case of ASD equations for an $\so$ bundle $P$ on a closed, oriented, Riemannian $4$-manifold $X$. In this case, \cite{Donaldson-orientations} describes a way to orient moduli spaces when $X$ admits an almost complex structure $J$: there is a standard homotopy from the operators $\D_{A}$ to the complex-linear operator
	\[
	\bar\partial_{A} + \bar\partial^{*}_{A} :
	\Omega^{0,1}_{X}\otimes_{\R}\g_{P} \to (\Omega^{0,0}_{X}\oplus
	\Omega^{0,2}_{X})\otimes_{\R}\g_{P}.
	\]
	For the latter operator, the complex orientation gives a preferred orientation; one can use the above homotopy to transfer this preferred choice to $\det(\D_{A})$. For our purposes, the crucial upper hand this construction provides over the other one in \cite{Donaldson-orientations} is that this method can be used to prove orientability of the determinant line bundle under not just the determinant-1 transformations but the \emph{full} gauge automorphism group.
	
	Following \cite[\S 5.1]{kmknot}, $\overline{\Lambda} (\beta)$ denotes the possible orientations of the determinant of the operator on $\R \times Y$ such that it restricts to $\D_\beta$ on the $+\infty$ end and $\bar\partial_{\beta} + \bar\partial^{*}_{\beta}$ on the $-\infty$ end. Then, just as in \Cref{sec:orientations}, we have
	\[
	\overline{\Lambda} (\alpha, \beta) = \overline{\Lambda}(\alpha) \overline{\Lambda} (\beta).
	\]
	The rest of the constructions in \Cref{sec:orientations} go through as before except when a critical point on a cylindrical point is not acyclic, in which case weights are needed. 
	
	\begin{rmk} \label{rmk:complex-orientations}
		The main advantage provided by this convention is that the situation with orientations is not overly complicated when stretching along a 3-manifold hyper-surface with flat connections that are non-degenerate, but not acyclic. In such a scenario, for the purposes of this paper, we shall deal exclusively with the complex orientation by using appropriate almost complex structures $J$ on the relevant cobordisms. See also \cite[Appendix 2]{kmsurf2} for a discussion in the case of 4-manifolds with an orientable singular locus. 
	\end{rmk}
	
	\subsection{Moduli Space on Orbifolds with Cylindrical Ends}
	
	The section introduces orbifold singularities in a manner that closely follows \cite{fsorbi}; however, for orientation reasons, we will deviate from them in the technical implementation. We will not be fixing a perturbation as in \Cref{prop:morse-smale} for this subsection.
	
	Given a cobordism $(W, S)$, we can attach cylindrical ends and define the map on floer homology by counting solutions to the ASD equations. However, for our applications we need a slightly more general situation that allows orbifold points in the interior of the cobordism. We make this precise below as orbifold cobordisms of pairs.
	
	\begin{defx}[Orbifold cobordism]
		Let $\tilde{W}$ by an oriented compact 4-manifold with $\partial \tilde{W} = Y_1 \bigsqcup -Y_0 \bigsqcup \left( \bigsqcup_{i = 1}^k L(p_i, q_i) \right)$ where $p_i$ are relatively prime. Suppose we have a properly embedded surface (possibly unoriented) $S \subset \tilde{W}$ with $\partial S \subset Y_1 \bigsqcup -Y_0$. Then, we cone off the lens spaces $L(p_i, q_i)$ (denote the cone points by $c_i$) to get a 4-dimensional orbifold $W$ with smooth boundary $Y_1 \bigsqcup -Y_0$ and an embedded surface $S$ disjoint from $c_i$. Now suppose $S \cap Y_i = K_i$, we call $(W,S)$ an orbifold cobordism from $(Y_0, K_0)$ to $(Y_1, K_1)$. If $S$ is oriented, we give $K_1$ the boundary orientation while $K_0$ gets the opposite of the boundary orientation as is usual for oriented cobordisms. 
	\end{defx}
	
	We first establish the relevant `removable singularities' theorem so that the maps defined by these cobordisms can be thought of as a sum of maps defined by manifolds with ends. This in particular allows us to import the technical package of \cite{kmknot} verbatim with only minor changes. We deal with orientation issues and define the maps from such cobordisms only in \Cref{sec:cob-maps}, but we lay the analytic groundwork in this section.
	
	To state this, we first introduce the appropriate set up closely following \cite{kmknot}. Given an orbifold cobordism $(W, S)$ we need to make some fix some topological data (as we made when defining the Floer homology groups) to define the appropriate moduli space. The notion of singular data from \Cref{sec:con} extends analogously to this setting in the absence of orbifold points. Near the orbifold points, we require the bundle to be an orbifold bundle; since this is a local condition and the points are disjoint from $S$, this poses no issues. We call this singular bundle data $\pp$.

	We attach cylindrical ends $(-\infty, 0] \times Y_0$ with metric $dt^2 + \check{g}_0$ and similarly $[0, +\infty) \times Y_1$ with $dt^2 + \check{g}_1$ while the metric in the interior of $W$ is an orbi metric $\check{g}$ with cone angle $\pi$ along $S$ and cone angle $\frac{2\pi}{p_i}$ near $c_i$. Further, the singular bundle data $\pp$ above furnishes the data of a singular bundle $\pp_1$ and $\pp_2$ by restriction. Fix $\beta_i \in \fc(Y_i, K_i, \pp_i)$. We now further need to choose $\beta^o_i \in \fc(L(p_i, q_i), \emptyset, \pp|_{\partial V_i})$ where $V_i$ denotes a conical neighbourhood of the orbifold point $c_i$. 
	
	Now, choose a singular bundle data $\pp'$ on $(W, S)$ that differs from $\pp$ by the addition of monopoles and instantons; we record this by a preferred map $g' : \pp' \to \pp$ defined outside a finite set of points. We choose gauge representatives $B_i'$ for $\beta_i$. On the neighbourhoods $V_i$, we pick gauge representatives $B^{o'}_i$ for $\beta^o_i$ and extend conically to $V_i$. With this, we can choose an orbifold connection, $A_o$, on $\pp'$ that agrees with the pull-back of $B_i'$ on the cylindrical ends and on $V_i$ agree with $B^{o'}_i$. 
	
	As before we can construct an affine space of connections $\mathcal{C}_k(\pp', A_o)$ consisting of all $A$ with 
	\[
	A - A_o \in \check{L}^2_{k, A_o} ( \check{W}, \Lambda^1 \otimes \mathfrak{g}_{\pp'})
	\]
	where the Sobolev space is defined using the metric $\check{g}$. We say the choices $(\pp', B_1', B_2', B^{o'}_1, \dots, B^{o'}_k)$ and $(\pp'', B_1'', B_2'', B^{o''}_1, \dots, B^{o''}_k)$ are equivalent if there is a determinant-1 gauge transformation $\pp' \to \pp''$ pulling back $B_i''$ to $B_i'$ and $B^{o''}_i$ to $B^{o'}_i$. We denote by $z$ the isomorphism class of such choices:
	\[
	z = [W, S, \pp', B_1', B_0', B^{o'}].
	\]
	where we have used $B^{o'}$ to denote all data of all the $B^{o'}_i$ for brevity. The quotient of $\mathcal{C}_k(\pp', A_o)$ by the determinant-1 gauge group $\mathcal{G}_{k+1}(\pp', A_o)$ depends only on $z$, and we write
	\[
	\mathcal{B}_z(W, S, \pp; \beta_1, \beta_0, \beta^o) = \mathcal{C}_k (\pp', A_o) / \mathcal{G}_{k+1} (\pp', A_o)
	\]
	
	We now describe the perturbations we use to ensure our moduli spaces are transversely cut out. Most of this is already covered by a result of \cite{kmknot}, see below, but there is one matter due to the orbifold points which will be settled by the removable singularities theorem to be stated. The perturbed equation on $\check{W}$ is:
	\begin{equation} \label{eqn:ASD}
	F_A^+ + \hat{V}(A) = 0
	\end{equation}
	where $\hat{V}$ is a holonomy perturbation supported on the cylindrical ends and in the union of punctured neighbourhoods of the orbifold points $c_i$. To define this on the cylindrical end $\pR \times Y_0$, we take the given $\pi_0$ from the auxiliary data $\boldsymbol{a}_0$ and an additional term $\pi_0'$. We refer to the former perturbation data as the \emph{primary perturbation} and the latter the \emph{secondary perturbation}. Given this, the perturbation is defined as follows:
	\[
	\hat{V} (A) = \phi(t) \hat{V}_{\pi_0} (A) + \psi(t) \hat{V}_{\pi'_0} (A),
	\]
	where $\phi(t)$ is a cut-off function equal to 1 on $[1, +\infty)$ and equal to 0 near $t = 0$, while $\psi(t)$ is a bump-function supported in $[0,1]$. On the other cylindrical end, the description is similar using the data of $\pi_1$ and an additional term $\pi'_1$. Near the orbifold points, we require terms $\pi^o_i$ where the perturbation takes the form:
	\[
	\hat{V} (A) = \psi(r) \hat{V}_{\pi^o_i} (A)
	\]
	where $\psi$ is a bump function supported on $[1/3, 2/3]$ and $r$ is the radial function from the cone point (in particular $r(c_i) = 0$). Note that if we conformally change the metric to make the punctured neighbourhood of $c_i$ cylindrical, then this is equivalent to the earlier form of perturbation except there is no primary perturbation defined on $L(p, q)$. 
	
	With this, we can talk about the moduli space of solutions of the equation \eqref{eqn:ASD}
	\[
	M_z (W, S, \pp; \beta_1, \beta_0, \beta^o) \subset \mathcal{B}_{k, z} (W, S, \pp; \beta_1, \beta_0, \beta^o).
	\]
	Note that these generalise in a straight-forward fashion to the case of multiple boundary components; in this case we shall denote all of the boundary data collectively as $\beta_0$. We record some of the above as definitions below for future reference.
	
	\begin{defx} \label{defn:metrics}
		Suppose $W$ is a 4-manifold with boundary and orbifold points $\tilde{c}_i$ with local isotropy groups $\Z / \tilde{p}_i$. Further, there is a distinguished collection of boundary components which are lens spaces $L(p_k, q_k)$ with $k = 1, 2, \cdots, r$. Next, assume we have a properly embedded surface $S$ such that $\partial S$ is disjoint from the distinguished boundary components and the orbifold points $c_i$. We call $S$ to be an \emph{admissible singular locus} when this is the case.
		
		Fix a metric $\check{g}$ such that it has orbifold singularities along $S$ with cone angle $\pi$, is conical with cone angle $2 \pi / \tilde{p}_i$ over orbifold points $\tilde{c}_i$, is cylindrical along boundary components and is the standard product metric on $\pR \times L(p_k, q_k)$ for the distinguished boundary components. We call such a orbifold metric $\check{g}$ compatible with the distinguished boundary data.
		
		Now, if $W'$ is obtained from $W$ by attaching a cone on $L(p_k, q_k)$, say $cL(p_k, q_k)$ with cone point $c_k$, we can equip $W' \setminus \{c_1, c_2, \cdots, c_r\}$ with a metric $\check{h}$ which agrees with $\check{g}$ away from $L(p_k, q_k)$ and $\check{h}$, restricted to a punctured neighbourhood of $c$, is conformally equivalent to $\check{g}$ on the cylindrical end $\pR \times L(p_k, q_k)$. By choosing $\check{h}$ to be conical metric with cone angle $2 \pi / p_k$ on the punctured neighbourhood of $c$, this extends to $W'$ to a metric with orbifold singularities which we shall continue to denote by $\check{h}$. 
		
		We call $W'$ the orbifold completion of $W$ with respect to the distinguished boundary components and a metric $\check{h}$ on $W'$ obtained this way a compatible orbifold metric. 
		
		The perturbation of the form discussed above with no primary terms in a neighbourhood of the orbifold points will be referred to as orbifold holonomy perturbations.
	\end{defx}
	
	We are now in a position to state and prove the appropriate removable singularity theorem needed.
	
	\begin{theorem}[Equivariant Removable Singularities] \label{thm:rm-sing}
		Suppose $W$ is a 4-manifold with orbifold points and boundary.  Suppose one of the boundary components is $L(p, q)$ and is the only  distinguished boundary component. Further assume we have a properly embedded admissible surface $S$. Fix an orbifold metric $\check{g}$ compatible with the distinguished boundary data. Let $W'$ be the orbifold completion and $\check{h}$ a compatible orbifold metric.
		
		Fix a singular bundle data over $W$, say $\pp$, and extend it to $W'$ and call it $\pp'$; this extension is merely a conical extension of the bundle over $L(p, q)$. Choose any orbifold holonomy perturbation $\hat{V}$ of the ASD equations on $W$ such that there is no primary perturbation terms for the distinguished boundary components.
		
		Now, fix a critical point $\beta$ on the bundle over $L(p, q)$ and $\beta_0$ on the rest of the boundary of $W$ with the connection data $\beta^o$ at the orbifold points. This defines an isomorphism class $z = [W, S, \pp, B_0 \cup B, B^o]$ and correspondingly we get a class $z(\beta) = [W', S, \pp', B_0, B^{o'}]$ where the data of the orbifold points, $\beta^{o'}$ is obtained by taking $\beta^o$ and adjoining the data of $\beta$ for the cone point $c$ and $B_0, B, B^o, B^{o'}$ denote a choice of connections in the gauge equivalence class of $\beta_0, \beta, \beta^o, \beta^{o'}$ respectively.
		
		Then,
		\[
		M_{z, \check{g}}(W, S, \pp; \beta \cup \beta_0, \beta^o) = M_{z(\beta), \check{h}} (W', S, \pp'; \beta_0, \beta^{o'}).
		\]
		
	\end{theorem}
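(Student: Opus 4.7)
The plan is to identify the two moduli spaces by exploiting conformal invariance of the ASD equations in dimension four, combined with an equivariant version of Uhlenbeck's removable singularities theorem. The key observation is that the distinguished boundary component $L(p,q)$, equipped with a cylindrical end in the $W$-picture, becomes a punctured conical neighbourhood of the orbifold point $c$ in the $W'$-picture, and these two geometries are conformally equivalent away from $c$. More precisely, with $r = e^{-t}$ the product metric $dt^2 + \check{g}_{L(p,q)}$ on $\pR \times L(p,q)$ is conformal to $dr^2 + r^2 \check{g}_{L(p,q)}$ on $(0,1] \times L(p,q)$, and the latter is precisely the restriction of $\check{h}$ to a punctured neighbourhood of $c$ (since $L(p,q)$ carries the round metric and the cone is modelled on $\R^4/(\Z/p)$). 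Because self-duality of a $2$-form in four dimensions is conformally invariant, ASD connections transport between the two models, and the hypothesis that the perturbation $\hat V$ has no primary perturbation term on the distinguished end ensures that the only perturbation terms present near $c$ are the orbifold ones, which by \Cref{defn:metrics} agree on both sides.

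With this identification in hand, I would first produce the map from the left-hand moduli space to the right. Given $[A] \in M_{z, \check{g}}(W, S, \pp; \beta \cup \beta_0, \beta^o)$, the connection converges exponentially to $\beta$ on the $L(p,q)$-end by non-degeneracy of $\beta$ (as in \cite{kmknot}), so its Yang--Mills energy on the end is finite. Transporting by the conformal equivalence yields an ASD connection, call it $A'$, on a punctured neighbourhood of $c$ in $W'$ with finite Yang--Mills energy, which is smooth in the orbifold sense away from $c$. To extend across $c$, I would lift to the local orbifold chart $B^4 \to B^4/(\Z/p)$, where $A'$ pulls back to a $\Z/p$-equivariant ASD connection on the punctured ball with finite energy; Uhlenbeck's removable singularities theorem then provides a smooth extension across the origin, and the extension is automatically $\Z/p$-equivariant because the group action preserves the local gauge in which the extension is produced (one can enforce this by symmetrising the Coulomb gauge slice). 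The orbifold type of $A'$ at $c$, i.e. its holonomy around the link $L(p,q)$, is determined precisely by the asymptotic flat connection $\beta$, so the extended connection lies in the isomorphism class $z(\beta)$ with orbifold data $\beta^{o'}$ as in the statement.

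For the reverse direction, starting from $[A'] \in M_{z(\beta), \check{h}}(W', S, \pp'; \beta_0, \beta^{o'})$, one removes the orbifold point $c$, applies the inverse conformal change, and produces an ASD connection $A$ on $W$ with cylindrical end. The holonomy around the cross-sectional $L(p,q)$-slice, which is fixed by the orbifold data at $c$, identifies the flat limit as $\beta$; finite energy at $c$ together with the cylindrical structure of the perturbation forces exponential convergence to that flat connection, by the standard decay estimates for the perturbed gradient flow (as in \cite{kmknot, yaft}). Finally, I would check that the two constructions are inverse to each other on the level of gauge equivalence classes: the determinant-$1$ gauge transformations on $W$ that preserve the cylindrical structure at the end correspond bijectively to determinant-$1$ orbifold gauge transformations on $W'$ preserving the orbifold structure at $c$, since a gauge transformation with finite energy and removable singularity at $c$ is uniquely determined by its restriction.

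The main obstacle is the equivariant removable singularities step: one must verify that Uhlenbeck's theorem can be applied $\Z/p$-equivariantly in the orbifold bundle setting, which requires controlling the Coulomb gauge chosen on the punctured ball to be compatible with the $\Z/p$ action. The standard proof of removable singularities produces such a gauge by solving an elliptic equation in a neighbourhood of the puncture, and averaging over $\Z/p$ preserves the solution; however, care is needed because the bundle $\pp'$ is only an orbifold bundle, so the equivariant gauge must be compatible with the bundle monodromy. The other technical point is to verify that the secondary orbifold holonomy perturbation, supported in the collar $[1/3, 2/3]$ of the cone point in the $W'$-picture, corresponds exactly to the cylindrical-end secondary perturbation in the $W$-picture under the conformal identification; this is essentially by design of the perturbation scheme in \Cref{defn:metrics}, but the precise matching of support and profile should be recorded.
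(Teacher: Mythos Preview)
Your overall strategy---use conformal invariance to pass between the cylindrical and conical pictures, then extend across the cone point by lifting to the $\Z/p$-cover $D^4\setminus\{0\}$---matches the paper's. The difference lies in how the extension across the origin is achieved, and in one preliminary step you take for granted.

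The paper does \emph{not} invoke Uhlenbeck's removable singularities theorem. Instead it first proves, inside this very argument, that every flat connection $\beta$ on $L(p,q)$ is non-degenerate (by pulling back to $S^3$ and using $H^1_{A'}(S^3)=0$); you assume this by citing \cite{kmknot}, but it is actually established here. With non-degeneracy in hand, the paper fixes a representative $A_\beta$ whose pullback to $S^3$ is literally the trivial connection, puts $A$ in Coulomb gauge relative to $A_\beta$, and uses that the smallest eigenvalue $\delta$ of $-*d_A$ on $\ker d_A^*$ for $L(p,q)$ satisfies $\delta>1$. This yields $\|A|_t-A_\beta\|_{L^2_1}\le Ce^{-\delta t}$, which after the conformal change $r=e^{-t}$ becomes $\|a|_r\|_{L^2_1(S^3)}\le Cr^\delta$ with $\delta>1$. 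That decay rate is strong enough to conclude directly that $a\in L^2_1(D^4)$ extends across the origin as a $\Z/p$-equivariant $L^2_1$ connection---no gauge change, no averaging, no appeal to Uhlenbeck required.

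Your route via Uhlenbeck is not wrong, but it is heavier: you correctly flag the equivariance of the Coulomb gauge as the main obstacle, and resolving it takes genuine work. The paper's choice of $A_\beta$ with trivial pullback, together with the quantitative spectral gap $\delta>1$, sidesteps that obstacle entirely and makes the equivariance automatic (the Coulomb gauge is taken on $L(p,q)$, not on the ball, and then pulled back). What the paper's approach buys is a shorter, more self-contained argument; what yours buys is that it would generalise to situations where the explicit spectral bound $\delta>1$ is unavailable.
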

	
	\begin{proof}
		Since $\check{h} = \check{g}$ away from a punctured neighbourhood of $c \in W'$ and the perturbations are also supported away from $c$, we will be done if we show that any solution to the $\check{g}$-ASD equation, $A$, on $\pR \times L(p, q)$ which limits to $\beta$ extends over $c$ as an orbifold connection to give a solution of $\check{h}$-ASD connection on $W'$. The converse, also claimed in the theorem, is clear as the $W \subset W'$ and $\check{h}|_W$ is conformally equivalent to $\check{g}$.
		
		We first note that every critical point $\beta \in \fc (L(p, q))$ is non-degenerate: Fix a representative connection $A_\beta$. Now, if $\pi : S^3 \to L(p, q)$ denotes the $\Z / p$ cover, we can pull back the connection $A_\beta$ to get a connection $A'$ on $S^3$. Further pull back induces an injective map, $\pi^* : H^1_{A_\beta} (L(p, q)) \hookrightarrow H^1_{A'} (S^3) = 0$. The last equality is due to the fact that the last group depends only on the gauge equivalence class of $A'$ and since $S^3$ is a simply connected this is merely the trivial connection for which the group is zero (cf. \cite{dym}). For the rest of the proof, fix a representative $A_\beta$ such that $\pi^* (A_\beta)$ is the trivial connection on $S^3$ (i.e., not merely gauge equivalent).
		
		Now, we also have a $\Z / p$ cover, $\pi : D^4 \setminus \{0\} \to cL(p, q) \setminus \{c\}$ which restricts to the boundary as the map $\pi$ above (we use the same notation for either map for this reason). Choosing the Coulomb gauge (with respect to $A_\beta$) for $A$ (with appropriate weights as $H^0_{A_\beta} (L(p, q))  \neq 0$), we have that $||A|_{\{t\}} - A_\beta ||_{L^2_1(L(p, q))} \leq Ce^{-\delta t}$ for $t \gg 0$ where $t \in \pR$ and $\delta > 0$ is the smallest eigenvalue of $- * d_A$ restricted to $\ker (d_A^*)$; in particular, for the case of $L(p, q)$ we can choose $\delta > 1$ (see \cite{dym}). Applying a conformal transformation to convert the metric $\check{g}$ to $\check{h}$, this translates to $||A|_{\{r\}} - A_\beta ||_{L^2_1(L(p, q))} \leq Cr^{\delta }$. Hence, $\pi^*(A)$ is $\Z /p$ equivariant connection on $D^4 \setminus \{0\}$ which differs from the trivial connection by a 1-form  $a \in L^2_1 (D^4 \setminus \{0\})$ with $||a|_{\{r\}}||_{L^2_1(S^3)} \leq r^\delta$ when $r \ll 1$. This shows that $a$ extends over origin and this defines a $\Z / p$ equivariant $L^2_1(D^4)$ connection (with respect to the trivial connection), say $\tilde{A}$. This hence defines an orbifold connection on $cL(p, q)$ that extends $A$. It is clear from this description that homotopy class of configurations is as claimed in the statement.
	\end{proof}
	
	\begin{rmk}\label{non-deg}
		For future reference, we note that in the proof we have shown that all critical points of $L(p, q)$ are non-degenerate (cf. the Weitzenbock formulae in \cite[pp. 94]{fu}). 
	\end{rmk}
	
	Just as in the cylindrical case, we can linearise the equations and use gauge-fixing to obtain a Fredholm operator $\D_A$. We say a solution $[A] \in M_z (W, S, \pp; \beta_1, \beta_0, \beta^o)$ is regular if this operator is surjective and we call the moduli space regular if each point is regular. Regular moduli spaces are smooth manifolds of dimension $\ind \, \D_A$ which we denote as 
	\[
	\mathrm{gr}_z (W, S, \pp; \beta_1, \beta_1, \beta^o).
	\]
	Further,
	\[
	M (W, S, \pp; \beta_1, \beta_0, \beta^o)_d = \bigcup_{ z \, | \, \mathrm{gr}_z = d} M_z (W, S, \pp; \beta_1, \beta_0, \beta^o)
	\]
	as before.
	
	We now state the requisite genericity theorem we need for this class of perturbations.
	
	\begin{proposition}
		Let $\pi_i$, $i = 0, 1$ be perturbations such that the conclusions of \Cref{prop:irred} and \Cref{prop:morse} hold, assuming the hypothesis of them. Let $(W, S, \pp)$ be given equipped with a cylindrical metric with a orbifold singularities, $\check{g}$, as above. Then there are secondary perturbations $\pi_i'$ for $i = 0, 1$ and $\pi^o_k$ for every orbifold point such that for all $\beta_1, \beta_0, \beta^o$ and $z$, the moduli spaces $M_z (W, S, \pp; \beta_1, \beta_0, \beta^o)$ are regular.
	\end{proposition}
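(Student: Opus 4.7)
The plan is to carry out a standard Sard--Smale transversality argument, adapted to accommodate the secondary perturbations supported in the collars of the cylindrical ends and in annular regions about the orbifold points $\tilde{c}_i$ and, via \Cref{thm:rm-sing}, about the cone points $c_k$ arising from distinguished boundary components. First I would enlarge the parameter space: let $\Pa_{\mathrm{sec}}$ be the separable Banach space of tuples $(\pi_0', \pi_1', \{\pi^o_k\})$ where each factor is a holonomy perturbation as constructed in \cite{yaft}, but with the constraint that $\pi_i'$ is supported in $[0,1] \times Y_i$ (so as to not disturb the non-degeneracy and irreducibility conclusions of \Cref{prop:irred} and \Cref{prop:morse} at the ends) and $\pi^o_k$ supported in the collar $\{1/3 \leq r \leq 2/3\}$ around the orbifold point. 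Since the primary perturbations $\pi_i$ are left fixed, the limit data $\fc_{\pi_i}$ and the Fredholm package for $\D_A$ developed in \Cref{sec:weighted-spaces} carry over verbatim.

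Next I would form the universal moduli space
\[
\M^{\mathrm{univ}}_z = \{([A], \mathbf{p}) \in \CB_{k, z} \times \Pa_{\mathrm{sec}} \ |\  F_A^+ + \hat{V}_{\mathbf{p}}(A) = 0\},
\]
and show it is a smooth Banach submanifold by proving that the combined linearisation
\[
(a, \delta\mathbf{p}) \mapsto d_A^+ a + D\hat{V}(a) + D_{\mathbf{p}} \hat{V}_{\delta\mathbf{p}}(A)
\]
together with Coulomb gauge fixing is surjective at every solution $([A], \mathbf{p})$. This is the main obstacle. The argument mirrors \cite[Proposition 3.18]{yaft}: suppose $\eta \in L^2 \Lambda^+ \otimes \g_{\check P}$ is $L^2$-orthogonal to the image. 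Orthogonality to the first summand gives $(d_A^+)^* \eta + (D\hat V)^* \eta = 0$, so by the unique continuation theorem for the perturbed ASD equations (\cite{yaft}, see also \cite{kmknot}) it suffices to show that $\eta$ vanishes on some open set in each of the three perturbation regions. Orthogonality to variations of $\pi_i'$ in the collar $[0,1]\times Y_i$ is handled exactly as in \cite[\S 3.4]{kmknot}: the density of cylinder-function derivatives among $L^2$ sections on the collar forces $\eta$ to vanish there. The new ingredient is the orbifold annular region: I would run the same cylinder-function density argument in the conformally cylindrical model $[0,1] \times L(p_k, q_k)$ (or $[0,1] \times S^3/(\Z/\tilde p_i)$) obtained by the conformal change in \Cref{defn:metrics}, using the family of immersions $\mathbf{q}^i$ lifted equivariantly to the finite cover $S^3$, then pushed down. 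Because the local isotropy group acts freely on $S^3$, the holonomy perturbations descend to a dense family on the quotient collar, and density of their $L^2$-gradients yields $\eta \equiv 0$ in the annular region.

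With the universal moduli space smooth, I would apply the Sard--Smale theorem to the projection $\M^{\mathrm{univ}}_z \to \Pa_{\mathrm{sec}}$, which is Fredholm of the same index as $\D_A$; its regular values form a residual (hence nonempty) subset $\Pa^{\mathrm{reg}}_z \subset \Pa_{\mathrm{sec}}$. Taking the countable intersection over all triples $(\beta_1, \beta_0, \beta^o)$ of limit data and all homotopy classes $z$ yields a residual set of secondary perturbations achieving regularity of \emph{every} $M_z$ simultaneously; countability follows because $\fc_{\pi_i}$ and $\fc_{\pi^o|_{\partial V_k}}$ are finite by non-degeneracy (see \Cref{non-deg}) and the relative homotopy classes $z$ form a countable set. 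Finally, any $\mathbf{p}$ in this intersection furnishes the desired $(\pi_i', \pi^o_k)$, completing the proof; as an orientation-neutral remark, \Cref{thm:rm-sing} lets one interchange at will between the picture on $W$ and the orbifold completion $W'$, so no separate transversality argument near the cone points $c_k$ is required beyond the annular density statement already used.
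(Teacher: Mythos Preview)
Your Sard--Smale argument is essentially correct, but it is considerably more elaborate than what the paper actually does. The paper's proof is a two-line reduction: invoke \Cref{thm:rm-sing} to trade each orbifold point for an additional cylindrical end modelled on $\pR \times L(p_k,q_k)$, so that the cobordism now has only cylindrical ends and no interior singularities; then cite \cite[Proposition~3.8]{kmknot} verbatim. The only thing left to verify is that on the new lens-space ends one may take the primary perturbation to be zero, and this is exactly the content of \Cref{non-deg} (all flat connections on $L(p,q)$ are already non-degenerate by the Weitzenb\"ock argument). So the paper never reopens the universal-moduli-space machinery at all.

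What your approach buys is self-containment: you reprove the surjectivity step in the presence of orbifold points rather than reducing to a black box. The price is that you have to redo the density-of-cylinder-functions argument in the annular collar of each orbifold point, which you sketch via equivariant lifting to $S^3$; this works, but is unnecessary once \Cref{thm:rm-sing} is available. One minor point: in your outline you say it suffices to show $\eta$ vanishes on some open set ``in each of the three perturbation regions''. In fact, since $A$ is globally irreducible (by non-integrality on the $(Y_i,K_i)$ ends), the density argument in a \emph{single} collar $[0,1]\times Y_i$ already forces $\eta\equiv 0$ there, and unique continuation for the unperturbed operator $d_A^*$ on the complement of the perturbation support then propagates this to the rest of $\check W$; you do not need a separate argument near the orbifold points.
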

	
	\begin{proof}
		This is almost verbatim \cite[Proposition 3.8]{kmknot} except that in our set-up we have orbifold points. However, thanks to \Cref{thm:rm-sing}, we see that we can appeal to the the cited proposition once we verify that we can set the primary perturbation to be zero for the lens space boundary components. And this we can due to \Cref{non-deg}. 
	\end{proof}
	
	\begin{rmk}
		Strictly speaking, we need that the moduli spaces of trajectories on lens spaces are regular too with no primary perturbation for the gluing theory to apply as stated in \cite{kmknot}. This is true by the Weitzenbock formulae stated in \cite[pp. 94]{fu}.
	\end{rmk}

	\subsection{Energy Filtrations and Perturbations} \label{sec:energy}
	
	We will discuss the energy filtration which we shall need in the proof of the triangle. The main ideas appear in the \cite{mono} and we adapt the setup for singular instantons with the perturbations described in \cite{kmknot}. For this section, we will \emph{not} be fixing a perturbation that satisfies \Cref{prop:morse-smale}.
	
	Let $(W, S)$ be a cobordism from $(Y_1, K_1)$ to $(Y_0, K_0)$; we shall use $(Y, K)$ to denote either of them when it is not necessary to specify a particular end of the cobordism. Assume that $S$ has no closed components. Then, the cobordism map is defined after attaching cylindrical ends to the two boundary components; call this resulting manifold with the orbifold structure $(\check{W}, \check{S})$. We now consider a particular form of perturbations which are different from the one used earlier. To define these, we first think of $W$ as obtained by attaching handles to the three manifold $Y_1$. Let the union of the attaching regions of these handles be denoted by $H$. Note that then $Y_1 \setminus (H \cup \nu(K_1)) \times \R \subset \check{W} \setminus \check{S}$ as the handles for the 4-manifold and the 2-manifold are attached along $H \cup \nu(K_1)$. Thus, any map $q:S^1 \times D^2 \to Y_1 \setminus (H \cup \nu(K_1))$ can be extended to the map $\tilde{q} : S^1 \times D^2 \times \R \to \check{W} \setminus \check{S}$ which is `constant' along the $\R$ factor. 
	
	Hence, if $q: S^1 \times D^2 \to Y_1 \setminus (H \cup \nu(K_1))$ is an embedding, $\mu$ is a compactly supported $2$-form on the interior of $Y_1 \setminus (H \cup \nu(K_1))$ such that $q^* (\mu)$ is a compactly supported volume form on $D^2$, $h:SU(2) \to \R$ is a conjugation invariant function and $\mathrm{Hol}_x$ denotes holonomy along $q(S^1 \times \{ x \})$, we can form the usual 3-dimensional holonomy perturbation
	\[
	f_q (A) = \int_{Y_1 \setminus (H \cup \nu(K_1))} h(\mathrm{Hol}_x (A)) \mu
	\]
	and extend it to 4-dimensions to form the perturbed ASD equation:
	\[
	F_A^+ - P_+(h'(T_x(B_t)) \mu) = 0
	\]
	where $A = B + cdt$ on $Y_1 \setminus (H \cup \nu (K_1)) \times \R$ and the metric is a product on it. We abbreviate $\mathrm{Hol}_x (A)$ to $T_x(A)$ in the following. Note that $P_+(\mu) = P_+(*_3 \mu \wedge dt)$ so that $P_+(\grad f_q (B_t) \wedge dt) = P_+(-h'(T_x(B_t)) \mu )$.
	
	We will also use  \[\mathrm{Hol}_q(A)\] to denote $-h'(T_x(B_t)) \mu$.

	Given this, we first consider the following perturbed Chern-Weil integral:
	\[
	\tilde{\kappa} (A) = \int_{\check{W} \setminus \check{S}} \tr[ (F_A - h' (T_x (A)) \mu )^2 ]
	\]
	Here, $\mu$ is the $2$-form pulled-back from $Y_1 \setminus (H \cup \nu(K_1))$ to a compactly supported form on the interior of $Y_1 \setminus (H \cup \nu(K_1)) \times \R \subset \check{W} \setminus \check{S}$. In particular, this implies that $\mu \wedge \mu = 0$. Now, 
	\[
	\tr (F_A - h'(T_x (A) ) \mu )^2 = \tr (F_A^2) - 2 \tr (F_A h' (T_x(A))) \mu.
	\]
	As $\mu$ is supported in $Y_1 \setminus (H \cup \nu(K_1)) \times \R$, we need only integrate on this subset. Denote the $\R$ coordinate by $t$. Further, assume that $A$ is in temporal gauge on this subset. Then, we have the following computation:
	\begin{align*}
		\int_{t \in \R} \int_{Y_1 \setminus (H \cup \nu(K_1))} \tr (- h'(T_x(A_t))  F_{A} \mu) 
		&= \int_{t \in \R} \int_{Y_1 \setminus (H \cup \nu(K_1))} \tr(h'(T_x(A_t)) \partial_t A_t \wedge \mu) \wedge dt \\
		&= \int_{t \in \R} -\bigg[  \partial_t \bigg( \int_{Y_1 \setminus (H \cup \nu(K_1))} h(T_x(A_t)) \mu \bigg) \bigg] dt \\
		&= \int_{-\infty}^\infty - \partial_t f_q(A_t) dt \\
		&= f_q(A_{-\infty}) - f_q(A_{\infty})
	\end{align*}
	where we used that $F_A = -\partial_t A_t \wedge dt + F_{A_t}$ and $F_{A_t} \wedge \mu = 0$ in the first line. 
	
	Putting this together, we have the following conclusion:\\
	\begin{align*}
		\tilde{\kappa} (A) &=  \int_{\check{W} \setminus \check{S}} \tr[ (F_A - h' (T_x (A)) \mu )^2 ] \\
		&= 2(f_q(A_{-\infty}) - f_q(A_{\infty})) + \int_{\check{W} \setminus \check{S}} \tr (F_A^2)  \\
		&= 2(f_q(A_{-\infty}) - f_q(A_{\infty})) + \kappa(A) \\
		&= CS_q(A_{-\infty}) - CS_q(A_{\infty})
	\end{align*}
	where $CS_q(B) = CS(B) + 2f_q(B)$; here we are \emph{not} working with gauge representatives but with explicit connections. We shall refer to such perturbations as \emph{time independent} perturbations.
	
	Hence, we see that if $A$ satisfies 
	\[
	F_A^+ + \mathrm{Hol}^+_{{q}} (A) = 0
	\]
	then,
	\begin{align*}
		\tilde{\kappa} (A) &= \int_{\check{W} \setminus \check{S}} \tr [ (F_A + \mathrm{Hol}_q(A))^2 ] \\
		&= \lVert F_A^- + \mathrm{Hol}_q^-(A) \rVert^2 - \lVert F_A^+ + \mathrm{Hol}_q^+(A) \rVert^2 \\
		&= \lVert F_A^- + \mathrm{Hol}_q^-(A) \rVert^2 \\
		&\geq 0.
	\end{align*}
	In particular, $\tilde{\kappa}(A) = 0$ implies 
	\begin{equation} \label{eqn:deformed}
	F_A + \mathrm{Hol}_q(A) = 0. 
	\end{equation}
	We shall refer to this as the \emph{deformed flatness} equation.
	
	\begin{rmk}
		While all the above constructions used a single framed knot to make the holonomy perturbations along, one can just as easily adapt this framework in the construction of the Banach space of perturbations and ensure that we have a Banach (subspace) of time independent perturbations. We will use the notation $\mathrm{Hol}_\pi$ and such to refer to such perturbations.
		
		As the only constraint is that the perturbations are along framed knots $q$ are disjoint from a knot $K_1$ in $Y_1$ and from the attaching regions $H$ (which is just a neighbourhood of a union of positive codimension sub-manifolds), we can ensure that transversality is attained for the three-dimensional equations and for cylinders by this subspace of perturbations as they are dense in the space of all perturbations. Hence forth, we shall assume the perturbations are made from this subspace so that they satisfy the conclusions of \Cref{prop:morse-smale}. 
	\end{rmk}
	
	A special situation where the above constructions are useful is recorded below as a lemma.
	
	\begin{lemma} \label{dflat}
		Suppose that $\{q_i\}$ is a collection of embeddings with image away from the attaching regions and the singular locus and the resulting perturbation be $\mathrm{Hol}_\pi$. Then, if $A$ is a solution to the time independent perturbed equations on $(W, S)$ with $[A_{-\infty}] = [A_\infty]$ and $\kappa(A) = \int_{\check{W} \setminus \check{S}} \tr (F_A^2) = 0$, then $A$ satisfies the deformed flatness equation~\eqref{eqn:deformed}.
	\end{lemma}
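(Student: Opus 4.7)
The plan is to combine the two expressions for $\tilde{\kappa}(A)$ that were derived in the discussion immediately preceding the statement and force both of them to vanish simultaneously under the given hypotheses; this then pins the value of $F_A + \mathrm{Hol}_\pi(A)$ down to zero directly.

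First I would invoke the integration-by-parts identity
\[
\tilde{\kappa}(A) \;=\; 2\bigl(f_\pi(A_{-\infty}) - f_\pi(A_{\infty})\bigr) + \kappa(A),
\]
obtained by summing the single-perturbation version established above over the sequence $\pi = \{\pi_i\}$. Each summand is supported in the smooth product region $(Y_1\setminus(H\cup\nu(K_1)))\times\R$, where temporal gauge is available and the integration by parts in $t$ applies; the boundary terms at $t\to\pm\infty$ converge to $f_{q_i}$ evaluated on the limiting connections. Since $f_\pi$ is gauge invariant on the three-dimensional configuration space, the assumption $[A_{-\infty}] = [A_\infty]$ gives $f_\pi(A_{-\infty}) = f_\pi(A_\infty)$. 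Combined with $\kappa(A) = 0$, this forces $\tilde{\kappa}(A) = 0$.

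Next I would use the pointwise identity already recorded above,
\[
\tilde{\kappa}(A) \;=\; \lVert F_A^- + \mathrm{Hol}_\pi^-(A)\rVert^2 \;-\; \lVert F_A^+ + \mathrm{Hol}_\pi^+(A)\rVert^2.
\]
By hypothesis $A$ solves the time-independent perturbed ASD equation, so the second term vanishes; combining with the previous paragraph gives $\lVert F_A^- + \mathrm{Hol}_\pi^-(A)\rVert^2 = 0$, hence $F_A^- + \mathrm{Hol}_\pi^-(A) = 0$. Adding the self-dual and anti-self-dual pieces yields $F_A + \mathrm{Hol}_\pi(A) = 0$, which is exactly \eqref{eqn:deformed}.

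The only real subtlety I expect is justifying the temporal-gauge bookkeeping for $\tilde{\kappa}$ in the orbifold setting. Because each $q_i$ is embedded away from the attaching regions and the singular locus $\check{S}$, the perturbation contribution lives in a smooth product cylinder and temporal gauge is available there; outside this product region the integrand of $\tilde{\kappa}$ reduces to $\tr(F_A^2)$ and is already captured by $\kappa(A)$. Convergence of the boundary terms at $t\to\pm\infty$ follows from the existence of the limits $[A_{\pm\infty}]$ as gauge classes (built into the hypothesis) together with the boundedness of the class function $h$, and no additional care is needed at the orbifold singularities of $\check{S}$ since the computation never interacts with them.
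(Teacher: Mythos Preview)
Your proposal is correct and follows exactly the same route as the paper: the paper's proof is the one-line remark that gauge invariance of $f_\pi$ forces $\tilde{\kappa}(A)=0$, after which the deformed flatness equation follows from the computations already recorded before the lemma. You have simply unpacked those computations explicitly, including the extra paragraph on temporal gauge and convergence that the paper leaves implicit.
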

	
	\begin{proof}
		The only additional observation needed is that $f_\pi$ is gauge invariant which allows one to conclude that $\tilde{\kappa} (A) = 0$.
	\end{proof}
	
	These time independent perturbations are not sufficient to ensure transversality for the four dimensional equations. However, we can still use metric perturbations on $W \setminus (S \cup \im (\pi))$ without affecting the results above. Here, $\im(\pi)$ is the union of the image of all the $\{\tilde{q}_i\}$ used to form the perturbation $\pi$.
	
	For the rest of this section we will primarily follow \cite[Chapter 3]{fu}. To set things up, we will deal with connections directly rather than connections modulo gauge. Fix connections $B_1$ and $B_0$ on the ends $(Y_1, K_1)$ and $(Y_0, K_0)$ respectively and a bundle $\pp$ over $(W, S)$ such that all $\pp|_{(Y_i, K_i)}$ is non-integral. Then, we can choose a connection $A_o$ that interpolates between these; fix one such choice that is constant along the the cylindrical ends outside a compact set. Then, we can form the parametrised equations as follows:
	\begin{align*}
		\Pa : \lp[k, A_o]{\Lambda^1}{W} &\times \G_r \to \lp[k-1]{\Lambda^+}{W} \\
		\< A_o + a, \varphi > & \mapsto P_+  \big( (\varphi^{-1})^* ( F_{A_o + a}  + \mathrm{Hol}_\pi (A_o + a)) \big)
	\end{align*}
	Here, the bundles respect the orbifold structure and the sections are orbifold sections as usual. The perturbation $\pi$ is a fixed time independent perturbation which makes the Chern-Simons functional on $(Y_i, K_i)$ Morse, and $\G_r = \{ \varphi \in C^r(\check{W}, GL(T\check{W})) \, : \, \varphi = \Id \text{ on } S \text{ and on } \im (\pi) \}$ parametrises the space of metrics under pull-back. We fix some $r \gg k$ from now on and note that $\G_r$ is a Banach space. We can also define an analogous map where we look at gauge equivalence classes of connections:
	\begin{align*}
		\bar{\Pa} : \mathcal{B}_{[A_o]}(W, S, \pp; [B_1], [B_0]) &\times \G_r \to \lp[k-1]{\Lambda^+}{W} \\
		\< [A_o + a], \varphi > & \mapsto P_+  \big( (\varphi^{-1})^* ( F_{A_o + a}  + \mathrm{Hol}_\pi (A_o + a)) \big)
	\end{align*}
	Denote $\bar{\Pa}_g = \bar{\Pa} (-, \varphi)$ when $g = \varphi^* g_0$.
	
	\begin{proposition} \label{prop:mtransverse}
		Suppose that there is an open set $U \subset \check{W} \setminus (\check{S} \cup \im (\pi))$ such that any connection $A$ for $(A, \varphi) \in \Pa^{-1} (0)$ restricted to $U$ is irreducible, then $\bar{\Pa}$ is transverse to $0$. 
	\end{proposition}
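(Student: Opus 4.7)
The plan is to prove surjectivity of $d\bar{\mathcal{P}}_{(A,\varphi)}$ at each solution $(A,\varphi) \in \mathcal{P}^{-1}(0)$ by a Freed--Uhlenbeck style cokernel argument adapted to the singular, holonomy-perturbed ASD equation. Since $\bar{\mathcal{P}}$ is obtained from $\mathcal{P}$ by passing to the gauge quotient, it suffices to show that any $\eta \in L^2(\check{W}; \Lambda^+ \otimes \mathfrak{g}_{\pp})$ orthogonal to the images of the connection variations $a \mapsto d_A^+ a + D_A \mathrm{Hol}_\pi(a)$ and the metric variations $h \mapsto (\delta_h P_+)(F_A + \mathrm{Hol}_\pi(A))$ must vanish.

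From the metric-direction orthogonality, tested against $h \in T_\varphi \mathcal{G}_r$ compactly supported inside $U$ (permissible because the elements of $\mathcal{G}_r$ are required to be $\mathrm{Id}$ only on $\check{S} \cup \mathrm{Im}(\pi)$, which is disjoint from $U$), one obtains a pointwise constraint on $\eta|_U$. Since $\mathrm{Hol}_\pi$ vanishes on $U$ and the solution satisfies $P_+ F_A = 0$ there, $F_A$ is pointwise $\varphi^* g_0$-anti-self-dual on $U$. The standard Freed--Uhlenbeck algebraic identity (cf. \cite[Chapter 3]{fu}) then writes $(\delta_h P_+)(F_A)(p)$ as a bilinear contraction of the symmetric traceless part of $h(p)$ with $F_A(p)$, and shows that as $h(p)$ varies, the resulting vectors span $\Lambda^+_p \otimes V(p)$, where $V(p) \subset \mathfrak{g}_{\pp}|_p$ is the subspace spanned by the Lie-algebra components of $F_A(p)$. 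Hence $\eta(p) \perp \Lambda^+_p \otimes V(p)$ for every $p \in U$.

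Using the irreducibility of $A|_U$, the family $\{V(p)\}_{p \in U}$ together with its parallel transports must span every fibre $\mathfrak{g}_{\pp}|_{p_0}$, for otherwise it would cut out a parallel proper sub-bundle invariant under $A$-holonomy and yield, via the Ambrose--Singer theorem, a nontrivial infinitesimal stabiliser of $A|_U$. Combined with the connection-direction orthogonality --- which, after elliptic regularity and Coulomb gauge fixing, shows that $\eta$ is a smooth self-dual form satisfying the overdetermined elliptic system $d_A^* \eta = 0$ on $U$ --- this forces $\eta$ to vanish on an open subset of $U$, and then Aronszajn's unique continuation propagates the vanishing through the connected component of $\check{W} \setminus (\check{S} \cup \mathrm{Im}(\pi))$ containing $U$; a further propagation across the perturbation region via the full elliptic system yields $\eta \equiv 0$ on all of $\check{W}$.

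The hardest step is the explicit pointwise computation of $(\delta_h P_+)(F)$, together with verifying that the admissible metric variations $h \in T_\varphi \mathcal{G}_r$ realise arbitrary symmetric traceless endomorphisms of $T_p W$ at each $p \in U$; this technical input is isolated to \Cref{appendix:perturbations}. A subsidiary difficulty is ruling out the possibility $F_A \equiv 0$ on $U$, which is handled by unique continuation for the anti-self-dual equation itself, together with the non-triviality of the singular bundle data $\pp$, which forces $A$ to have non-zero curvature somewhere in any connected component of the smooth locus.
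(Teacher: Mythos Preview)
Your overall architecture is right and matches the paper: pick $\eta$ orthogonal to the image, use connection variations to get $d_A^*\eta = 0$ on $U$, and use metric variations to get the pointwise orthogonality $\im F_A(p) \perp \im \eta(p)$ in $\mathfrak{g}$ (the Freed--Uhlenbeck Lemma~3.7). But the step where you ``combine'' these to force $\eta$ to vanish on an open set is a genuine gap. Your Ambrose--Singer argument does not work as stated: it tells you that the parallel transports of the $V(p)$'s span $\mathfrak{g}|_{p_0}$, but $\eta$ is \emph{not} parallel (only $d_A^*\eta = 0$), so you cannot transport the pointwise orthogonality $\eta(p) \perp \Lambda^+_p \otimes V(p)$ to a single fibre and conclude $\eta(p_0) = 0$. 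The paper (following \cite{fu}) instead runs a rank-by-rank analysis: the orthogonality gives $\mathrm{rank}\,F_A(p) + \mathrm{rank}\,\eta(p) \le 3$; one shows that wherever $\eta \neq 0$ and $F_A$ has rank $2$, writing $\eta = \alpha \otimes u$ with $|u|=1$ and using $d_A\eta = 0$ forces $d_A u = 0$, whence $[F_A,u] = d_A d_A u = 0$ together with $\langle F_A,u\rangle = 0$ contradicts $\mathrm{rank}\,F_A = 2$. This reduces to $\mathrm{rank}\,F_A \le 1$ on $U$, and then $F_A = \sigma \otimes u$ with $d_A u = 0$ produces a unit-length parallel section of $\mathfrak{g}_{\check P}$ on $U$, contradicting irreducibility of $A|_U$. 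This is the missing mechanism.

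A second issue: your final ``propagation across the perturbation region via the full elliptic system'' is not available. The operator $(D_A\mathrm{Hol}_\pi)^*$ is \emph{non-local}, so the equation $d_A^*\eta + (D_A\mathrm{Hol}_\pi)^*\eta = 0$ does not fall under Aronszajn-type unique continuation. The paper avoids this entirely: the contradiction is obtained on $U$ alone, from irreducibility of $A|_U$, with no need to propagate $\eta = 0$ through $\im(\pi)$.
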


	\begin{proof}[Sketch of Proof]
		We need the linearised operator to be Fredholm to even begin the argument however this is provided by \cite{kmknot}. After that, the proof in \cite{fu} applies verbatim on the open set $\check{W} \setminus (\check{S} \cup \im (\pi))$ provided the necessary irreducibility hypothesis is met. The irreducibility hypothesis is used in a `local' way in \cite{fu}; thus, the irreducibility of restriction to $U \subset \check{W} \setminus (\check{S} \cup \im (\pi))$ suffices to carry out the proof.
	\end{proof}

	For full details, we direct the reader to \Cref{appendix:perturbations}. A standard application of Sard-Smale theorem \cite{sard} gives the following corollary.
	
	\begin{corollary} \label{cor:mpert}
		Assuming the hypothesis of \Cref{prop:mtransverse}, we have that for a generic metric $g$, $\bar{\Pa}_g$ is transverse to $0$.
	\end{corollary}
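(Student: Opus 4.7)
The plan is a routine application of the Sard-Smale theorem \cite{sard} to the projection from the universal zero set onto the parameter space. By \Cref{prop:mtransverse}, the universal zero set $\mathcal{Z} := \bar{\Pa}^{-1}(0)$ is a smooth Banach submanifold of $\mathcal{B}_{[A_o]}(W, S, \pp; [B_1], [B_0]) \times \G_r$. First I would study the projection $\Pi : \mathcal{Z} \to \G_r$ onto the second factor. A diagram chase using the splitting $D\bar{\Pa} = D_1 \bar{\Pa} + D_2 \bar{\Pa}$ shows that $d\Pi_{([A],\varphi)}$ has kernel canonically isomorphic to $\ker(D\bar{\Pa}_g|_{[A]})$ and cokernel canonically isomorphic to $\coker(D\bar{\Pa}_g|_{[A]})$, where $g = \varphi^* g_0$.

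Next, because $D\bar{\Pa}_g$ at any $[A] \in \bar{\Pa}_g^{-1}(0)$ is the linearised perturbed singular ASD operator together with its Coulomb gauge-fixing term, it is Fredholm by the analytical package of \cite{kmknot} on the weighted Sobolev spaces introduced in \Cref{sec:weighted-spaces}. Consequently $\Pi$ is a Fredholm map between separable Banach manifolds. Sard-Smale then produces a residual subset $\G_r^{\mathrm{reg}} \subset \G_r$ of regular values of $\Pi$, provided the differentiability class $r$ exceeds the Fredholm index; this is exactly why the convention $r \gg k$ was imposed. Unpacking the identification of cokernels, $\varphi \in \G_r^{\mathrm{reg}}$ is precisely the statement that $\bar{\Pa}_g$ is transverse to $0$ at every point of $\bar{\Pa}_g^{-1}(0)$, which is the desired conclusion.

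The only substantive analytic input—the surjectivity of the full parametrised linearisation, which rests on the irreducibility hypothesis on the open set $U \subset \check{W} \setminus (\check{S} \cup \im(\pi))$—has been absorbed into \Cref{prop:mtransverse}, so the genuine work happens there rather than in the present corollary. The main obstacle to guard against here is merely bookkeeping: one should check that the map $\varphi \mapsto \varphi^* g_0$ really does surject onto a Baire-generic family of metrics and that the Fredholm property of $\Pi$ is uniform enough in $([A], \varphi)$ for Sard-Smale to apply, both of which are standard in this setting.
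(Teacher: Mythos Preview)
Your proposal is correct and matches the paper's approach: the paper simply states that the corollary follows from ``a standard application of Sard-Smale theorem \cite{sard}'' without spelling out the details. You have filled in exactly the standard argument the paper alludes to, so there is nothing to add.
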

	
	The proposition also leads to the following corollary by applying Sard-Smale \cite{sard} to a parametrised setting.
	
	\begin{corollary} \label{cor:mparampert}
		If $Q$ is a compact manifold with boundary parametrising a space of metrics, such that the equations are transverse on $\partial Q$, then we can choose a metric perturbation that is supported in the interior of $Q$ which makes the parametrised moduli space transverse.
	\end{corollary}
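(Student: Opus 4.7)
The plan is to invoke a parametrised Sard--Smale argument, running the proof of \Cref{cor:mpert} with an extra parameter coming from $Q$ and with the space of metric perturbations restricted to those vanishing near $\partial Q$.

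First I would set up the universal parametrised moduli space. Let $\{g_q\}_{q\in Q}$ be the given family of metrics on $\check W$, realised (at least locally) as $g_q=\varphi_q^* g_0$ for some reference metric $g_0$ and a smooth map $Q\to \G_r$. Let $\mathcal E\subset C^r(Q,\G_r)$ be the Banach space of perturbations $\eta$ with $\eta_q=\Id$ in a neighbourhood of $\partial Q$; these replace $\varphi$ by $\varphi\cdot \eta$ and so perturb the family of metrics only in the interior of $Q$. Consider the parametrised map
\begin{align*}
\widetilde{\Pa}\colon \mathcal{B}_{[A_o]}(W,S,\pp;[B_1],[B_0])\times Q\times \mathcal{E} &\longrightarrow \lp[k-1]{\Lambda^+}{W}\\
\bigl\langle [A],q,\eta\bigr\rangle &\longmapsto \bar{\Pa}_{(\varphi_q\cdot\eta_q)^*g_0}([A]).
\end{align*}
The universal moduli space is $\widetilde{\Pa}^{-1}(0)$, and the parametrised moduli space we want to make transverse is the fibre of its projection to $\mathcal E$ over $\eta\equiv\Id$.

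Next I would verify that $\widetilde{\Pa}$ is transverse to $0$. At a point $(A,q,\eta)$ with $\eta$ compactly supported in the interior of $Q$, the variation in $\eta$ alone already gives the same surjectivity used in \Cref{prop:mtransverse}: after choosing an interior point $q_0\in Q$ with $q_0\in\supp(\eta\mapsto\cdot)$, one may localise an infinitesimal metric variation $\dot\varphi\in\G_r$ supplied by the proof of \Cref{prop:mtransverse} using a bump function in $Q$ supported near $q_0$ and vanishing on $\partial Q$. The irreducibility hypothesis, which is inherited from the hypothesis that the equations are already transverse on $\partial Q$ (and is open, hence holds on a neighbourhood), ensures the argument in \Cref{appendix:perturbations} goes through verbatim. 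Hence $\widetilde{\Pa}^{-1}(0)$ is a Banach manifold and the projection $\Pi\colon \widetilde{\Pa}^{-1}(0)\to \mathcal E$ is Fredholm of the correct index.

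Now I would apply Sard--Smale \cite{sard} to $\Pi$ to obtain a residual set $\mathcal E_{\mathrm{reg}}\subset \mathcal E$ of perturbations $\eta$ for which the parametrised moduli space
\[
\bigcup_{q\in Q}\bar{\Pa}_{(\varphi_q\eta_q)^*g_0}^{-1}(0)
\]
is cut out transversely. Since any $\eta\in \mathcal E$ equals $\Id$ near $\partial Q$ by construction, the original transversality on $\partial Q$ is preserved. Taking any $\eta\in\mathcal E_{\mathrm{reg}}$ sufficiently close to $\Id$ gives the desired perturbation supported in the interior of $Q$.

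The main obstacle is the verification of transversality of the universal map $\widetilde{\Pa}$, specifically the step of localising the metric deformations provided by \Cref{prop:mtransverse} to a bump in $Q$ away from $\partial Q$ while retaining the necessary irreducibility on an open set $U\subset \check W\setminus(\check S\cup \im\pi)$. Once that is in place, the Sard--Smale output is routine; the only care required is to confirm that the assumed transversality on $\partial Q$ guarantees the irreducibility hypothesis on a collar neighbourhood, which allows the bump-function localisation to avoid destroying transversality already present there.
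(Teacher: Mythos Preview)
Your approach is the same as the paper's: both apply Sard--Smale in the parametrised setting, with the extra parameter coming from $Q$ and the perturbation space restricted to metric deformations vanishing near $\partial Q$. The paper states this in one line; your expansion of that line is essentially correct.

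One point of confusion to clean up: you write that the irreducibility hypothesis of \Cref{prop:mtransverse} is ``inherited from the hypothesis that the equations are already transverse on $\partial Q$''. That is backwards. Irreducibility on an open set $U\subset \check W\setminus(\check S\cup\im(\pi))$ is a standing hypothesis in this paper, supplied by the non-integrality condition on $\pp$ (cf.\ \Cref{prop:irred} and the discussion after \Cref{prop:mtransverse}); it is an input to, not an output of, transversality. Relatedly, your verification that $\widetilde{\Pa}$ is transverse should explicitly split on the location of $q$: for $q\in\mathrm{int}(Q)$ you vary $\eta$ near $q$ and invoke \Cref{prop:mtransverse} exactly as you describe; for $q\in\partial Q$ the perturbation $\eta$ is frozen, and surjectivity of $D\widetilde{\Pa}$ at such a point is precisely the assumed transversality on $\partial Q$. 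With those two clarifications your argument goes through.
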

	
	\subsection{Index Formulae}
	
	We state and prove the necessary index formula we shall need in this paper. Let $Z$ be a closed connected oriented 4-manifold with $\Sigma \subset Z$ a closed surface. We define the energy of a connection $A$ on a singular bundle $\pp$ on $(Z, \Sigma)$ by the following:
	
	\[ \kappa(A) = \frac{1}{8 \pi^2 } \int_{Z \setminus \Sigma} \tr(F_A \wedge F_A)  \]
	
	The relevant fredholm operator in this setting is the following:
	\[ \D_{A, Z} = - d_A^* \oplus d_A^+ : \lp{\Lambda^1}{Z} \to \lp[k-1]{(\Lambda^0 \oplus \Lambda^+)}{Z} \]
	
	Then, we have the following:
	
	\begin{theorem}[{\cite[Lemma 2.11]{kmknot}}]\label{thm:ind0}
		\[ \mathrm{Ind} \D_{A, Z} = 8\kappa(A) - \frac{3}{2} (\chi(Z) + \sigma(Z) ) + \chi (\Sigma) + \frac{1}{2} \Sigma \cdotp \Sigma \]
		\qed
	\end{theorem}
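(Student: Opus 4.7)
The plan is to reduce the singular index on $(Z,\Sigma)$ to the standard smooth ASD index formula on a branched double cover, and then extract the correction from the fixed locus via the equivariant index theorem. First, I would form the branched double cover $\pi : \tilde Z \to Z$ ramified along $\Sigma$, with deck involution $\tau$. The hypothesis that $\pp$ has order-$2$ holonomy around the meridians of $\Sigma$ is precisely what lets $\pi^* \pp$ extend smoothly across the ramification locus as a $\tau$-equivariant $SO(3)$-bundle $\tilde P \to \tilde Z$, and an orbifold connection $A$ with cone angle $\pi$ pulls back to a genuine smooth $\tau$-invariant connection $\tilde A$ on $\tilde P$. The Sobolev identifications of \cite{kmknot} then identify $\D_{A,Z}$ with the $\tau$-invariant summand of $\D_{\tilde A, \tilde Z}$, so that
\[
\mathrm{Ind}\, \D_{A,Z} \;=\; \tfrac12\bigl( \mathrm{Ind}\, \D_{\tilde A, \tilde Z} \;+\; L(\tau, \D_{\tilde A}) \bigr),
\]
where $L(\tau,\cdot)$ denotes the equivariant Lefschetz contribution of the involution.

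Next, I would apply the standard smooth ASD index formula to the upstairs operator to obtain $\mathrm{Ind}\, \D_{\tilde A, \tilde Z} = 8\kappa(\tilde A) - \tfrac32(\chi(\tilde Z) + \sigma(\tilde Z))$. A local Chern--Weil computation near $\Sigma$ (using that the curvature concentration of $A$ along $\Sigma$ is compensated by the branching) should give $\kappa(\tilde A) = 2\kappa(A)$; combined with the branched-cover topological identities $\chi(\tilde Z) = 2\chi(Z) - \chi(\Sigma)$ and the $G$-signature equality $\sigma(\tilde Z) = 2\sigma(Z) - \tfrac12\,\Sigma\cdot\Sigma$, this produces the bulk terms $8\kappa(A) - \tfrac32(\chi(Z)+\sigma(Z))$ together with leftover topological contributions of the form $\tfrac34\chi(\Sigma) + \tfrac38\,\Sigma\cdot\Sigma$. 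Finally, I would compute $L(\tau, \D_{\tilde A})$ via the Atiyah--Singer $G$-index theorem; since the fixed set of $\tau$ is exactly $\Sigma$, this localizes to an integral over $\Sigma$ of a characteristic class built from the normal bundle of $\Sigma$ and the $\tau$-eigenspace decomposition of $\mathfrak g_{\tilde P}|_\Sigma$. Combining with the leftover terms from the previous step should yield the missing $\chi(\Sigma) + \tfrac12\,\Sigma\cdot\Sigma$ exactly.

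The main obstacle I expect is bookkeeping rather than conceptual: correctly identifying the $(\pm 1)$-eigenspaces of $d\tau$ on both the normal bundle to $\Sigma$ and on $\mathfrak g_{\tilde P}|_\Sigma$ (the latter depending on the holonomy reduction encoded in $\pp$), and then tracking signs and multiplicities through the $G$-index integrand. A robust alternative, if this computation proves delicate, is to use the Fredholm excision strategy employed in \cite{kmknot}: both sides of the claimed formula are additive under gluing and differ from the smooth ASD formula only in a tubular neighbourhood of $\Sigma$, so one may instead verify the formula on a convenient model such as a smooth surface $\Sigma \hookrightarrow S^2 \times \Sigma$ equipped with the standard singular bundle, and deduce the general case by cutting and pasting. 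A useful consistency check at the end is that setting $\Sigma = \emptyset$ recovers the classical smooth ASD index formula $\mathrm{Ind} = 8\kappa - \tfrac32(\chi+\sigma)$.
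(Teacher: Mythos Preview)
The paper does not prove this statement at all: it is quoted from \cite[Lemma~2.11]{kmknot} and closed with a \qed. So there is no proof in the paper to compare against; your proposal is a sketch of how one might establish the cited result.

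Your primary route via the branched double cover is correct when it applies, and your arithmetic checks out: with $\kappa(\tilde A)=2\kappa(A)$, $\chi(\tilde Z)=2\chi(Z)-\chi(\Sigma)$, and $\sigma(\tilde Z)=2\sigma(Z)-\tfrac12\,\Sigma\cdot\Sigma$, half the upstairs index gives the bulk terms plus $\tfrac34\chi(\Sigma)+\tfrac38\,\Sigma\cdot\Sigma$, leaving the Lefschetz contribution $\tfrac12 L(\tau,\D_{\tilde A})$ to supply $\tfrac14\chi(\Sigma)+\tfrac18\,\Sigma\cdot\Sigma$, which is consistent with the $G$-index integrand for an involution with surface fixed set. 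The genuine limitation is that the branched double cover $\tilde Z\to Z$ exists only when $[\Sigma]=0\in H_2(Z;\Z/2)$, and the setup of \cite{kmknot} (and of this paper) permits $\Sigma$ nonorientable as well; so this argument as written covers only a special case. Your fallback in the last paragraph---excision to reduce to a model pair, using that both sides are additive under gluing---is exactly how \cite{kmknot} actually proceeds, and it handles the general case cleanly. If you want the branched-cover argument to stand on its own, you should either restrict the hypotheses or combine it with an excision step that moves the problem into a neighbourhood of $\Sigma$ where a local double cover always exists.
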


	We now state and prove an analogous result when we allow isolated cone singularities as in \Cref{defn:metrics}.
	
	\begin{theorem} \label{thm:ind}
		Let $W$ be a closed oriented 4-manifold with a closed surface $\Sigma$ and a single isolated cone singular point $p \in W \setminus \Sigma$. Suppose further that a conical neighbourhood of $p$ is isomorphic to a cone on $L(2,1)$ and that $g$ is a compatible orbifold metric in the sense of \Cref{defn:metrics}. Then, given a singular bundle $\pp$ on $(W, \Sigma, \{p\})$ such that the restriction of $\pp$ to a conical neighbourhood of $p$ is trivial, we can form the singular ASD operator
		\[
		\D_{A, W} = - d_A^* \oplus d_A^+ : \lp{\Lambda^1}{W} \to \lp[k-1]{(\Lambda^0 \oplus \Lambda^+)}{W}.
		\]
		This operator is Fredholm and has index
		\[
		\mathrm{Ind} \D_{A, W} = 8\kappa(A) - \frac{3}{2} (\chi(W) + \sigma(W) ) + \chi (\Sigma) + \frac{1}{2} \Sigma \cdotp \Sigma.
		\]
	\end{theorem}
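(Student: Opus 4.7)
The strategy is to deduce the formula from the smooth case \Cref{thm:ind0} by resolving the isolated orbifold singularity $p$. The triviality of $\pp$ on a conical neighborhood $U$ of $p$ (with $\partial U = L(2,1)$) is essential: it allows me to gauge $A$ to be trivial on a collar of $\partial U$, so that $U$ can be excised and replaced by any smooth oriented $4$-manifold bounded by $L(2,1)$ equipped with the trivial bundle. Note that $\Sigma$ sits inside $W \setminus U$ and is unaffected by the surgery.

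The natural choice of replacement is the minimal smooth resolution $\tilde U = D(\mathcal{O}_{S^2}(-2))$, whose boundary is $L(2,1)$. Form the closed $4$-manifold $\tilde W = (W \setminus U) \cup_{L(2,1)} \tilde U$, smooth away from $\Sigma$, and extend the bundle and connection trivially over $\tilde U$ to produce $(\tilde\pp, \tilde A)$ with $\kappa(\tilde A) = \kappa(A)$. The key topological input is that $\chi + \sigma$ is preserved by the resolution: using $\chi(L(2,1)) = 0$, $\chi(U) = 1$, $\chi(\tilde U) = 2$ in Mayer--Vietoris, and $\sigma(U) = 0$, $\sigma(\tilde U) = -1$ (from the intersection form $[-2]$) in Novikov additivity along the rational homology sphere $L(2,1)$, one obtains $\chi(\tilde W) - \chi(W) = 1$ and $\sigma(\tilde W) - \sigma(W) = -1$, hence $\chi(\tilde W) + \sigma(\tilde W) = \chi(W) + \sigma(W)$. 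Applying \Cref{thm:ind0} to $(\tilde W, \Sigma, \tilde\pp, \tilde A)$ then produces precisely the claimed right-hand side.

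The remaining task is to show $\Ind \D_{A, W} = \Ind \D_{\tilde A, \tilde W}$. By the excision principle for Fredholm indices of elliptic operators, since the two operators coincide on $W \setminus U = \tilde W \setminus \tilde U$ with identical collar data at $L(2,1)$ (trivial connection, product metric), the index difference reduces to $\Ind^{\mathrm{APS}}_{A, U} - \Ind^{\mathrm{APS}}_{\tilde A, \tilde U}$, with the $\eta$-invariant contributions at the common boundary cancelling. I would argue this local difference vanishes: the Kawasaki equivariant correction at the $\Z/2$ orbifold point $p$, where $-\operatorname{Id}$ acts on $T_p U = \R^4$ and trivially on $\mathfrak{su}(2)$, exactly compensates the difference between the orbifold and smooth Chern--Weil integrals of $e$ and $L$ on $U$ versus $\tilde U$. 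Combined with Fredholmness (already provided by the analytic framework of \Cref{sec:weighted-spaces}), this yields the desired equality of indices.

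The main obstacle is this final local comparison: rigorously verifying excision in the orbifold setting and pinning down the Kawasaki correction at $p$. The triviality of $\pp$ near $p$ keeps the calculation tractable (the $\Z/2$ character on $\mathfrak{su}(2)$ is trivial, so the local character reduces to the standard one on $\Lambda^{\bullet} T^*\R^4$), but one must carefully bookkeep orbifold Chern--Gauss--Bonnet/Hirzebruch boundary transgressions against the smooth ones on $\tilde U$ and confirm they are absorbed into the Kawasaki defect.
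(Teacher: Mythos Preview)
Your strategy is essentially the same as the paper's: both replace the cone $cL(2,1)$ by $\tilde U = D(T^*S^2)$, form the smooth $\tilde W$, observe that $\chi+\sigma$ is unchanged, and use excision to transfer the index computation to $\tilde W$ where \Cref{thm:ind0} applies. The difference lies in how the local index defect $\Ind \D_{A,W} - \Ind \D_{\tilde A,\tilde W}$ is shown to vanish.

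You propose to compute this defect via APS boundary terms and the Kawasaki correction at $p$, and you flag this as the main obstacle. The paper sidesteps that calculation entirely. Rather than a two-term excision, it runs a four-term linear excision on the pairs $(W\setminus cL(2,1),\ cL(2,1))$ and $(D(TS^2),\ D(T^*S^2))$, the second pair gluing to $S^2\times S^2$. This yields
\[
\Ind \D_{A,W} + \Ind \D_{A_0,\,S^2\times S^2} \;=\; \Ind \D_{A',\,\tilde W} + \Ind \D_{A_1,\,Th(TS^2)},
\]
so the defect you want equals $\Ind \D_{A_1,Th(TS^2)} - \Ind \D_{A_0,S^2\times S^2}$. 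Both terms are now computable without Kawasaki: the $S^2\times S^2$ term is $-6$ by \Cref{thm:ind0} with empty $\Sigma$, and the $Th(TS^2)$ term is computed directly for the trivial connection $A_1$ via orbifold Hodge theory (lifting to the branched double cover with the Fubini--Study metric), giving $h^1=0$, $h^0=h^+=3$, hence also $-6$. The defect is zero and the formula follows.

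So your outline is correct, but the paper's trick of introducing a second model pair converts the local Kawasaki computation you left open into two elementary global index calculations on explicit closed (orbi)folds. If you want to complete your version as written, you would need to carry out the $\Z/2$ equivariant signature and Euler defect at $p$ explicitly; the paper's route is shorter.
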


	\begin{proof}
		Both parts will be accomplished by a (linear) excision argument of the ASD operators on the pairs of 4-manifolds broken as $(W \setminus cL(2,1), cL(2,1))$ and $(TS^2, T^*S^2)$. Note that the latter pair can be glued along the boundary to obtain $S^2 \times S^2$. Depending on the limiting flat connection near $p$, we choose $w_2$ of the bundle over $TS^2$ and $T^*S^2$ to be represented either by the zero section or not. With this, we have the corresponding flat connections on $TS^2, T^*S^2$ and $cL(2,1)$ and the connection of interest $A$ on $W \setminus cL(2,1)$.
		
		The standard excision argument, as in \cite[Appendix B]{excision}, shows that $\D_{A, W}$ is Fredholm and gives the following:
		\begin{align*}
		\ind \D_{A, W} + \ind \D_{A_0, S^2 \times S^2} &= \ind \D_{A', (W \setminus cL(2,1)) \cup T^*S^2} + \ind \D_{A_1, Th(TS^2)} \\
		&= 8\kappa(A) - 3(1-b_1(W)+b^+(W)) + \chi (\Sigma) + \frac{1}{2} \Sigma \cdotp \Sigma - 6
		\end{align*}
		where we used \Cref{thm:ind0} and that for $A_1$ we can compute the index directly by working equivariantly on the double branched cover with the Fubini-Study metric on it, to see that $H^1_{A_1} = 0$ and $\dim H^0_{A_1} = \dim H^+_{A_1} = 3$. Finally, noting that $\ind \D_{A_0, S^2 \times S^2} = -6$ by using \Cref{thm:ind0} with empty singular locus, the proof is complete.
	\end{proof}

	We also note the gluing formula for the index when dealing with weighted spaces. We only quote the result as stated in \cite[\S 4.3]{sca} referring to \cite{dym} for details.
	
	\begin{proposition}[{\cite[\S 4.3]{sca}}] \label{prop:gluing}
		Let $(X_i, S_i)$ be pairs ($i = 1, 2$) with cylindrical ends and potentially isolated orbifold points disjoint from $S_i$. Suppose that these have a common boundary component $(Y, K)$. Now, if $A_i$ are connections on $(X_i, S_i)$ which are asymptotically flat on the cylindrical ends such that the flat connections on the component $(Y, K)$ are equal, to say $b$. Then, we can form $A_1 \cup A_2$ and we have
		\[
		\ind (A_1 \cup A_2) = \ind (A_1) + \ind (A_2) + h^0(b) + h^1(b).
		\]
		Here, $h^j(b) = \dim H^j(Y, K; \ad_b)$, the dimension of the cohomology groups of $\ad_b$. \qed 
	\end{proposition}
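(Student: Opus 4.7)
The formula is a standard gluing result for Fredholm indices on cylindrical-end manifolds, adapted to the orbifold setting of \cite{kmknot} and \cite{dym}. My plan is to combine the linear excision principle (already used in the proof of \Cref{thm:ind}) with the change-of-weight formula for weighted Sobolev indices, reading off the correction term from the spectral structure of the cross-section operator at $(Y, K)$.

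First, on each cylindrical end $[0,\infty) \times (Y, K)$ I would put the linearised operator $\D_{A_i}$ into the form $\partial_t + L_b$ in temporal gauge, where $L_b$ is a self-adjoint first-order elliptic operator on $(Y, K)$ built from the $d_b, d_b^*$ of the twisted de Rham complex. Standard Hodge theory on this orbifold complex identifies
\[
\ker L_b \;\cong\; H^0(Y, K; \ad_b) \oplus H^1(Y, K; \ad_b),
\]
of dimension $h^0(b) + h^1(b)$. Because $b$ is flat but possibly non-acyclic, $L_b$ need not be invertible; the small positive weight $\alpha$ required by \Cref{sec:weighted-spaces} serves to shift the spectrum off zero, making $\D_{A_i}$ Fredholm on $\check{L}^2_{k,\alpha}$ and thereby defining $\ind(A_i)$.

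Next, I would apply linear excision to the decomposition of $X_1 \cup_{(Y,K)} X_2$ into its two halves with a long neck inserted around $(Y, K)$, following the template of \cite[Appendix B]{excision} used in \Cref{thm:ind}. In the acyclic case $\ker L_b = 0$ this gives strict additivity $\ind(A_1 \cup A_2) = \ind(A_1) + \ind(A_2)$, since weighted and unweighted indices then coincide. When $\ker L_b$ is nontrivial, the comparison is governed by the spectral-flow-type change-of-weight formula: crossing an eigenvalue of $L_b$ of multiplicity $m$ changes the Fredholm index by $\pm m$. Choosing the weights with opposite signs on the two pieces so that the glued operator corresponds to passing through the eigenvalue $0$ of multiplicity $h^0(b) + h^1(b)$ produces exactly the stated correction on the right-hand side.

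The main obstacle is careful bookkeeping of the sign conventions in the weighted setup of \Cref{sec:weighted-spaces}, together with verifying that the change-of-weight formula continues to hold in the presence of the isolated orbifold cone points permitted in $(X_i, S_i)$. The former is a matter of unpacking definitions; the latter reduces to the fact, already exploited in \Cref{thm:rm-sing} and in \Cref{thm:ind}, that orbifold cone singularities can be treated as interior points after passing to an equivariant branched cover, so that both the spectral decomposition of $L_b$ on $(Y, K)$ and the Fredholm theory of $\D_{A_i}$ on each $(X_i, S_i)$ reduce to the standard weighted APS framework of \cite{dym}.
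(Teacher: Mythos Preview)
The paper does not prove this proposition at all: it is recorded as a quoted result from \cite[\S 4.3]{sca}, with the reader directed to \cite{dym} for details, and the statement is closed immediately with a \qed. Your sketch is essentially the standard derivation one would find in \cite{dym}: write the cylindrical-end operator as $\partial_t + L_b$, identify $\ker L_b \cong H^0 \oplus H^1$, and obtain the correction term from the change-of-weight formula when passing from the positive-weight index convention of \Cref{sec:weighted-spaces} on one piece to the opposite weight needed to glue. One small point of phrasing: the cleanest way to state the argument is that additivity holds exactly when one piece carries weight $+\alpha$ and the other $-\alpha$ (so that the glued operator is Fredholm on unweighted $L^2_k$), and the $h^0(b)+h^1(b)$ term then arises from converting $\ind^{-\alpha}(A_2)$ back to $\ind^{+\alpha}(A_2)$ via the wall-crossing formula; your description of ``passing through the eigenvalue $0$'' is correct in spirit but could be sharpened this way.
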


	\subsection{Maps from Cobordisms} \label{sec:cob-maps}
	We use an alternate description of the moduli spaces involved in the setting with orbifold points so that we can import the result regarding their orientability from \cite{kmknot}. The needed result amounts to an analogue of Uhlenbeck's removable singularities theorem but in the case of orbifold points. \Cref{thm:rm-sing} serves this purpose. Due to this result and the arguments of \cite{kmknot}, we see that all our moduli spaces $M_z(W, S, \pp; \beta_1, \beta_0)$ are orientable. Unless stated otherwise, 4-manifold $W$ has no orbifold points for this sub-section.
	
	We further examine the choices made in determining an orientation in the case when $W$ has two boundary components. Recall that we fixed a choice of classes $\theta_i \in \CB(Y_i, K_i, \pp_i)$ for $i =0, 1$ while specifying $\bold{a}_i$. Remember that $\Lambda (\beta_i)$ is short for $\Lambda (\theta_i, \beta_i)$. Pick representatives $\Theta_i$ for the classes $\theta_i$, and let $A_\Theta$ be a connection on $(W, S, \pp)$ which agrees with $\Theta_i$ at the ends. Define 
	\[
	\Lambda(W, S, \pp)
	\]
	to be the orientations of $\det (\D_{A_\Theta})$.
	
	\begin{defx}
		Let $(Y_i, K_i, \pp_i)$ (for $i = 0, 1$) be manifolds with singular bundle data, and let $(W, S, \pp)$ be a cobordism from the $i = 1$ to $i = 0$. Let the auxiliary bundle data $\boldsymbol{a}_i$ be given on the two ends, as above. We define an $I$-\emph{orientation} of $(W, S, \pp)$ as the choice of an element of $\Lambda(W, S, \pp)$.
	\end{defx}
	
	The above definition is made so that we can naturally identify orientations of the determinant line bundle over $\CB_z(W, S, \pp; \beta_1, \beta_0)$ with
	\[
	\Lambda (\beta_1) \Lambda (\beta_0) \Lambda(W, S, \pp).
	\]
	Having picked an $I$-orientation, every $[A] \in M(W,S, \pp; \beta_1, \beta_0)_0$ defines
	\[
	\varepsilon ([A]) : \Z \Lambda (\beta_1) \to \Z \Lambda (\beta_0).
	\]
	We define a homomorphism in this situation by
	\[
	\<m_W(\beta_1), \beta_0> = \mathlarger{\sum}_{[A] \in M(W,S, \pp; \beta_1, \beta_0)_0} \varepsilon ([A]),
	\]
	giving rise to a chain map 
	\[
	m_W : C_*(Y_1, K_1, \pp_1) \to C_*(Y_0, K_0, \pp_0).
	\]
	Upto chain homotopy, this depends only on $(W, S, \pp)$, $\bold{a}_i$, and the $I$-orientation.
	
	The above discussion goes over verbatim when $W$ has more ends but the boundary critical points on these other ends are fixed. In particular, assuming all these other boundary components are distinguished and $S$ is admissible in the sense of \Cref{defn:metrics}, we can define 
	\[
	m_{\beta^o} : C_*(Y_1, K_1, \pp_1) \to C_*(Y_0, K_0, \pp_0)
	\]
	by 
	\[
	\<m_{\beta^o} (\beta_1), \beta_0> = \mathlarger{\sum}_{[A] \in M(W',S, \pp; \beta_1, \beta_0, \beta^o)_0} \varepsilon ([A])
	\]
	where $W'$ is the orbifold completion of $W$ and $\beta^o$ is the data of critical points on the orbifold points of $W'$. The reason the moduli spaces are oriented is due to above discussion above in conjunction with \Cref{thm:rm-sing} as alluded to in the beginning of this subsection.
	
	Finally, summing over all the possible $\beta^o$, we get a map 
	\[
	m_{W'} : C_*(Y_1, K_1, \pp_1) \to C_*(Y_0, K_0, \pp_0)
	\]
	defined for any orbifold cobordism. There is \emph{a priori} no reason that this should define a \emph{chain map}. The issue is that, using the alternate description of moduli spaces as in \Cref{thm:rm-sing}, there could be breaking of trajectories along the distinguished boundary components that could lead to extraneous terms contributing to the standard compactness-gluing argument. The following lemma will allow us to overcome this difficulty when the distinguished boundary component is a $L(2,1) \cong \RP^3$ and the bundle over it is trivial.
	
	\begin{lemma} \label{lem:no-breaking}
		Let $\pp$ denote the trivial $\so$ bundle over $L(2,1)$ let $A$ be an ASD connection on $\R \times L(2,1)$ with bundle $\pp$ pulled back, in the appropriate weighted moduli space. Then, if $\kappa(A) > 0$, $\ind (A) \neq 1$ and $A$ is a transversely cut out point in the moduli space.
	\end{lemma}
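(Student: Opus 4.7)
The plan is to apply a double-cover argument: lift $A$ via the 2-fold covering $\pi \colon S^3 \to L(2,1)$ to an ASD connection on $\R \times S^3$, use Uhlenbeck's removable singularities theorem to extend to $S^4$, and then translate index and transversality statements back to $\R \times L(2,1)$ via the induced $\Z/2$-equivariant structure. First, since $\pi_1(\RP^3) = \Z/2$ and the non-trivial representation $\Z/2 \to SO(3)$ produces a bundle with $w_2 \neq 0$, the trivial $\so$ bundle over $L(2,1)$ admits a unique flat connection, namely the trivial one; hence $A$ must limit to the trivial flat connection at both ends of $\R \times L(2,1)$. Pulling back, $\tilde A \coloneqq \pi^* A$ is a $\Z/2$-equivariant ASD connection on the trivial $SO(3)$ bundle over $\R \times S^3$ with trivial flat limits and $\kappa(\tilde A) = 2 \kappa(A)$.

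Next I would identify $\R \times S^3$ conformally with $S^4 \setminus \{N, S\}$, extend the deck involution $\sigma$ to an orientation-preserving $\Z/2$-action on $S^4$ fixing $\{N, S\}$, and apply Uhlenbeck's removable singularities at each end. This produces a smooth ASD connection $\bar A$ on some $SO(3)$ bundle $\bar E \to S^4$; since $H^2(S^4; \Z/2) = 0$, $\bar E$ lifts to an $SU(2)$ bundle and $\kappa(\bar A) = 2\kappa(A)$ is a positive integer, so in particular $\kappa(A) \in \tfrac{1}{2}\Z_{>0}$. The equivariance of $\tilde A$ extends to $\bar A$, and $A$ corresponds to the orbifold descent of $\bar A$ to $S^4 / \Z_2$.

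For transversality, I would use that for $\kappa(\bar A) \geq 1$ any ASD $SU(2)$ connection on $S^4$ is irreducible: a reducible splitting would require a non-trivial line sub-bundle, which does not exist on $S^4$. Hence the standard ASD moduli is smooth at $\bar A$, and the equivariant slice theorem gives smoothness of the $\Z/2$-fixed submoduli, which identifies with the orbifold moduli containing $A$; this gives the claimed transversality.

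For the index, I would apply the excision argument underlying \Cref{thm:ind} twice, once at each of the two orbifold cone points of $S^4/\Z_2$, thereby reducing $\ind(A)$ to an index on the smooth closed 4-manifold obtained by capping each cone with $T^*S^2$. Equivalently, one can apply the Atiyah-Singer $\Z/2$-equivariant index theorem, writing $\ind(A)$ as a combination of the closed index $8\kappa(\bar A) - 3$ on $S^4$ with Lefschetz contributions at the two fixed points $\{N, S\}$, where $\sigma$ acts as $-\mathrm{Id}$ on the tangent spaces and by a specific involution (determined by the bundle descent analysis above) on the fiber $\mathfrak{so}(3)$. The main obstacle is bookkeeping these fixed-point contributions precisely enough to verify that the resulting integer-valued expression $\ind(A)$, viewed as a function of $\kappa(\bar A) \in \Z_{\geq 1}$, lies in an arithmetic progression that excludes the value $1$ for all admissible $\kappa(A) > 0$.
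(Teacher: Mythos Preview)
Your proposal has a genuine gap at the very first step: the claim that the trivial $\so$ bundle over $L(2,1)$ admits a unique flat connection is incorrect in the sense relevant here. The moduli spaces in this paper are formed modulo the determinant-$1$ gauge group (equivalently, one lifts to an $SU(2)$ bundle and uses $SU(2)$ gauge transformations), not the full $SO(3)$ gauge group. Under this finer equivalence there are \emph{two} flat connections $\theta_\pm$ on the trivial bundle over $L(2,1)$: the trivial one $\theta_+$, and $\theta_-$ corresponding to the representation sending the generator of $\pi_1(\RP^3)\cong\Z/2$ to $-1 \in SU(2)$. These coincide as $SO(3)$ connections but are not related by an even gauge transformation. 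Consequently you must also treat the case where $A$ limits to $\theta_+$ at one end and $\theta_-$ at the other, which your double-cover argument does not address.

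Even in the equal-limits case your energy quantization is too coarse. Lifting to $S^4$ gives only $\kappa(A)\in\tfrac12\Z_{>0}$, and since the gluing computation yields $\ind(A)=8\kappa(A)-3$ for equal limits, the value $\kappa(A)=\tfrac12$ would give exactly $\ind(A)=1$. The paper instead glues the two ends of $A$ together to obtain a connection on the closed manifold $S^1\times L(2,1)$ and invokes the congruence $p_1\equiv 0\pmod 4$ for $SO(3)$ bundles with $w_2=0$ there, forcing $\kappa(A)\in\Z$ and hence $\ind(A)\geq 5$. For the mixed-limits case $\theta_+\to\theta_-$ the paper caps each end with a disk bundle $D(T^*S^2)$ or $D(TS^2)$ (carrying the appropriate $w_2$) to form $S^2\times S^2$, and a similar $p_1\equiv 2\pmod 4$ congruence yields $\ind(A)\equiv 3\pmod 8$. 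Your equivariant-$S^4$ route could in principle recover both constraints, but only after tracking the $\Z/2$-action on the fibres at the two fixed points (trivial versus $-1$, according to the limiting $\theta_\pm$) and carrying out the Lefschetz fixed-point computation in full---precisely the bookkeeping you concede is unfinished. Finally, note that irreducibility of $\bar A$ on $S^4$ does not by itself give transversality; the paper obtains transversality directly from the Weitzenb\"ock formula on the positive-scalar-curvature metric.
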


	\begin{proof}
		The transversality observation is due to the Weitzenbock formula (cf. \cite[pp. 94]{fu}). We now proceed to show that $\ind (A)  \neq 1$. Since $A$ is an element of the appropriate weighted moduli space, we know it limits to flat connections on the two ends. Let $\theta_\pm$ denote the two flat connections on the trivial bundle on $L(2,1)$ up to gauge equivalence where $\theta_+$ is the trivial connection and the representation defined by $\theta_-$ sends the generator to the non-trivial element of the centre of $SU(2)$.
		
		First, consider the case when the limiting flat connections are equal, say $\alpha$. Then, gluing the two ends together, we get a connection $A'$ with index computed by the gluing formula (\Cref{prop:gluing}) as,
		\[
		\ind (A') = \ind (A) + h^0(\alpha) + h^1(\alpha) = \ind (A) + 3.
		\]
		
		Further, since $A'$ is defined on the closed manifold $S^1 \times L(2,1)$, we also have 
		\[
		\ind (A') = 8\kappa(A') - 3(1-1+0) = 8\kappa(A).
		\]
		We conclude that $\ind (A) = 8\kappa(A) - 3$ where 
		\[
		\kappa(A) = -\frac{1}{4} p_1(\pp') \geq 0
		\]
		where $\pp'$ is equivalent to $\pp$ pulled back. We are writing $p_1$ even though we really mean its pairing with the compact fundamental class and we shall continue to do so. But, by the results of \cite{dw}, we see that $p_1(\pp') \equiv 0 \pmod 4$. Hence, we see that $\ind (A) \geq 5$ in this case.
		
		If the limiting connections are different, we can cap off each end by $D(T^* S^2)$ or $D(TS^2)$ depending on the orientation of the end to form a closed manifold diffeomorphic to $S^2 \times S^2$. Further, by specifying by the Steifel-Whitney class to be the zero section on the manifold attached to the end with limit $\theta_-$, we get an ASD connection $A'$ on $S^2 \times S^2$ with bundle $\pp'$ and $w_2(\pp') = P.D. ( S^2 \times \{*\} \cup \{*\} \times S^2 )$; here we use the canonical flat connections on the disk bundles compatible with the bundle, call these $A_\pm$. By this construction, we have 
		\begin{align*}
		\ind (A') &= \ind (A) + \ind (A_-) + \ind (A_+) + h^0(\theta_-) + h^0(\theta_+) + h^1(\theta_-) + h^1(\theta_+) \\
		&= \ind (A) + (-3-3) + (-3) + (3) + (3) + (0) + (0) \\
		&= \ind (A) - 3.
		\end{align*}
		
		Using the index formula in the closed case, we have 
		\[
		\ind (A') = 8 \kappa (A') - 3(1-0+1) = -2 p_1(\pp') - 6.
		\]
		
		Here, we have $p_1(\pp') \leq 0$ as before and \cite{dw} tells us that in this case $p_1(\pp') \equiv 2 \pmod 4$.Putting this all together, we have
		\begin{align*}
			\ind (A) &= \ind (A') - 3 \\
			&= -2p_1(\pp') - 9 \\
			&\equiv 3 \pmod 8.
		\end{align*}
		
		We conclude that $\ind (A)  \neq 1$ in this case as well.
	\end{proof}
	
	This lemma tells us that there is no breaking along the $\RP^3$ components and so the map $m_{W'}$ in this case is a genuine chain map. Recall that the gluing can potentially involve a gluing parameter and so the case of interest to rule out is the case $\ind(A) = 1$. We suspect that the analogous result holds for all $L(p, q)$ but the argument given above does not generalise.
	
	\subsection{Local Coefficients}
	\label{sec:local-coefficients}
	
	We describe a local coefficient system that we will use following \cite{yaft}. For a more detailed and general discussion, we refer the reader to \cite[\S 3.9]{yaft}. For this section, we will assume that our singular loci are all oriented.
	
	Suppose $K \subset Y$ is a link with components $K_i$ where $i = 1, \cdots, n$. Fix a framing for $K$. Recall from \cite[\S 3.1]{kmknot} that we have a reduction of structure group to $U(1)$ in a tubular neighbourhood of $K$. Using the framing of $K$, we can  talk about $\mathrm{Hol}_{\lambda_i} (B)$ for any singular connection $B$ where $\lambda_i$ are the longitudes of $K_i$ specified by the framing of $K_i$. 
	
	Formally, 
	\[
	\mathrm{Hol}_{\lambda_i} (B) = \lim_{\epsilon \to 0} \mathrm{hol}_{K_{i, \epsilon}} (B) \in U(1)
	\]
	where $K_{i, \epsilon} = \{ \epsilon \} \times K_i \subset D^2 \times K_i \cong \nu(K_i)$ where the latter identification is the chosen framing of $K_i$.
	
	These give us a function
	\[
	\mathrm{Hol}_{\lambda} : \CB = \CB_k (Y, K, \pp) \to U(1)^n.
	\]
	
	We can now pull-back the local coefficient system $U(1)$ described in \cite[\S 3.9]{yaft} to get one on $\CB$. Before we describe it, we note that the framing does not affect this definition in any substantial manner (cf \cite[\S 3.9]{yaft}). Explicitly, this local system has fibre
	\[
	\Gamma_B = \Z [t_1^{\pm 1}, \cdots, t_n^{\pm 1}] \cdotp \left( \prod_{i=1}^{n} t_i^{\frac{1}{2 i \pi}\log (\mathrm{Hol}_{\lambda_i}(B))}\right)
	\]
	at $B \in \CB$. 
	
	Any cobordism of pairs $(W, S)$ also naturally gives a map between the local systems as defined above. For simplicity, assume that $\partial S$ has two components, each on one end of the cobordism. Now, we can pick a framing of $S$ and a reduction of the singular bundle to $U(1)$ along it (here we need $S$ to be oriented) and suppose $A$ is an ASD connection on $(W, S)$ with limits $B_\pm$ on the ends.
	
	Define $\Delta_A (p) = p \cdotp t^{\nu (A)} : \Gamma_{B_-} \to \Gamma_{B_+}$ where $\nu(A)$ is the monopole number of $A$. The monopole number in this context is defined by the following equation:
	\[
	\nu (A) = \frac{i}{2\pi} \lim_{\epsilon \to 0} \int_{S_\epsilon} F_A
	\]
	where $S_\epsilon = \{ \epsilon \} \times S \subset \nu(S) \cong D^2 \times S$ where the latter identification is provided by the framing of $S$. Again, this definition does not depend on the framing of $S$ in any essential way. 
	
	So far the discussion is mostly a restatement of the material of \cite[\S 3.9]{yaft}. Now, we want to extend this definition to the case when $S = S' \cup S_{model}$ where $S'$ is a product (we allow $S' = \emptyset$) and $S_{model}$ is either a disk, a sphere or a disjoint union of two disks with the boundary on different ends of the cobordism.
	
	In each of the three cases (assuming $S' = \emptyset$), we attach an extra variable $T$ on the boundary components with empty link and the default variable $t_0$ (we use $t_0'$ to distinguish between potentially different knots) is associated to the knots in the boundary. 
	
	We \emph{define} the action on local coefficients as
	\begin{align*}
		\Delta_A (p(t_0)) &= p(T) \cdotp T^{\nu (A)} \quad \text{if } S_{model} \cong D^2 \text{ and } \partial_{+} S_{model} = \emptyset \\
		\Delta_A (p(T)) &= p(t_0) \cdotp t_0^{\nu (A)} \quad \text{if } S_{model} \cong D^2 \text{ and } \partial_{-} S_{model} = \emptyset \\
		\Delta_A (p(T)) &= p(T) \cdotp T^{\nu (A)} \quad \text{if } S_{model} \cong S^2 \\
		\Delta_A (p(t_0')) &= p(t_0) \cdotp {t_0}^{\nu (A)} \quad \text{if } S_{model} \cong D^2 \cup D^2 \text{ and } \partial_{\pm} S_{model} \neq \emptyset.
	\end{align*}
	These have a clear extension to the case $S' \neq \emptyset$ which is only difficult to write down due to notational complexity. We end this section with two observations about the above construction:
	\begin{enumerate}[label=(\roman*)]
		\item In all but the case of $S_{model}$ being closed, the framing of $S_{model}$ (and consequently $K$) matter in the above definition as, unlike the earlier discussion, there are no canonical isomorphisms on the local system on the 3-manifold chain groups intertwining with the canonical isomorphisms with the local system on the knot chain groups induced by the change of framing.
		
		\item The usual composition laws as stated in \Cref{sec:metrics-compactness-gluing} continue to hold.
		
	\end{enumerate}
	
	\subsection{Families of Metrics}
	\label{sec:metrics-compactness-gluing}
	
	In this section, we focus on the aspect of perturbations when using a family of \emph{cut metrics} (in the sense of \cite[\S 3.9]{kmknot}) to construct homotopies. For fuller details on the construction of these metrics, we direct the reader to \cite[\S 3.9]{kmknot}. We will also freely borrow terminology from the aforementioned section without further comment.
	
	Suppose that $(W, S, \pp)$ is a cobordism from $(Y_1, K_1, \pp_1)$ to $(Y_0, K_0, \pp_0)$ and we have a family of metrics $G \cong [0, +\infty]$ where $+\infty$ corresponds to having a \emph{cut} hypersurface pair $(Y_c, K_c)$. For this discussion, we further assume $Y_c$ is connected, closed and that $Y_c \subset \text{int}(W)$.
	
	To explicitly describe the family of (orbifold) metrics, let $r$ denote the normal coordinate to $Y_c$. Then, the family of metrics takes the form
	\[ \check{g}_s = f_s(r)^2 dr^2 + \check{g}_{Y_c} \]
	where $f_s(r) = 1$ for every $s \in [0, +\infty]$ when $|r| > 1$ and when $r \in [-1, 1]$, $f_s(r) \to \frac{1}{r}$ as $s \to +\infty$. Recall that $r = 0$ corresponds to the hypersurface $Y_c$.
	
	Thus, when $s \gg 0$, we have an isometric copy of $[-T_s, T_s] \times Y_c$ embedded in $W$. In these coordinates, the perturbations take the form
	\[
	\hat{V}_{\pi_c} + \psi_-(t) \hat{V}_{(\pi_{c, -})'} + \psi_+(t) \hat{V}_{(\pi_{c, +})'}
	\]
	where the functions $\psi_\pm$ are bump functions supported near $t= \pm T_s$. While the primary perturbation $\pi_c$ is required to be independent of $s$, the secondary terms $(\pi_{c, \pm})'$ are allowed to depend on $s$. The appropriate transversality result is recorded in \cite[Proposition 3.24]{kmknot}.
	
	To bring the analogue of the material in \Cref{sec:energy} to bear in this context, we allow perturbing the metric $g_s$ in the region $t \in [-T_s-1, -T_s] \cup [T_s, T_s + 1]$. Adapting the arguments of \cite[\S 3.9]{kmknot} as we did in the previous situation in \Cref{appendix:perturbations}, we have the analogous results of \Cref{sec:energy} in this situation. Specifically, \Cref{cor:mparampert} continues to hold when the space of metrics parametrises cut metrics as above.
	
	The above discussion naturally generalises to the case when $Y_c$ has $n$ connected components (in which case $G \cong [0, +\infty]^n$); we refer the reader to \cite[\S 3.9]{kmknot} for specifics and orientation conventions in this situation.
	
	Finally using the above set-up and orientation conventions, we record the an equation which we shall use often. To set the stage for it, let $G$ be a compact family of cut metrics such that $\pd[] G = \sqcup_j G_j' \times G_j''$ with the cut hypersurface $Y_c^j$. Suppose further that the metrics $G_j'$ and $G_j''$ are families of metrics on the two components of $(W \setminus Y_c^j, S \setminus S \cap Y_c^j)$. Denote $m_j^\circ = m_{G_j^\circ}$ where $\circ \in \{', ''\}$. Then,
	
	\begin{equation} \label{eqn:parametrised-metrics}
		\sum_j (-1)^{\dim G''_j \cdotp \dim G'_j} m''_j \circ m'_j = \partial \circ m_G - (-1)^{\dim G} m_G \circ \partial.
	\end{equation}
	
	\begin{rmk}
		This formula continues to hold when there are isolated orbifold points of order $2$ due to \Cref{lem:no-breaking}.
	\end{rmk}
	
	\subsection{Cobordism with Non-degenerate Ends}
	\label{sec:cobordism-non-degen}
	
	We will have need to deal with a situation where our 4-manifold with have non-degenerate ends. Since we need only a particular topological set-up, rather than introduce new definitions and prove the requisite statement in any generality, we settle for the specific cases we will need.
	
	\begin{proposition} \label{prop:non-degenrate-ends}
		Suppose $(W, S)$ is a pair with $\partial (W, S) = -(Y, K) \sqcup (Y, K) \sqcup (S^2 \times S^1, \emptyset)$ or $\partial (W, S) = -(Y, K) \sqcup (Y, K) \sqcup (S^2 \times S^1, \{p_n, p_s\} \times S^1)$; denote the last boundary pair by $(X, C)$. Let $\pp_W$ be the bundle over it such that $\pp_W|X$ is trivial and $\pp_W$ satisfies the non-integrality condition. 
		
		Let $Q$ be a space of metrics and let $\U \subset \chi_{SU(2)} (X, C)$. Assume that if $\dim Q = 0$, $\U$ is  open interval in the interior of $\chi_{SU(2)} (X, C) \setminus \chi_{SU(2)} (X, C)^{\text{central}}$ and if $\dim Q > 0$, assume $\U$ to be a finite collection of points in the interior of $\chi_{SU(2)} (X, C) \setminus \chi_{SU(2)} (X, C)^{\text{central}}$.
		
		Then, we can form a linear map 
		\[
		m(W, S, \pp_W, \U) :  C(Y, K) \to C(Y, K)
		\]
		by the equation 
		\[
		\< m (W, S, \pp_W, \U) (\mathfrak{a}), \mathfrak{b}> = \# M(W, S, \pp_W; \mathfrak{a}, \mathfrak{b}, \U)_0.
		\]
		
		Here, if $\U$ is not a finite collection of points, we have chosen appropriate perturbation such that 
		\begin{equation} \label{eqn:transverality}
			M(W, S, \pp_W; \mathfrak{a}, \mathfrak{b}, \partial \U) = \emptyset
		\end{equation}
		when expected dimension of $M(W, S, \pp_W; \mathfrak{a}, \mathfrak{b}, \U)$ is $0$.
		
		Then, no breaking of trajectories along $(X, C)$ occurs and so the map is a chain map as usual.
	\end{proposition}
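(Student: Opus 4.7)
My plan is to set up $M(W, S, \pp_W; \mathfrak{a}, \mathfrak{b}, \U)$ using weighted Sobolev spaces on the $(X, C)$-end (required since $\U$ consists of reducibles with $U(1)$-stabiliser), establish transversality and compactness in the standard way, and then rule out breaking at $(X, C)$ via an energy argument exploiting that $\U$ sits inside a Morse--Bott critical manifold on which the Chern--Simons functional is constant.

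For the initial setup, I would attach cylindrical ends to all three boundary components and employ the weighted framework of \Cref{sec:weighted-spaces} on the $(X, C)$-end, with orientations handled by the complex-orientation convention of \Cref{sec:complex orientations} (suitable for reducible, non-acyclic limits). Transversality is attained by combining the holonomy perturbations of \Cref{sec:energy} with metric perturbations, invoking \Cref{prop:mtransverse} and \Cref{cor:mparampert}: non-integrality of $\pp_W|_{(Y,K)}$ forces any solution to be irreducible on an open subset of $\check W$ disjoint from the $(X, C)$-end, which verifies the irreducibility hypothesis of \Cref{prop:mtransverse}. The assumption \eqref{eqn:transverality} excludes solutions whose $(X, C)$-limit lies on $\partial \U$ in the expected dimension. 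Standard Uhlenbeck compactness from \cite{kmknot} then yields subsequential convergence modulo bubbling (codimension $\geq 4$, hence absent in expected dimension $0$) and modulo trajectory breaking at the three ends; the $(Y, K)$-breakings contribute the familiar $\partial \circ m \pm m \circ \partial$ terms, so the whole argument reduces to excluding breaking at $(X, C)$.

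The heart of the proof, and its main obstacle, is this non-breaking at $(X, C)$. In both stated situations the non-central locus of $\chi_{SU(2)}(X, C)$ is a connected $1$-dimensional Morse--Bott critical manifold of reducible $U(1)$-flat connections, and the Chern--Simons functional is constant on it: for $X = S^2 \times S^1$ every flat connection is gauge-equivalent to $c\,d\theta$ with $c$ in a Cartan subalgebra, so $A \wedge A = 0$ and $dA = 0$, hence $CS \equiv 0$ on the critical locus; the singular case with $C = \{p_n, p_s\} \times S^1$ is handled analogously. Consequently, any ASD trajectory on $\R \times (X, C)$ with limits $\alpha', \alpha \in \overline{\U}$ satisfies $\kappa = CS(\alpha') - CS(\alpha) = 0$ by the energy identity of \Cref{sec:energy}; since $\kappa \geq 0$ on ASD connections with equality only when $F_A = 0$, any such trajectory is flat, hence pulled back from a single point of $\U$, i.e.\ a constant trajectory. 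No non-trivial cylinder trajectory is available to split off at $(X, C)$, and together with the exclusion of $\partial \U$ limits this shows that $\overline{M(W, S, \pp_W; \mathfrak{a}, \mathfrak{b}, \U)_1}$ has only the standard $(Y, K)$-breakings, yielding the chain-map identity.

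The most delicate step will be executing the Morse--Bott index bookkeeping in the weighted reducible setting rigorously: one must use \Cref{prop:gluing} to verify the dimension count across the full family $Q$ of metrics and confirm that no parasitic index-$1$ cylinder trajectory can exist in the weighted moduli space. When $\dim Q > 0$ and $\U$ is a finite point set, the energy argument above applies pointwise, while for $\dim Q = 0$ with $\U$ an open interval the Morse--Bott/energy argument delivers the conclusion directly as stated.
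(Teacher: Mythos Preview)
Your energy argument for ruling out breaking at $(X,C)$ has a gap. You claim that any ASD trajectory on $\R\times(X,C)$ with limits $\alpha',\alpha\in\overline{\U}$ has $\kappa=CS(\alpha')-CS(\alpha)=0$ because $CS$ vanishes on the flat locus. But $CS$ is only well-defined on the configuration space modulo the periods coming from $\pi_0$ of the gauge group; equivalently, the energy of a trajectory depends on its relative homotopy class $z$, not just on the endpoint flat connections. On $\R\times S^2\times S^1$ with the trivial bundle there are genuine positive-energy ASD connections (after capping the ends by $S^1\times D^3$ one obtains $S^1\times S^3$, on which $\kappa$ can be any non-negative integer), so your argument only disposes of the trivial homotopy class and leaves the instanton classes untouched. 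The paper instead caps off and uses \Cref{prop:gluing} together with $h^0=h^1=1$ for non-central points of $\chi_{SU(2)}(X,C)$ and $\ind(A_\pm)=-1$ on $S^1\times D^3$ to obtain $8\kappa(A)=\ind(A)+2$; integrality of $\kappa$ on the closed glued manifold then forces $\ind(A)\geq 6$ whenever $A$ is not flat, which is what actually excludes breaking in the relevant $1$-dimensional families. You gesture toward this at the very end (``confirm that no parasitic index-$1$ cylinder trajectory can exist''), but that is the entire content of the non-breaking step, not a residual bookkeeping issue.

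There is a second omission: the proposition asserts the \emph{existence} of a perturbation achieving \eqref{eqn:transverality}, whereas you treat \eqref{eqn:transverality} as a standing hypothesis. The paper supplies this by a dimension count: for fixed $\rho\in\partial\U$ non-central, the same linear gluing shows that any solution with $(X,C)$-limit $\rho$ lying in a $0$-dimensional parametrised moduli space has $\ind(A)=-1$, and since $A$ is irreducible by non-integrality one can perturb (via \Cref{cor:mpert}) to make such solutions disappear; if an endpoint of $\U$ is central the corresponding index is $-3$ and the conclusion is immediate. Your proposal should include this argument rather than assume its output.
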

	
	\begin{proof}
		First, we compute a lower bound on the energy needed to break off a ASD connection on the $S^2 \times S^1$ end. Suppose $A$ is an ASD connection on $S^2 \times S^1 \times \R$ with boundary limiting connections in $\U$. By capping off the ends by $S^1 \times D^3$ equipped with flat connections $A_\pm$, we then have 
		\begin{align*}
			8 \kappa (A) &= \ind (A_- \cup A \cup A_+) \\
			&= -1 + 2 + \ind (A) + 2 + -1 \\
			&= \ind (A) + 2.
		\end{align*}
		Here, we have used that any point in the interior of $\chi_{SU(2)}(X, C) \setminus  \chi_{SU(2)} (X, C)^{\text{central}}$ has $h^0 = h^1 = 1$ and \Cref{prop:gluing}. Additionally, we use $\ind(A_\pm) = -1$ which can be seen by direct computation. Hence, if $A$ is not flat, $\ind (A) \geq 6$ proving the first assertion in the case of $C = \emptyset$. The other case is exactly the same.
		
		Secondly, assuming $\U$ is not a finite collection of points, we prove the existence of the perturbation asserted when $C = \emptyset$. Fix $\mathfrak{a}$ and $\mathfrak{b}$ such that the expected dimension of $M(W, S, \pp_W; \mathfrak{a}, \mathfrak{b}, \U)$ is 0. Since $\U$ is an open set, the expected dimension of any connection in associated configuration space is same as that of any connection in $\CB(W, S, \pp_W; \mathfrak{a}, \mathfrak{b}, \chi_{SU(2)} (S^2 \times S^1) \setminus \chi_{SU(2)} (S^2 \times S^1)^{central})$. A linear gluing argument as above shows that any such connection $A$ must have $\ind (A) = -1$. Since $A$ is irreducible due to the non-integrality condition, we see that we can choose perturbations of the form described in \Cref{sec:energy} (see \Cref{cor:mpert}) to ensure such $A$ do not appear with the limiting connection on $S^2 \times S^1$ being a fixed $\rho \in \chi_{SU(2)} (S^2 \times S^1)$. As $\U$ is an open interval, it has two end points. If both end points are not in $\chi_{SU(2)}(S^2 \times S^1)^{central}$, we're done. Else, we need only note that the linear gluing argument shows that $A$ must have had $\ind (A) = -3$ in the case its limiting connection $\rho$ on $S^2 \times S^1$ is central.
	\end{proof}
	
	\begin{rmk} \label{rmk:non-degen-transversality}
		The above proof in addition shows that along a 1-parameter family of perturbations, could be connections on $(W, S)$ whose limiting flat connection on $(X, C)$ is in $\partial \U$. Hence, $m$ is not independent (up to chain homotopy) of the choice of perturbations. However, since the condition \eqref{eqn:transverality} is an open condition, we can still vary the perturbation inside an open set to satisfy it. Suppose two maps $m_1$ and $m_2$ are related by changing the perturbation in such an open set, we will denote it by 
		\[
		m_1 \simeq_{\U} m_2.
		\]
		
		In particular, note that we can take the metric perturbations as described in \Cref{sec:energy}. In general, there is wall crossing behaviour which contributes terms coming from moduli spaces of the form $M(W, S, \pp_W; \mathfrak{a}, \mathfrak{b}, \partial \U)$; this phenomenon appears in \Cref{claim:proj}.
	\end{rmk}
	
	\section{Energy Ordered Morphisms} \label{sec:energy-order}
	
	In this section we shall give a way to order the critical points of the (perturbed) Chern-Simons functional so that some morphisms can be expressed as an upper triangular matrix with respect to this basis. To start, we use the following definition.
	
	\begin{defx} \label{defn:energy-basis}
		Let $\{\mathfrak{a}_i\}_{i = 1}^n$ be called an energy ordered basis for the chain group $C(Y, K)$ if the collection is the set of all critical points of a perturbed Chern-Simons functional and whenever $\mathrm{gr}(\mathfrak{a}_i) \equiv \mathrm{gr} (\mathfrak{a}_j) \pmod 4$ with $i < j$, then there exists a connection $A$ on $\R \times (Y, K)$ such that $[A|_{- \infty}] = \mathfrak{a}_j$ and $[A|_{+ \infty}] = \mathfrak{a}_i$ with $\ind (A) = 0$ and $\kappa (A) \geq 0$. Note that $A$ need not be ASD.
	\end{defx}
	
	\begin{rmk}
		This gives a total ordering on the standard basis elements in every fixed canonical $\Z / 4$ grading.
	\end{rmk}
	
	\begin{defx} \label{defn:energy-ordered-morphism}
		Suppose $(W, S)$ is a cobordism from $(Y, K)$ to itself, $\pp_W$ a bundle over it and $Q$ is some parameter space of metrics on it. Further, we require $\pp_W$ to restrict to isomorphic bundles, say $\pp_Y$ on the boundary components and be non-integral. Further, assume that $w_2(\pp_W) = 0 \in H^2(W, \partial W; \Z/2)$. Let $(W', S')$ be the pair obtained by gluing the two boundary components to each other. Then, if
		\[
		- \dim Q = -3(1 - b_1(W') + b^+(W')) + \chi(S') + \frac{1}{2} S' \cdotp S'
		\]
		we call the linear map 
		\[
		m_Q(W, S, \pp_W) : C(Y, K, \pp_Y) \to C(Y, K, \pp_Y)
		\]
		\emph{energy ordered} and also refer to the data $(W, S, \pp_W, Q)$ as \emph{energy ordered}.
	\end{defx}

	\begin{rmk} \label{rmk:grading-shift}
		We note that in the above context, we have $i(W, S, \pp_W) \equiv 0 \pmod 4$ by \cite[Proposition 4.4]{kmknot}.
	\end{rmk}
	
	\begin{proposition} \label{prop:energy-ordered-morphism}
		Let $(W, S)$ be a cobordism from $(Y, K)$ to itself, $\pp_W$ a bundle over it, and $Q$ a parameter space of metrics on it such that $L = m_Q(W, S, \pp_W)$ is energy ordered. Suppose $L_k$ denotes the restriction of $L$ to the chain subgroup of grading $k$. Then, $L_k$ maps into the chain subgroup of grading $k$.
		And, the matrix for $L_k$, as a matrix with entries in $\Z [T, T^{-1}]$, is upper triangular with respect to any energy ordered basis for every $k \in \Z / 4$. Further, if $\dim Q > 0$, then the diagonal entries of $L_k$ are zero.
	\end{proposition}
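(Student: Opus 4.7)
The plan is to derive all three claims—grading preservation, upper triangularity, and vanishing of diagonal entries when $\dim Q > 0$—by converting each into a topological index constraint on a closed orbifold obtained from $W$. The key construction is to glue a hypothetical contributing ASD connection $A_W$ on $(W, S, \pp_W)$ to a reference cylindrical connection $A_{cyl}$ supplied by the energy-ordered basis hypothesis, producing a connection $A'$ on the closed pair $(W', S')$, and then to compare the Fredholm index of $A'$ via the gluing formula \Cref{prop:gluing} with the closed index formula \Cref{thm:ind0}/\Cref{thm:ind}. Grading preservation follows immediately: \Cref{rmk:grading-shift} gives $i(W, S, \pp_W) \equiv 0 \pmod 4$, while the defining identity of \Cref{defn:energy-ordered-morphism} forces the total $\Z/4$-degree of $L$ to be zero, so $L_k$ sends chains of grading $k$ to grading $k$.

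For the main gluing argument, suppose $\langle L_k(\mathfrak{a}_j), \mathfrak{a}_i \rangle$ receives a non-zero contribution with $i \geq j$. The contributing $A_W$ satisfies $\ind(A_W) = -\dim Q$, and \Cref{defn:energy-basis} (or the constant path when $i=j$) produces a cylinder $A_{cyl}$ from $\mathfrak{a}_i$ at $-\infty$ to $\mathfrak{a}_j$ at $+\infty$ with $\ind(A_{cyl}) = 0$ and $\kappa(A_{cyl}) \geq 0$. Concatenation yields $A' = A_W \cup A_{cyl}$ on $(W', S')$, and the gluing formula combined with the non-degeneracy of the critical points (giving $h^0 = h^1 = 0$) yields $\ind(A') = -\dim Q$. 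On the other hand, the closed index formula together with the defining identity of an energy-ordered morphism gives $\ind(A') = 8\kappa(A') - \dim Q$, so $\kappa(A') = 0$ and hence $\kappa(A_W) + \kappa(A_{cyl}) = 0$. Both summands are non-negative (the first by the ASD inequality $\int \tr F^2 = \int |F^-|^2 \geq 0$, the second by hypothesis), so both vanish. By \Cref{dflat}, applied in the time-independent perturbation framework of \Cref{sec:energy}, $A_W$ is then deformed flat.

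The moduli of deformed-flat connections with the prescribed boundary values near $A_W$ has expected dimension $h^1(W, S; \mathfrak{g}_{A_W}) \geq 0$, and belonging to a zero-dimensional parametrized moduli over $Q$ forces $h^1 + \dim Q = 0$. For $\dim Q > 0$ this is impossible, giving both the vanishing of diagonal entries ($i = j$) and the strict upper triangularity ($i > j$) in this regime. The main remaining obstacle is the subdiagonal case for $\dim Q = 0$; here I would exploit the identity $\int_W \tr F_{A_W}^2 = 8\pi^2[\,CS(\mathfrak{a}_j) - CS(\mathfrak{a}_i)\,]$ (in the lift determined by the topological class) to conclude that $\kappa(A_W) = 0$ forces the Chern--Simons values at the ends to coincide, and combine this with the asymmetry of \Cref{defn:energy-basis} together with the transversality provided by \Cref{cor:mparampert} to rule out the remaining rigid flat connections. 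A secondary subtlety, the non-negativity of $\kappa$ for perturbed ASD on the cobordism itself, I would handle by restricting to the time-independent holonomy perturbation class of \Cref{sec:energy}, for which the perturbed energy $\tilde\kappa \geq 0$ recovers the required estimate.
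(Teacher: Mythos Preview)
Your overall architecture matches the paper's: glue a contributing instanton $A_W$ to the reference cylinder $A_{cyl}$ supplied by \Cref{defn:energy-basis}, close up to a connection $B$ on $(W',S')$, and combine the closed index formula with the energy-ordered hypothesis to force $\kappa(B)=0$, hence $\kappa(A_W)=\kappa(A_{cyl})=0$. The grading claim via \Cref{rmk:grading-shift} is also the paper's argument.

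There is, however, a genuine gap in your treatment of the off-diagonal entries. You invoke \Cref{dflat} to conclude that $A_W$ is deformed flat from $\kappa(A_W)=0$, but that lemma carries the hypothesis $[A_{-\infty}]=[A_{\infty}]$, which you have not established when $i\neq j$. The hypothesis is not decorative: the perturbed energy $\tilde\kappa$ and the topological energy $\kappa$ differ by $2\bigl(f_\pi(A_{-\infty})-f_\pi(A_{\infty})\bigr)$, and it is $\tilde\kappa=0$, not $\kappa=0$, that forces deformed flatness. The paper reverses your order of steps: it uses $\kappa(A_{cyl})=0$ \emph{first} to force $i=j$---this is precisely what the energy-ordered basis encodes, since for distinct critical points the index-zero reference path has strictly positive $\kappa$---and only then, with equal endpoints in hand, appeals to \Cref{dflat}. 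Your acknowledged ``remaining obstacle'' for the $\dim Q=0$ off-diagonal case is in fact the entire off-diagonal argument, and your sketched fix via Chern--Simons values is essentially what the paper's one-line step already does.

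Your transversality argument for the diagonal case is also vaguer than necessary. The paper's version is sharper: since the deformed-flatness equation is metric-independent, the summand $T_qQ$ lies in the kernel of the linearised parametrised operator $T_qQ\oplus H^1_A\to H^{+_q}_A$; irreducibility of $A$ together with $\ind(A)=-\dim Q$ makes this map index zero, so surjectivity is equivalent to injectivity, and $\dim Q>0$ rules out injectivity. This directly contradicts regularity of $(q,[A])$. Your ``$h^1+\dim Q=0$'' formulation does not clearly encode this mechanism.
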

	
	\begin{proof}
		By \Cref{rmk:grading-shift}, the first assertion follows. Now, suppose $L_k = (L_{i, j})$ are the matrix entries with respect to an energy ordered basis $\{\mathfrak{a}_i\}_{i = 1}^n$ for the chain subgroup of grading $k$. Then, suppose that $i \leq j$ and there is an ASD connection $A$ on $(W, S)$ and a metric $q \in Q$ with 
		\[
		(q, [A]) \in M_Q(W, S, \pp_W; \mathfrak{a}_i, \mathfrak{a}_j)_0
		\]
		a transverse point.
		
		Then, $\ind (A) = - \dim Q$. Now if $A_0$ is a connection as in \Cref{defn:energy-basis}, then we can form a connection $A \cup A_0$ glued along the end with critical point $\mathfrak{a}_j$; we take $A_0$ to be flat if $i = j$. Now, as the limiting connections are gauge equivalent, we can glue them together to finally get a connection $B$ on $(W', S')$. By linear gluing and the index formula in the closed case, we have that 
		\begin{align*}
			-\dim Q &= \ind(A) + \ind(A_0) \\
			&= \ind (B) \\
			&= 8\kappa (B) - 3(1 - b_1(W') + b^+(W')) + \chi (S') + \frac{1}{2} S' \cdotp S'.
		\end{align*}
		
		Hence, as $(W, S, Q)$ is energy ordered, $0 = \kappa (B) = \kappa (A) + \kappa (A_0)$. Now $\kappa (A), \kappa(A_0) \geq 0$ as $A$ is ASD and as $A_0$ is as in \Cref{defn:energy-basis}. We conclude that in fact $\kappa (A) = \kappa (A_0) = 0$. The latter equation implies that $i = j$ and so we conclude that $L_{i, j} = 0$ if $i < j$.
		
		Next, when $i = j$, \Cref{dflat} applies to show that $A$ satisfies the deformed flatness equation. As this equation is independent of the metric, we see that the linearised ASD equations as a map 
		\begin{equation} \label{eqn:linearised-ASD}
			T_q Q \oplus H^1_A \to H^{+_q}_A
		\end{equation}
		has $T_q Q$ contained in its kernel. Now, $\ind (A) = - \dim Q$, along with irreducibility of $A$, implies that the map in \eqref{eqn:linearised-ASD} is surjective if and only if it is injective. Thus, a deformed flat connection can never be transverse which contradicts that $(q, [A])$ is transverse. We conclude that $L_{i, i} = 0$ if $\dim Q > 0$.
	\end{proof}
	
	For our applications, we need an extension of the above result to one more setting. We shall state and prove the appropriate result only in the specific case we need even though it is likely true in greater generality.
	
	\begin{proposition} \label{prop:energy-ordered-non-degenerate}
		Suppose $(W, S)$ is a pair with $\partial (W, S) = -(Y, K) \sqcup (Y, K) \sqcup (S^2 \times S^1, \emptyset)$ or $\partial (W, S) = -(Y, K) \sqcup (Y, K) \sqcup (S^2 \times S^1, \{p_n, p_s\} \times S^1)$; denote the last boundary pair by $(X, C)$. Let $\pp_W$ be the bundle over it such that $\pp_W|X$ is trivial. Then, we can form $\overline{W} = W \cup S^1 \times D^3$ ($\overline{S} = S$), in the first case and $(\overline{W}, \overline{S}) = (W \cup S^1 \times D^3, S \cup S^1 \times [-1, 1])$ in the second. 
		
		Let $\U \subset \chi_{SU(2)} (X, C)$ be an open interval in the interior of $\chi_{SU(2)} (X, C) \setminus \chi_{SU(2)} (X, C)^{\text{central}}$ if $\dim Q = 0$. If $\dim Q > 0$, assume $\U$ is a finite collection of points in the interior of $\chi_{SU(2)} (X, C) \setminus \chi_{SU(2)} (X, C)^{\text{central}}$. Then, by \Cref{prop:non-degenrate-ends}, we can form a linear map 
		\[
		m_Q(W, S, \pp_W, \U) :  C(Y, K) \to C(Y, K)
		\]
		by the equation 
		\[
		\< m_{Q} (W, S, \pp_W, \U) (\mathfrak{a}), \mathfrak{b}> = \# M_Q(W, S, \pp_W; \mathfrak{a}, \mathfrak{b}, \U)_0.
		\]
		Then, if $(\overline{W}, \overline{S}, Q)$ is monotone, then all the conclusions of \Cref{prop:energy-ordered-morphism} hold.
	\end{proposition}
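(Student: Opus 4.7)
The plan is to reduce the statement to Proposition~\ref{prop:energy-ordered-morphism} applied to $(\overline W, \overline S, Q)$ via the capping construction. The key observation is that any non-central flat connection $b \in \U$ on $(X, C)$ extends uniquely, up to gauge, to a flat reducible connection $A_X$ on $S^1 \times D^3$ (together with $S^1 \times [-1,1]$ in the second case), and this extension has $\kappa(A_X) = 0$. Thus capping by $A_X$ gives a correspondence between points of $M_Q(W, S, \pp_W; \mathfrak{a}, \mathfrak{b}, \U)$ and points of $M_Q(\overline W, \overline S, \pp_{\overline W}; \mathfrak{a}, \mathfrak{b})$ whose restriction to $W$ has limit in $\U$, and it preserves both the expected dimension and the topological energy $\kappa$.

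Given a transverse point $(q, [A]) \in M_Q(W, S, \pp_W; \mathfrak{a}_i, \mathfrak{a}_j, \U)_0$ with $i \leq j$, form $\bar A := A \cup A_X$ on $(\overline W, \overline S, \pp_{\overline W})$. Proposition~\ref{prop:gluing}, together with $\ind(A_X) = -1$ (as in the proof of Proposition~\ref{prop:non-degenrate-ends}) and $h^0(b) = h^1(b) = 1$ for non-central $b$, relates $\ind(\bar A)$ to $\ind(A)$; combined with the dimension count involving $\U$ and $Q$, this places $\bar A$ in the zero-dimensional transverse stratum of $M_Q(\overline W, \overline S, \pp_{\overline W}; \mathfrak{a}_i, \mathfrak{a}_j)$.

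Now proceed exactly as in Proposition~\ref{prop:energy-ordered-morphism}: choose $A_0$ on $\R \times (Y, K)$ as in Definition~\ref{defn:energy-basis} (taken flat if $i = j$), glue $\bar A \cup A_0$ along the $\mathfrak{a}_j$ end, and identify the two remaining $(Y, K)$ ends to form a connection $B$ on the closed pair $(\overline W', \overline S')$. Iterated linear gluing and Theorem~\ref{thm:ind0} express $\ind(B)$ in terms of $\ind(\bar A)$ and $\ind(A_0)$; the monotonicity hypothesis on $(\overline W, \overline S, Q)$ identifies the topological part of the closed index formula with $-\dim Q$, forcing $\kappa(B) = 0$. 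Decomposing $\kappa(B) = \kappa(A) + \kappa(A_X) + \kappa(A_0)$, with $\kappa(A_X) = 0$ and the other terms non-negative, yields $\kappa(A_0) = 0$, which forces $i = j$ as in the original proof. Hence the off-diagonal entries of $L_k$ vanish. The vanishing of the diagonal when $\dim Q > 0$ follows by applying Lemma~\ref{dflat} to $\bar A$: the deformed flatness equation is metric-independent, so $T_q Q$ lies in the kernel of the linearized operator, contradicting transversality. Grading preservation is read off from the index formula on $(\overline W', \overline S')$ as in Remark~\ref{rmk:grading-shift}.

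The main obstacle is the careful bookkeeping of index and dimension shifts arising from the reducibility of the limit $b \in \U$. The value $\dim \U$ ($1$ in Case~1 and $0$ in Case~2), the weighted Sobolev conventions at the non-degenerate reducible limit, and the index $\ind(A_X) = -1$ must combine coherently in both cases so that the correspondence between transverse zero-dimensional strata on $W$ and $\overline W$ preserves both dimension and energy; once this is verified, the remainder is a direct transcription of the proof of Proposition~\ref{prop:energy-ordered-morphism}.
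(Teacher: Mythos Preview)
Your proposal is correct and follows essentially the same approach as the paper. Both arguments cap the $(X,C)$ end with the unique flat extension $A_X$ on $S^1\times D^3$ (respectively $(S^1\times D^3, S^1\times[-1,1])$), use $\kappa(A_X)=0$ together with the linear gluing data $\ind(A_X)=-1$, $h^0(b)=h^1(b)=1$ from Proposition~\ref{prop:non-degenrate-ends}, and then rerun the energy argument of Proposition~\ref{prop:energy-ordered-morphism} on $(\overline W,\overline S)$. The paper's write-up is slightly terser: it stresses up front that the proof of Proposition~\ref{prop:energy-ordered-morphism} actually establishes \emph{emptiness} of the lower-triangular moduli spaces (not merely a zero signed count), which is why one only needs the one-way implication ``nonempty on $W$ $\Rightarrow$ $\kappa(B)=0$ on the closed glue-up'' rather than any bijection of moduli. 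Your explicit acknowledgement that the index/dimension bookkeeping across the two cases is the only real content matches the paper's appeal to ``the linear gluing argument as in the proof of Proposition~\ref{prop:non-degenrate-ends}''. One small wording point: Lemma~\ref{dflat} is most naturally applied to $\bar A$ on $(\overline W,\overline S)$ (which has the required two $(Y,K)$ ends and no extra boundary); deformed flatness of $A$ on $W$ then follows by restriction, and the transversality contradiction is run for $(q,[A])$ on $W$ as you indicate.
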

	
	\begin{proof}
		Note that in the proof of \Cref{prop:energy-ordered-morphism} we show not just an energy ordered morphism is upper triangular but that the moduli spaces corresponding to the lower triangular entries is empty. This is also the situation for the last assertion in \Cref{prop:energy-ordered-morphism}. As such these proofs carry through in the present context once we glue in a flat connection on $S^1 \times D^3$ extending the one on $S^2 \times S^1$ in the case when $C = \emptyset$; the linear gluing argument as in the proof of \Cref{prop:non-degenrate-ends} allows the above proof to go through verbatim. The other case is analogous.
	\end{proof}
	
	\section{Preliminaries for the Proof}
	\label{sec:preliminaries}
	
	We state the main theorem with all its technical qualifiers first. Let $Y$ be a closed three-manifold and $L \subset Y$ be a link with a distinguished component $K$ which has a framing. Further, let $\pp$ be the data of a singular bundle on $(Y, L)$ such that it is represented by $\omega \subset Y \setminus L$ with the restriction that $\partial \omega \cap K = \emptyset$.
	
	Let $\GK$ denote the local system on $(Y_n(K), L \setminus K)$ for any $n \in \Z$ and $\Gamma_L$ the one on $(Y, L)$. Then $\Gamma_L$ is a laurent polynomial ring over $\GK$ with one variable, say $T_K$, i.e., $\Gamma_L = \GK[T_K^{\pm 1}]$. In this way, we will interpret $\Gamma_L$ as a module over $\GK$.
	
	\begin{theorem}\label{thm:main}
		The following is an exact triangle in the derived category of chain complexes of $\Gamma_L$ modules:
		\begin{equation*}
			\begin{tikzcd}[column sep=small]
				C_*(Y_0(K), L \setminus K; \GK) \otimes_{\GK} \Gamma_L \arrow[rr, "\psi"] & & C_*(Y_{2}(K), L \setminus K; \GK) \otimes_{\GK} \Gamma_L \arrow[dl, "f_1"] \\
				& C_*(Y, L; \Gamma_L) \arrow[ul, "f_2"]&
			\end{tikzcd}
		\end{equation*}
		The maps $f_1$ and $f_2$ are described explicitly by cobordism maps and interpreted as in \Cref{sec:local-coefficients}. When $T_K = 1$, the above is an exact triangle in the chain homotopy category of chain complexes and $\psi$ is also a linear combination of cobordism maps. The explicit descriptions are in \Cref{sec:complexes-maps}.
	\end{theorem}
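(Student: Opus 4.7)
The plan is to follow the surgery-triangle strategy developed in \cite{int-monopole, sca}: define the three maps from explicit cobordisms, show that each pair composition is null-homotopic through a $1$-parameter family of cut metrics, and invoke the homological-algebra criterion proved in \Cref{sec:tech} (a variant of the criterion of \cite{lin}) to conclude exactness. The maps $f_1$ and $f_2$ will arise from the two trace cobordisms of the $(+2)$-handle attachment, each equipped with an appropriate singular surface running through the $2$-handle along the core or co-core, as anticipated by the Atiyah--Floer heuristic of \Cref{sec:motivation}. The map $\psi$ will be assembled as an integer linear combination of cobordism maps associated to the rational cobordism from $Y_n(K)$ to $Y_{n+2}(K)$ constructed in \Cref{sec:rational-surgery}; because this cobordism contains an isolated $\Z/2$-orbifold point, \Cref{thm:rm-sing} together with \Cref{lem:no-breaking} are what certify that the resulting count is a genuine chain map.

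Next, for each pair composition I would form the composite cobordism and neck-stretch along a natural separating $3$-manifold to obtain a family $G \cong [0, +\infty]$ of cut metrics, so that \eqref{eqn:parametrised-metrics} promotes the zero-dimensional count $m_G$ to a chain homotopy whose failure is exactly the degeneration at $s = +\infty$. The central task, carried out in \Cref{sec:moduli-computation}, is to identify the $0$-dimensional pieces of the stretched moduli space: here the time-independent holonomy perturbations together with the metric perturbations of \Cref{sec:energy} become essential, because by \Cref{dflat} any connection contributing in the infinite-neck limit must satisfy the deformed flatness equation, reducing the wall-crossing contribution to an explicit finite enumeration. Index bookkeeping via \Cref{prop:gluing} and \Cref{thm:ind} then pins down which such configurations can occur in a given grading.

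Finally, I would exploit the energy-ordered basis framework of \Cref{sec:energy-order}. By \Cref{prop:energy-ordered-morphism} and \Cref{prop:energy-ordered-non-degenerate}, the relevant compositions and chain homotopies become upper triangular with respect to the energy filtration, with diagonal entries determined by lowest-energy deformed-flat configurations; these are precisely the connections enumerated in \Cref{sec:moduli-computation}. To apply the homological-algebra criterion of \Cref{sec:tech} one must then verify that the diagonal entries of the composite chain homotopy are units in $\Gamma_L$, and I expect this to be the main obstacle: the sign and multiplicity analysis entangles the complex orientations of \Cref{sec:complex orientations}, the $I$-orientations of the three cobordisms, and the local coefficient monodromies of \Cref{sec:local-coefficients}, and ruling out accidental cancellations requires the explicit moduli model from \Cref{sec:moduli-computation}. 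Once the diagonal contributions are shown to be units, the abstract criterion forces the triangle to be exact, and setting $T_K = 1$ recovers the unlocalised statement in the chain homotopy category.
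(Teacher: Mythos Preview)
Your outline has the right architecture but two structural pieces are missing or misdescribed.

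First, it is not true that each double composite $f_{i+1}\circ f_i$ is null-homotopic. Only the composite through the knot complex, $f_2\circ f_1$, yields $g_1=0$; the other two produce nonzero correction terms $g_0,g_2$, each equal to $\pm(1-T^{-1})$ times a map coming from the moduli computation in \Cref{sec:moduli-computation}. This is exactly why the paper needs the Lin criterion of \Cref{prop:triangle} rather than the standard \cite{oz-sz} lemma, and why the triangle only lives in the derived category over $\Gamma_L$ (with all three maps honest chain maps only after setting $T_K=1$). Your sentence ``show that each pair composition is null-homotopic through a $1$-parameter family'' would, if it worked, make the Lin generalisation unnecessary; but it does not work, and recognising that the $1$-parameter degenerations leave behind the nonzero $g_0,g_2$ is essential.

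Second, the quasi-isomorphism input to \Cref{prop:triangle} is not extracted from a $1$-parameter family. One needs the full pentagon of metrics of \Cref{fig:metrics}, a $2$-parameter family whose five boundary edges account for the terms $f_{i+2}H_i$, $H_{i+1}f_i$, and three further maps $m_{Q(Z)}$, $m_{Q(Z_1)}$, $m_{Q(Z_2)}$. The candidate $F_i$ is the sum of these last three, and the heart of the argument is that $m_{Q(Z)}$ factors as $\Pi_+ + T^{-1}\Pi_-$ with $(\Pi_+ + T^{-1}\Pi_-)(\Pi_+ + T\Pi_-)\simeq \Id + N$ for $N$ nilpotent, while $m_{Q(Z_1)}$ and $m_{Q(Z_2)}$ are strictly upper triangular. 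So the mechanism is not ``diagonal entries are units'' but rather ``one summand is invertible up to nilpotent and the others are nilpotent''; the energy ordering is what makes the nilpotence visible. Relatedly, the moduli computation of \Cref{sec:moduli-computation} concerns genuine reducible ASD connections on the model $(\mathbb{D}(T^*S^2),S^2)$, not deformed-flat connections; the deformed-flatness of \Cref{dflat} enters separately, inside the proof of \Cref{prop:energy-ordered-morphism}, to force vanishing of diagonal entries when $\dim Q>0$.
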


	For the rest of the paper, for notational ease, we will describe the proof in the case when $L = K$ so that $\GK = \Z$. We will also abbreviate $T_K$ to $T$. The argument applies verbatim in the general context as above. 
	
	\subsection{Trace of Rational Surgery}
	\label{sec:rational-surgery}
	
	Let $Y$ be a three manifold with a knot $K$ with meridian $\mu$ specified by a framing. Denote by $\lambda$ the class in $H_1(\partial (Y \setminus N(K)) )$ with $\lambda \cdot \mu = 1$ where we orient the torus as the boundary of $Y \setminus N(K)$. Recall that $p/q$ surgery is defined as follows: 
	\[ Y_{p/q} \coloneqq Y_{p/q} (K) = (Y \setminus N(K)) \cup_{f_{p/q}} H\]
	where $H = D^2 \times S^1$ and $f_{p/q}: \partial H \to \partial (\yk)$ with $f_{p/q} (\partial D^2 \times \{ pt \})$ is homologous to $\mu_{p/q} = p \mu + q \lambda$. We define $K_{p/q} \subset Y_{p/q}$ as $\{ 0 \} \times S^1 \subset H$ inside $Y_{p/q}$ in the above identification, i.e., the dual knot. 
	
	Unless $q = \pm 1$, we have do not have a cobordism from $Y$ to $Y_{p/q}$ (trace of the surgery) a priori. However, we can instead construct a cobordism with a single orbifold singularity. Define $Z$ as follows:
	
	\[ Z = ([-2, -1] \times H_1) \cup_{\Id \times f_{1/0}} ([-2, 2] \times \yk) \cup_{\Id \times f_{p/q}} ([1,2] \times H_2)\]
	where $H_i$ are just $H$, labelled for future reference. 
	
	Note that $Z$ has three boundary components: $Y$, $Y_{p/q}$ and $L(q, p)$. To see how the last boundary component is formed, note that by definition it is obtained by gluing a copy of $H$ to each boundary of $[-1, 1] \times \partial(\yk)$ along $f_{1/0}$ and $f_{p/q}$. This is precisely the standard Heegaard splitting of $L(q,p)$. We can now cone the last component to get a $4$-manifold $W$ with a single orbifold point (the cone point) of order $q$. Note that if $q = \pm 1$, this recovers the usual trace of a surgery. We refer to this construction as attaching a orbifold handle.
	
	Continuing the analogy with trace of an integral surgery, we can talk about core and cocore of the orbifold handle. Formally, we define the core (resp. cocore) as the union of $[-2, -1] \times \{0\} \times S^1 \subset [-2, -1] \times H_1$ and the cone of $\{-2\} \times \{0\} \times S^1 \subset \{-2\} \times H_1 \; (\text{resp. }H_2)$. Note that these are orbifold disks in general with a single orbifold point of order $q$.
	
	We can generalise the above discussion to the cases when we need a cobordism from $Y_{p/q}$ to $Y_{r/s}$.
	
	\begin{example}
		Let $(Y, K) = (S^3, U)$, then we get an orbifold cobordism with one orbifold handle from $Y_0 = S^2 \times S^1$ to $Y_2 = \RP^3$. Note that this can't be achieved with a single standard handle however.
	\end{example}

	\subsection{Cuts for neck stretching} \label{sec:heg}
	
	\begin{figure}[!h]
		\tikzset{every picture/.style={line width=0.75pt}} %
		\begin{tikzpicture}[x=0.75pt,y=0.75pt,yscale=-1,xscale=1]
			\clip (25,25) rectangle (575, 250);
			\draw  [line width=1.5]  (50,50) -- (500,50) -- (500,200) -- (50,200) -- cycle ;
			\draw [color={rgb, 255:red, 245; green, 166; blue, 35 }  ,draw opacity=1 ][line width=1.5]    (200,50) -- (200,200) ;
			\draw [color={rgb, 255:red, 139; green, 87; blue, 42 }  ,draw opacity=1 ][line width=1.5]    (350,50) -- (350,200) ;
			\draw  [color={rgb, 255:red, 126; green, 211; blue, 33 }  ,draw opacity=1 ][line width=1.5]  (97.02,199.13) .. controls (97.02,150.68) and (143.32,111.41) .. (200.45,111.41) .. controls (257.57,111.41) and (303.88,150.68) .. (303.88,199.12) ;  
			\draw  [color={rgb, 255:red, 80; green, 227; blue, 194 }  ,draw opacity=1 ][line width=1.5]  (246.02,200.13) .. controls (246.02,151.68) and (292.32,112.41) .. (349.45,112.41) .. controls (406.57,112.41) and (452.88,151.68) .. (452.88,200.12) ;  
			\draw  [color={rgb, 255:red, 144; green, 19; blue, 254 }  ,draw opacity=1 ][line width=1.5]  (72.45,199.06) .. controls (72.45,199.06) and (72.45,199.06) .. (72.45,199.06) .. controls (72.45,141.66) and (163.33,95.13) .. (275.45,95.13) .. controls (387.56,95.13) and (478.45,141.66) .. (478.45,199.06) ;  
			\draw  [color={rgb, 255:red, 208; green, 2; blue, 27 }  ,draw opacity=1 ][line width=1.5]  (284.45,161.88) .. controls (284.45,145.72) and (314,132.63) .. (350.45,132.63) .. controls (386.9,132.63) and (416.45,145.72) .. (416.45,161.88) .. controls (416.45,178.03) and (386.9,191.13) .. (350.45,191.13) .. controls (314,191.13) and (284.45,178.03) .. (284.45,161.88) -- cycle ;
			\draw  [color={rgb, 255:red, 208; green, 2; blue, 27 }  ,draw opacity=1 ][fill={rgb, 255:red, 208; green, 2; blue, 27 }  ,fill opacity=1 ] (131.32,156.56) .. controls (131.32,154.59) and (132.92,153) .. (134.89,153) .. controls (136.85,153) and (138.45,154.59) .. (138.45,156.56) .. controls (138.45,158.53) and (136.85,160.13) .. (134.89,160.13) .. controls (132.92,160.13) and (131.32,158.53) .. (131.32,156.56) -- cycle ;
			\draw    (133.45,133.13) -- (168.45,228.13) ;
			\draw    (439.45,159.13) -- (415,230) ;
			\draw    (457,152) -- (538.45,123.13) ;
			
			\draw (45,30) node [anchor=north west][inner sep=0.75pt]    {$Y_{0}$};
			\draw (100,70) node [anchor=north west][inner sep=0.75pt]    {$W_{0}$};
			\draw (265,70) node [anchor=north west][inner sep=0.75pt]    {$W_{1}$};
			\draw (420,70) node [anchor=north west][inner sep=0.75pt]    {$W_{2}$};
			\draw (195,30) node [anchor=north west][inner sep=0.75pt]  [color={rgb, 255:red, 245; green, 166; blue, 35 }  ,opacity=1 ]  {$Y_{2}$};
			\draw (327.5,30) node [anchor=north west][inner sep=0.75pt]  [color={rgb, 255:red, 139; green, 87; blue, 42 }  ,opacity=1 ]  {$( Y,K)$};
			\draw (495,30) node [anchor=north west][inner sep=0.75pt]    {$Y_{0}$};
			\draw (163,234) node [anchor=north west][inner sep=0.75pt]  [color={rgb, 255:red, 126; green, 211; blue, 33 }  ,opacity=1 ]  {$\partial Z_{1}$};
			\draw (400,234) node [anchor=north west][inner sep=0.75pt]  [color={rgb, 255:red, 80; green, 227; blue, 194 }  ,opacity=1 ]  {$\partial Z_{2}$};
			\draw (542,115.4) node [anchor=north west][inner sep=0.75pt]  [color={rgb, 255:red, 144; green, 19; blue, 254 }  ,opacity=1 ]  {$\partial Z$};
		\end{tikzpicture}
		\caption{Schematic picture of the hypersurfaces and the 4-manifolds they bound. The point and arc in red indicate singular loci.}
		\label{fig:hypersurfaces}
	\end{figure}
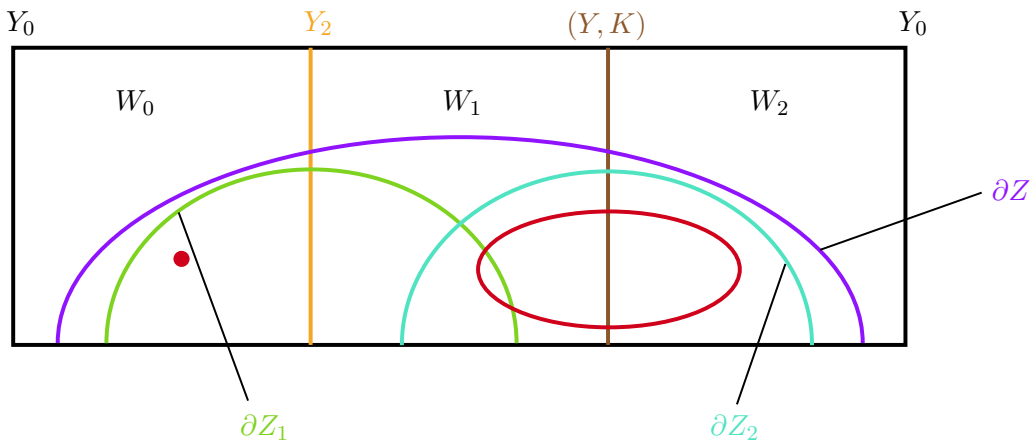
	
	Note that a (orbifold) tubular neighbourhood of the sphere formed by the cocore of $W_0$ and the core of $W_1$ has boundary $S^3$. This is seen by noticing that the boundary can be realised by a Heegaard diagram of genus one with the $\alpha$ and $\beta$ curves given by the meridians along with the solid tori are attached for the $0/1$ and $1/0$ surgeries. 
	
	We further claim that the component of the complement of this sphere that contains the cocore of $W_0$ and the core of $W_1$ is in fact the Thom space of a bundle over $S^2$ with euler class $-2$ with a small ball inside the disk bundle removed (this leads to the boundary $S^3$ and the orbifold point); let $Z_1$ denote this space. Then, $Z_1$ has $H_2(Z_1; \Z) \cong \Z$ with intersection form $-2$. Both of these facts can be deduced from the long exact sequence for manifold and its boundary after capping off the $S^3$ and removing a conical neighbourhood of the orbifold point, coupled with the fact that there is a Kirby diagram description with a single two handle; the sign is determined by looking at the Heegaard diagram description of the boundary.
	
	We now focus on the analogous situation for the case of $W_1$ and $W_2$: We take the core of $W_2$ and the cocore of $W_1$ and look at its tubular neighbourhood. This neighbourhood has boundary $%
	-\RP^3$ again by noticing that the decomposition as above gives a genus one Heegaard diagram with $\alpha$ and $\beta$ curves given by the meridians for the $2/1$ and $0/1$ surgery. Let $Z_2$ denote this neighbourhood. Then, it is clear that $Z_2$ is the disk bundle of a bundle over $S^2$ with euler class $%
	-2$ (as before this is by looking at the long exact sequence and noting that we only have one two handle in the Kirby diagram; the sign is by looking at the Heegaard diagram for the boundary). Our connections on $Z_2$ however, are singular along the zero section in this identification. 
	
	Finally, we look at the neighbourhood of the union of cores of $W_1$ and $W_2$ and cocores of $W_0$ and $W_1$. The boundary has a Heegaard diagram description of genus 1 with $\alpha$ and $\beta$ curves being the meridians for the $0/1$ and $0/1$ surgery respectively and so it is $S^2 \times S^1$; call the neighbourhood $Z$. Then, $Z$ is the complement of the neighbourhood of an unknotted circle in the Thom space of a bundle over $S^2$ with euler class $-2$ and disjoint from the cone point.
	
	See \Cref{fig:hypersurfaces} for a schematic representation of the situation where the red ellipse and dot are singular loci.
	
	\begin{rmk}
		By a Thom space of a (real) vector bundle $E$, $Th(E)$, we mean the one point compactification of $E$. Note that if the base is $S^2$ and $E$ is a real rank 2 oriented vector bundle, then $Th(E)$ is an oriented 4-manifold with a single cone point singularity, the point at infinity. Further, this cone point singularity is also an orbifold singularity.
	\end{rmk}

	\subsection{Cobordisms and geometric cycles for the exact triangle}
	\label{sec:geo-cycle}
	
	In this section we describe the relevant cobordisms with the geometric cycles that describe the bundles over them necessary for the surgery exact triangle. First, fix a geometric cycle $\gamma_P$ on $\yk$ such that $\gamma_P$ is a closed $1$-submanifold in $\yk$; this implies that the (singular) bundle data will always be trivial near $K$.

	We define $\Omega_{P_i} = \gamma_P \times [0, 1]$, i.e., we take the bundle to be just the product away from the surgery region and trivial on the surgery region.
	
	Finally we record that we can endow the cobordisms with almost complex structures such that the singular loci are almost complex submanifolds. We introduce some notation for this purpose. Let $p$ denote the orbifold point in $W_0$, $\Sigma_1 \subset W_1$ the co-core of the 2-handle and $\Sigma_2 \subset W_2$ the core of the attached 2-handle. Further, let $\Sigma$ denote the sphere formed by the co-core of 2-handle in $W_1$ and core of 2-handle in $W_2$, i.e., $\Sigma = \Sigma_1 \cup \Sigma_2$.
	
	\begin{lemma}\label{lem:cgeo-orientation}
		There is an almost complex structure $J$ on $W_0 \cup W_1 \cup W_2$ such that it restricts to isomorphic structures on the collar neighbourhoods of the boundary components. Further, we can arrange that the $\Sigma$ is an almost complex submanifold with respect to $J$.
	\end{lemma}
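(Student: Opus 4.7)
The plan is to assemble $J$ piece by piece on $W_0$, $W_1$, $W_2$, then match on collars. On $W_1$ and $W_2$, which are each $2$-handle attachments to a product cobordism, model each handle on $D^2 \times D^2 \subset \C^2$ with the product complex structure. In these coordinates, $\Sigma_1 \cap (\text{handle in } W_1) = D^2 \times \{0\}$ is the cocore of $W_1$ and $\Sigma_2 \cap (\text{handle in } W_2) = \{0\} \times D^2$ is the core of $W_2$, both complex discs, so $\Sigma = \Sigma_1 \cup \Sigma_2$ is automatically $J$-holomorphic in the handle region. I would then extend across the product cobordism parts of $W_1$ and $W_2$ by choosing any almost complex structure agreeing with the handle model on the attaching collar; such an extension exists because a $4$-dimensional cobordism admits an almost complex structure compatible with prescribed $\mathrm{spin}^c$ data on its boundary collars, and no further obstructions appear in our setup.

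For $W_0$, I would exploit the identification from \Cref{sec:heg} of the neighbourhood $Z_1$ with the Thom space of $\mathcal{O}(-2) \to S^2$ minus a small neighbourhood of its singular point. As an oriented orbifold, this Thom space is biholomorphic to a neighbourhood of the $A_1$-singularity $\C^2/(\Z/2)$ (the minimal resolution minus the complement of its exceptional $\CP^1$), and carries a canonical orbifold complex structure extending across the cone point. Pull this back to give $J$ on $Z_1$, and extend to the rest of $W_0$ by the same general argument as above.

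Finally, I would match collars. An almost complex structure on $[0, \varepsilon) \times Y$ is equivalent, up to homotopy, to the choice of a $\mathrm{spin}^c$ structure on $Y$ together with a representative cooriented $2$-plane field. I would fix the same $\mathrm{spin}^c$ data and plane field on each of $Y_2$, $(Y, K)$, and \emph{identically} on both copies of $Y_0$ at the outset of each piecewise construction, then use homotopies supported in collars disjoint from $\Sigma$ to make the resulting almost complex structures agree across the interfaces $Y_2$ and $(Y, K)$. The identical choice on the two outer $Y_0$ boundary components then automatically provides the isomorphism of collar data required. The principal subtlety is the orbifold point in $W_0$: here $J$ must extend as an \emph{orbifold} almost complex structure across the cone, which is resolved precisely by the explicit $A_1$-model. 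Keeping all collar adjustments disjoint from $\Sigma$ ensures $\Sigma$ remains almost complex throughout, completing the construction.
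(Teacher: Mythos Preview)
Your approach differs from the paper's in the choice of decomposition. You split along the original interfaces $Y_2$ and $(Y,K)$ and build $J$ on each $W_i$ separately; the paper instead glues the two $Y_0$ boundaries to form a closed orbifold $W$ and then cuts along $\partial Z$, so that $\Sigma$ and the orbifold point both lie entirely inside the piece $Z$. After filling, $Z \cup (S^1 \times D^3) \cong Th(T^*S^2)$ carries its standard complex structure, in which the zero section $\Sigma$ is automatically complex, while the complement fills to $Y_0 \times S^1$, on which $J$ is produced from a contact structure on $Y_0$ together with a Legendrian representative of the dual knot $K_0$. The matching then takes place on a single annulus-times-disk region $\nu(\gamma_Z)$, which is disjoint from $\Sigma$ by construction.

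Your plan is plausible but has two soft spots. First, the assertion that a $4$-dimensional cobordism admits an almost complex structure extending prescribed $\mathrm{spin}^c$ collar data with ``no further obstructions'' is not free: there are genuine extension obstructions (the homotopy class of the boundary $2$-plane field, essentially Gompf's $d_3$ invariant, must be compatible with the interior), and you would need to check these on each piece. The paper sidesteps obstruction theory entirely by exhibiting explicit global models on both pieces. Second, since $\Sigma = \Sigma_1 \cup \Sigma_2$ crosses the $(Y,K)$ interface, you cannot keep collar adjustments ``disjoint from $\Sigma$'' there; you must instead arrange that the handle-model structures coming from the $W_1$ and $W_2$ sides already agree on a neighbourhood of $K$ before any homotopy, which requires more care than stated. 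A minor point: $Z_1$ is a neighbourhood of the cocore of $W_0$ union the core of $W_1$, so it straddles $W_0$ and $W_1$ rather than sitting inside $W_0$; for handling the orbifold point alone the local model $\C^2/(\Z/2)$ already suffices.
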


	\begin{proof}
		We try to find an almost complex structure $J'$ on $W = W_0 \cup W_1 \cup W_2 / {\sim}$ where $\sim$ identifies the two boundary components with a orientation reversing diffeomorphism. That one can find a $J'$ can be checked by a characteristic class computation (note that $W$ is closed!) but this gives us no control over $J'$ near $\Sigma$. Instead, we cut along one of the necks described in \Cref{sec:heg} and equip each piece with an almost complex structure and ensure the structure on the collars of the boundary agree.
		
		To do this, notice that $\partial (W \setminus Z) = S^1 \times S^2$, $(W \setminus Z) \cup (S^1 \times D^3) = Y_0 \times S^1$ and that $(S^1 \times D^3) \cup Z = Th(T^*S^2)$. Further, $S^1 \times \{0\} \subset Th(T^*S^2)$ links with the zero section; call the image of $S^1$ in $Th(T^*S^2)$ as $\gamma_Z$ so that $Z = Th(T^*S^2) \setminus \nu(\gamma_Z)$ where the latter set is the tubular neighbourhood. Similarly, note that $S^1 \times \{0\} \subset Y_0 \times S^1$ has image (isotopic to) $K_0 \times \{*\}$ where $K_0$ denotes the dual knot. We will construct almost complex structures on $Th(T^*S^2)$ and $Y_0 \times S^1$ with the desired properties such that they agree on tubular neighbourhoods of $\gamma_Z$ and $K_0 \times \{*\}$.
		
		First, note that $\gamma_Z$ can be considered to be the standard circle in a fibre of the associated vector bundle $T^*S^2$. The standard complex structure on $Th(T^*S^2)$ is $S^1$ invariant, where $S^1$ acts on $Th(T^*S^2)$ by complex multiplication on the fibres fixing the base and point at infinity. Under this $S^1$ action, $\gamma_Z$ is the orbit of any point on which the action is free. Thus, we can describe the (almost) complex structure on $\nu (\gamma_Z)$ as $A \times D^2$ where $A$ denotes an annulus. 
		
		Finally, we need to find an almost complex structure on $Y_0 \times S^1$ such that restricted to $\nu(K_0 \times \{*\})$, such that it agrees with $\nu(\gamma_Z)$. Here's one way to construct it: choose a (co-oriented) contact structure $\xi$ on $Y_0$. Then, we can define the almost complex structure $\tilde{J}$ on $Y_0 \times S^1$ as taking $\partial_\theta$ (where $\theta$ is the coordinate on the $S^1$ factor) to the complement of $\xi$ and the one defined by the co-orientation on $\xi$. Choosing a Legendrian representative of $K_0$, we see that $\tilde{J}$ restricts to the tubular neighbourhood of $K_0 \times \{*\}$ to give an almost complex structure isomorphic to $\nu(\gamma_Z)$.
	\end{proof}
	
	\begin{rmk}
		We fix the choice of $J$ from above for the rest of this paper. Also, note that this choice ensures that the map defined $(W_0 \cup W_1 \cup W_2 \setminus Z) \cup (S^1 \times D^3) = Y_0 \times [0, 1]$ is the identity. See \Cref{sec:complex orientations} and \cite[\S 5.1]{kmknot} for details.
	\end{rmk}
	
	\subsection{Family of Metrics for Neck Stretching} We describe the parameter space of metrics in our context for the case of the triple composite from $Y_0$ to itself, i.e., the situation as presented in \Cref{fig:hypersurfaces}. To set notation, call the singular locus in $Z_1$ as $\Sigma_{Z_1}$, in $Z_2$ as $\Sigma_{Z_2}$ and denote the singular point by $p$. Note that $\Sigma_{Z_1} \cong D^2$ and $\Sigma_{Z_2} \cong S^2$.
	
	\begin{figure}[!h]
		\tikzset{every picture/.style={line width=0.75pt}} %
		
		\begin{tikzpicture}[x=0.75pt,y=0.75pt,yscale=-1,xscale=1]
			\clip (100,25) rectangle (400, 275);
			\draw   (367.31,120.81) -- (324.24,250.29) -- (187.79,249.34) -- (146.53,119.27) -- (257.48,39.84) -- cycle ;
			\draw    (202.01,79.56) -- (256.67,155.91) ;
			\draw    (312.4,80.33) -- (256.67,155.91) ;
			\draw    (345.78,185.55) -- (256.67,155.91) ;
			\draw    (256.67,155.91) -- (256.02,249.81) ;
			\draw    (256.67,155.91) -- (167.16,184.3) ;
			
			\draw (241,255) node [anchor=north west][inner sep=0.75pt]    {$Q( Z)$};
			\draw (115,180.4) node [anchor=north west][inner sep=0.75pt]    {$Q( Z_{1})$};
			\draw (352.78,179.95) node [anchor=north west][inner sep=0.75pt]    {$Q( Z_{2})$};
			\draw (167,53.4) node [anchor=north west][inner sep=0.75pt]    {$Q( Y)$};
			\draw (320,53.4) node [anchor=north west][inner sep=0.75pt]    {$Q( Y_{2})$};
		\end{tikzpicture}
		\caption{Moduli of metrics used in \Cref{sec:triple-composite}. The edges of the pentagon are used in \Cref{homotopies} }
		\label{fig:metrics}
	\end{figure}
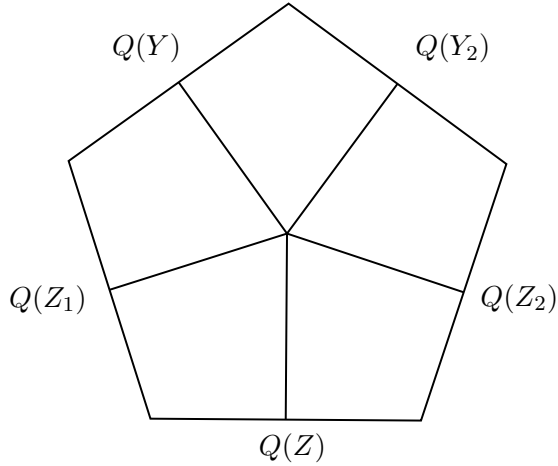
	
	Note that out of $\{ Y_0, Y_2, Y, \partial Z_1, \partial Z_2, \partial Z \}$ at most two hyper-surfaces are non-intersecting. For any two non-intersecting hyper-surfaces, we can take their union and define a 2-parameter family of metrics as described in \Cref{sec:metrics-compactness-gluing}. These parameter spaces are homeomorphic to quadrilaterals. Taking all such pairs of non-intersecting hyper-surfaces, we have five quadrilaterals in total which we can put together to form the pentagon as in \Cref{fig:metrics}. Note for instance that the quadrilateral sharing edges labelled $Q(Y)$ and $Q(Y_2)$ corresponds to the space of metrics defined by $Y \cup Y_2$. The edges of these metrics are used in \Cref{homotopies} and the full parameter space is used in \Cref{sec:triple-composite}.
	
	\section{A Moduli Space Computation} \label{sec:moduli-computation}
	
		Let $Z_1 = \mathbb{D}(T^*S^2)$ and $\Sigma_1 \subset Z_1$ denote the zero section. The main result of this section is the following proposition.
	
	\begin{proposition}\label{prop:key-moduli computation}
		$M^{red}(Z_1, \Sigma_1, \pp_1)$ where $\pp_1$ is the trivial bundle consists of exactly two isolated connections with energy $\kappa = 1/8$ and these are oriented in the same way by using the orientation conventions of \Cref{sec:complex orientations}. These connections are distinguished by the limiting flat connection on the boundary. Further, when the connection on the boundary is trivial, we have $l = 0$ (and $l = -1$ otherwise) where $l$ denotes the monopole charge.
	\end{proposition}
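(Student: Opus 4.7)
The plan is to exploit the Kähler structure on $Z_1 = \BD(T^*S^2)$ coming from (a cylindrical modification of) the Eguchi--Hanson metric, for which $\Sigma_1$ is the holomorphic exceptional sphere of self-intersection $-2$. Equivalently, one may pass to the double branched cover $(\tilde Z_1, \tilde \Sigma_1)$, identified with the tautological disk bundle $\BD(\mathcal{O}(-1)) \to \CP^1$ (i.e.\ the blow-up of $D^4$ at the origin) with the $\Z/2$ action given by fibrewise negation; under this identification, cone-angle-$\pi$ singular connections on $(Z_1, \Sigma_1)$ correspond to $\Z/2$-equivariant smooth connections on $\tilde Z_1$. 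Reducibles occur precisely because the trivial $\so$ bundle on $\partial Z_1 = \RP^3$ fails the non-integrality condition, admitting two distinct flat connections.

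The classification proceeds via the usual decomposition $\g_P = \underline{\R}\oplus L$. Because $Z_1$ retracts to $\Sigma_1 \cong S^2$ and the cone-angle-$\pi$ condition shifts the available Chern classes by $\tfrac{1}{2}[\Sigma_1]^{\ast}$, the orbifold line bundles in play are indexed by $\Z + \tfrac{1}{2}$; upstairs on $\tilde Z_1$ they correspond to line bundles of odd degree on $\tilde\Sigma_1 \cong \CP^1$. For the topological sector specified by $\pp_1$, finite-energy asymptotic conditions single out the line bundles of degree $\pm 1$ upstairs; to verify the energy I would glue a flat disk bundle along the $S^3$ boundary of $\tilde Z_1$, reducing to the closed-manifold reducible calculation on $\overline{\CP^2}$ where $c_1(L)^2 = -1$ yields the desired $\kappa = 1/8$ after halving for the $\Z/2$-quotient. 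The two reducibles are distinguished by the parity of their monopole number, which governs the holonomy of $L$ around the meridian of $\Sigma_1$ and hence the restriction $L|_{\RP^3}$: $l=0$ corresponds to the trivial flat limit, and $l=-1$ to the non-trivial one.

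For the orientation assertion I would invoke the complex orientation convention of \Cref{sec:complex orientations}: each reducible corresponds to a $(1,1)$-type anti-self-dual $U(1)$ connection, so its determinant line receives a canonical orientation from the deformation of $\D_A$ to the Dolbeault operator $\bar\partial_A + \bar\partial^{\ast}_A$ via the standard homotopy. Because both reducibles are Kähler-holomorphic with respect to the \emph{same} Kähler structure on $(Z_1, \Sigma_1)$, the two complex orientations are compatible and yield the same sign. Isolatedness then reduces to vanishing of the deformation cohomology, which can be checked by a Bochner argument upstairs on $\tilde Z_1$ using the positivity properties of (a perturbation of) the Eguchi--Hanson scalar curvature. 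The main difficulty lies in the sign bookkeeping for the complex orientation: the identification $\det(\D_A) \cong \det(\bar\partial_A + \bar\partial^{\ast}_A)$ must be executed carefully in the presence of both the cone-angle-$\pi$ singularity along $\Sigma_1$ and the weighted function spaces required at the non-acyclic end $\RP^3$, and the comparison of the two orientations must match the conventions fixed in \Cref{sec:orientations}.
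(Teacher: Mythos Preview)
Your route via the branched double cover $\tilde Z_1 = \BD(\mathcal O(-1))$ is genuinely different from the paper's, but as written it has a gap in how the two solutions are produced. The paper does not pass to a cover at all: it caps $Z_1$ along $\partial Z_1=\RP^3$ with a second copy of $\BD(T^*S^2)$ to obtain the \emph{closed} pair $(Z,\Sigma)=(\overline{\CP^2}\#\overline{\CP^2},\Sigma_1)$, and computes reducibles there by the Diophantine constraint $(a_1-\tfrac14)^2+(a_2-\tfrac14)^2=\tfrac18$ on $\tfrac{i}{2\pi}[F_B]$. The two boundary limits on $\RP^3$ are encoded by two choices of $w_2$ on the glued piece (trivial versus $PD[\Sigma_2]$); each yields a unique solution, the two adjoint $\so$ connections are literally $\so$-gauge equivalent, and that equivalence is what gives the orientation statement. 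A neck-stretch along $\RP^3$ then transfers everything back to $Z_1$.

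The gap in your argument is the claim that ``degree $\pm 1$ upstairs'' produces the two solutions. It does not: the line bundles of degree $+1$ and $-1$ on $\tilde\Sigma_1$ are the two $U(1)$-reductions of a \emph{single} reducible $\so$ connection (the swap $L\leftrightarrow L^{-1}$ lifts to $SU(2)$), not two distinct moduli points. In fact the two genuine solutions on $(Z_1,\Sigma_1)$ differ by tensoring with the nontrivial flat real line bundle $\xi$ on $Z_1\setminus\Sigma_1\simeq\RP^3$; since $\tilde Z_1\setminus\tilde\Sigma_1\simeq S^3$ is simply connected, $\pi^*\xi$ is trivial and the two solutions become \emph{identical} on the double cover. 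What actually distinguishes them upstairs is the choice of $\Z/2$-equivariant lift of that single connection (equivalently, the determinant-$1$ gauge class downstairs, equivalently the flat limit on $\RP^3$), and your proposal never isolates this. The assignment $l\in\{0,-1\}$ and the orientation comparison both hinge on precisely this distinction, so neither is established by the non-equivariant data on $\overline{\CP^2}$ that you compute.
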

	
	We will first convert the problem to one about moduli on closed pairs of manifolds and then prove it. We state and prove the required moduli computation for the closed pair first.
	
	Let $Z = Z_1 \cup_{\RP^3} \mathbb{D}(T^*S^2) = Z_1 \cup Z_2$, $\Sigma = \Sigma_1$ and $\Sigma_2$ the zero section of the new disk bundle glued to $Z_1$. Further, suppose $\pp$ is the trivial singular bundle data over $(Z, \Sigma)$ and $\pp'$ is bundle data with $w_2(\pp') = P.D.[\Sigma_2]$. Note that $Z \cong \overline{\CP^2} \# \overline{\CP^2}$. Following \cite[\S 4.4.2]{kmbar}, denote the exceptional spheres in this presentation by $E_1$ and $E_2$. Up to some orientation conventions, 
	\begin{equation} \label{eqn:cohom}
		\begin{aligned}
			[\Sigma_1] &= - [E_1] - [E_2] \\
			[\Sigma_2] &= - [E_1] + [E_2] 
		\end{aligned}
	\end{equation}
	
	\begin{lemma} \label{lem:model-computation}
		$M^{red}(Z, \Sigma, \pp)$ and $M^{red}(Z, \Sigma, \pp')$ each consist of a single isolated connection with energy $\kappa = 1/8$. Further, both solutions have are isomorphic up to $\so$ gauge equivalence. These unique solutions carry monopole charge of $l = 0$ (for $\pp$) and $l = -1$ (for $\pp'$).
	\end{lemma}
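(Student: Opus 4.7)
The plan is to reduce the problem to a classification of abelian (singular) instantons on $Z \cong \overline{\CP^2}\#\overline{\CP^2}$. A reducible $SO(3)$ singular ASD connection corresponds to a splitting $\mathfrak{g}_{\pp} \cong \underline{\R}\oplus L_{\R}$, for $L$ a (singular) complex line bundle on $Z\setminus\Sigma$ whose meridional holonomy around $\Sigma_1$ is of order four. Equivalently, the real Chern class has the form $c_1(L) = \beta + \tfrac14 [\Sigma_1]_{PD}$ with $\beta \in H^2(Z;\Z)$ lifting the Chern class of $L|_{Z\setminus\Sigma_1}$ (the ``$\beta \bmod 2$'' constraint encodes the compatibility with $w_2(\pp)$ on the complement). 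Chern--Weil identifies the energy with $\kappa(A) = -c_1(L)^2$.

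Next I would exploit $b^+(Z) = 0$: every real cohomology class has a unique harmonic representative, and this representative is automatically anti-self-dual on a negative-definite manifold. Hence each topological type of reduction yields a unique gauge class of ASD $U(1)$ connection, and $M^{red}(Z,\Sigma_1,\pp)$ is determined by a discrete enumeration over $\beta$. In the basis $\{e_1, e_2\}$ with $e_i^2 = -1$, $e_1 \cdot e_2 = 0$, and $[\Sigma_1]_{PD} = -e_1 - e_2$, a direct computation of $-(a_1 e_1 + a_2 e_2 + \tfrac14[\Sigma_1]_{PD})^2$ subject to the mod $2$ constraint imposed by the relevant $w_2$ gives a unique minimum of $\kappa = \tfrac18$ in each of the two cases, realized at $c_1(L) = \tfrac14[\Sigma_1]_{PD}$ (for $\pp$) and the corresponding class for $\pp'$.

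The $SO(3)$-gauge equivalence statement would follow from the observation that $\pp$ and $\pp'$ are isomorphic as $SO(3)$-bundles on $Z\setminus \Sigma_1$: the Gysin sequence for $(Z,\Sigma_1)$ identifies $H^2(Z\setminus\Sigma_1;\Z/2) = H^2(Z;\Z/2)/\langle [\Sigma_1]_{PD}\rangle$, and since $[\Sigma_2]_{PD} = -e_1 + e_2 \equiv e_1 + e_2 = [\Sigma_1]_{PD} \pmod 2$ we have $w_2(\pp') - w_2(\pp)$ vanishing in this quotient. Under this identification, the two minimum-energy reductions restrict to the same abelian ASD connection on $Z\setminus\Sigma_1$, yielding the claimed $SO(3)$-gauge equivalence.

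The hardest step will be the monopole-charge computation. The charge $l$ is an invariant not of the bare ASD connection but of its extension across $\Sigma_1$ as dictated by the bundle data, and $\pp$ and $\pp'$ give two inequivalent such extensions. I would compute $l$ using the KM formula relating it to the pairing of the integer part of $c_1(L)$ with $[\Sigma_1]$, adjusted by the singular contribution $\tfrac14[\Sigma_1]^2$; the shift from $\pp$ to $\pp'$ then corresponds to modifying the lift of $c_1(L)|_{Z\setminus\Sigma_1}$ by a class whose pairing with $[\Sigma_1]$ accounts for the difference $l_{\pp'} - l_{\pp} = -1$. Carefully matching the sign conventions of \Cref{sec:complex orientations} and KM to get $l = 0$ for $\pp$ and $l = -1$ for $\pp'$ is the main bookkeeping challenge of the proof.
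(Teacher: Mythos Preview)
Your proposal is correct and follows essentially the same approach as the paper: parametrize reducibles by the cohomology class $\frac{i}{2\pi}[F_B] = \frac{1}{4}P.D.[\Sigma_1] + \beta$ with $\beta$ integral (shifted by $\pm\frac{1}{2}P.D.[\Sigma_2]$ for $\pp'$), use negative-definiteness so that harmonic representatives are automatically ASD and unique, and solve $\kappa = 1/8$ to find a single lattice point in each case.

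The one place you diverge is in treating the monopole charge as ``the hardest step.'' In the paper this is in fact the easiest: once you have the explicit solutions $\beta = 0$ (for $\pp$) and $\beta = e_1$ or $e_2$ (for $\pp'$, depending on the sign choice for the lift of $w_2$), the pair $(k,l)$ is read off directly as $(0,0)$ and $(1/2,-1)$ respectively. There is no separate formula or sign-matching to chase; the instanton and monopole numbers are immediate from the curvature class you have already computed. Your $SO(3)$-equivalence argument via the Gysin sequence is a valid topological route, but the paper does it more directly by observing that both solutions satisfy $\frac{i}{2\pi}[F_{B\otimes B}] = \frac{i}{2\pi}[F_{B'\otimes B'}] = \frac{1}{2}P.D.[\Sigma_1]$, so the induced connections on the adjoint bundles coincide.
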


	\begin{proof}
		We first deal with the case of bundle $\pp$. Since we are dealing with globally reducible solutions on a negative definite simply connected manifold, we just need to determine the cohomology class of $F_A$ to determine the equivalence class of $A$ in the moduli of reducible ASD solutions. Let $e_i$ denote the Poincare dual of $E_i$. Note that 
		\begin{equation} \label{eqn:intersection}
			\int_Z e_i \wedge e_j =
				\begin{cases}
					-1 \quad \text{if } i = j \\
					\phantom{-} 0 \quad \text{otherwise}.
				\end{cases}
		\end{equation}
		
		Suppose the bundle $E$ over $Z \setminus \Sigma$ splits as $L \oplus L^{\vee}$ and the connection splits as $B \oplus B^\vee$. We will analyse the cohomology class of $F_B$ as it determines $F_A$.
		
		Now, due to singularity of $B$ near $\Sigma = \Sigma_1$, we must have 
		\begin{equation} \label{eqn:chern}
		\frac{i}{2\pi} [F_B] = \frac{1}{4} P.D.[\Sigma_1] + a_1 e_1 + a_2 e_2
		\end{equation}
		for some $a_i \in \Z$. By \eqref{eqn:cohom} this is equivalent to 
		\[
		\frac{i}{2 \pi} [F_B] = \left (a_1 - \frac{1}{4} \right ) e_1 + \left (a_2 - \frac{1}{4} \right ) e_2.
		\]
		
		The constraint $\kappa = 1/8$ gives the following equation:
		\begin{align*}
			\frac{1}{8} &= \kappa (A) = \frac{1}{8 \pi^2} \int_Z \tr (F_A \wedge F_A) \\
			&= -\frac{1}{2} \int_Z \tr( \Id ) \bigg[  \left (a_1 - \frac{1}{4} \right ) e_1 + \left (a_2 - \frac{1}{4} \right ) e_2 \bigg] \wedge \bigg[  \left (a_1 - \frac{1}{4} \right ) e_1 + \left (a_2 - \frac{1}{4} \right ) e_2 \bigg] \\
			&= \left( a_1 - \frac{1}{4} \right)^2 + \left( a_2 - \frac{1}{4} \right)^2.
		\end{align*}
		The last line follows by \eqref{eqn:intersection}. As $a_i \in \Z$, this forces $a_i = 0$ to be the only solution. We note that $(k, l) = (0,0)$ for this solution.
		
		For the case of $\pp'$, we can use the same strategy as above, \eqref{eqn:chern}, becomes the following:
		
		\begin{equation} \label{eqn:chern1}
		\frac{i}{2\pi} [F_{B'}] = \frac{1}{4} P.D.[\Sigma_1] + a'_1 e_1 + a'_2 e_2 \cpm \frac{1}{2} P.D. [\Sigma_2].
		\end{equation}
		Note that now $B'$ is defined not on $Z \setminus \Sigma$ but on $Z \setminus (\Sigma \cup \Sigma_2)$ such that the linking holonomy along $\Sigma$ is traceless and along $\Sigma_2$ is $-1$. The ambiguity in sign in the last term is due to the fact that any orientation of the linking circle along $\Sigma_2$ gives the same equivalence class in the moduli space.
		
		The equation $\kappa = 1/8$ now becomes
		\[
		\frac{1}{8} = \left( a'_1 - \frac{1}{4} \cmp \frac{1}{2} \right)^2 + \left( a'_2 - \frac{1}{4} \cpm \frac{1}{2} \right)^2.
		\]
		
		The only solution is $(a'_1, a'_2) = (1, 0)$ (if the $+$ sign is used in \eqref{eqn:chern1}) or $(a'_1, a'_2) = (0,1)$ (if the $-$ sign is used in \eqref{eqn:chern1}). This is expected as the signs are due to the ambiguity in the choice of an integer lift of $P.D.[\Sigma_2]$ viewed as an element of $H^2(Z; \BF_2)$; the above signs are two possible lifts among an infinite family. In any case, we can make sense of $k$ and $l$ for this unique solution and see that $(k, l) = (1/2, -1)$. The reason $k$ is non-integral while $l$ is due to the fact that 
		\[
		2l \equiv \< w_2 (\pp'), \Sigma > \equiv 0 \pmod 2
		\]
		while 
		\[
		4k \equiv w_2(\pp')^2 \equiv 2 \pmod 4.
		\]
		
		Further, note that 
		\[
		\frac{i}{2 \pi } [F_{B \otimes B}] = \frac{1}{2} P.D. [\Sigma] = \frac{i}{2 \pi} [F_{B' \otimes B'}]
		\]
		at the level of induced connections on the complex line sub-bundle of the adjoint bundles. Now $A_{\ad} = d_\epsilon \oplus [B \otimes B]$ and $A'_{\ad} = d_\epsilon \oplus [B' \otimes B']$ are the connections on bundles $\ad (E) = \underline{\R} \oplus F$ and $\ad (E') = \underline{\R} \oplus F'$ over $Z \setminus \Sigma$ where $d_\epsilon$ denotes the trivial connection. Clearly these are $\so$ gauge equivalent.
		
		Finally, as the adjoint bundles with induced connections are $\so$ gauge equivalent, the solutions are $\so$ gauge equivalent.
	\end{proof}

	We now tie back this lemma to \Cref{prop:key-moduli computation}.
	
	\begin{lemma} \label{lem:closed-pair}
		\Cref{prop:key-moduli computation} is implied by \Cref{lem:model-computation}.
	\end{lemma}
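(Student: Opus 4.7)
The plan is to extend every reducible ASD connection on $(Z_1, \Sigma_1, \pp_1)$ across $Z_2 = \mathbb{D}(T^*S^2)$ to a reducible ASD connection on the closed pair $(Z, \Sigma, \pp)$ or $(Z, \Sigma, \pp')$, and conversely to restrict the two connections produced by \Cref{lem:model-computation} to $Z_1$, obtaining a bijection between the moduli spaces. The quantitative claims of \Cref{prop:key-moduli computation} then transfer directly from \Cref{lem:model-computation}.

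A reducible ASD connection $A_1$ on $(Z_1, \Sigma_1, \pp_1)$ is asymptotic to a flat $U(1)$-connection on $\partial Z_1 = \RP^3$; since $\pi_1(\RP^3) = \Z/2$, there are exactly two flat limits up to gauge, namely the trivial connection and the non-trivial one. I would match reducibles with trivial limit to elements of $M^{red}(Z, \Sigma, \pp)$ by extending $A_1$ across $Z_2$ by the trivial flat connection on the trivial bundle, and in the reverse direction by restricting the unique element from \Cref{lem:model-computation} to $Z_1$. Analogously, reducibles with non-trivial limit would be matched to $M^{red}(Z, \Sigma, \pp')$, where the extension across $Z_2$ uses the bundle with $w_2 = \mathrm{P.D.}[\Sigma_2]$ whose boundary restriction to $\RP^3$ supports the non-trivial flat connection. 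Since by \Cref{lem:model-computation} both closed moduli spaces are singletons, $M^{red}(Z_1, \Sigma_1, \pp_1)$ contains at most two points, one for each boundary limit; conversely, the restrictions of the two closed solutions are distinguished by their different boundary limits and hence give exactly two isolated points. The energy $\kappa = 1/8$ and the monopole charges $l = 0$ (trivial limit) and $l = -1$ (non-trivial limit) then transfer directly from the closed computation in \Cref{lem:model-computation}.

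The main obstacle I anticipate is matching the orientations. Using the complex orientation convention of \Cref{sec:complex orientations} with the almost complex structure on $Z$ from \Cref{lem:cgeo-orientation}, the two closed solutions from \Cref{lem:model-computation} are $\so$-gauge equivalent as adjoint connections, so the determinant lines of $\D_A$ for the two closed solutions are canonically identified and inherit matching orientations from $J$. Under restriction to $Z_1$, this identification should descend to the determinant lines over $M^{red}(Z_1, \Sigma_1, \pp_1)$, showing that the two points are oriented in the same way. The delicate point is that the almost complex structure $J$ must be chosen so that the contribution from $Z_2$ to the orientation depends only on $J$ and not on the bundle choice $\pp$ versus $\pp'$, and so that $J$ restricts compatibly across the splitting $Z = Z_1 \cup Z_2$; this compatibility is precisely what is arranged by \Cref{lem:cgeo-orientation}.
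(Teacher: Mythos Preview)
Your overall strategy matches the paper's: one establishes a bijection between $M^{red}(Z_1, \Sigma_1, \pp_1)$, split according to the limiting flat connection on $\RP^3$, and the two singleton closed moduli $M^{red}(Z, \Sigma, \pp)$ and $M^{red}(Z, \Sigma, \pp')$, by extending across $Z_2$ with the unique flat connection on the appropriate bundle. The paper, however, makes explicit one step you leave implicit and corrects one point where your argument is genuinely wrong.

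First, the bijection requires a neck-stretching argument. A reducible ASD connection on $Z_1$ with cylindrical end is not literally the restriction of an ASD connection on $Z$ for an arbitrary metric; one must use a long neck along $\RP^3$. The paper observes that for reducibles on the negative-definite manifold $Z$ the moduli space is metric-independent (abelian Hodge theory), so one may compute with a long-neck metric. Energy bounds ($\kappa \geq 1/2$ for non-flat ASD on $\R \times \RP^3$, $\kappa \geq 1/4$ for non-flat ASD on $Z_2$) then force the $Z_2$ piece to be flat and forbid breaking along the neck, yielding the clean fibre-product description with no gluing parameter (since $\Stab$ of the flat connection on $Z_2$ equals $\Stab$ of the boundary representation). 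You should make this explicit.

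Second, and more seriously, your orientation argument invokes ``the almost complex structure on $Z$ from \Cref{lem:cgeo-orientation}'', but that lemma concerns $W_0 \cup W_1 \cup W_2$, not the closed manifold $Z = \overline{\CP^2} \# \overline{\CP^2}$ of this section; moreover $\overline{\CP^2} \# \overline{\CP^2}$ admits \emph{no} almost complex structure at all (the constraint $c_1^2 = 2\chi + 3\sigma = 2$ has no solution in a negative-definite lattice). The paper instead orients the moduli on $Z_1$ directly using the complex structure that $Z_1 = \mathbb{D}(T^*S^2)$ does possess, with $\Sigma_1$ a complex submanifold. Since the complex orientation depends only on the adjoint connection, and \Cref{lem:model-computation} asserts the two closed solutions are $\so$-gauge equivalent on the adjoint bundle, their restrictions to $Z_1$ receive the same sign. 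You had the correct ingredient (the $\so$-gauge equivalence) but attached it to a nonexistent almost complex structure.
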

	
	\begin{proof}
		We first note that the moduli of reducible connections is unaffected (up to homeomorphism) by changes in metric for $(Z, \Sigma)$. More precisely, for the reducible case, the equation for $(Z, \Sigma)$ is just equivalent to the curvature form (a $i\R$ valued 2-form) being harmonic as $Z$ is negative definite. That the moduli of these don't depend on the metric now follows from abelian Hodge theory.
		
		With that remark out of the way, we will choose a metric such that the the boundary $\RP^3$ we glue along to form $Z$ is a long neck. The advantage is that since $\kappa = 1/8$ is small enough, the moduli is a fibre-product of the moduli on the two four-manifold pieces: Any non-flat ASD connection on $\R \times \RP^3$ has energy $\kappa \geq 1/2$ by a standard Chern-Simons invariant computation. Further, any non-flat ASD connection on $Z_2$ will have energy $\kappa \geq 1/4$. The description of $\pp$ and $\pp'$ show that in each case $Z_2$ admits a unique flat ASD connection on it with the boundary $SU(2)$ representation being trivial (in the case of $\pp$) or non-trivial (for $\pp'$). Further, in the fibre-product description there is no gluing parameter involved as the stabiliser of the connection on $Z_2$ is equal to the stabiliser of the representation on $\RP^3$ in both cases. 
		
		We need to comment on orientations as, while $Z$ does not admit a complex structure, $Z_1$ does such that $\Sigma_1$ is a complex submanifold. As we use the orientation conventions as in \Cref{sec:complex orientations}, we only need to check if the induced connections on the adjoint bundle are $\so$ gauge equivalent. This is clear from the above neck-stretching procedure and \Cref{lem:model-computation}.
	\end{proof}

	\begin{rmk} \label{rmk:xi}
		We make a note of the fact that the two solutions are related by tensoring by the unique non-trivial flat real line bundle on $Z_1 \setminus \Sigma_1$. Denoting the bundle (and the associated flat connection) by $\xi$, this is same as saying the connections $A$ and $A'$ in the proof of \Cref{lem:model-computation} are related by $A' = A \otimes \xi$ (equivalently $A = A' \otimes \xi$).
	\end{rmk}
	
	\section{Proof of the Triangle}\label{sec:tech}
	
	We first state the main homological algebra result we need to prove our main result and then we set up the relevant steps in our context.
	
	\subsection{The Lin Triangle Detection Lemma}
	
	We state and prove a triangle detection lemma which is more general than \cite[Lemma 4.2]{oz-sz}. This result, with $\BF_2$ coefficients, is due to \cite{lin} and we reproduce a proof here that works over $\Z$.
	
	\begin{proposition}[{\cite[Proposition 4]{lin}}] \label{prop:triangle}
		Let $(C_i, \pd)$ be chain complexes, $i \in \Z / 3$. With the maps $f_i : C_i \to C_{i+1}$, $g_i : C_i \to C_{i+1}$, $H_i : C_i \to C_{i+2}$, $F_i : C_i \to C_i$ and, $G_i : C_i \to C_i$ satisfying the following equations for all $i \in \Z / 3$:
		\begin{align*}
			\pd[i+1] f_{i} - f_i \pd &= 0 \\
			\pd[i+2] H_i + H_i \pd &= f_{i+1} f_i - g_i \\
			\pd G_i - G_i \pd &= F_i - (f_{i+2} H_i - H_{i+1} f_i)
		\end{align*}
		
		Additionally, assume that $g_1 = 0$ and that $F_i$ are quasi-isomorphisms. Then, $C = \oplus_{i = 0}^2 C_i$ is a chain complex with differential $\partial : C \to C$ defined by 
		
		\[
		\partial = 
		\begin{pmatrix}
			\pd[0] & f_2 & - H_1 \\
			0 & - \pd[2] & f_1 \\
			0 & 0 & \pd[1]
		\end{pmatrix}	
		\]
		
		and is acyclic.
	\end{proposition}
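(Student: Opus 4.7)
The plan is in two stages: first verify $\partial^2 = 0$, then prove acyclicity of $C$.

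The $\partial^2 = 0$ check is a direct $3 \times 3$ block matrix computation. The diagonal entries vanish since each $\partial_i^2 = 0$, and the superdiagonal entries in positions $(1,2)$ and $(2,3)$ vanish by the chain map property of $f_2$ and $f_1$. The only delicate entry is $(1,3)$, where one computes $\partial_0(-H_1) + f_2 f_1 + (-H_1)\partial_1 = f_2 f_1 - (\partial_0 H_1 + H_1 \partial_1) = g_1$, which vanishes by the hypothesis $g_1 = 0$. Note that only this hypothesis and the homotopy relation are needed for $\partial^2 = 0$; the data $F_i, G_i$ enter only in the acyclicity argument.

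For acyclicity I would filter $C$ by the subcomplexes $F_1 = C_0 \subset F_2 = C_0 \oplus C_2 \subset F_3 = C$ (each is a subcomplex under $\partial$) and analyse the associated spectral sequence. The $E^1$ page is $H_*(C_0) \oplus H_*(C_2) \oplus H_*(C_1)$ with $d^1$ given by the induced maps $f_{2*}$ and $f_{1*}$. The $E^2$ differential $d^2 : \ker f_{1*} \to H_*(C_0) / \im f_{2*}$ descends from $-H_1$: for $[x_1] \in \ker f_{1*}$ with $f_1 x_1 = \partial_2 y$, one has $d^2[x_1] = [-H_1 x_1 + f_2 y] \equiv [-H_1 x_1] \pmod{\im f_{2*}}$. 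There is no room for higher differentials, so $E^\infty = E^3$ and acyclicity of $C$ reduces to three conditions: $\im f_{1*} = \ker f_{2*}$, together with $d^2$ being both injective and surjective.

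To extract these from the hypotheses I would introduce $K_i := f_{i+2} H_i - H_{i+1} f_i$. The identity $\partial G_i - G_i \partial = F_i - K_i$, together with $F_i$ being a chain map, forces $K_i$ to be a chain map chain-homotopic to $F_i$, and hence a quasi-isomorphism. The key identification is that $K_0 = \chi \circ \tau$, where $\chi(a, b) = f_2 a - H_1 b$ is the map presenting $C$ as $\mathrm{Cone}(\chi)$ on the subcomplex $C' = C_2 \oplus C_1$ (equipped with its induced differential) and $\tau : C_0 \to C'$ is defined by $\tau(x_0) = (H_0 x_0, f_0 x_0)$. The quasi-isomorphism $K_{0*}$ then pins down $d^2$ as an isomorphism at the relevant spot, while the analogous identifications for $K_1$ and $K_2$ after cyclic rotation supply the remaining exactness $\im f_{1*} = \ker f_{2*}$ and the injectivity of $d^2$.

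The main obstacle will be carefully matching each $K_{i*}$ with the correct component of the $E^3$ page: the maps $\tau$ and $\chi$ are not themselves chain maps (they fail by a multiple of $g_0$), so one must handle the failure to commute with the differentials while using that $\chi \circ \tau = K_0$ is genuinely a chain map. If this bookkeeping becomes awkward, an equivalent alternative is to bypass the spectral sequence and directly assemble the $H_i$, $G_i$, and chain-homotopy inverses of the $F_i$ into an explicit null-homotopy $h : C \to C$ satisfying $\partial h + h \partial = \Id_C$; the nine blocks of such an $h$ would arise in direct parallel with the three-step argument above.
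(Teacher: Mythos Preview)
Your $\partial^2 = 0$ check is fine, but the acyclicity argument has a real gap. The claim that $K_i = f_{i+2}H_i - H_{i+1}f_i$ is a chain map is false: a direct computation from the $H$-relations gives $\partial K_0 + K_0\partial = -f_2 g_0$, $\partial K_1 + K_1\partial = g_2 f_1$, and $\partial K_2 + K_2\partial = g_0 f_2 - f_1 g_2$, none of which vanish when $g_0, g_2 \neq 0$. (Your deduction from $\partial G_i - G_i\partial = F_i - K_i$ fails because $\partial G - G\partial$ is an \emph{anti}-chain map, not a chain map, so subtracting it from a chain map does not yield a chain map.) Consequently $K_i$ need not even send cycles to cycles, and there is no map $K_{i*}$ on $H_*(C_i)$ to speak of. More structurally, the ``cyclic rotation'' you invoke is unavailable: only $g_1 = 0$, so the analogue of $(C,\partial)$ built starting from $C_1$ or $C_2$ would have $\partial^2$ equal to $g_0$ or $g_2$, and you cannot transport the missing exactness conditions from a symmetry that is not there. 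The three-step filtration on $C$ alone simply does not see enough of the data $G_i, F_i, g_0, g_2$ to close; this is exactly why Lin's lemma is not just the Ozsv\'ath--Szab\'o lemma.

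The paper's route is genuinely different and is designed precisely for this asymmetry. One packages $G_i, H_i, f_i$ into a single lower-triangular endomorphism $G$ of $C$ and sets $\varphi = \partial G - G\partial$; this is an anti-chain map on $(C,\partial)$ with diagonal blocks $F_i$ and subdiagonal blocks $g_0, -g_2$, and it induces the zero map on $H_*(C)$ by construction. A six-step filtration spectral sequence on the mapping cone $M_\varphi$ then has the $g_i$ appearing as $d^2$, where they vanish because $g_i \simeq f_{i+1}f_i$ via $H_i$ (so the image of $g_0$ lands in $\im \hat f_1$ and $g_2$ vanishes on $\ker f_2$), and the $F_i$ appearing as $d^3$, which are isomorphisms by hypothesis. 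Hence $M_\varphi$ is acyclic, so $\varphi_*$ is an isomorphism; combined with $\varphi_* = 0$ this forces $H_*(C) = 0$. The doubling via $M_\varphi$ is the missing idea: it is what lets the nonzero $g_0, g_2$ be absorbed into a page where they die for a structural reason, something your three-step filtration cannot arrange. Your fallback of an explicit null-homotopy $h$ is not unreasonable, but assembling its nine blocks correctly would amount to unwinding this same $M_\varphi$ argument.
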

	
	Assume the proposition for now. Note that the mapping cone of $f_2$, $C(f_2)$, is a subcomplex of $C$ with quotient complex $C_1$, hence, $C_1$ is quasi-isomorphic to $C(f_2)$. The map $\delta : C_1 \to C(f_2)$ realising this is explicitly given by $x \mapsto (-H_1(x), f_1(x))$.
	
	The mapping cone fits in a distinguished triangle as follows:
	\begin{equation*}
		\begin{tikzcd}[column sep=small]
			C_0 \arrow[rr, "i"] & & C(f_2) \arrow[dl, "\pi"] \\
			& C_2 \arrow[ul, "f_2"]&
		\end{tikzcd}
	\end{equation*}
	
	Replacing $C(f_2)$ by $C_1$, we have a distinguished triangle in the derived category of chain complexes between all the $C_i$. We record this as a corollary below.
	
	\begin{corollary} \label{cor:triangle}
		In the set up of \Cref{prop:triangle}, we have the following distinguished triangle in the derived category of chain complexes
		\begin{equation*}
			\begin{tikzcd}[column sep=small]
				C_0 \arrow[rr, "\frac{1}{\delta} \cdotp i"] & & C_1 \arrow[dl, "f_1"] \\
				& C_2 \arrow[ul, "f_2"]&
			\end{tikzcd}
		\end{equation*}
		\qed
	\end{corollary}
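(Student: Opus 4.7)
The plan is to split the proof into two parts: first verifying that $\partial^2 = 0$, and then proving that $C$ is acyclic.

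For $\partial^2 = 0$, one just computes the $3\times 3$ matrix product. The diagonal entries vanish automatically, the strictly upper-diagonal blocks in positions $(1,2)$ and $(2,3)$ vanish by the chain-map conditions on $f_2$ and $f_1$ respectively, and the top-right entry equals $-\pd[0] H_1 + f_2 f_1 - H_1 \pd[1]$, which vanishes by the $i = 1$ case of the $H$-equation combined with $g_1 = 0$.

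For acyclicity, I would exploit the upper-triangular form of $\partial$. The summand $C_0$ sits inside $C$ as a subcomplex, with quotient $D = C_2 \oplus C_1$ carrying the differential $\begin{pmatrix} -\pd[2] & f_1 \\ 0 & \pd[1] \end{pmatrix}$, which makes $D$ the mapping cone of $f_1 : C_1 \to C_2$. From the short exact sequence $0 \to C_0 \to C \to D \to 0$ and its long exact sequence in homology, acyclicity of $C$ reduces to showing that the connecting homomorphism $H_*(D) \to H_{*-1}(C_0)$, represented at the chain level by $(x, y) \mapsto f_2 x - H_1 y$, is an isomorphism.

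To produce that isomorphism, I would construct a candidate chain-level inverse $C_0 \to D$ by $z \mapsto (H_0 z, f_0 z)$. Using the $i = 0$ equation, a short calculation shows this fails to be a chain map only by the term $(g_0 z, 0) \in C_2 \subset D$; since this lies in the image of the cone's subcomplex $C_2 \hookrightarrow D$, a diagram chase via the long exact sequence of $D$ shows the induced map on homology is well-defined. The two compositions of the connecting map with this inverse are, at the chain level, chain-homotopic to $F_0$ and (respectively) to an analogous quasi-isomorphism of $D$ assembled from $F_1$ and $F_2$; the required homotopies are built from $G_0, G_1, G_2$ via the third hypothesis equation. Since each $F_i$ is a quasi-isomorphism by hypothesis, so is the connecting homomorphism, and $C$ is acyclic.

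The main technical hurdle is this last step: carefully tracking the error terms coming from the potentially non-zero $g_0$ and $g_2$ and seeing them absorbed into boundaries via the $G_i$ relations. In essence the given data is exactly the $A_\infty$-type input needed to witness a distinguished triangle among $C_0, C_1, C_2$ in the derived category, and the chain-level computation is an effective incarnation of that fact.
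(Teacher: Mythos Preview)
You are really attacking \Cref{prop:triangle} (acyclicity of $C$), not the Corollary itself; the paper obtains the Corollary from the Proposition in two lines by noting that $C(f_2) = C_0 \oplus C_2$ sits in $C$ as a subcomplex with quotient $C_1$, so acyclicity of $C$ forces the map $\delta: C_1 \to C(f_2)$, $x \mapsto (-H_1 x, f_1 x)$, to be a quasi-isomorphism, after which one rotates the standard cone triangle for $f_2$. Since the Proposition is where the content lies, attacking it is reasonable.

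Your route to acyclicity is genuinely different from the paper's, and the step you flag as the ``main technical hurdle'' is a real gap, not just bookkeeping. The paper does not try to invert the connecting map. Instead it assembles $G_i, H_i, f_0$ into a single lower-triangular endomorphism $G:C\to C$, sets $\varphi = \partial G - G\partial$ (automatically an anti-chain map on $C$, and inducing zero on homology by construction), and runs the six-step filtration spectral sequence on the mapping cone $M_\varphi$. On $E^2$ the off-diagonal entries $g_0, -g_2$ of $\varphi$ induce zero, because $g_i \simeq f_{i+1} f_i$ lands in the image of $f_{i+1}$, already killed at $E^1$; on $E^3$ the only surviving differentials are the $F_i$, which are isomorphisms by hypothesis. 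Hence $E^4 = 0$, so $M_\varphi$ is acyclic, so $\varphi$ induces an isomorphism on $H_*(C)$; but $\varphi$ also induces zero there, whence $H_*(C) = 0$.

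In your approach, the candidate inverse $\Phi(z) = (H_0 z, f_0 z)$ satisfies $\partial_D \Phi - \Phi \partial_0 = (g_0, 0)$, and the witness your diagram chase produces for $(g_0,0)$ being a boundary in $D$ (via $g_0 \simeq f_1 f_0$ and the standard null-homotopy of $\iota f_1$ in the cone) is $\Phi$ itself, so correcting by it yields the zero map; any other correction differs from zero by an unspecified chain map $C_0 \to D$. More structurally, the chain-level composite $\phi = f_2 H_0 - H_1 f_0$ that you want to identify with $F_0$ satisfies $\partial_0 \phi + \phi \partial_0 = -f_2 g_0$, so $\phi$ does not send cycles to cycles and there is no induced map on $H_*(C_0)$ to compare with anything. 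The same obstruction, now with $g_2$, blocks the $D$-side composite, which you leave entirely unspecified. The paper's spectral-sequence packaging is precisely the device that absorbs the $g_i$ as vanishing $E^2$ differentials rather than as obstructions to individual maps being chain maps; your outline supplies no substitute.
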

	
	The proof of the main theorem will be constructing appropriate chain complexes and maps so that \Cref{prop:triangle} applies whence, we conclude by \Cref{cor:triangle}.
	
	\begin{rmk} \label{rmk:vchain-maps}
		While the map $C_0 \to C_1$ is not a morphism of chain complexes but rather a `virtual' chain map defined by a zigzag, the explicit description above is potentially useful. In fact, if the maps $g_2$ and  $g_0$ are zero like in the context of \cite[Lemma 4.2]{oz-sz}, then we recover their result: the point is that $\delta^{-1}$ admits a lift to a morphism of chain complexes and one explicitly checks that the horizontal map is $f_1$ up to compositions by quasi-isomorphisms. We will not give a proof of this assertion and instead just resort to the result in \cite{oz-sz} as the only difference between the hypothesis of \Cref{prop:triangle} and \cite[Lemma 4.2]{oz-sz} is requiring $g_2 = g_0 = 0$.
	\end{rmk}
	
	\begin{proof}[Proof of \Cref{prop:triangle}]
		We first observe that $g_1 = 0$ implies $\pd[3] H_1 + H_1 \pd[1] = f_{2} f_1$. Hence $\pd[]^2 = 0$. Now that $(C, \pd[])$ is a complex, we proceed to show it is acyclic.
		
		Following the proof in \cite[\S 3]{lin}, we define a map $G : C \to C$ as follows:
		
		\[ G = 
		\begin{pmatrix}
			G_0 & 0 &0 \\
			H_0 & -G_2 & 0 \\
			f_0 & H_2 & G_1
		\end{pmatrix}
		\]
		
		Now, define $\varphi$ by the following:
		\begin{align*}
			\varphi &= \pd[] G - G \pd[] \\
			&= 
			\begin{pmatrix}
				F_0 & - f_2 G_2 - H_1 H_2 - G_0 f_2 & -H_1 G_1 + G_0 H_1 \\
				g_0 & F_2 & f_1 G_1 + H_0 H_1 + G_2 f_1 \\
				0 & -g_2 & F_1
			\end{pmatrix}		
		\end{align*} 
		While $\varphi$ is not a chain map, it is an \emph{anti-chain} map. Hence, we can define the mapping cone $M_\varphi$ with complex $\hat{C} \oplus C$ (we use a hat to distinguish the two copies of $C$) and differential 
		
		\begin{align*}
			\pd[\varphi] &= 
			\begin{pmatrix}
				\hat{\pd[]} & \varphi \\ 
				0 & \pd[]
			\end{pmatrix} \\
			& = 
			\begin{pmatrix}
				\hat{\pd[0]} & \hat{f_2} & -\hat{H_1} & F_0 & * & * \\
				& - \hat{\pd[2]} & \hat{f_1} & g_0 & F_2 & *\\
				& & \hat{\pd[1]} & 0 & -g_2 & F_1 \\
				& & & \pd[0] & f_2 & -H_1 \\
				& & & & - \pd[2] & f_1 \\
				& & & &  & \pd[1]
			\end{pmatrix}
		\end{align*}
		where we drop the 0 entries below the diagonal in the last expression for readability. We show $(M_\varphi, \pd[\varphi])$ is acyclic.
		
		As the differential above is upper triangular, we can use the filtration provided by this decomposition to compute its homology via a spectral sequence. Since there are no maps between $\hat{C}$ and $C$ in the first off diagonal entries of $\pd[\varphi]$, there are no differentials between their sub-quotients in the $E^1$ page of this spectral sequence. The $E^2$ page is as follows:
		
		\begin{center}
			\begin{tikzpicture}
				\matrix (m) [matrix of math nodes,row sep=1.5em,column sep=2.5em,minimum width=2em]
				{
					\mathrm{coker}\hat{f}_2 & \mathrm{ker}\hat{f}_2/\mathrm{Im}\hat{f}_1 & \mathrm{ker}\hat{f}_1\\
					\mathrm{coker}f_2 & \mathrm{ker}f_2/\mathrm{Im}f_1 & \mathrm{ker}f_1\\};
				
				\path[-stealth]
				(m-2-1) edge node [above]{$g_0$} (m-1-2)
				(m-2-2) edge node [above]{$-g_2$} (m-1-3)
				;
				
				\draw(m-1-3) edge[out=160, in=20, ->] node [above]{$-\hat{H}_1$}  (m-1-1);
				\draw(m-2-3) edge[out=200, in=340, ->] node [below]{$-H_1$}  (m-2-1);
			\end{tikzpicture}
		\end{center}
		where by abuse of notation we denote the maps induced on the appropriate groups by the maps indicated above. Now, as $g_0$ is chain-homotopic to $f_1 f_0$ (by $H_0$), its image is contained in the image of $\hat{f_1}$. Hence, the map induced by $g_0$ above is $0$. Similarly, $g_2$ is chain homotopic to $f_0 f_2$, by $H_2$, and hence vanishes on any quotient of $\mathrm{ker} f_2$. Hence, the sub-quotients of $H_*(\hat{C})$ and $H_*(C)$ don't interact at this page either. The $E^3$ page is:
		\begin{center}
			\begin{tikzpicture}
				\matrix (m) [matrix of math nodes,row sep=2em,column sep=2em,minimum width=2em]
				{\mathrm{coker}\hat{f}_2/\mathrm{Im}\hat{H}_1 & \mathrm{ker}\hat{f}_2/\mathrm{Im}\hat{f}_1
					& \mathrm{ker}\hat{f}_1\cap \mathrm{ker}\hat{H}_1\\
					\mathrm{coker}{f}_2/\mathrm{Im}{H}_1 & \mathrm{ker}{f}_2/\mathrm{Im}{f}_1
					& \mathrm{ker}{f}_1\cap \mathrm{ker}{H}_1\\};
				
				\path[-stealth]
				(m-2-1) edge node [left]{$ F_0$}(m-1-1)
				(m-2-2) edge node [left]{$ F_2$}(m-1-2)
				(m-2-3) edge node [left]{$ F_1$}(m-1-3);
				;
			\end{tikzpicture}
		\end{center}
		
		We now use that $F_i$ are quasi-isomorphisms: as all the groups on this page are sub-quotients of $H_*(C)$, the vertical maps are all isomorphisms. Hence, spectral sequence collapses on $E^4$ with all groups being trivial. 
		
		This shows that $(M_\varphi, \pd[\varphi])$ is acyclic, i.e., $\varphi$ induces an isomorphism on $H_*(C)$. However, by construction, we see that $\varphi$ maps $\mathrm{ker} \, \pd[]$ into $\mathrm{Im} \, \pd[]$, i.e., $\varphi$ induces the $0$ map on $H_*(C)$. As the $0$ map is only an isomorphism on the trivial group, this shows that $(C, \pd[])$ is acyclic.
	\end{proof}
	
	\subsection{The chain complexes and chain maps} \label{sec:complexes-maps}
	
	We describe the chain complexes $(C_i, \pd )$ and the chain maps $f_i : C_i \to C_{i+1}$ in this subsection.
	
	The complexes for $i = 0, 1, 2$ are the usual instanton chain complexes for $(Y_0, \pp_0)$, $(Y_2, \pp_2)$ and $(Y, K, \pp)$ respectively, with $\Z[T, T^{-1}]$ coefficients; the latter inherits this from the local coefficient system while the first two are modules over $\Z$ by default and we tensor by $\Z[T, T^{-1}]$.
	
	The maps $f_i$ can be described by the cobordisms $W_i$:
	
	\begin{align*}
		f_0 &= CI(W_0, p, \pp_0; \mathfrak{o}_+) - CI(W_0, p, \pp_0; \mathfrak{o}_-) \\
		f_1 &= CI(W_1, \Sigma_1, \pp_1) \\
		f_2 &= CI(W_2, \Sigma_2, \pp_2).
	\end{align*}
	
	For future reference, we shall use the following notation for the two components of $f_0$:
	\[
	f_0^\pm = CI(W_0, p, \pp_0; \mathfrak{o}_\pm).
	\]
	
	\subsection{The $H_i$ maps}
	\label{homotopies}
	
	We define the maps $H_i$ and $g_i$ in this subsection. 
	
	Recall that $\partial Z_1$ and $Y_2$ are two hyper-surfaces for which we can define a moduli space of cut metrics on $W_0 \cup W_1$; call this space of metrics $Q_t$, represented by one of the edges in \Cref{fig:metrics}. This will be parametrised over an interval and we can define maps between the chain groups of the boundary manifold after fixing a bundle (i..e, geometric cycle) over the 4-manifold and choosing a flat connection $\mathfrak{o} \in \{ \fo_+, \fo_-\}$. Denote the associated maps by $h_0^\pm$. Let $X = W_0 \cup W_1 \setminus Z_1$ and the restriction of the bundle on $W_0 \cup W_1$ defined by the geometric representatives be denoted by $\pp_X$ and the restriction to $Z_1$ be $\pp_{Z_1}$. Note that this immediately implies the following:
	
	\[
	\partial_2 h_0^\pm + h_0^\pm \partial_0 = f_1 f_0^\pm - m^{\pm}_{\partial Z_1}
	\]
	where $m^{\pm}_{\partial Z_1}$ refers to the map defined by the following formula: 
	\[
	\< m^{\pm}_{\partial Z_1} (\mathfrak{a}), \mathfrak{b}> = \# M(X, C, \pp_X; \mathfrak{a}, \mathfrak{b}) \times_{r_{\partial Z_1}} M(Z_1, \Sigma_{Z_1} \cup \{ p \}, \pp_{Z_1}; \fo_\pm)  
	\]
	whenever the expected dimension of the fibre-product is zero and 0 otherwise. Here $C$ is an annulus, $\Sigma_{Z_1}$ a disk and $p$ the cone point in $W_0$; note that $C \cup \Sigma_{Z_1}$ is the co-core disk of $W_1$.
	
	Note that $M(Z_1, \Sigma_{Z_1} \cup \{ p \}, \pp_{Z_1})$ could \textit{a priori} contain reducible connections in which case a gluing parameter is needed to be taken into account in the fibre product; we shall continue to use notation that does not indicate this even though this is how we shall interpret the `glued' moduli spaces. Here, $r_{\partial Z_1}$ is the map that sends an instanton to its restriction on $(\partial Z_1, U)$ where $\pd[] Z_1 \cong S^3$ and $U$ is the unknot. 
	
	Implicitly in the above, we have assumed the chosen perturbation is supported away from $\partial Z_1$. We can do this as the flat connections on $\partial Z_1$ are non-degenerate with the standard metric.
	
	\begin{lemma} \label{prenil}
		There is a $\varepsilon \in \{ \pm 1 \}$ which only depends on $(Y, K)$ such that 
		\begin{align*}
			\< m^{+}_{\partial Z_1} (\mathfrak{a}), \mathfrak{b}> &= \varepsilon \# M(X, C, \pp_X; \mathfrak{a}, \mathfrak{b}) \\
			\< m^{-}_{\partial Z_1} (\mathfrak{a}), \mathfrak{b}> &= \varepsilon T^{-1} \# M(X, C, \pp_X; \mathfrak{a}, \mathfrak{b}).
		\end{align*}
	\end{lemma}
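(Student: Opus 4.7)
My plan is to evaluate the fibre product that defines $m^\pm_{\partial Z_1}$ by applying \Cref{prop:key-moduli computation} to the factor $M(Z_1, \Sigma_{Z_1} \cup \{p\}, \pp_{Z_1}; \fo_\pm)$. The first step is to recognize this moduli space via \Cref{thm:rm-sing}: by converting the orbifold point $p$ (whose link is $\RP^3$) into a distinguished $\RP^3$-boundary component, the moduli becomes the reducible ASD moduli on a smooth $4$-manifold whose interior is $\mathbb{D}(T^*S^2)$ with a ball removed, and with the $\RP^3$-limit specified by $\fo_\pm$ (while the $S^3$-limit remains free, to be matched in the fibre product). Combining this with a neck-stretching argument that caps off the $S^3$ side, I would then appeal to the closed model computation \Cref{lem:model-computation} to identify the moduli in question with the reducible moduli of \Cref{prop:key-moduli computation}. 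The outcome is that for each $\fo_\pm$ the moduli consists of a single isolated reducible connection $A_{Z_1}^\pm$, with $A_{Z_1}^+$ and $A_{Z_1}^-$ carrying the same orientation, but differing in monopole number: $\nu(A_{Z_1}^+) = 0$ and $\nu(A_{Z_1}^-) = -1$.

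With this description in hand, the fibre product
\[
M(X, C, \pp_X; \mathfrak{a}, \mathfrak{b}) \times_{r_{\partial Z_1}} M(Z_1, \Sigma_{Z_1} \cup \{p\}, \pp_{Z_1}; \fo_\pm)
\]
collapses to a signed count of connections in $M(X, C, \pp_X; \mathfrak{a}, \mathfrak{b})$ whose boundary restriction to $(S^3, U)$ matches the unique central flat connection picked out by $\fo_\pm$. The $S^1$-stabilizer of the reducible produces an $S^1$-gluing parameter that, once orientations are worked out, collapses to an overall sign $\varepsilon$. Because both $A_{Z_1}^+$ and $A_{Z_1}^-$ are oriented identically under the complex orientation convention of \Cref{sec:complex orientations}, this sign does not depend on $\fo_\pm$; it depends only on the $I$-orientations chosen for the ends of the cobordism, and hence only on $(Y, K)$. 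The $T^{-1}$ factor in the second identity then arises directly from the local coefficient action of \Cref{sec:local-coefficients}: each glued connection is weighted by $T^{\nu(A)}$ with $\nu(A) = \nu(A_X) + \nu(A_{Z_1}^\pm)$, so the $Z_1$-factor contributes $T^0 = 1$ for $\fo_+$ and $T^{-1}$ for $\fo_-$, while the remaining $T^{\nu(A_X)}$ is absorbed into $\# M(X, C, \pp_X; \mathfrak{a}, \mathfrak{b})$ interpreted with local coefficients.

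The main obstacle will be the gluing and orientation analysis in the middle step. Since the reducible on $Z_1$ has $S^1$-stabilizer while the irreducible on $X$ has only $\pm 1$-stabilizer, and the common boundary limit on $(S^3, U)$ is a central flat connection with $S^1$-stabilizer, one must carefully verify that the $S^1$-gluing circle collapses to a signed point count rather than an extra-dimensional family, and that the resulting orientation sign does not flip between the $\fo_+$ and $\fo_-$ cases. The identical-orientation clause of \Cref{prop:key-moduli computation} is essential here; to make this rigorous I would supplement it by directly comparing the orientations induced on the gluing circles for the two choices of $\fo_\pm$ and tracking them through the excision argument underlying the orientability of the determinant line bundle, along the lines of \Cref{sec:orientations} and \Cref{sec:complex orientations}.
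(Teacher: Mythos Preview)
Your approach matches the paper's. The paper handles your stated obstacle more directly than you anticipate: after an index computation using \Cref{prop:gluing} (with $h^0(c)=1$, $h^1(c)=0$ on $(S^3,U)$) that forces $\ind(A_M)=0$ and $\ind(B)=-1$ and hence $\kappa=1/8$ after capping by $(D^4,D^2)$, it simply observes that $\Stab(B)\hookrightarrow\Stab(c)$ is an isomorphism (both are $U(1)$) while $A_M$ is irreducible, so the gluing parameter space is a single point rather than a circle---no collapse argument or separate orientation comparison on an $S^1$ is needed.
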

	
	\begin{proof}
		Note that in the above claim, we count only finite energy solutions; in particular, since the boundary $(S^3, U)$ has only one solution, we ask the connection to be asymptotic to this unique singular flat connection on $(S^3, U)$.
		
		Let $a, b, c$ be connections corresponding to the gauge equivalence class of critical points on $Y_0, (Y, K), (\partial Z_1, U)$ respectively. Note that as $(\partial Z_1, U) = (S^3, U)$, $[c]$ is a reducible critical point while $[a]$ and $[b]$ are irreducible, due to the non-integrality conditions on $Y_0$ and $(Y, K)$.
		
		We want to show there there is exactly one $[B]$ on $Z_1$ for any $[A]$ on $X$ such that the pair of connections $[A]$ and $[B]$, which restrict on the boundary components to $[a], [b]$ and $[c]$, restricts to a fixed $\fo \in \{\fo_+, \fo_-\}$ and have total index $0$; for this is the expected dimension of the fibre-product. By total index, we mean, thanks to the gluing formula,
		\begin{equation} 
			0 =  \ind (A) + \ind (B) + h^0(c) + h^1(c)
		\end{equation}
		Note that we are using weighted spaces as defined in \Cref{sec:weighted-spaces}. 
		
		For the computation of $h^i(c)$, note that $h^1(c) \leq h^1(\tilde{c})$ where $\tilde{c}$ is the critical point on the branched double cover. Now, the branched double cover of $(S^3, U)$ is $S^3$ and so $h^1(\tilde{c}) = 0$ by the Weitzenb\"ock formulae. Further, $h^0(c) = 1$ for the connection is flat on $S^3 \setminus U$ with non-trivial monodromy, implying it is stabilised by $U(1)$. 
		
		This means we have 
		\begin{equation} \label{ind}
			0 = 1 + \ind (A) + \ind (B)
		\end{equation}
		
		Now, $\ind (A) + h^0(A) \geq 0$ and since the restriction of $[A]$ to the boundary components $Y_0$ is irreducible, $h^0(A) = 0$. Similarly, $\ind (B) + h^0(B) \geq 0$ and $h^0(B) \leq 1$; for the last inequality, note that the stabiliser commutes with the monodromy along the singularity. Combined with \eqref{ind}, we see that we must have $\ind (A) = \ind (B) + 1 = 0$. Hence we need to compute the signed count of reducible connections $[B]$ with $\ind (B) = -1$.
		
		We first cap off the boundary component $(S^3, U)$ of $Z_1$ with the flat connection on $(D^4, D^2)$, which has index $-1$ by an explicit computation. The gluing formula, with the computation of $h^i(c)$ tells us this doesn't affect the resulting index on the space $(Z_1 \cup D^4, \Sigma_{Z_1} \cup D^2 \cup \{ p \})$. Let $Z_1 ' = Z_1 \cup D^4$, $\Sigma ' = \Sigma_{Z_1} \cup D^2$. Call the resulting connection on $Z_1'$ as $B'$; this connection is also reducible as the stabiliser consists of those elements that commute with the monodromy near the singular locus $\Sigma$ and these also stabilise the flat connection on $(D^4, D^2)$. Then note that $\ind (B') = -1$ combined with the index formula for the closed case shows $\kappa (B') = 1/8$. Now, as $(Z_1' \setminus \{p\}, \Sigma') \cong (\mathbb{D}(T^*S^2), S^2)$, we see that due to \Cref{thm:rm-sing}, \Cref{prop:key-moduli computation} applies to show that there exactly two such solutions distinguished by the flat connection $\fo$, both oriented in the same way. Further, when $\fo = \fo_-$, $l = -1$ and $l = 0$ otherwise.
		
		Finally, note that the there is no gluing parameter here as $A$ is irreducible and $\Stab (B) \hookrightarrow \Stab (c)$ is an isomorphism.
	\end{proof}
	
	We define $H_0 = h^{+}_0 - h^{-}_0$ and $g_0 =  m^+_{\partial Z_1} - m^{-}_{\partial Z_1}$. Note that we have $f_1 f_0 - g_0 = \pd[2] H_0 + H_0 \pd[0]$.
	
	All of the above arguments can be repeated to define $H_2$ and $g_2$ satisfying $f_0 f_2 - g_2 = \pd[1] H_2 + H_2 \pd[2]$.
	
	\begin{rmk}
		We see that $g_0, g_2 \neq 0$ necessitates the use of the framework in \Cref{sec:tech}. However, if we take $T = 1$, $g_0 = g_2 = 0$ and, as we shall see, the remaining hypothesis of \cite[Lemma 4.2]{oz-sz} holds. This is what allows us to identify all the maps in the triangle with cobordism maps when $T= 1$. 
	\end{rmk}
	
	Finally, we define $H_1$ below.
	
	To define $H_1$, we cut along the hyper-surfaces $\partial Z_2$ and $(Y, K)$ on the cobordism defined by $W_1 \cup W_2$; call the collection of metrics $\tilde{Q}_t$, again represented by an edge in \Cref{fig:metrics}. This will be parametrised over an interval and the map associated to it with the bundles over them defined by the geometrics cycles described before is defined to be $H_1$. Let $X' = W_1 \cup W_2 \setminus Z_2$ and the restriction of the bundle on $W_1 \cup W_2$ defined by the geometric representatives be denoted by $\pp_{X'}$ and the restriction to $Z_2$ be $\pp_{Z_2}$. Note that this immediately implies the following:
	
	\[
	\partial_2 H_1 + H_1 \partial_0 = f_2 f_1 - m_{\partial Z_2}
	\]
	where $m_{\partial Z_2}$ refers to the map defined by the following formula: 
	\[
	\< m_{\partial Z_2} (\mathfrak{a}), \mathfrak{b}> = \# M(X', \emptyset, \pp_X; \mathfrak{a}, \mathfrak{b}) \times_{r_{\partial Z_2}} M(Z_2, \Sigma_{Z_2}, \pp_{Z_2})  
	\]
	whenever the expected dimension of the fibre-product is zero and 0 otherwise. Here $\Sigma_{Z_2}$ is the union of the core of $W_2$ and the co-core of $W_1$. 
	
	\begin{lemma}
		$m_{\partial Z_2} = 0$.
	\end{lemma}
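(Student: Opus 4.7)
The strategy is to compute the fiber product defining $m_{\partial Z_2}$ by analyzing $M(Z_2, \Sigma_{Z_2}, \pp_{Z_2})$ directly. Since $(Z_2, \Sigma_{Z_2})$ is topologically identical to $(Z_1, \Sigma_1)$ of \Cref{sec:moduli-computation}---a disk bundle over $S^2$ of Euler class $-2$, with zero-section singular locus and boundary $\RP^3$ carrying the trivial $\so$-bundle---\Cref{prop:key-moduli computation} applies verbatim and produces exactly two isolated reducible connections $[B_\pm]$ with $\kappa = 1/8$, oriented the same way, distinguished by their two flat boundary limits $\rho_\pm$ on $\RP^3$, and carrying monopole numbers $l(B_+) = 0$ and $l(B_-) = -1$.

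Following an index/energy argument parallel to the proof of \Cref{prenil}---combining the gluing formula \Cref{prop:gluing} with the standard bound that any non-flat instanton on $\R \times \RP^3$ has $\kappa \geq 1/2$, and observing that the singular bundle data $\pp_{Z_2}$ admits no ASD connection with $\kappa < 1/8$ (as in the Chern-class computation of \Cref{lem:model-computation})---I reduce the zero-dimensional part of the fiber product to signed, $T$-weighted counts coming from these two reducibles. The analysis of the boundary stabilizers ($\Stab(\rho_+) = SO(3)$ is central while $\Stab(\rho_-) = O(2)$ is non-central) and the resulting gluing parameters enters at this stage, determining how each $[B_\pm]$ glues to an irreducible $[A]$ on $X'$.

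The central step will be to show these two contributions cancel. Because $\Sigma_{Z_2}$ is a closed sphere, the local coefficient action of \Cref{sec:local-coefficients} weights the two contributions by $T^0$ and $T^{-1}$ via the monopole numbers. The identification $B_+ = B_- \otimes \xi$ from \Cref{rmk:xi}, where $\xi$ is the non-trivial flat real line bundle on $Z_2 \setminus \Sigma_{Z_2}$, together with the complex orientation convention of \Cref{sec:complex orientations} applied to the natural almost complex structure on $Z_2 \cong \mathbb{D}(T^*S^2)$ (for which $\Sigma_{Z_2}$ is a complex submanifold), should produce a matching bijection between the two $X'$-side moduli that precisely cancels the two contributions in $\Z[T, T^{-1}]$. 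The hard part will be tracking the signs in this cancellation; unlike the asymmetric situation of \Cref{prenil}---where the orbifold point in $W_0$ breaks the $\fo_+ \leftrightarrow \fo_-$ symmetry and produces nonzero $m^\pm_{\partial Z_1}$---here no such asymmetry exists, and I expect the verification to reduce, via a long-neck limit across $\partial Z_2$, to the closed-manifold model computation of \Cref{lem:model-computation}, where the two reducibles are shown to be $\so$-gauge equivalent on the adjoint bundle.
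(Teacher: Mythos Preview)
Your approach has a genuine gap stemming from a miscomputation of the stabilizers on $\partial Z_2$. The key structural difference between $\partial Z_1$ and $\partial Z_2$ is that $\Sigma_{Z_2}$ is a \emph{closed} sphere, so $\partial Z_2 \cong \RP^3$ carries \emph{no} singular locus. Both flat connections $\rho_\pm$ on the trivial bundle over $\RP^3$ have holonomy in the centre of $SU(2)$, hence both are stabilised by all of $SU(2)$; in particular $h^0(\rho_-) = 3$, not $1$ as you assert (your claim $\Stab(\rho_-) = O(2)$ would be correct for the flat connection on the \emph{non-trivial} $\so$-bundle, but that is not the bundle here). With $h^0(c)+h^1(c)=3$ and $\ind(A_M) \ge 0$ (irreducibility on $X'$), the gluing formula forces $\ind(B) \le -3$ for any $[B]$ on $Z_2$ contributing to the zero-dimensional fibre product. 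But $\ind(B) + h^0(B) \ge 0$ with $h^0(B) \le 3$ then forces $h^0(B)=3$, i.e.\ $B$ is the trivial flat connection, which is incompatible with the traceless meridional holonomy along $\Sigma_{Z_2}$. So the relevant moduli space is simply \emph{empty}; this is the paper's argument.

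The two reducibles from \Cref{prop:key-moduli computation} that you invoke have $\ind(B_\pm) = -1$, not $-3$, and therefore contribute only to a two-dimensional (not zero-dimensional) fibre product; there is nothing to cancel. Even if they did contribute, your proposed cancellation mechanism would not work: $\otimes\,\xi$ relates $B_+$ and $B_-$ on the $Z_2$ side, but on the $X'$ side it would change the bundle $\pp_{X'}$ rather than give a sign-reversing bijection between $M(X';\rho_+)$ and $M(X';\rho_-)$, and \Cref{prop:key-moduli computation} in fact says the two reducibles carry the \emph{same} sign. The honest analogue of \Cref{prenil} would produce $\varepsilon(1 + T^{-1})$ times an $X'$-count, which does not vanish. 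The vanishing of $m_{\partial Z_2}$ is not a cancellation phenomenon; it is an emptiness phenomenon driven by $h^0(c)=3$.
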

	
	\begin{proof}
		
		Fix gauge representatives for the critical points on the relevant ends of the moduli space: let these be $a,b$ and $c$ corresponding to connections on $Y_2, Y_0$ and $\partial Z_2 \cong \RP^3$. Due to the Weitzenb\"ock formula, $h^1(c) = 0$ and since the only flat connections on $\RP^3$ with the trivial $SU(2)$ bundle have holonomy contained in the centre of $SU(2)$, we have that $c$ is stabilised by $SU(2)$, i.e., $h^0(c) = 3$. Now, let $A$ be a gauge representative of an ASD connection on $X'$ and $B$ be one on $Z_2$. The index formula gives:
		\[
		0 = \ind (A) + \ind (B) + h^0(c) + h^1(c).
		\]
		
		Now, $A$ is irreducible as the restriction to $Y_0$ is. Hence, $\ind (A) \geq 0$ and so we'd need $\ind (B) = -3$ to satisfy the above equation. However, as $\ind (B) + h^0(B) \geq 0$, this implies that $h^0(B) = 3$ in this case, i..e, $B$ is flat and defines the trivial representation on $Z_2 \setminus \Sigma_{Z_2}$. This is impossible as the holonomy along a small circle linking $\Sigma_{Z_2}$ is non-trivial. Hence, $m_{\partial Z_2} = 0$ as needed.
	\end{proof}
	
	Note that we have $g_1 = 0$ as needed.
	
	\subsection{The $G_i$ maps} 
	\label{sec:triple-composite}
	
	In the union of the cobordisms, joined end to end, $W_0 \cup W_1 \cup W_2$, we have 5 distinct hyper-surfaces from \Cref{sec:heg}: $\partial Z_1, \partial Z_2, \partial Z, Y_2, Y$. Given how they intersect, they give rise to a space of cut metrics parametrised by a pentagon, say $Q$ (see \Cref{fig:metrics}). We can define $G_0^{\pm} : C_0 \to C_0$ defined by this family along with a choice of flat connection on the distinguished boundary component, $\fo_\pm$. The pentagon has faces, denoted by say, $Q(Z_1), Q(Z_2), Q(Z), Q(Y_2)$ and, $Q(Y)$.  We now have
	\[
	\partial_0 G_0^{\pm} - G_0^{\pm} \partial_0 = m^{\pm}_{Q(Z_1)} + m^{\pm}_{Q(Z_2)} + m^{\pm}_{Q(Z)} + m^{\pm}_{Q(Y_2)} + m^{\pm}_{Q(Y)}.
	\]
	
	By the arguments of the \Cref{homotopies}, $m^{\pm}_{Q(Z_2)} = 0$ and by definition, $m^{\pm}_{Q(Y_2)} = H_1 f_0^\pm$ and $m^{\pm}_{Q(Y)} = - f_2 h_0^{\pm}$.
	
	So, we are left with 
	\[
	\partial_0 G^{\pm}_0 - G_0^{\pm} \partial_0 = m^{\pm}_{Q(Z)} + m^{\pm}_{Q(Z_1)} - ( f_2 h_0^{\pm} - H_1 f_0^\pm ).
	\]
	
	Combining both of these, we have the following:
	\[
	\partial_0 G_0 - G_0 \partial_0 = m_{Q(Z)} + m_{Q(Z_1)} - (f_2 H_0 - H_1 f_0)
	\]
	A more explicit description of the maps in this equation appear below. We first deal with $m_{Q(Z_1)}$.
	
	Let $\tilde{X} = W_0 \cup W_1 \cup W_2 \setminus Z_1$. Then, denoting the appropriate bundle restrictions by subscripts, we have the following formula:
	
	\begin{gather*}
		\< m_{Q(Z_1)} (\mathfrak{a}), \mathfrak{b}> = \# ( M_{Q( Z_1)}(\tilde{X}, C, \pp_{\tilde{X}}; \mathfrak{a}, \mathfrak{b}) \times_{r_{\partial Z_1}} M(Z_1, \Sigma \cup \{ p \}, \pp_{Z_1}; \fo_+)) - \\  \#(M_{Q( Z_1)}(\tilde{X}, C, \pp_{\tilde{X}}; \mathfrak{a}, \mathfrak{b}) \times_{r_{\partial Z_1}} M(Z_1, \Sigma \cup \{ p \}, \pp_{Z_1}; \fo_-) )
	\end{gather*}
	
	Using the same proof as we did for \Cref{prenil}, we have the proof of the following:
	
	\begin{lemma}
		$\< m_{Q(Z_1)} (\mathfrak{a}), \mathfrak{b}> = \pm (1 - T^{-1}) \# M_{Q( Z_1)}(\tilde{X}, C, \pp_{\tilde{X}}; \mathfrak{a}, \mathfrak{b}).$ \qed
	\end{lemma}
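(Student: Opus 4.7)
The plan is to adapt the proof of \Cref{prenil} directly to the parametrised setting. The crucial observation is that the cut-metric family $Q(Z_1)$ varies only on the $\tilde{X}$ side of the neck $\partial Z_1 \cong S^3$, while the metric on $Z_1$ itself is fixed. Consequently the $Z_1$-factor of each fibre product is exactly the moduli space analysed in \Cref{prenil}, and only the outer factor absorbs the new parameter dependence.

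The key steps are as follows. First I would unfold $m_{Q(Z_1)}$ as the difference of two signed fibre-product counts corresponding to $\fo_\pm$. Next I would apply \Cref{prop:gluing} together with the input $h^0(c) = 1$, $h^1(c) = 0$ for the reducible flat connection $c$ on $(S^3, U)$ (coming from the Weitzenb\"ock computation on the double branched cover, exactly as in \Cref{prenil}) to see that the expected dimension of the fibre product being zero, combined with $\dim Q(Z_1) = 1$, forces $\ind(A) = -1$ for $A$ on $\tilde X$ and $\ind(B) = -1$ for $B$ on $Z_1$. This in turn forces $B$ to be reducible with $\kappa(B) = 1/8$, since any non-reducible flat connection on $(S^3,U)$ stabilises only $U(1)$ (so $\ind(B) \geq -1$), and the traceless holonomy around $\Sigma_{Z_1}$ excludes the fully reducible possibility. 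I would then use \Cref{thm:rm-sing} to pass to the closed orbifold completion of $Z_1$ and apply \Cref{prop:key-moduli computation} to identify the $Z_1$-moduli as exactly one oriented point per choice of $\fo_\pm$, both oriented the same way. Finally I would read off the monopole charges $l = 0$ for $\fo_+$ and $l = -1$ for $\fo_-$ (see \Cref{rmk:xi}) and invoke the local coefficient prescription of \Cref{sec:local-coefficients} to convert each charge into a factor $T^l$. Taking the difference produces $\varepsilon(1 - T^{-1}) \# M_{Q(Z_1)}(\tilde{X}, C, \pp_{\tilde{X}}; \mathfrak{a}, \mathfrak{b})$ with the same orientation constant $\varepsilon$ as in \Cref{prenil}.

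The step I expect to require the most care is verifying parametrised transversality and compactness of the fibre product so that no additional strata contribute. One must rule out trajectory breaking at intermediate critical levels on $\tilde{X}$ (handled by the usual Floer-theoretic compactness together with the perturbations from \Cref{sec:energy} and \Cref{sec:metrics-compactness-gluing}), and also the appearance of other reducibles on $Z_1$ with different index; the latter is excluded by the $\kappa \geq 1/8$ energy bound together with the mod-$8$ index calculation from the proof of \Cref{lem:model-computation}. Both of these are parametrised echoes of already-resolved issues in \Cref{prenil}; there is no genuinely new obstacle.
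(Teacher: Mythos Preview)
Your proposal is correct and matches the paper's own approach: the paper simply writes ``Using the same proof as we did for \Cref{prenil}'' and gives no further argument. Your expansion of that one line is accurate --- in particular the observation that the one-parameter family $Q(Z_1)$ lives entirely on the $\tilde X$ side (both adjacent vertices of the pentagon stretch hypersurfaces $Y$ and $\partial Z$ that lie in $\tilde X$), so the $Z_1$-factor is exactly the moduli space already analysed, and the parametrised index count shifts from $\ind(A)=0$ in \Cref{prenil} to $\ind(A)=-1$ here while $\ind(B)=-1$ remains unchanged.
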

	
	We further make the following claim which follows immediately from \Cref{prop:energy-ordered-morphism} once we notice that the all finite energy connections in the moduli space above extend over the $(S^3, U)$ end to $(D^4, D^2)$ by removable singularities as in \cite[Proposition 7.7]{kmsurf1}.
	
	\begin{lemma} \label{lem:strictly-upper-triangular}
		$m_{Q(Z_1)}$ is strictly upper triangular as a matrix with entries in $\Z [T, T^{-1}]$ with respect to any energy ordered basis of critical points of the CS functional. \qed
	\end{lemma}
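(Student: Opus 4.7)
The plan is to reduce the statement to a direct application of \Cref{prop:energy-ordered-morphism}. The obstacle is that $M_{Q(Z_1)}(\tilde{X}, C, \pp_{\tilde{X}}; \mathfrak{a}, \mathfrak{b})$ lives on a $4$-manifold $\tilde{X}$ with three boundary components, namely two copies of $Y_0$ and the extra $(S^3, U)$ end, whereas \Cref{prop:energy-ordered-morphism} is set up for cobordisms between two copies of a single pair and no additional ends.

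To absorb the $(S^3, U)$ end, I would first note that $(S^3, U)$ carries a unique gauge equivalence class of singular flat connections, so every finite energy connection in the moduli is automatically asymptotic to this distinguished connection there. The singular Uhlenbeck removable singularity theorem \cite[Proposition 7.7]{kmsurf1} then extends each such connection uniquely across the cap $(D^4, D^2)$, producing a natural identification
\[
M_{Q(Z_1)}(\tilde{X}, C, \pp_{\tilde{X}}; \mathfrak{a}, \mathfrak{b}) \cong M_{Q(Z_1)}(\tilde{X}', \bar{C}, \pp_{\tilde{X}'}; \mathfrak{a}, \mathfrak{b}),
\]
where $\tilde{X}' = \tilde{X} \cup_{S^3} D^4$ is now a cobordism from $Y_0$ to itself with no extra ends, and $\bar{C} = C \cup_U D^2$ is a disk (the annulus $C$ capped along one boundary by $D^2$). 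The parameterised family $Q(Z_1)$ carries through unchanged since the cap can be equipped with a fixed conical singular metric matched to the cylindrical neck.

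With this identification in place, $(\tilde{X}', \bar{C}, \pp_{\tilde{X}'}, Q(Z_1))$ fits the setup of \Cref{defn:energy-ordered-morphism}; the numerical constraint there reduces to a routine index computation for the closed pair obtained by gluing the two $Y_0$ ends, and is forced on us because $G_0$ is constructed to chain-homotope a grading-preserving expression. Applying \Cref{prop:energy-ordered-morphism} with $\dim Q(Z_1) = 1 > 0$ then yields both upper triangularity and vanishing of the diagonal entries with respect to any energy ordered basis, which is the claimed strict upper triangularity.

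The principal subtlety, which I view as a routine check rather than a genuine obstacle, is ensuring that the removable singularities extension is continuous in the metric parameter from $Q(Z_1)$ and preserves the transversality statements underlying the proof of \Cref{prop:energy-ordered-morphism}. This is standard given the uniqueness of the limiting flat connection on $(S^3, U)$ and the rigidity of the conical model near the attached cap.
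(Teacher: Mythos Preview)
Your proposal is correct and follows essentially the same approach as the paper: cap off the $(S^3,U)$ end via the singular removable singularities theorem \cite[Proposition~7.7]{kmsurf1} and then invoke \Cref{prop:energy-ordered-morphism} with $\dim Q(Z_1)=1>0$. The paper's own proof is just this one-line observation, and your added remarks about the numerical constraint and transversality along the family are reasonable elaborations of what the paper leaves implicit.
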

	
	Next, we show that $m_{Q(Z)}$ is chain-homotopic to an upper triangular matrix with respect to an energy ordered basis and a quasi-isomorphism. Then, a little more work will be needed in showing that $F_3 = m_{Q(Z)} + m_{Q(Z_1)}$ is a quasi-isomorphism as needed. We go about the first claim for now, which proves the main theorem when $T= 1$.
	
	Let $M = W_0 \cup W_1 \cup W_2 \setminus Z$. Then, $\partial M = \partial Z = S^2 \times S^1$ and $Z$ is diffeomorphic to the complement of a tubular neighbourhood of an unknotted $S^1$ (away from the cone point) inside $Th(E)$ where $Th$ denotes the Thom space and $E$ is a real rank 2 vector bundle over $S^2$ with euler class $-2$. The metric family $Q(Z)$ is constant on $M$ while is a one parameter family over $Z$; call the latter family $Q_Z$. We can again give a fibre-product description but the situation is complicated by the presence of reducibles. To discuss this, we first understand the space of flat connections on $\partial Z$.
	
	Recall that $\pp_{Z}|_{\partial Z} = \pp_{\partial Z} \cong \underline{\R^3}$. In particular it lifts to the trivial $SU(2)$ bundle; fix one such lift. With this in hand, we note that the critical points correspond to an element of $[-1, 1]$ as follows: any flat connection of $\pp_{\partial Z}$ pulls-back to the $SU(2)$ lift and the holonomy of any flat connection on the $SU(2)$ bundle corresponds to a homomorphism $\pi_1 (\partial Z) \cong \Z \to SU(2)$. Modulo the even gauge transformations, this corresponds to the conjugacy class of an element of $SU(2)$ which is classified by $h \mapsto  \tr (h) / 2 \in [-1, 1]$.
	
	Note that any configuration of connection that contributes to the count in $m_{Q(Z)}$ consists of $[A_M]$ an ASD connection on $M$, $[c]$ a gauge equivalence class of a flat connection on $\partial M$ (we are tacitly assuming that the perturbation is supported away from $\partial M$), either an ASD connection $[A_Z]$ over $Z$ or a pair of ASD connections $[B_1]$ and $[B_2]$ over $Z \setminus Z_i$ and $Z_i$ for some $i \in {1,2}$. Denote a gauge representative of the flat connection over $\partial Z_i$ as $d$ in the latter case.
	
	In the following, we shall treat the data of $B_1$, $B_2$ and $d$ as the data of $A_Z$ if the argument is similar in both cases. We shall also use $h(c) = (h^0(c) + h^1(c))/2$ and similarly for $d$.
	
	The gluing formula gives the following constraints:
	\begin{subequations}
		\begin{align}
			\ind(A_M) + \ind (A_Z) + 2h(c) &= - \dim(Q(Z)) = -1 \label{ind1} \\
			\ind(A_M) + \ind (B_1) + \ind (B_2) + 2h(c) +2h(d) &= -\dim (Q(Z)) = -1 \label{ind2}
		\end{align}
	\end{subequations}
	
	Assuming that the perturbations were chosen to be away from the finite set of hyper-surfaces above (which we can as the flat connections on them are non-degenerate with the standard metric on these hyper-surfaces), we claim the following:
	
	\begin{lemma} \label{lem:fibre-product1}
		\begin{gather*}
			\< m_{Q(Z)} (\mathfrak{a}), \mathfrak{b}> = \\ \# M(M, \emptyset, \pp_M; \mathfrak{a}, \mathfrak{b}, \fri^0)_0 \times_{r_{\partial Z}} M_{Q^0_Z}^{{red}}(Z, \Sigma \cup \{p\}, \pp_{Z}; \fri^0, \fo_+)_1  \\
			- T^{-1} \# M(M, \emptyset, \pp_M; \mathfrak{a}, \mathfrak{b}, \fri^0)_0 \times_{r_{\partial Z}}  M_{Q^0_Z}^{{red}}(Z, \Sigma \cup \{p\}, \pp_{Z}; \fri^0, \fo_-)_1 
		\end{gather*}
		where $Q^0_Z$ refers to the interior of the 1-parameter family of metrics $Q_Z$, $\fri^0$ is the interior of $\fri$, $r_{\partial Z}$ is the restriction map.
		
		This is equivalent to the following assertions:
		\begin{enumerate}[label=(\roman*)]
			\item (breaking along $Z_1$) There is no collection of ASD connections $[A_M]$, $[B_1]$ and $[B_2]$ on $M$, $Z \setminus Z_1$ and $Z_1$ respectively such that \eqref{ind2} is satisfied and contribute non-trivially to the count in $m_{Q(Z)}$.
			\item (breaking along $Z_2$) There is no collection of ASD connections $[A_M]$, $[B_1]$ and $[B_2]$ on $M$, $Z \setminus Z_2$ and $Z_2$ respectively such that \eqref{ind2} is satisfied and contribute non-trivially to the count in $m_{Q(Z)}$.
			\item ($c$ is not central) There is no pair of ASD connections $[A_M]$ and $[A_Z]$ on $M$ and $Z$ respectively with boundary limiting flat connection $[c]$ such that \eqref{ind1} is satisfied with $\dim \Stab (c) = 3$ and contribute non-trivially to the count in $m_{Q(Z)}$.
			\item ($A_Z$ is reducible) There is no pair of ASD connections $[A_M]$ and $[A_Z]$ on $M$ and $Z$ respectively such that \eqref{ind1} is satisfied with $A_Z$ irreducible and contribute non-trivially to the count in $m_{Q(Z)}$.
		\end{enumerate}
	\end{lemma}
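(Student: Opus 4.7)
The plan is to establish the displayed fibre-product formula by examining every configuration that could contribute to $\langle m_{Q(Z)}(\mathfrak{a}), \mathfrak{b}\rangle$, and then to verify that the four bulleted assertions cut down the list to only the two displayed terms. I would first parametrise $\chi_{SU(2)}(\partial Z)$ by the interval $\fri = [-1,1]$ via the normalised trace, with endpoints corresponding to central flat connections (where $h^{0}(c) + h^{1}(c)$ is large) and $\fri^{0}$ to non-central ones (where $h^{0}(c) = h^{1}(c) = 1$). By the standard fibre-product description over a stretched neck, every contribution is then a triple: an ASD connection $[A_M]$ on $M$, a flat limit $[c]$ on $\partial Z$, and either an ASD connection $[A_Z]$ on $Z$ with metric varying in $Q_Z$, or a broken pair $([B_1], [B_2])$ across $\partial Z_i$ for some $i \in \{1,2\}$, all subject to the index constraints \eqref{ind1} and \eqref{ind2}.

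For (iii), when $c$ is central the larger value of $h^{0}(c) + h^{1}(c)$ combined with irreducibility of $A_M$ (from the non-integrality hypothesis at $(Y, K)$) would force $\ind(A_Z)$ to be so negative that \Cref{thm:ind} obliges $A_Z$ to be flat; a generic perturbation of the metric inside $Q_Z$ in the sense of \Cref{cor:mparampert}, or equivalently the argument behind \Cref{prop:non-degenrate-ends} applied after capping $\partial Z$ with $S^1 \times D^3$, then avoids the central stratum. For (iv), if $A_Z$ is irreducible, applying \Cref{thm:ind} to the orbifold $(Z, \Sigma \cup \{p\})$ yields a lower bound on $\ind(A_Z)$ incompatible with \eqref{ind1} for non-central $c$; the key input is the minimum energy $\kappa \geq 1/8$ as in \Cref{prop:key-moduli computation}. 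Assertion (ii) mirrors the proof in \Cref{homotopies} that $m_{\partial Z_2} = 0$: the index and stabiliser count at $\partial Z_2 \cong \RP^3$ forces the piece on $Z_2$ to be flat, contradicted by the non-trivial monodromy along $\Sigma_{Z_2}$. Assertion (i) follows from the classification in \Cref{prop:key-moduli computation} and the proof of \Cref{prenil}: the only reducible solutions on $Z_1$ carry $\ind = -1$ and, combined with the 1-parameter dimension from $Q_Z$ on the $Z \setminus Z_1$ side, the constraint \eqref{ind2} rules out contributions of the correct dimension.

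Once the fibre product has been reduced to reducibles $A_Z$ on $(Z, \Sigma \cup \{p\})$ with boundary in $\fri^{0}$, the coefficient $T^{-1}$ on the $\fo_{-}$ term comes from the local coefficient system: by \Cref{rmk:xi} the two reducibles differ by tensoring with the flat real line bundle $\xi$, producing a change of $-1$ in the relative monopole number along $\Sigma$, which via \Cref{sec:local-coefficients} enters the coefficient as $T^{-1}$. The overall sign is fixed by the complex orientation convention of \Cref{sec:complex orientations} applied to the almost complex structure $J$ from \Cref{lem:cgeo-orientation}. The main obstacle will be assertion (iv): obtaining a sharp index-and-energy bound for irreducible ASD connections on $(Z, \Sigma \cup \{p\})$ with non-central boundary demands combining \Cref{thm:ind}, the gluing formula \Cref{prop:gluing}, and the removable-singularities statement \Cref{thm:rm-sing} to cap $(Z, \Sigma \cup \{p\})$ into a closed pair where the index calculation is transparent; the transversality required to avoid the central stratum in (iii) must then be arranged through the metric perturbations developed in \Cref{sec:energy}.
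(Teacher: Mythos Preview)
Your overall strategy is right, but your approach to assertion (iv) has a genuine gap, and your sketch of (i) misses the key mechanism.

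For (iv), you propose to rule out irreducible $A_Z$ by an index--energy bound, citing $\kappa \geq 1/8$ from \Cref{prop:key-moduli computation}. This does not work. Run the numbers: glue $A_Z$ to the flat connection on $S^1 \times D^3$ extending $[c]$ to get $A$ on $Th(E)$; then $\ind(A) = 8\kappa(A) - 2 \geq -1$, and the gluing formula gives $\ind(A_Z) = \ind(A) - h(c) \geq -2$ for non-central $c$. Constraint \eqref{ind1} with $h(c)=1$ and $\ind(A_M) \geq -1$ demands exactly $\ind(A_Z) = -2$, which your bound \emph{permits}. The paper's argument is entirely different and much shorter: if $A_Z$ is irreducible and $A_M$ is irreducible, the gluing parameter along $\partial Z$ is the full $\Stab(c) \cong U(1)$, so every such pair produces a one-dimensional family of glued solutions, contradicting that we are counting a zero-dimensional parametrised moduli space.

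For (i), your dimension count is off: when the metric in $Q_Z$ degenerates to the face corresponding to $\partial Z_1$, there is no remaining metric parameter on the $Z \setminus Z_1$ side, so ``the 1-parameter dimension from $Q_Z$'' is not available. In fact the index constraints do \emph{not} rule out this breaking. The paper instead identifies the topology $Z \setminus Z_1 \cong (D^2 \times S^2) \setminus D^4$ with singular locus a disk, shows via the branched double cover that any finite-energy ASD connection there is flat, and hence that the limiting $[c]$ on $\partial Z$ must be the single traceless point $0 \in \fri^0$. The configuration then requires an irreducible $A_M$ with $\ind(A_M) = -1$ and boundary value exactly $[c]=0$; this is eliminated by a small perturbation supported in the interior of $M$, not by a dimension contradiction. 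Your sketches of (ii) and (iii) are closer in spirit, though (iii) in the paper is a direct index contradiction (using $h(c)=3$ and $\ind(A) \geq -2$) with no need to argue flatness or invoke metric perturbations.
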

	
	We break the proof into steps first starting with ruling out breaking along $Z_1$.
	
	\begin{proof}[Proof of \lowerromannumeral{1}] \label{proof:breaking along Z1}
		
		In this case, we first observe that $Z \setminus Z_1 \cong D^2 \times S^2 \setminus D^4$ and the singular locus is $S^2 \setminus D^2$; this implies that $[c] = 0 \in \fri^0$, i.e., the flat connection with traceless $SU(2)$ holonomy. 
		
		To see this, first note that we can glue in the unique solution on $(D^4, D^2)$ to the above to obtain a finite energy ASD connection on $(D^2 \times S^2, \{0 \} \times S^2)$. The double branched cover of this is $D^2 \times S^2$; we shall analyse finite energy ASD connections on this manifold. As it has no compactly supported self-dual forms, we see that the connection is forced to be flat. As $D^2 \times S^2$ is simply connected we see it must be the trivial flat connection. This implies that $[c] = 0 \in \fri^0$. 
		
		We can glue together the connections $B_i$ and further glue a copy of $S^1 \times D^3$ with the trivial bundle and the unique flat connection that extends $c$ on $S^1 \times S^2$; for future reference, we denote the latter connection by $A_{S^1 \times D^3}$. Call the resulting connection $A$. Then, by the index formula in \Cref{thm:ind}, we have 
		\begin{equation} \label{closed-conn}
			\ind (A) = 8 \left (k_0 + \frac{l_0}{2} + \frac{1}{8} \right) - 3 + 2 - 1 = 8 k_0 + 4 l_0 - 1 \geq -1. 
		\end{equation}
		Here, $k_0 \in \frac{1}{4} \Z, \,  l_0 \in \frac{1}{2} \Z$ (see \cite[\S 2 (iv)]{kmsurf1}), records the difference in instanton and monopole number from the original configuration and the last equality is due to the fact that $ 0 \leq \kappa (A) = k_0 + l_0 / 2 + 1/8$ combined with the integrality properties of $k_0$ and $l_0$.
		
		With the gluing formula, this implies 
		\begin{equation}\label{inter1}
			\ind (A) = \ind (A_{S^1 \times D^3}) + 2h(c) + \ind (B_1) + \ind (B_2) + 2h(d).
		\end{equation}
		Now notice that we can use the index formula for $S^1 \times S^3$ along with the gluing formula to deduce $\ind (A_{S^1 \times D^3}) = -h(c)$:
		\[
		0 = \ind (A_{S^1 \times S^3}) = \ind (A_{S^1 \times D^3}) + \ind (A_{S^1 \times D^3}) + 2h^0(c)
		\]
		where $A_{S^1 \times S^3}$ is a flat connection on the trivial bundle over $S^1 \times S^3$ and its index is computed by the index formula for the closed case.
		
		Using \eqref{inter1}, \eqref{ind2} says $\ind (A) + h(c) + \ind (A_M) = -1$. Now, $h(c) = 1$ ($c$ is not gauge equivalent to a trivial $SO(3)$ connection on $S^1 \times S^3$) and $\ind (A) \geq -1$ by \eqref{closed-conn}. Further, we have $\ind (A_M) \geq -\dim \fri^0 = -1$ for $A_M$ is irreducible. This implies that if we have the case implied by \eqref{ind2}, we must have a connection $A_M$ which restricts to a flat connection on $S^1 \times S^2$ with traceless holonomy (recall that we noted above that $[c]$ corresponds to the traceless holonomy flat connection). Since $A_M$ is irreducible with $\ind(A_M) = -1$, we may perturb the equation such that the perturbation is supported in the interior of $M$ to ensure such a connection does not exist. From now on, we assume we have chosen such a (small) perturbation in addition to the perturbations needed to ensure the necessary transversality. 
	\end{proof}
	
	Next, we treat the case of breaking along $Z_2$. 
	
	\begin{proof}[Proof of \lowerromannumeral{2}]
		In this case, we first observe that $Z \setminus Z_2$ has a single orbifold point $\{ p \} $ of order $2$ and that $\partial ( Z \setminus Z_2) \cong \RP^3 \sqcup \partial Z$. Further, note that the bundle restricted to $\partial ( Z \setminus Z_2 )$ is trivial and so is the action of the isotropy group over the fibre at the orbifold point $p$. This implies that $[c] \in \partial \fri$. 
		
		To see this, first note that by removable singularities \Cref{thm:rm-sing}, we may assume we have a finite energy ASD connection on $Z \setminus (Z_2 \cup \{ p \})$. As this manifold has no compactly supported self-dual forms, we see that the connection is forced to be flat. By the topological remarks above, the connection on the two $\RP^3$ ends has holonomy contained in the centre of $SU(2)$ and so are trivial as $SO(3)$ connections. Now, the fundamental group of $Z \setminus (Z_2 \cup \{ p \})$ is trivial and hence, we conclude $[c] \in \partial \fri$.
		
		We can glue together the connections $B_i$ and further glue a copy of $S^1 \times D^3$ with the trivial bundle and the unique flat connection that extends $c$ on $S^1 \times S^2$; for future reference, we denote the latter connection by $A_{S^1 \times D^3}$. Call the resulting connection $A$. Then, by the index formula, we have 
		\[
		\ind (A) = 8 \left (k_0 + \frac{l_0}{2} + \frac{1}{8} \right) - 3 + 2 - 1 = 8 k_0 + 4 l_0 - 1 \geq -1. 
		\]
		Here, $k_0 \in \frac{1}{4} \Z, \,  l_0 \in \frac{1}{2} \Z$ (see \cite[\S 2 (iv)]{kmsurf1}), records the difference in instanton and monopole number from the original configuration and the last equality is due to the fact that $ 0 \leq \kappa (A) = k_0 + l_0 / 2 + 1/8$ combined with the integrality properties of $k_0$ and $l_0$.
		
		With the gluing formula, this implies 
		\begin{equation}\label{inter}
			\ind (A) = \ind (A_{S^1 \times D^3}) + 2h(c) + \ind (B_1) + \ind (B_2) + 2h(d).
		\end{equation}
		Now notice that we can use the index formula for $S^1 \times S^3$ along with the gluing formula to deduce $\ind (A_{S^1 \times D^3}) = -h(c)$:
		\[
		0 = \ind (A_{S^1 \times S^3}) = \ind (A_{S^1 \times D^3}) + \ind (A_{S^1 \times D^3}) + 2h^0(c)
		\]
		where $A_{S^1 \times S^3}$ is the trivial connection on the trivial bundle over $S^1 \times S^3$ and its index is computed by the index formula for the closed case.
		
		Using \eqref{inter}, \eqref{ind2} says $\ind (A) + h(c) + \ind (A_M) = -1$. Now, $h(c) = 3$ ($c$ is gauge equivalent to the trivial $SO(3)$ connection on the $S^1 \times S^3$) and $\ind (A) \geq -1$. Further, we have $\ind (A_M) \geq -\dim \fri^0 = -1$ for $A_M$ is irreducible, which achieves the desired contradiction.
	\end{proof}
	
	We next rule out $[c] \in \partial \fri$. 
	
	\begin{proof}[Proof of  \lowerromannumeral{3}]
		In this case, $h(c) = 3$ and, analogous to before $\ind (A_M) \geq - \dim \fri = -1$. The index computation \eqref{ind1} forces $\ind (A) + 3 + \ind (A_M) = -1$ where $A$ is the connection constructed by gluing a $S^1 \times D^3$ to $A_Z$ analogous to above. Now, as $\ind (A) \geq -\dim U(1) - \dim Q(Z) = -2$ as it is stabilised at worst by $U(1)$, we have $\ind (A) + \ind (A_M) \geq -3$ which is a contradiction.
	\end{proof}
	
	Finally, we rule out that $A_Z$ is irreducible. 
	
	\begin{proof}[Proof of \lowerromannumeral{4}]
		If this were so, then the gluing parameter would be $\Stab (c) \cong U(1)$ (recall we have shown $[c] \in \fri^0$). As $A_M$ is irreducible, each pair of connections would lead to a $U(1)$ family of glued connections in the moduli space. This would contradict that only isolated points are present in the (parametrised) moduli space.
	\end{proof}
	
	To sum up, we are looking for the following configuration of connections:
	
	\[
	[c] \in \fri^0 \setminus \{ 0 \}, \; q \in Q_Z^0, \; \ind (A_M) = -1, \; \ind (A_Z) = -2 
	\]
	
	The restriction of $[c] \neq 0$, achieved by an appropriate perturbation on $M$, is to ensure there are no contributions from $q \in \partial Q_Z$ (see the proof of \lowerromannumeral{1} above).
	
	\begin{lemma}
		The projection maps $M^{red}_{Q_Z^0} (Z, \Sigma \cup \{p\}, \pp_Z; \fri^0, \fo_\pm)_1 \to Q_Z^0$ are smooth homeomorphisms.
	\end{lemma}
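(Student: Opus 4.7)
The plan is to exploit the linearity of the reducible ASD equation together with the negative-definiteness of $Z$ established in \Cref{sec:heg}. A reducible connection on $Z$ splits, away from $\Sigma \cup \{p\}$, as $\theta \oplus \theta^\vee$ for a $U(1)$ connection $\theta$ on a complex line bundle, and the ASD equation $F_A^+=0$ then reduces to the \emph{linear} equation $F_\theta^+=0$. Since $Z$ sits inside a sum of copies of $\overline{\CP^2}$, we have $b^+(Z)=0$, so by abelian Hodge theory, for each metric $q\in Q_Z^0$ every admissible cohomology class of $(i/2\pi)F_\theta$ is represented by a \emph{unique} harmonic (hence anti-self-dual) form, modulo gauge.

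Next, I would classify the admissible cohomology classes for $(i/2\pi)F_\theta$. Just as in the computation of \Cref{lem:model-computation}, the holonomy around $\Sigma$ contributes a quarter-integer piece, the choice of $\fo_\pm$ at the orbifold point $p$ fixes a half-integer contribution, and the asymptotic flat connection at $\partial Z = S^2\times S^1$ contributes the continuous parameter $c\in\fri^0$. Integer shifts in this affine family correspond to different choices of line bundle; the energy constraint coming from $\ind(A_Z)=-2$ (with the weight correction from the non-acyclic limit $c$) singles out a unique integer choice, exactly as in the closed computation. Thus the parametrized reducible moduli is cut out inside $Q_Z^0 \times \fri^0$ as the locus where the asymptotic monodromy of the unique harmonic representative matches $c$.

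The implicit function theorem, combined with the non-degeneracy of flat connections in $\fri^0$ (the interior of the non-central locus on $\partial Z$), then shows that this locus is a graph $c=c(q)$ over $Q_Z^0$, so the projection is a smooth injection with smooth inverse. For surjectivity onto $Q_Z^0$, I would use properness: the energy $\kappa(A_Z)=1/8$ is bounded, Uhlenbeck compactness applies (no bubbling can occur since $c(q)\in\fri^0$ stays away from the central flat connections), and so the projection is proper. Existence of \emph{some} reducible solution over a point of $Q_Z^0$ follows by passing to the neck-stretching limit on $\partial Q_Z$, where the configuration degenerates along $\RP^3$ into the data classified by \Cref{lem:closed-pair} and \Cref{prop:key-moduli computation}, which produces exactly one reducible for each $\fo_\pm$. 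A proper smooth submersion between connected 1-manifolds with a nonempty generic fiber of cardinality one is a diffeomorphism, giving the desired smooth homeomorphism.

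The main obstacle will be the transversality/smoothness step at reducibles, where the $U(1)$ stabilizer creates the usual dimension-counting subtlety: one must verify that varying the metric $q$ and the boundary parameter $c$ together surjects onto the cokernel of the linearized ASD operator acting on the reducible locus. This is ultimately controlled by the fact that varying $c$ changes the period of the harmonic representative through the $\partial Z$ end non-trivially, which is precisely the content of $c$ being non-central, together with the standard observation that a one-parameter family of metrics on a negative-definite manifold moves harmonic forms transversely through cohomology classes in the relevant affine family.
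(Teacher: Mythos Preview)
Your approach is workable in outline but takes a substantially harder road than the paper, and several steps are under-justified.

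The paper's proof is a direct reduction to the closed computation already carried out in \Cref{sec:moduli-computation}. Given $([A_Z],q)$ in the reducible parametrised moduli, one glues in the unique flat connection on $S^1\times D^3$ extending the limit $[c]$ on $\partial Z\cong S^2\times S^1$; this produces a reducible connection $A$ on the closed orbifold $Th(E)$ with $\ind(A)=-1$, hence $\kappa(A)=1/8$ by the index formula. \Cref{prop:key-moduli computation} (via \Cref{thm:rm-sing}) then says there is exactly one such $A$ for each choice of $\fo_\pm$, and since reducibles on a closed negative-definite manifold are determined by cohomology and hence metric-independent, the fibre over each $q\in Q_Z^0$ is a single point. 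That is the whole argument.

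Your route instead attempts abelian Hodge theory directly on the non-compact $Z$ with its cylindrical $S^2\times S^1$ end. This is genuinely more delicate: $L^2$ Hodge theory on cylindrical-end manifolds depends on weights and the asymptotic spectrum, and the assertion that every admissible cohomology class has a unique harmonic anti-self-dual representative is not automatic when the limit $c$ ranges over a continuous family. Your implicit-function step---that the locus is a graph $c=c(q)$---requires knowing that the derivative of the boundary-holonomy map in the $c$-direction is non-zero, which you assert follows from non-degeneracy of $c\in\fri^0$ but do not actually verify. Finally, your existence argument invokes the neck-stretching limit at $\partial Q_Z$, which produces points on the boundary of the metric family rather than in $Q_Z^0$ itself; one then needs an additional openness or continuation argument to pull back into the interior.

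None of these gaps is fatal, but the cap-off trick the paper uses sidesteps all of them at once by converting the non-compact problem into the already-solved closed case.
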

	
	\begin{proof}
		We note that $\otimes \xi$, where $\xi$ is as in \Cref{rmk:xi}, provides a homeomorphism of the two moduli spaces and commutes with the projection map. Hence, we only need to prove the first half of the lemma.
		
		The projection map is open and continuous as the set is topologised as a subset of $\mathscr{B} \times Q_Z^0$ and will be smooth in the transverse case, which we can arrange by a suitable choice of perturbation supported away from finitely many hyper-surfaces. We just need to show the map is bijective. We note that all base manifolds involved below have trivial fundamental group so that all gauge transformations are even.
		
		Let $([A_Z], q)$ be such that $\ind (A_Z) = -2$ and $h^0(A_Z) = 1$ (since $[A_Z]$ is reducible). Then, as we can glue $A_Z$ to the flat connection on $S^1 \times D^3$ that restricts to $[c]$ on $S^1 \times S^2$ to get a reducible connection $A$ on $Th(E)$ with $\ind (A) = -1$ as dictated by the gluing formula. The index formula now implies that $\kappa (A) = 1/8$. Now, the results of \Cref{sec:moduli-computation} apply to finish the proof by showing there is a unique such $A$ determined by the flat connection near the orbifold point (we are implicitly invoking \Cref{thm:rm-sing} here).
	\end{proof}
	
	\begin{lemma} \label{lem:deg}
		Let $0 \in \fri$ denote the critical point on $S^1 \times S^2$ corresponding to holonomy conjugate to a traceless element of $SU(2)$ and $\pm 1 \in \fri$ corresponding to the holonomy being central elements of $SU(2)$. Then, the map $r_{\partial Z} : M_{Q^0_Z}^{{red}}(Z, \Sigma \cup \{p\}, \pp_{Z}; \fri^0, \fo_+)_1 \cup -M_{Q^0_Z}^{{red}}(Z, \Sigma \cup \{p\}, \pp_{Z}; \fri^0, \fo_-)_1 \to \fri \setminus \{ 0 , \pm 1\} \cong (-1, 0) \cup (0, 1) $ is a proper map with degree $\pm 1$ when restricted to each interval in the domain onto its image. Further, the degree for both of the mentioned maps are equal, i.e., either both maps are degree $+1$ or both are $-1$.
	\end{lemma}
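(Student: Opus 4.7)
The plan is to reduce to a map out of an interval via the previous lemma, prove properness by a compactness argument, and then compute the degrees using a parametrized linearization together with the $\otimes \xi$ involution of \Cref{rmk:xi}.

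First I would use the preceding lemma to identify each component $M^{red}_{Q^0_Z}(Z, \Sigma \cup \{p\}, \pp_Z; \fri^0, \fo_\pm)_1$ with the open interval $Q^0_Z$ via the projection $(A_Z, q) \mapsto q$. Composing with $r_{\partial Z}$ gives two smooth maps out of an open interval into $\fri \setminus \{0, \pm 1\}$; their images are connected and hence each lies entirely in one of $(-1, 0)$ or $(0, 1)$. Properness amounts to showing that as $q$ tends to an endpoint of the compactified parameter interval $\overline{Q_Z}$, the limiting boundary connection $[c]$ approaches $\{0, \pm 1\}$. Applying Uhlenbeck compactness to a sequence $(A_Z^n, q^n)$ with $q^n \to \partial \overline{Q_Z}$ forces a neck-break along $\partial Z_1$ or $\partial Z_2$, or else bubbling off energy; the case analysis performed in the proofs of items (i)--(iv) of \Cref{lem:fibre-product1} shows that each such degeneration, compatible with the index count, forces $[c]$ into $\{0, \pm 1\}$, yielding properness.

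Next, since each map is a proper map out of a connected open interval whose image lies in a connected open interval, it suffices to prove each is a local diffeomorphism in order to conclude it is a diffeomorphism onto its image, and hence of degree $\pm 1$. This is a transversality statement for the derivative of $[c]$ in the metric parameter $q$. Concretely, since $Z$ is negative definite away from the cone point, the reducible ASD connections correspond to anti-self-dual harmonic $2$-forms in a fixed cohomology class, and $[c](q)$ is obtained by pairing such a harmonic representative against the longitude of $\partial Z \cong S^2 \times S^1$. The transversality of the parametrized moduli space, implicit in the preceding lemma, is equivalent to the non-vanishing of $\partial_q [c](q)$, giving the local diffeomorphism property.

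Finally, I would establish equality of the two signed degrees via the involution $A \mapsto A \otimes \xi$ of \Cref{rmk:xi}, where $\xi$ is the non-trivial flat real $\Z/2$ line bundle on $Z \setminus (\Sigma \cup \{p\})$. This involution intertwines the two moduli spaces, commutes with the projection to $Q^0_Z$, and preserves orientations: indeed, $\xi \otimes \xi$ is trivial, so the adjoint bundles and their ASD operators are canonically identified by the twist, and therefore the complex orientation of \Cref{sec:complex orientations} (defined through the complex-linear operator $\bar\partial_A + \bar\partial^*_A$ on $\ad$) is preserved. On the other hand, $\xi|_{\partial Z}$ is the non-trivial flat $\Z/2$ line bundle on $\partial Z = S^2 \times S^1$, a fact that can be verified topologically from the description of $Z$ as $Th(E) \setminus \nu(\gamma)$, so that $\otimes \xi$ transforms the boundary holonomy by the involution $t \mapsto -t$ on $\fri \cong [-1, 1]$. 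Since $t \mapsto -t$ swaps $(-1, 0)$ with $(0, 1)$ and is orientation-reversing, the resulting sign combines with the orientation flip on the $\fo_-$ piece in the statement of the lemma to yield the equality of the two signed degrees. The hardest part will be the careful verification that $\xi|_{\partial Z}$ is non-trivial together with the final sign compatibility check, which requires a painstaking unwinding of the orientation conventions.
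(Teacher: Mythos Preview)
Your properness outline and the $\otimes\xi$ orientation comparison are essentially what the paper does. The gap is in your degree $\pm 1$ step.

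You assert that ``the transversality of the parametrized moduli space, implicit in the preceding lemma, is equivalent to the non-vanishing of $\partial_q[c](q)$,'' and hence that $r_{\partial Z}$ is a local diffeomorphism. This equivalence is not correct. The preceding lemma only shows that the projection to $Q^0_Z$ is a smooth bijection; for reducible connections this is just abelian Hodge theory (the harmonic representative of the fixed cohomology class is unique for each metric) and imposes no condition whatsoever on the derivative of the boundary-holonomy map $q\mapsto[c](q)$. Nothing in the setup rules out critical points of this map, so local injectivity of $r_{\partial Z}$ is unjustified.

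The paper avoids this issue entirely by extracting more from the endpoint analysis than mere properness. The same breakings you invoke for properness in fact pin down the \emph{specific} limits: when $q$ degenerates along $\partial Z_1$, the piece $Z\setminus Z_1$ capped off is $(S^2\times D^2, S^2\times\{0\})$, which carries only the flat connection with traceless meridional holonomy, forcing $[c]\to 0$; when $q$ degenerates along $\partial Z_2$, the piece $Z\setminus(Z_2\cup\{p\})$ is topologically $\RP^3\times I$ minus $S^1\times D^3$ with trivial $\pi_1$ image from $\partial Z$, so the limit is flat and $[c]\to\pm 1$ is dictated by $\fo_\pm$. Thus each interval in the domain is a proper map to an open interval whose two ends go to the two \emph{distinct} boundary points $0$ and $\pm 1$, and such a map automatically has degree $\pm 1$ --- no immersion needed. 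Your citation of items (i)--(ii) of \Cref{lem:fibre-product1} already contains this information; you just stopped at ``$[c]\in\{0,\pm 1\}$'' instead of recording which end yields which value.

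Note also that this explicit endpoint identification is what allows the paper to record, in the remark immediately following, that the $\fo_+$ component lands in $(0,1)$ --- a fact used in defining $\Pi_\pm$ and which your local-diffeomorphism route would not supply even if it succeeded.
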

	
	\begin{proof}
		As we know the moduli spaces are open intervals by the preceding lemma, we only need to determine where the end points go. Note that the end points of the moduli space correspond to connections broken along $\pd[] Z_i$; the relevant index formula checks are analogous to the proofs of the assertions in \Cref{lem:fibre-product1}. Recall that $Z_2$ contains the singular sphere and that $Z \setminus {Z_2 \cup \{p\}}$ is diffeomorphic to $\RP^3 \times I \setminus S^1 \times D^3$ with the $S^1 \subset \RP^3 \times I$ isotopic to $\gamma \times \{0\}$ where $\gamma$ is a simple loop generating $\pi_1 (\RP^3)$. Thus, the only ASD connections on $Z \setminus {Z_2 \cup \{p\}}$ are flat and whether they exist or not is determined by whether the ASD connection on $Z_2$ has limiting boundary flat connection agreeing with the one at $p$ or not. The flat connection near $p$ also determines the holonomy along the $S^1 \subset S^1 \times S^2 \cong \partial Z$.
		
		Next, when breaking along $Z_1$, we note that $Z \setminus Z_1$ has two boundary components $S^1 \times S^2$ and $(S^3, U)$ after capping off the latter end (cf \cite[Proposition 7.7]{kmsurf1}), we see that the space is diffeomorphic to $(S^2 \times D^2, S^2 \times \{0\})$. Only ASD connections on this are flat and so the limiting holonomy is $0 \in \fri$. And this connection remains unchanged by $\otimes \xi$ up to gauge equivalence.
		
		We conclude that both intervals on the domain map to the ones in co-domain with degree $\pm 1$. We now need to check the signs agree for the two intervals in the domain with their orientations decided by \Cref{sec:complex orientations} (see also \Cref{rmk:complex-orientations}).
		
		We first recall that the parametrised moduli space is oriented via the fibre-first convention for the following map:
		
		\begin{equation*}
			\begin{tikzcd}
				M_{q}^{red} (Z, \Sigma \cup \{p\}, \pp_{Z}; \fri^0, \fo_\pm) \arrow[r] & M_{Q^0_Z}^{{red}}(Z, \Sigma \cup \{p\}, \pp_{Z}; \fri^0, \fo_\pm)_1 \arrow[d] \\
				& Q^0_Z
			\end{tikzcd}
		\end{equation*}
		By the claim regarding orientations in \Cref{prop:key-moduli computation}, we see that the fibres have the same orientation. As $\otimes \xi$ does not affect the projection map above, $\otimes \xi$ is an orientation preserving diffeomorphism between the two arcs in the domain. However, $\otimes \xi$ acts in a orientation reversing manner on $\fri \setminus \{0\}$. The extra minus sign in the cobordism maps then makes the degree of the restriction map $r_{\partial Z}$ in the statement have degree $\pm 1$ as needed.
	\end{proof}
	
	\begin{rmk}
		We note from the above proof that $M_{Q^0_Z}^{{red}}(Z, \Sigma \cup \{p\}, \pp_{Z}; \fri^0, \fo_+)_1$ maps into $(0, 1) \subset \fri$.
	\end{rmk}
	
	We are ready to show that $m_{Q(Z)}$ is chain homotopic to a quasi-isomorphism. By the preceding lemma and \Cref{lem:fibre-product1}
	\[
	\pm \< m_{Q(Z)} (\mathfrak{a}), \mathfrak{b}> = \# M(M, \emptyset, \pp_M; \mathfrak{a}, \mathfrak{b}, \fri_+)_0 + T^{-1} \# M(M, \emptyset, \pp_M; \mathfrak{a}, \mathfrak{b}, \fri_-)_0 
	\]
	where $\fri_- = (-1, 0) \subset \fri$ and $\fri_+ = (0, 1) \subset \fri$ and the sign on the left side is independent of $\mathfrak{a}$ and $\mathfrak{b}$. We have implicitly varied the perturbation in the interior of $M$ if necessary so that we are in the generic case. We define some new maps as follows assuming we are in the generic case:
	
	\begin{equation} \label{projections}
		\<\Pi_\pm(\mathfrak{a}), \mathfrak{b} > = \# M(M, \emptyset, \pp_M; \mathfrak{a}, \mathfrak{b}, \fri_\pm)_0.
	\end{equation}
	
	Hence, 
	\[
	\pm m_{Q(Z)} = \Pi_+ + T^{-1} \Pi_-.
	\]
	
	Note that $M \cup S^1 \times D^3 \cong Y_0 \times [0, 1]$. The hyper-surface $\partial (S^1 \times D^3) \cong S^1 \times S^2$ induces a parameter of metrics that gives a homotopy from $\Id$ to $\Pi_+ + \Pi_-$; the details are provided in the proof of \cite[Lemma 5.2]{sca}.
	
	We further claim that $\Pi_\pm^2 \simeq_{\fri_\pm} \Pi_\pm + N_\pm$ where the notation is from \Cref{rmk:non-degen-transversality} and $N_\pm$ are strictly upper triangular with respect to any energy ordered basis. Assuming this, we see that 
	\begin{align*}
		\Pi_+ \Pi_- &\simeq \Pi_+ (\Id - \Pi_+) \\
		&= \Pi_+ - \Pi_+^2 \\
		&\simeq_{\fri_+} N_+.
	\end{align*}
	
	Similarly, $\Pi_- \Pi_+ \simeq_{\fri_+} N_+$. Hence, 
	\begin{align*}
		(\Pi_+ + T^{-1}\Pi_-) (\Pi_+ + T\Pi_-) &\simeq_{\fri_+} \Pi_+^2 + \Pi_-^2 + N_+(T+T^{-1})\\
		&\simeq_{\fri_-} \Pi_+^2 + \Pi_- - N_- + N_+(T + T^{-1})\\
		&\simeq_{\fri_+} \Pi_+ + \Pi_- - N_- + N_+(T + T^{-1} - 1)\\
		&\simeq \Id + N.
	\end{align*}
	where $N = -N_- + N_+(T + T^{-1} - 1)$ is strictly upper triangular with respect to any energy ordered basis.
	In conclusion, $m_{Q(Z)}$ is a quasi-isomorphism.
	
	However, we need to show that $m_{Q(Z)} + m_{Q(Z_1)}$ is a quasi-isomorphism. Equivalently, by the above, we need to show that $m_{Q(Z)} (\Pi_+ + T \Pi_-) + m_{Q(Z_1)} (\Pi_+ + T \Pi_-)$ is a quasi-isomorphism. Up to chain homotopies of the kind used above, this map is $\pm \Id \pm N + H_*(m_{Q(Z_1)} (\Pi_+ + T \Pi_-))$. By \Cref{prop:energy-ordered-non-degenerate}, the map $\Pi_+ + T\Pi_-$ is upper triangular with respect to an energy-ordered basis and with respect to this basis, $m_{Q(Z_1)}$ is strictly upper triangular by \Cref{lem:strictly-upper-triangular}. In conclusion, $m_{Q(Z_1)} (\Pi_+ + T \Pi_-)$ and $N$ are simultaneously strictly upper triangular, hence their sum (or difference) is nilpotent. Thus, $m_{Q(Z)} + m_{Q(Z_1)}$ is a quasi-isomorphism. All that remains is to prove the claim above.
	
	\begin{claim} \label{claim:proj}
		$\Pi_\pm^2 \simeq_{\fri_\pm} \Pi_\pm - N_\pm$.
	\end{claim}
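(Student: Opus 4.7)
My plan is to derive the claim in two steps. First, I would use the already-established homotopy $\Id \simeq \Pi_+ + \Pi_-$ of \cite[Lemma 5.2]{sca} to reduce the computation of $\Pi_+^2$ to that of the mixed composition $\Pi_+\Pi_-$. Second, I would show that this composition realises a strictly upper triangular map via an adaptation of \Cref{prop:energy-ordered-non-degenerate}.

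Composing $\Id \simeq \Pi_+ + \Pi_-$ with $\Pi_+$ on the left gives $\Pi_+ \simeq \Pi_+(\Pi_+ + \Pi_-) = \Pi_+^2 + \Pi_+\Pi_-$ up to chain homotopy, so that $\Pi_+^2 \simeq_{\fri_+} \Pi_+ - N_+$ upon setting $N_+ = \Pi_+\Pi_-$ as a representative modulo $\simeq_{\fri_+}$. The analogous argument, composing with $\Pi_-$ instead, produces $\Pi_-^2 \simeq_{\fri_-} \Pi_- - N_-$ with $N_- = \Pi_-\Pi_+$. This settles the relation.

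For the strict upper triangularity of $N_+$, I would interpret $\Pi_+\Pi_-$ as the cobordism count (in the sense of \Cref{prop:non-degenrate-ends}) on $W = M \cup_{Y_0} M$ with boundary flat limits $(X_1, X_2) \in \fri_+ \times \fri_-$ on its two $S^2 \times S^1$ components. The orbifold completion $\overline{W}$ obtained by capping both $X_i$'s with $S^1 \times D^3$ is the trivial cobordism $Y_0 \times [0,1]$, so closing up the $Y_0$-ends yields $W' = Y_0 \times S^1$ with $b_1(W') = 1 + b_1(Y_0)$ and $b^+(W') = b_1(Y_0)$. One checks
\[
-3(1 - b_1(W') + b^+(W')) + \chi(\emptyset) + \tfrac{1}{2}\emptyset \cdot \emptyset = 0,
\]
so $(\overline{W}, \emptyset, \mathrm{pt})$ is monotone and \Cref{prop:energy-ordered-non-degenerate} gives that $N_+$ is upper triangular with respect to any energy ordered basis. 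To upgrade to strict upper triangularity, I would rule out diagonal entries: mimicking the proof of \Cref{prop:energy-ordered-morphism}, any diagonal contribution comes from a flat ASD connection $A$ on $W$ connecting $\mathfrak{a}$ to itself (the monotonicity forces $\kappa(A) = 0$), hence a flat connection whose restrictions to $X_1$ and $X_2$ are forced into the disjoint open sets $\fri_+$ and $\fri_-$ of $\chi_{SU(2)}(S^2 \times S^1)\setminus\{\mathrm{central}\}$. The argument then combines \Cref{dflat} with a wall-crossing deformation, absorbing any such solutions into the $\simeq_{\fri_+}$-equivalence class via a perturbation that drives their $X_2$-holonomies across $\{0\} \cup \{1\} \subset \partial\fri_-$.

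The hardest part will be this final vanishing step. A flat connection on $W$ restricts to the two internal $S^2 \times S^1$ slices through two \emph{a priori} independent meridians $\mu_1, \mu_2 \in \pi_1(W)$ (free generators relative to $\pi_1(Y_0)$ under Van Kampen), so the diagonal entries of $\Pi_+\Pi_-$ need not be zero \emph{on the nose}; the claim is only that they vanish modulo $\simeq_{\fri_+}$. A careful wall-crossing argument --- in the style of \Cref{rmk:non-degen-transversality} and the identification $\Pi_+ \Pi_- \simeq_{\fri_+} N_+$ used in the main text --- should achieve this, but writing it cleanly and checking all the orientation signs is the real technical content.
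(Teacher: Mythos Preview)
Your algebraic reduction is sound but runs in the opposite direction from the paper. The paper proves the claim first and then \emph{derives} $\Pi_+\Pi_- \simeq_{\fri_+} N_+$ from it; you are trying to set $N_+ := \Pi_+\Pi_-$ and deduce the claim. That reversal is legitimate, but it means your closing remark ``in the style of \ldots the identification $\Pi_+\Pi_- \simeq_{\fri_+} N_+$ used in the main text'' is circular: that identification in the main text already assumes the claim you are trying to prove.

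The substantive gap is your argument for strict upper-triangularity. \Cref{prop:energy-ordered-non-degenerate} as stated handles a single extra $(X,C)$ end, not two, but even granting the obvious extension you only get \emph{upper} triangular when $\dim Q = 0$. Your proposed upgrade --- ``a wall-crossing deformation \ldots that drives their $X_2$-holonomies across $\partial\fri_-$'' --- is not what $\simeq_{\fri_+}$ permits. That relation only lets you move the perturbation inside an open set preserving the transversality condition \eqref{eqn:transverality}; it does not let you push boundary holonomies across walls at will, and there is no mechanism here that forces the diagonal deformed-flat solutions to disappear under such small perturbations. A workable completion along your lines would instead note that $\Pi_+$ and $\Pi_-$ are each upper triangular, and that their diagonal entries at any fixed $\mathfrak a$ are disjointly supported (the essentially unique deformed-flat connection on $M$ from $\mathfrak a$ to $\mathfrak a$ has a definite $S^2\times S^1$-holonomy lying in exactly one of $\fri_\pm$, generically), so the product has zero diagonal. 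But you have not argued this.

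The paper takes a quite different route: it finds a pair-of-pants piece $W \cong S^1 \times (S^3 \setminus 3B^3)$ inside $M \cup_{Y_0} M$ with $M \cup_{Y_0} M \setminus W \cong M$, and stretches along $\partial W$. The key topological fact is that $\pi_1(F) \to \pi_1(W)$ is an isomorphism for each boundary component $F$, so a flat connection on $W$ restricts identically to all three $S^2\times S^1$ ends; this is what makes $\Pi_+^2$ land back on $\Pi_+$ directly. The correction $N_+$ then arises as a genuine wall-crossing term over the one-dimensional stretching family, so \Cref{prop:energy-ordered-non-degenerate} applies with $\dim Q = 1 > 0$ and yields strict upper-triangularity immediately, with no separate argument for the diagonal.
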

	
	\begin{proof}
		We will deal with $\Pi_+$ as the proofs are identical in both cases. As $\Pi_+^2$ is chain homotopic to the map represented by gluing two copies of $M$ along $Y_0$ and $-Y_0$ with the two $S^1 \times S^2$ ends having their flat connections restricted to $\fri_+ \subset \fri$. We can now find a four-manifold $W \subset M \cup_{Y_0} M$ such that $M \cup_{Y_0} M \setminus W \cong M$, $\partial W$ is three copies of $S^1 \times S^2$ with their appropriate orientations and $W \cong S^1 \times (S^3 \setminus (B^3 \cup B^3 \cup B^3))$. Recall that by \Cref{lem:cgeo-orientation}, we have an almost complex structure on $W$ and we use this structure to orient moduli spaces on $W$ using the conventions in \Cref{sec:complex orientations}.
		
		We can now stretch along one of the components of $\partial W$ to break the manifold into two pieces, $M \cup_{Y_0} M \setminus W \cong M$ and $W$.
		
		First, we show that there is a unique ASD connection on $W$ which restricts to a given $[\theta] \in \fri$ on $\partial M$ when the other two boundary components of $W$ are allowed to any flat connections in $\fri$. By the topological description of $W$, we see that $\text{Image}(H^2(W, \partial W; \Z) \to H^2(W; \Z)) = 0$ and hence every finite energy ASD connection on $W$ is flat. Furthermore, if $F \subset \partial W$ is a connected component, $\pi_1 (F) \to \pi_1 (W)$ is an isomorphism. Hence, an ASD connection exists on $W$ only if all the flat connections on the boundary are gauge equivalent and in this case there is a unique gauge equivalence class of ASD connection. Finally, the sign for $\Pi_+$ term is due to the fact that we are orienting our moduli spaces using almost complex structures (as in \Cref{sec:complex orientations}) and the almost complex structures are compatible by construction.
		
		Next, we have additional terms due to the non-compactness of $\fri_+$ (see \Cref{rmk:non-degen-transversality}). These can be described as the ASD connections with the limiting flat connection in one of the boundary components being $0 \in \partial \fri_+$ while the other one is in the interior and the metric varies over the one-dimensional family. That this is strictly upper triangular with respect to any energy ordered basis is due to \Cref{prop:energy-ordered-non-degenerate}.
	\end{proof}
	
	The construction of $G_1$ and $F_1$ are analogous and the proofs are almost identical to the one above.
	
	We now construct $G_2$ and $F_2$.
	
	We put together the manifolds $W_2, W_0$ and $W_1$ in that order and note that we have $5$ distinct hyper-surfaces $\partial Z_1'$, $\partial Z_2'$, $\partial Z'$, $Y_0$ and, $Y_2$. These hyper-surfaces give rise to a pentagon of metrics which we shall call $Q$. We can define $G_2^{\pm} : C_2 \to C_2$ defined by this family. Let the pentagon's faces be denoted by $Q(Z_1'), Q(Z_2'), Q(Z'), Q(Y_0)$ and, $Q(Y_2)$. We then have
	\[
	\pd[2] G_2^{\pm} - G_2^{\pm} = m_{Q(Z')}^{\pm} + m_{Q(Z_1')}^{\pm} + m_{Q(Z_2')}^{\pm} + m_{Q(Y_0)}^{\pm} + m_{Q(Y_2)}^{\pm}.
	\]
	
	By definition, we have $m_{Q(Y_0)}^{\pm} = h_0^{\pm} f_2$ and $m_{Q(Y_2)}^{\pm} = -f_1 h_2^{\pm}$.
	
	So, we are left with 
	\[
	\partial_0 G^{\pm}_2 - G_2^{\pm} \partial_0 = m^{\pm}_{Q(Z')} + m^{\pm}_{Q(Z_1')} + m^{\pm}_{Q(Z_2')} - ( f_1 h_2^{\pm} - h_0^{\pm} f_2 ).
	\]
	
	Combining both of these, we have the following:
	\[
	\partial_0 G_2 - G_2 \partial_0 = m_{Q(Z')} + m_{Q(Z_1')} + m_{Q(Z_2')} - (f_1 H_2 - H_0 f_2)
	\]
	A more explicit description of these appear below. We first deal with $m_{Q(Z_1')}$ and $m_{Q(Z_2')}$.
	
	Let $\tilde{X}_1 = W_2 \cup W_0 \cup W_1 \setminus Z_1'$. Then, denoting the appropriate bundle restrictions by subscripts, we have the following formula:
	
	\begin{gather*}
		\< m_{Q(Z_1')} (\mathfrak{a}), \mathfrak{b}> = \# M_{Q( Z_1)}(\tilde{X}_1, C_1, \pp_{\tilde{X}_1}; \mathfrak{a}, \mathfrak{b}) \times_{r_{\partial Z_1'}} M(Z_1', \Sigma_{Z_1'} \cup \{ p \}, \pp_{Z_1'}; \fo_+) \\- \# M_{Q( Z_1)}(\tilde{X}_1, C_1, \pp_{\tilde{X}_1}; \mathfrak{a}, \mathfrak{b}) \times_{r_{\partial Z_1'}} M(Z_1', \Sigma_{Z_1'} \cup \{ p \}, \pp_{Z_1'}; \fo_-)
	\end{gather*}
	
	Using the same proof as we did for \Cref{prenil}, we have the following.
	
	\begin{lemma}
		$\pm \< m_{Q(Z_1')} (\mathfrak{a}), \mathfrak{b}> = (1- T^{-1}) \#  M_{Q( Z_1)}(\tilde{X}_1, C_1, \pp_{\tilde{X}_1}; \mathfrak{a}, \mathfrak{b})$. \qed 
	\end{lemma}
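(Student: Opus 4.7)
The plan is to mirror the proof of \Cref{prenil} essentially verbatim in this parametrised setting. The only structural difference between the two situations is that the cobordism $W_0 \cup W_1$ has been replaced by $W_2 \cup W_0 \cup W_1$ and the family of metrics on $Z_1'$ is a point whereas the family $Q(Z_1)$ over $\tilde{X}_1$ is higher dimensional; however the pair $(Z_1', \Sigma_{Z_1'} \cup \{p\})$ is diffeomorphic to $(Z_1, \Sigma_{Z_1} \cup \{p\})$, and the stretching in $Q(Z_1')$ happens along $\partial Z_1'$ rather than inside $Z_1'$. So the factor coming from the fibre over $Z_1'$ is independent of the parameter, and I can pull it outside the count.

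Concretely, first I would write
\[
\< m_{Q(Z_1')}(\mathfrak{a}), \mathfrak{b}> = \sum_{\fo \in \{\fo_+, \fo_-\}} (\pm 1) \, \# \bigl( M_{Q(Z_1)}(\tilde{X}_1, C_1, \pp_{\tilde{X}_1}; \mathfrak{a}, \mathfrak{b}) \times_{r_{\partial Z_1'}} M(Z_1', \Sigma_{Z_1'} \cup \{p\}, \pp_{Z_1'}; \fo) \bigr),
\]
with the sign coming from the subtraction $\fo_+ - \fo_-$. Next, I would run the same index bookkeeping as in \Cref{prenil}: writing $[c]$ for the limiting connection on $(\partial Z_1', U) = (S^3, U)$, the Weitzenb\"ock formula forces $h^1(c) = 0$ and $h^0(c) = 1$, so the gluing formula \Cref{prop:gluing} combined with the expected dimension of the fibre product being zero forces $\ind(B) = -1$ on $Z_1'$. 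Capping off the $(S^3, U)$ end with the unique (index $-1$) flat connection on $(D^4, D^2)$ and invoking the equivariant removable singularities theorem \Cref{thm:rm-sing}, this reduces to counting reducible ASD solutions on $(\mathbb{D}(T^*S^2), S^2)$ with energy $\kappa = 1/8$.

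This count is exactly the content of \Cref{prop:key-moduli computation}: there is precisely one such reducible for each of $\fo_+$ and $\fo_-$, both oriented in the same way under the complex-orientation convention of \Cref{sec:complex orientations}, carrying monopole charges $l = 0$ and $l = -1$ respectively. The local coefficient convention of \Cref{sec:local-coefficients} then weights the two contributions by $T^0 = 1$ and $T^{-1}$. The sign $\varepsilon \in \{\pm 1\}$ is determined by the fixed geometric data (independent of $\mathfrak{a}, \mathfrak{b}$ and the metric parameter) just as in \Cref{prenil}. Pulling this fibre out and comparing with the remaining base count on $\tilde{X}_1$, I get
\[
\pm \< m_{Q(Z_1')}(\mathfrak{a}), \mathfrak{b}> = (1 - T^{-1})\, \# M_{Q(Z_1)}(\tilde{X}_1, C_1, \pp_{\tilde{X}_1}; \mathfrak{a}, \mathfrak{b}).
\]

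The only potential obstacle is ensuring that gluing along $(S^3, U)$ introduces no extra gluing parameter: this is handled because the irreducibility of $[A]$ on $\tilde{X}_1$ (which inherits it from the non-integrality of $\pp_2$ on $Y_2$) combined with $\Stab(B) \hookrightarrow \Stab(c) \cong U(1)$ being an isomorphism means that no stabiliser quotient appears. Since the reducible moduli on $Z_1'$ is rigid and the family $Q(Z_1)$ only affects the $\tilde{X}_1$ factor, the whole argument carries through with no additional input beyond what already appears in \Cref{prenil}.
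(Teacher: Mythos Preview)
Your proposal is correct and takes essentially the same approach as the paper, which simply says the lemma follows by the same proof as \Cref{prenil}. One small slip: the irreducibility of $[A]$ on $\tilde{X}_1$ comes from the non-integrality of the bundle on the $(Y,K)$ ends (since $W_2 \cup W_0 \cup W_1$ is a cobordism from $(Y,K)$ to itself), not from $\pp_2$ on $Y_2$; this does not affect the argument.
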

	
	Analogous claims hold for $m_{Q(Z_2')}$ where we replace $\tilde{X}_1$ by $\tilde{X}_2 = W_2 \cup W_0 \cup W_1 \setminus Z_2'$ and the bundles and other data appropriately.
	
	We further make the following claim which follows immediately from \Cref{prop:energy-ordered-morphism} once we notice that the all finite energy connections in the moduli spaces above extend over the $(S^3, U)$ end to $(D^4, D^2)$ by removable singularities as in \cite[Proposition 7.7]{kmsurf1}. 
	
	\begin{lemma} \label{lem:strictly-upper-triangular-1}
		$m_{Q(Z_1')} + m_{Q(Z_2')}$ is strictly upper triangular as a matrix with entries in $\Z [T, T^{-1}]$ with respect to an energy ordered basis of critical points of the CS functional. \qed
	\end{lemma}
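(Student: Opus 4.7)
The plan is to follow the argument of \Cref{lem:strictly-upper-triangular} summand-by-summand. Since a sum of two strictly upper triangular matrices is strictly upper triangular, it suffices to treat $m_{Q(Z_1')}$ and $m_{Q(Z_2')}$ separately.

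For each $i \in \{1,2\}$, the map $m_{Q(Z_i')}$ is defined by a fibre-product over the end $(\partial Z_i', U) \cong (S^3, U)$. As in the proof of \Cref{prenil}, every finite-energy singular ASD connection on $\tilde X_i$ with boundary limit on this end extends uniquely (via the model flat connection on $(D^4, D^2)$) to a finite-energy connection on the closed-up pair $(\overline{\tilde X}_i, \overline C_i)$; this uses \Cref{thm:rm-sing} together with the singular removable-singularities result \cite[Proposition~7.7]{kmsurf1}. Under this identification the moduli counted by $m_{Q(Z_i')}$ become the moduli of ASD connections on $(\overline{\tilde X}_i, \overline C_i)$ for a 1-parameter family $Q(Z_i')$ of cut metrics on a cobordism from $(Y,K)$ to $(Y,K)$ (with the factor $1 - T^{-1}$ already built into the lemma and absorbed into the sign of the map).

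It remains to check that this data is energy ordered in the sense of \Cref{defn:energy-ordered-morphism}, i.e.\ that the dimension condition
\[
-\dim Q(Z_i') = -3\bigl(1 - b_1(W') + b^+(W')\bigr) + \chi(S') + \tfrac{1}{2}S' \cdot S'
\]
holds for the closed pair $(W', S')$ obtained by gluing the two $(Y,K)$ ends of $\overline{\tilde X}_i$. This is an index calculation identical in structure to the one implicit in \Cref{lem:strictly-upper-triangular}: the topological invariants of $W'$ and $S'$ are obtained from the triple-composite $W_2 \cup W_0 \cup W_1$ after excising $Z_i'$ and capping with $(D^4, D^2)$, and are independent of the cyclic reordering of the composite. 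Once the dimension condition is verified, \Cref{prop:energy-ordered-morphism} applies with $\dim Q(Z_i') = 1 > 0$, yielding both the upper-triangular form and the vanishing of diagonal entries with respect to any energy ordered basis. Summing the two contributions gives the claim.

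The main point to be careful about is the index bookkeeping when capping the $(S^3, U)$ end; the contribution of $(D^4, D^2)$ to the index and to $\chi(S') + \tfrac{1}{2}S' \cdot S'$ is fixed (cf.\ \Cref{prop:gluing} and \Cref{thm:ind}), so this reduces to a routine calculation rather than a genuine obstacle.
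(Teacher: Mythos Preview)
Your proposal is correct and follows essentially the same approach as the paper: close up the $(S^3,U)$ end via singular removable singularities \cite[Proposition~7.7]{kmsurf1}, then apply \Cref{prop:energy-ordered-morphism} with a positive-dimensional metric family to obtain strict upper-triangularity for each summand. The only minor remark is that \Cref{thm:rm-sing} is not needed here (it concerns lens-space orbifold ends, not $(S^3,U)$), and the paper, like you, leaves the verification of the energy-ordered dimension condition implicit.
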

	
	Next, we show that $m_{Q(Z')}$ is chain-homotopic to an upper triangular matrix with respect to an energy ordered basis and a quasi-isomorphism. Then, a little more work will be needed in showing that $F_2 = m_{Q(Z')} + m_{Q(Z'_1)} + m_{Q(Z'_2)}$ is a quasi-isomorphism as needed. We go about the first claim for now, which proves the main theorem when $T= 1$.
	
	Let $M = W_2 \cup W_0 \cup W_1 \setminus Z'$. Then, $\partial M = \partial Z' = S^2 \times S^1$ and $Z'$ is diffeomorphic to the complement of a tubular neighbourhood of an unknotted $S^1$ (away from the cone point) inside $Th(E)$ where $Th$ denotes the Thom space and $E$ is a real rank 2 vector bundle over $S^2$ with euler class $-2$. The metric family $Q(Z')$ is constant on $M$ while is a one parameter family over $Z'$; call the latter family $Q_{Z'}$. We can again give a fibre-product description but the situation is complicated by the presence of reducibles. To discuss this, we first understand the space of flat connections on $\partial Z'$.
	
	Note that $\pp_{Z'} |_{\partial Z'} = \pp_{\partial Z'} \cong \underline{\R^3}$. In particular it lifts to the trivial $SU(2)$ bundle; fix one such lift. With this in hand, we note that the critical points correspond to an element of $U(1)$. 
	
	First, note that $(Z', \Sigma_{Z'}) \cong (S^2 \times S^1, \{p_n, p_s\} \times S^1)$ where $p_n$ and $p_s$ are the north and south poles.
	
	Next, any flat connection of $\pp_{\partial Z'}$ pulls-back to the $SU(2)$ lift and the holonomy of any flat connection on the $SU(2)$ bundle corresponds to a homomorphism $\pi_1 (\partial Z' \setminus \partial B') \cong \Z^2 \to SU(2)$ where  two generators correspond to $\{*\} \times S^1 \subset (S^2 \setminus \{p_n, p_s\}) \times S^1$ and a $\gamma \times \{*\} \subset (S^2 \setminus \{p_n, p_s\}) \times S^1$ where $\gamma$ is a loop around $p_n$. Up to conjugation, we can fix the meridian to be the standard traceless element. Fix a maximal torus of $SU(2)$ that passes through this element, then the other generator is an element of this circle. This shows the space is isomorphic to $U(1)$. The key difference from the situation of $S^2 \times S^1$ is that, if there are two elements of this circle are conjugate in $SU(2)$, then the element that realises this conjugation does not commute with the holonomy along the meridian. We note that the flip symmetry acts on this space and it acts as $z \mapsto \overline{z} = 1/z$. Denote this representation variety by $\fv$; we will identify with $U(1)$ as above given by holonomy along the $\{ * \} \times S^1$ with a fixed orientation (this orientation will not matter for any other purpose).
	
	Note that any configuration of connection that contributes to the count in $m_{Q(Z')}$ consists of $[A_M]$ an ASD connection on $M$, $[c]$ a gauge equivalence class of a flat connection on $(\partial Z', \partial B') \cong (S^2 \times S^1, \{p_n, p_s\} \times S^1)$ (we are tacitly assuming that the perturbation is supported away from $\partial Z'$), either an ASD connection $[A_{Z'}]$ over $Z'$ or a pair of ASD connections $[B_1]$ and $[B_2]$ over $Z' \setminus Z_i'$ and $Z_i'$ for some $i \in \{1,2\}$. Denote a gauge representative of the flat connection over $\partial Z_i'$ as $d$ in the latter case.
	
	In the following, we shall treat the data of $B_1$, $B_2$ and $d$ as the data of $A_{Z'}$ if the argument is similar in both cases. We shall also use $h(c) = (h^0(c) + h^1(c))/2$ and similarly for $d$.
	
	To prove this, we need to rule out all possible configurations of connections that don't appear in the fibre-product above subject to the following constraints:
	\begin{subequations}
		\begin{align}
			\ind(A_M) + \ind (A_{Z'}) + 2h(c) &= - \dim(Q(Z')) = -1 \label{ind11} \\
			\ind(A_M) + \ind (B_1) + \ind (B_2) + 2h(c) +2h(d) &= -\dim (Q(Z')) = -1 \label{ind21}
		\end{align}
	\end{subequations}
	
	\begin{lemma} \label{lem:fibre-product-1}
		\begin{gather*}
			\< m_{Q(Z')} (\mathfrak{a}), \mathfrak{b}> = \# M(M, C, \pp_M; \mathfrak{a}, \mathfrak{b}, \fv)_0 \times_{r_{\partial Z'}} M_{Q^0_{Z'}}^{{red}}(Z', \Sigma_{Z'}, \pp_{Z'}; \fv, \fo_+)_1 \\ - T^{-1} \# M(M, C, \pp_M; \mathfrak{a}, \mathfrak{b}, \fv)_0 \times_{r_{\partial Z'}} M_{Q^0_{Z'}}^{{red}}(Z', \Sigma_{Z'}, \pp_{Z'}; \fv, \fo_-)_1   
		\end{gather*}
		where $Q^0_Z$ refers to the interior of the 1-parameter family of metrics $Q_Z$, $r_{\partial Z'}$ is the restriction map, $C$ is a disjoint union of two properly embedded annuli.
		
		This is equivalent to the following assertions:
		\begin{enumerate}[label=(\roman*)]
			\item (breaking along $Z'_1$) There is no collection of ASD connections $[A_M]$, $[B_1]$ and $[B_2]$ on $M$, $Z' \setminus Z'_1$ and $Z'_1$ respectively such that \eqref{ind21} is satisfied and contribute non-trivially to the count in $m_{Q(Z')}$.
			\item (breaking along $Z'_2$) There is no collection of ASD connections $[A_M]$, $[B_1]$ and $[B_2]$ on $M$, $Z' \setminus Z'_2$ and $Z'_2$ respectively such that \eqref{ind21} is satisfied and contribute non-trivially to the count in $m_{Q(Z')}$.
			\item ($A_{Z'}$ is reducible) There is no pair of ASD connections $[A_M]$ and $[A_{Z'}]$ on $M$ and $Z'$ respectively such that \eqref{ind11} is satisfied with $A_Z$ irreducible and contribute non-trivially to the count in $m_{Q(Z')}$.
		\end{enumerate}
	\end{lemma}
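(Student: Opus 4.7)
The plan is to follow the blueprint of the proof of \Cref{lem:fibre-product1}, adapting each index-theoretic estimate to account for the fact that the singular locus in $Z'$ is now two circles (the components $\{p_n,p_s\} \times S^1$) rather than a disk and a sphere, and that the representation variety on the cut is $\fv \cong U(1)$ rather than $\fri \cong [-1,1]$. The three assertions (i)--(iii) are precisely the ways the naive fibre-product description can fail, and ruling them out will give the stated formula; the $-T^{-1}$ factor on the second term and the fact that the two moduli spaces are distinguished by limiting connection near the orbifold point $p \in W_0$ will then follow exactly as in \Cref{prenil} and \Cref{lem:closed-pair}.

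For part (i), I would first identify the topology of $(Z' \setminus Z_1', \Sigma_{Z'} \cap (Z' \setminus Z_1'))$. The 4-manifold piece (after capping the $(S^3,U)$ end as in \cite[Proposition 7.7]{kmsurf1}) is $(S^2 \times D^2, \{p_n,p_s\} \times D^2)$. Since this has no compactly supported self-dual forms and $\pi_1 = 1$, every finite-energy ASD connection on it is flat, and the flat connections are determined by holonomy along the meridians of the two singular disks — forcing the boundary limit to lie in a specific locus of $\fv$. After gluing in $S^1 \times D^3$ with matching connection, I would compare $\ind(A) = 8\kappa(A) - \mathrm{const}$ (using \Cref{thm:ind}, noting that there are no orbifold points here but the singular locus is $\cong \{p_n,p_s\} \times S^2$) with the total index constraint \eqref{ind21}, and use that $\ind(A_M) \geq -\dim \fv = -1$ (as $A_M$ is irreducible by the non-integrality condition) together with $h(c) \geq 1$ on the relevant locus. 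A small perturbation of the equations on $M$, supported away from the hypersurfaces, then makes the remaining borderline configurations empty.

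For part (ii), the piece $Z' \setminus Z_2'$ contains the orbifold point $p$ (of order $2$) and has boundary $\RP^3 \sqcup \partial Z'$. After applying \Cref{thm:rm-sing} to convert orbifold solutions into smooth solutions on the completion, the same vanishing-of-self-dual-forms argument shows any finite-energy ASD connection is flat, forcing the $\RP^3$-limit to be central and hence $[c] \in \fv$ to have full stabilizer (the analogue of $\partial \fri$ in the earlier proof). Using $\ind(A_{S^1 \times D^3}) = -h(c)$ from the $S^1 \times S^3$ closed-case index, and the bound $\ind(A_M) \geq -1$, the same arithmetic as in the proof of part (ii) of \Cref{lem:fibre-product1} yields a contradiction — slightly modified because $h(c)$ takes different values here ($h(c)=2$ rather than $3$, since $\dim\Stab$ of a non-central $U(1)$-valued connection on the pair $(S^2 \times S^1, \{p_n,p_s\} \times S^1)$ is smaller than in the empty-singular-locus case).

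Part (iii) is the cleanest: if $A_{Z'}$ were irreducible, then since $A_M$ is also irreducible (non-integrality on $Y$ and $Y_2$) and $\Stab(c) \cong U(1)$ for $[c]$ in the interior of $\fv$, the gluing of such a pair along the non-degenerate end would produce a $U(1)$-family of glued connections in the parametrized moduli space, contradicting its zero-dimensionality at transverse points. I expect the main obstacle to be the careful topological identification in parts (i) and (ii) — in particular, tracking how the two singular circles intersect the cutting hypersurfaces $\partial Z_i'$ and verifying that after capping the $(S^3,U)$-end one really obtains the claimed pair $(S^2 \times D^2, \{p_n,p_s\} \times D^2)$ — together with computing $h(c)$ correctly for the traceless $U(1)$-valued representations on $(S^2 \times S^1, \{p_n,p_s\} \times S^1)$. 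Once these topological and stabilizer computations are in hand, the index arithmetic mirrors \Cref{lem:fibre-product1} essentially verbatim.
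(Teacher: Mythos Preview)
Your plan for parts (i) and (iii) matches the paper's proof closely and is fine. The gap is in part (ii): you have misidentified the topology of $Z'\setminus Z_2'$.

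You are pattern-matching to \Cref{lem:fibre-product1}, where $Z\setminus Z_2$ retained the orbifold point $p$ and had an $\RP^3$ boundary. But the cyclic permutation $W_2\cup W_0\cup W_1$ is not symmetric with the previous one in this respect. Both $Z_1'$ and $Z_2'$ are built as neighbourhoods of a disk union containing a core or cocore of the orbifold handle $W_0$, and both of these pass through the cone point $p$; both have boundary $(S^3,U)$ rather than $\RP^3$. Consequently $Z'\setminus Z_2'$, like $Z'\setminus Z_1'$, does \emph{not} contain $p$, and after capping the $(S^3,U)$ end it is again $(D^2\times S^2,\, D^2\times\{p_n,p_s\})$. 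The paper's proof of (ii) is therefore literally ``verbatim'' the proof of (i), with the same value $h(c)=1$ and the same perturbation argument at the borderline.

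Relatedly, your claimed value $h(c)=2$ is wrong: for any $[c]\in\fv$ the meridian holonomy is traceless, so the stabiliser is exactly $U(1)$ and $h^0(c)=h^1(c)=1$, giving $h(c)=1$ everywhere on $\fv$. This is also why there is no separate ``$c$ is central'' case here (the analogue of item (iii) in \Cref{lem:fibre-product1}): no point of $\fv$ has three-dimensional stabiliser, so that case simply does not arise and the lemma has only three assertions rather than four.
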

	
	We first rule out \eqref{ind21}, i.e., we show that the metric lies in $Q^0_{Z'}$. We treat the case of breaking along $Z_1'$ first and then for $Z_2'$.
	
	\begin{proof}[Proof of \lowerromannumeral{1}]
		
		In this case, we first observe that $Z' \setminus Z_1' \cong D^2 \times S^2 \setminus D^4$ and the singular locus is a disjoint union of an annulus and a disk; this implies that $[c] = \pm 1$.
		
		To see this, first note that we can glue in the unique solution on $(D^4, D^2)$ to the above to obtain a finite energy ASD connection on $(D^2 \times S^2, D^2 \times \{p_n, p_s\})$. The double branched cover of this is $D^2 \times S^2$; we shall analyse finite energy ASD connections on this manifold. As it has no compactly supported self-dual forms, we see that the connection is forced to be flat; this statement holds true for $(D^2 \times S^2, D^2 \times \{p_n, p_s\})$ too. Now, $\pi_1 (D^2 \times S^2 \setminus (D^2 \times \{p_, p_s\})) \cong \Z$ is generated by a meridian that links a component of the singular locus $D^2 \times \{p_n, p_s\}$. Since this only generator must be traceless, $[c]$ is forced to be trivial up to $\so$ gauge transformation.
		
		We can glue together the connections $B_i$ and further glue a copy of $(S^1 \times D^3, S^1 \times [-1, 1])$ with the trivial bundle and the unique flat connection that extends $c$ on $(S^1 \times S^2, S^1 \times \{p_n, p_s\})$; for future reference, we denote the latter connection by $A_{(S^1 \times D^3, S^1 \times [-1,1])}$. Call the resulting connection $A$. Then, by the index formula, we have 
		\[
		\ind (A) = 8 \left (k_0 + \frac{l_0}{2} + \frac{1}{8} \right) - 3 + 2 - 1 = 8 k_0 + 4 l_0 - 1 \geq -1. 
		\]
		Here, $k_0 \in \frac{1}{4} \Z, \,  l_0 \in \frac{1}{2} \Z$ (see \cite[\S 2 (iv)]{kmsurf1}), records the difference in instanton and monopole number from the original configuration and the last equality is due to the fact that $ 0 \leq \kappa (A) = k_0 + l_0 / 2 + 1/8$ combined with the integrality properties of $k_0$ and $l_0$.
		
		With the gluing formula, this implies 
		\begin{equation}\label{inter11}
			\ind (A) = \ind (A_{(S^1 \times D^3, S^1 \times [-1,1])}) + 2h(c) + \ind (B_1) + \ind (B_2) + 2h(d).
		\end{equation}
		Now notice that we can use the index formula for $(S^1 \times S^3, S^1 \times S^1)$ along with the gluing formula to deduce $\ind (A_{(S^1 \times D^3, S^1 \times [-1,1])}) = -h(c)$:
		\[
		0 = \ind (A_{(S^1 \times S^3, S^1 \times S^1)}) = \ind (A_{(S^1 \times D^3, S^1 \times [-1,1])}) + \ind (A_{(S^1 \times D^3, S^1 \times [-1,1])}) + 2h^0(c)
		\]
		where $A_{(S^1 \times D^3, S^1 \times S^1)}$ is a flat connection on the trivial bundle over $(S^1 \times S^3, S^1 \times S^1)$ and its index is computed by the index formula for the closed case.
		
		Using \eqref{inter11}, \eqref{ind21} says $\ind (A) + h(c) + \ind (A_M) = -1$. Now, $h(c) = 1$ ($c$ is a flat connection on a space with non-empty singular locus) and $\ind (A) \geq -1$. Further, we have $\ind (A_M) \geq -\dim \fri^0 = -1$ for $A_M$ is irreducible. This implies that if we have the case implied by \eqref{ind21}, we must have a connection $A_M$ which restricts to a flat connection on $(S^1 \times S^2, S^1 \times \{p_n, p_s\})$ with holonomy along $S^1 \times \{*\}$ being trivial (recall that $[c] = -1$). Since $A_M$ is irreducible with $\ind(A_M) = -1$, we may perturb the equation such that the perturbation is supported in the interior of $M$ to ensure such a connection does not exist. From now on, we assume we have chosen such a (small) perturbation in addition to the perturbations needed to ensure the necessary transversality. 
	\end{proof}
	
	The above proof applies verbatim to rule out breaking along $Z_2'$ to show that \lowerromannumeral{2} holds. 
	
	Finally, we rule out that $A_{Z'}$ is irreducible.
	
	\begin{proof}[Proof of \lowerromannumeral{3}]
		If this were so, then the gluing parameter would be $\Stab (c) \cong U(1)$ (recall that the base space $c$ is defined on has non-empty singular locus). As $A_M$ is irreducible, each pair of connections would lead to a $U(1)$ family of glued connections in the moduli space. This would contradict that only isolated points are present in the (parametrised) moduli space.
	\end{proof}
	
	To sum up, we are looking for the following configuration of connections:
	
	\[
	[c] \in \fv \setminus \{-1, +1\}, \; q \in Q_{Z'}^0, \; \ind (A_M) = -1, \; \ind (A_{Z'}) = -2 
	\]
	
	The restriction of $[c] \neq -1$, achieved by an appropriate perturbation on $M$.
	
	We now analyse $M^{red}_{Q_{Z'}^0} (Z', \Sigma_{Z'}, \pp_{Z'}; \fv, \fo_\pm)_1$.
	
	\begin{lemma}
		The projection maps $M^{red}_{Q_{Z'}^0} (Z', \Sigma_{Z'}, \pp_{Z'}; \fv, \fo_\pm)_1 \to Q_{Z'}^0$ are smooth homeomorphisms.
	\end{lemma}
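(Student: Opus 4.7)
The plan is to follow the structure of the analogous statement proved earlier for $M^{red}_{Q^0_Z}(Z, \Sigma \cup \{p\}, \pp_Z; \fri^0, \fo_\pm)_1 \to Q^0_Z$, adapting the argument to account for the different boundary representation variety $\fv$ and the different topology of $\Sigma_{Z'}$. First, the tensoring construction of \Cref{rmk:xi} still applies to swap the two cases $\fo_\pm$ and commutes with the projection map, so it is enough to treat one of the two moduli spaces. Second, the projection map is open and continuous as it is topologized as a subspace of $\mathscr{B} \times Q^0_{Z'}$, and smoothness at each point follows once a generic parametrized metric perturbation is fixed using \Cref{cor:mparampert} (supported away from a finite collection of hypersurfaces so as not to disturb the non-degeneracy of the limiting flat connections). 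Thus the content reduces to a bijectivity claim.

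For bijectivity, I would fix $q \in Q^0_{Z'}$ and an $[A_{Z'}]$ in the fiber with $\ind(A_{Z'}) = -2$ and $h^0(A_{Z'}) = 1$, and glue $A_{Z'}$ along its boundary to the unique flat reducible connection on $(S^1 \times D^3, S^1 \times [-1,1])$ that extends the limiting flat connection $[c] \in \fv$; call this flat model $A_{(S^1 \times D^3, S^1 \times [-1,1])}$. The gluing-formula computation already performed in the proof of \lowerromannumeral{1} gives $\ind(A_{(S^1 \times D^3, S^1 \times [-1,1])}) = -h(c)$ (deduced from the closed pair $(S^1 \times S^3, S^1 \times S^1)$ having index $0$). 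Hence the resulting closed-pair connection $A$ has $\ind(A) = -1$, and the orbifold index formula \Cref{thm:ind} forces $\kappa(A) = 1/8$.

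The bijectivity then follows if the closed pair obtained by capping $(Z', \Sigma_{Z'})$ with $(S^1 \times D^3, S^1 \times [-1,1])$ can be identified with $Th(E)$ (euler class $-2$) equipped with its zero-section sphere and order-$2$ orbifold point, for then \Cref{prop:key-moduli computation} combined with the equivariant removable singularities result \Cref{thm:rm-sing} gives exactly one gauge equivalence class of reducible ASD connection with $\kappa = 1/8$ and the prescribed orbifold flat limit $\fo_\pm$. The uniqueness across varying $[c] \in \fv$ in that moduli space then yields a single lift for each point of $Q^0_{Z'}$.

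The main obstacle I anticipate is exactly this topological identification: the singular locus $\Sigma_{Z'}$ contains both a sphere component (from $\Sigma_1 \cup \Sigma_2$) and annular collars coming from the $K$-boundary at the two $(Y,K)$ ends, and after attaching $S^1 \times [-1,1]$ one must verify that this collection glues into a closed surface for which the cohomology-class analysis of \Cref{lem:model-computation} actually goes through. If the closed surface is a sphere, we are done by \Cref{prop:key-moduli computation} verbatim; if instead an additional torus component appears, a mild extension of the computation in \Cref{sec:moduli-computation} will be needed, but the method (solving for $[F_B]$ within the integer lattice shifted by a half-integer class coming from the meridional holonomy, subject to the constraint $\kappa = 1/8$ on a negative definite background) is robust enough that only the intersection-number bookkeeping changes. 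Everything else, including the orientation check via complex orientations (\Cref{sec:complex orientations}) and the absence of gluing parameters for the $U(1)$-stabilizer of $c$, carries over word for word from the $Z$ case.
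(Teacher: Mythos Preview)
Your approach is essentially the same as the paper's: reduce to one sign via $\otimes\xi$, note openness/continuity/smoothness of the projection, and for bijectivity glue in the flat connection on $(S^1\times D^3, S^1\times[-1,1])$ to land on $Th(E)$ with $\ind(A)=-1$ and hence $\kappa(A)=1/8$, then invoke \Cref{prop:key-moduli computation} through \Cref{thm:rm-sing}.

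The one point where your write-up wobbles is the description of $\Sigma_{Z'}$. In the $G_2$ composite $W_2\cup W_0\cup W_1$ the disks $\Sigma_1$ and $\Sigma_2$ are \emph{not} adjacent (each has its boundary on a different $(Y,K)$ end, separated by $W_0$), so they do not glue to a closed sphere inside the cobordism. The singular locus intersects $Z'$ in two disjoint disks, while $C\subset M$ consists of the two complementary annuli; this is exactly why $\partial\Sigma_{Z'}=\{p_n,p_s\}\times S^1$. Capping by the annulus $S^1\times[-1,1]$ therefore produces a single sphere, which is the zero section of $Th(E)$. So you land squarely in the ``sphere'' branch of your dichotomy, no torus component arises, and \Cref{prop:key-moduli computation} applies verbatim as in the paper. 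Your contingency plan for a torus is unnecessary here.
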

	
	\begin{proof}
		We note that $\otimes \xi$, where $\xi$ is as in \Cref{rmk:xi}, provides a homeomorphism of the first two moduli spaces and commutes with the projection map. Hence, we only need to prove the first half of the lemma.
		
		The projection map is open and continuous as the set is topologised as a subset of $\mathscr{B} \times Q_Z^0$ and will be smooth in the transverse case, which we can arrange by a suitable choice of perturbation supported away from finitely many hyper-surfaces. We just need to show the map is bijective. We note that all base manifolds involved below have trivial fundamental group so that all gauge transformations are even.
		
		Let $([A_{Z'}], q)$ be such that $\ind (A_{Z'}) = -2$ and $h^0(A_{Z'}) = 1$ (since $[A_{Z'}]$ is reducible). Then, as we can glue $A_{Z'}$ to the flat connection on $(S^1 \times D^3, S^1 \times [-1, 1])$ that restricts to $[c]$ on $(S^1 \times S^2, S^1 \times \{p_n, p_s\}$ to get a reducible connection $A$ on $Th(E)$ with $\ind (A) = -1$ as dictated by the gluing formula. The index formula now implies that $\kappa (A) = 1/8$. Now, the results of \Cref{sec:moduli-computation} apply to finish the proof by showing there is a unique such $A$ determined by the flat connection near the orbifold point (we are implicitly invoking \Cref{thm:rm-sing} here).
	\end{proof}
	
	\begin{lemma}
		Let $\{\pm 1\} \in \fv$ denote the critical points on $(S^1 \times S^2, S^1 \times \{p_n, p_s\})$ corresponding to holonomy along the $S^1$ factor being $\pm \Id$ in $SU(2)$. Then, the map $r_{\partial Z'} : M^{red}_{Q_{Z'}^0} (Z', \Sigma_{Z'}, \pp_{Z'}; \fv, \fo_+)_1 \cup -M^{red}_{Q_{Z'}^0} (Z', \Sigma_{Z'}, \pp_{Z'}; \fv, \fo_-)_1 \to \fv \setminus \{ \pm 1\} \cong (-1, 1) \sqcup (-1, 1) $ is a proper map with degree $\pm 1$ when restricted to each interval in the domain onto its image. Further, the degree for both of the mentioned maps are equal, i.e., either both maps are degree $+1$ or both are $-1$.
	\end{lemma}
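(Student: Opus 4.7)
The plan is to mimic the proof of the analogous lemma for $r_{\partial Z}$, exploiting the preceding lemma that identifies each of $M^{red}_{Q^0_{Z'}}(Z', \Sigma_{Z'}, \pp_{Z'}; \fv, \fo_\pm)_1$ with $Q^0_{Z'}$ via the projection map, so each is a smooth open interval. I therefore need to compute (i) the images of the two end points of each interval under $r_{\partial Z'}$, establishing properness and that the maps are degree $\pm 1$ onto their images, and (ii) the relative sign between the two degrees.

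First, I would analyse the end points of the intervals, which correspond to breaking of the reducible ASD solutions along $\pd Z_1'$ or $\pd Z_2'$. Using the topological identifications of the pieces from \Cref{sec:heg}: the piece $Z' \setminus Z_1'$ is diffeomorphic to $S^2 \times D^2 \setminus B^4$ with singular locus a disk plus an annulus (capping off at $S^3$ via removable singularities gives $(S^2 \times D^2, S^2 \times \{p_n,p_s\})$ style data), while $Z' \setminus Z_2'$ is diffeomorphic to $\RP^3 \times I \setminus S^1 \times D^3$ with the $S^1$ isotopic to a $\pi_1$-generator. In each case, an index count analogous to the one in \Cref{lem:fibre-product-1} forces the ASD connection on the limiting piece to be flat; the topology of the complement then determines uniquely the limiting flat connection in $\fv \cong U(1)$. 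Specifically, breaking along $\pd Z_2'$ forces the holonomy on $\partial Z'$ to lie at one of $\pm 1 \in \fv$ (determined by whether the flat connection near $p$ matches the one on the $\RP^3$-end), while breaking along $\pd Z_1'$ forces the holonomy to lie at the other element of $\{\pm 1\}$. In particular both end points of each interval map to distinct points of $\partial \fv = \{\pm 1\}$, which gives properness together with surjectivity onto the corresponding interval in $\fv \setminus \{\pm 1\}$ with degree $\pm 1$.

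Next, I would pin down the sign agreement between the two components. As in the cited analogue, the parametrised moduli space is oriented via the fibre-first convention over the projection to $Q^0_{Z'}$, and the claim about orientations in \Cref{prop:key-moduli computation} says that for a fixed metric the two fibres over $\fo_+$ and $\fo_-$ carry identical orientations. Tensoring by the flat line bundle $\xi$ of \Cref{rmk:xi} commutes with the projection to $Q^0_{Z'}$, so $\otimes \xi$ is an orientation-preserving diffeomorphism between the two moduli spaces in the domain. On the other hand, $\otimes \xi$ acts on $\fv \setminus \{\pm 1\}$ by the involution $z \mapsto \bar z = 1/z$ (this is the flip symmetry noted in the identification of $\fv$), which is orientation-reversing on each of the two arcs and exchanges them. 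The overall minus sign inserted by the second summand in the statement then compensates exactly for this, making both maps have the same degree.

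The main obstacle will be (ii): reconciling the orientation conventions, specifically keeping track of (a) how the complex-orientation framework of \Cref{sec:complex orientations} orients the fibres of the projection to $Q^0_{Z'}$ in a way that is compatible with the analogous analysis of \Cref{lem:deg}, and (b) precisely how $\otimes \xi$ acts on the identification $\fv \cong U(1)$ chosen via holonomy along $\{*\} \times S^1$. Step (i) is essentially a bookkeeping exercise given the topological identifications already set up, but step (ii) requires care because the presence of the singular locus $\Sigma_{Z'}$ and the orbifold point alter the stabiliser computations compared to the $\partial Z$ case.
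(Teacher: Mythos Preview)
Your endpoint analysis in step (i) has a genuine gap. You identify $Z' \setminus Z_2'$ with $\RP^3 \times I \setminus S^1 \times D^3$, importing the topology from the $\partial Z$ case of \Cref{lem:deg}; but in the present $G_2$ setting both $Z_1'$ and $Z_2'$ are built from a cocore of $W_0$ and a core of $W_1$ (one on each side of the orbifold handle), so they play symmetric roles. Indeed the paper records that $Z' \setminus Z_1' \cong D^2 \times S^2 \setminus D^4$ with singular locus an annulus and a disk, and that the proof for breaking along $Z_2'$ is verbatim the same. Thus both limiting pieces force $[c]$ to be trivial up to $\so$ gauge, i.e.\ $[c]\in\{\pm 1\}\subset\fv$, via the \emph{same} topological mechanism. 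Your argument that the two endpoints land at \emph{different} points of $\{\pm 1\}$ therefore has no content: nothing in the topology alone distinguishes them, and you have not ruled out that both endpoints map to the same point (which would give degree $0$, or worse, the map might wind multiply).

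This is exactly why the paper abandons the endpoint approach here and instead computes the holonomy $\hol_\gamma(A_t)$ explicitly as the metric parameter $t$ varies. The computation passes to the double branched cover $\widetilde{Z}' \cong S^2 \times D^2 \,\#\, \overline{\CP^2}$ and shows, by tracking how the curvature integral over a disk $D$ bounding $\gamma$ changes as energy concentrates near the exceptional divisor $E$, that the total variation of $\int_D F_{A^{\ad}_t}$ across $Q_{Z'}$ is $\pm 2\pi i$. This forces the $SU(2)$-lifted holonomy to move from one element of $\{\pm 1\}$ to the other exactly once, giving degree $\pm 1$ onto a single arc of $\fv\setminus\{\pm 1\}$. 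Your treatment of step (ii), the sign comparison via $\otimes\,\xi$ and complex orientations, is essentially what the paper does and is fine.
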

	
	\begin{proof}
		Note that $\otimes \xi$ (as in \Cref{rmk:xi}) acts on both the co-domain and domain spaces. On the co-domain, the action extends to $\fv$ and on this, it acts as $z \mapsto \bar{z}$. We now proceed to analyse the restriction map on $ M^{red}_{Q_{Z'}^0} (Z', \Sigma_{Z'}, \pp_{Z'}; \fv, \fo_+)_1$.
		
		We now compute the holonomy along the $\gamma = S^1 \times \{*\} \subset S^1 \times S^2 \cong \partial Z'$ explicitly as $q_t \in Q_{Z'}$, for $t \in [-1,1]$, varies. Note that there is a unique solution to the ASD equation, say $[A_t]$, such that $A_t$ is abelian and has harmonic curvature form $F_{A^{\ad}_t}$. If $D$ denotes a disk that bounds $\gamma$ and is disjoint from the singular locus, we have
		\[
		\hol_{\gamma} (A^{\ad}_t) = \exp \left ( - \int_{D} F_{A^{\ad}_t} \right ) \in U(1) \subset \so.
		\]
		We now need to pick a lift to $SU(2)$ to get a point in $\fv$. Noting that at $t = -1$, we have the holonomy corresponds to $\pm 1 \in \fv$, we choose the following lift:
		\[
		\hol_{\gamma} (A_t) = \exp \left ( -i k \pi - \frac{1}{2} \int_{D} F_{A^{\ad}_t} \right ).
		\]
		Here $k = 1$ if the starting point is $-1 \in \fv$ and $k = 0$ otherwise.
		
		There are two disks $D_\pm$ such that $[D_\pm]$ are the positive generators of $H_2(Z'_\pm, \partial Z'_\pm)$ and are disjoint from the singular loci. Here $Z'_- = Z' \setminus Z'_1$ and $Z'_+ = Z' \setminus Z'_2$ corresponding to $t = \pm 1$ when the metric breaking happens; the singular loci are analogously described. Note that $Z'_\pm \cup D^4 \cong S^2 \times D^2$. As $Z'_\pm \hookrightarrow Z'$, we have $D_\pm \in H^2(Z', \partial Z')$ and $[D_+] = [D_-] = [D]$. We claim that
		\[
		\int_{D_+} F_{A^{\ad}_t} - \int_{D_+} F_{A^{\ad}_{-t}} \to \pm 2i \pi 
		\]
		as $ t \to 1$. By the description of $D_+$, we see that $\int_{D_+} F_{A^{\ad}_t} \to 0$ as $t \to 1$. Hence, we only need to show 
		\[
		\int_{D_+} \frac{i}{2 \pi} F_{A^{\ad}_{t}} \to \pm 1
		\]
		as $t \to -1$.
		
		We shall tackle this computation over the double branched cover of $Z'$ with branching locus the singular locus; call it 
		\begin{equation} \label{d-cover}
			\widetilde{Z}' \cong S^2 \times D^2 \# \overline{\CP^2}.
		\end{equation} 
		We also pick lifts of the disks $D_\pm$ to $\widetilde{D}_\pm$ and denote the lift of the connection by $\widetilde{A}^{\ad}_t$.
		
		The description of $\widetilde{D}_\pm$ can be made more explicit by choosing the diffeomorphism in \eqref{d-cover} such that $S^2 \times D^2$ factor is $\widetilde{Z}'_- \cup D^4$. Having made this choice, $\widetilde{D}_-$ is just the $\{*\} \times D^2$ and $\widetilde{D}_+$ is the proper transform of $\widetilde{D}_-$ under the blow-up map $\widetilde{Z}' \to \widetilde{Z}'_-$ where the blow-up is at a point on $\widetilde{D}_-$; denote the exceptional divisor by $E$
		
		Now, 
		\begin{align*}
			\int_{D_+} \frac{i}{2 \pi} F_{A^{\ad}_{t}} 
			&= \int_{\widetilde{D}_+} \frac{i}{2 \pi} F_{\widetilde{A}^{\ad}_{t}} \\
			&= \int_{\widetilde{D}_-} \frac{i}{2 \pi} F_{\widetilde{A}^{\ad}_{t}} - \int_{E} \frac{i}{2 \pi} F_{\widetilde{A}^{\ad}_{t}}
		\end{align*}
		and the first term in the last sum approaches $0$ as $t \to -1$ by the description of $\widetilde{D}_-$. For the second term, we first note that as $t \to -1$, the energy concentrates in a neighbourhood of $E$ (for the only ASD connections on $\widetilde{Z}'_-$ are flat) and that $\kappa(\widetilde{A}) = 2 \kappa (A) = 1/4$. This implies that
		\[
		\frac{i}{2\pi} F_{\widetilde{A}^{\ad}_{t}} \to \pm P.D.[E]
		\]
		as $t \to -1$. As 
		\[
		\int_E P.D.[E] = [E] \cdotp [E] = -1,
		\]
		this proves the claim.
		
		This shows that the restriction map on $ M^{red}_{Q_{Z'}^0} (Z', \Sigma_{Z'}, \pp_{Z'}; \fv, \fo_+)_1$ maps onto one of the components of $\fv \setminus \{\pm 1\}$ with degree $\pm 1$. To prove the statement of the lemma, we need to deal with orientations of the two components of the moduli spaces but this is analogous to the case in \Cref{lem:deg}; recall that $\otimes \xi$ acts as $z \mapsto \bar{z}$ on $\fv$ as noted in the very beginning of this proof and there is an extra minus sign as in \Cref{lem:deg}.
	\end{proof}
	
	We are ready to show that $m_{Q(Z')}$ is chain homotopic to a quasi-isomorphism. By the preceding lemma and \Cref{lem:fibre-product-1},
	\[
	\pm \< m_{Q(Z')} (\mathfrak{a}), \mathfrak{b}> = \# M(M, C, \pp_M; \mathfrak{a}, \mathfrak{b}, \fv_+)_0 + T^{-1} \# M(M, C, \pp_M; \mathfrak{a}, \mathfrak{b}, \fv_-)_0
	\]
	where $\fv_\pm \subset \fv \setminus \{ \pm 1\}$ are the two arcs and we label them so that $M^{red}_{Q_{Z'}^0} (Z', B', \pp_{Z'}; \fv, \fo_+)_1$ maps onto $\fv_+$ under the restriction map and the sign on the left side is independent of $\mathfrak{a}$ and $\mathfrak{b}$. We have implicitly varied the perturbation in the interior of $M$ if necessary so that we are in the generic case. We define some new maps as follows assuming we are in the generic case:
	
	\begin{equation} \label{projections-1}
		\<\Pi_\pm(\mathfrak{a}), \mathfrak{b} > = \# M(M, C, \pp_M; \mathfrak{a}, \mathfrak{b}, \fv_\pm)_0.
	\end{equation}
	
	Hence, 
	\[
	\pm m_{Q(Z')} = \Pi_+ + T^{-1} \Pi_-.
	\]
	
	Note that $(M, C) \cup (S^1 \times D^3, S^1 \times [-1, 1]) \cong (Y, K) \times [0, 1]$. The hyper-surface $\partial (S^1 \times D^3, S^1 \times [-1, 1]) \cong (S^1 \times S^2, S^1 \times \{p_n, p_s\})$ induces a parameter of metrics that gives a homotopy from $\Id$ to $\Pi_+ + \Pi_-$; the details are exactly as in the proof of \cite[Lemma 5.2]{sca}.
	
	We further claim that $\Pi_\pm^2 \simeq_{\fv_\pm} \Pi_\pm - N_\pm$ where the notation is from \Cref{rmk:non-degen-transversality} and $N_\pm$ are strictly upper triangular with respect to any energy ordered basis. Assuming this, we see that 
	\begin{align*}
		\Pi_+ \Pi_- &\simeq \Pi_+ (\Id - \Pi_+) \\
		&= \Pi_+ - \Pi_+^2 \\
		&\simeq_{\fv_+} N_+.
	\end{align*}
	
	Similarly, $\Pi_- \Pi_+ \simeq_{\fri_+} N_+$. Hence, 
	\begin{align*}
		(\Pi_+ + T^{-1}\Pi_-) (\Pi_+ + T\Pi_-) &\simeq_{\fv_+} \Pi_+^2 + \Pi_-^2 + N_+(T+T^{-1})\\
		&\simeq_{\fv_-} \Pi_+^2 + \Pi_- - N_- + N_+(T + T^{-1})\\
		&\simeq_{\fv_+} \Pi_+ + \Pi_- - N_- + N_+(T + T^{-1} - 1)\\
		&\simeq \Id + N.
	\end{align*}
	where $N = -N_- + N_+(T + T^{-1} - 1)$ is strictly upper triangular with respect to any energy ordered basis.
	In conclusion, $m_{Q(Z)}$ is a quasi-isomorphism.
	
	However, we need to show that $m_{Q(Z)} + m_{Q(Z_1)}$ is a quasi-isomorphism. Equivalently, by the above, we need to show that $m_{Q(Z)} (\Pi_+ + T \Pi_-) + m_{Q(Z_1)} (\Pi_+ + T \Pi_-)$ is a quasi-isomorphism. Up to chain homotopies of the kind used above, this map is $\pm \Id \pm N + m_{Q(Z_1)} (\Pi_+ + T \Pi_-)$. By \Cref{prop:energy-ordered-non-degenerate}, the map $\Pi_+ + T\Pi_-$ is upper triangular with respect to an energy-ordered basis and with respect to this basis, $m_{Q(Z_1)}$ is strictly upper triangular by \Cref{lem:strictly-upper-triangular}. In conclusion, $m_{Q(Z_1)} (\Pi_+ + T \Pi_-)$ and $N$ are simultaneously strictly upper triangular, hence their sum (or difference) is nilpotent. Thus, $m_{Q(Z)} + m_{Q(Z_1)}$ is a quasi-isomorphism. All that remains is to prove the claim above.
	
	\begin{claim}
		$\Pi_\pm^2 \simeq_{\fv_\pm} \Pi_\pm - N_\pm$.
	\end{claim}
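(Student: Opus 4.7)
The proof proceeds by parallel strategy to \Cref{claim:proj}, with the additional bookkeeping needed to accommodate the singular locus $C$. As before, we focus on $\Pi_+$; the proof for $\Pi_-$ is identical. The first step is to note that $\Pi_+^2$ is chain homotopic to the map associated to the doubled pair $(M, C) \cup_{(Y, K)} (M, C)$ with both $(S^1 \times S^2, S^1 \times \{p_n, p_s\})$ ends carrying flat connections restricted to $\fv_+ \subset \fv$. Inside the doubled pair, one carves out a 4-dimensional sub-pair $(W, C_W)$ such that the complement $(M \cup_{(Y,K)} M \setminus W,\, C \cup_K C \setminus C_W)$ is diffeomorphic to $(M, C)$ and $(W, C_W)$ has three boundary components, each a copy of $(S^1 \times S^2, S^1 \times \{p_n, p_s\})$. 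One takes $W \cong S^1 \times (S^3 \setminus 3 B^3)$ as in \Cref{claim:proj}, and $C_W = S^1 \times \Gamma$ where $\Gamma \subset S^3 \setminus 3B^3$ is a properly embedded $1$-manifold consisting of three arcs pairing the three boundary spheres (so that $\Gamma$ meets each boundary sphere in $\{p_n, p_s\}$); then $C_W$ is a disjoint union of three annuli. The almost complex structure of \Cref{lem:cgeo-orientation} endows $W$ with a compatible $J$, which dictates orientations via the conventions of \Cref{sec:complex orientations}.

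Next, stretch along one of the three $(S^1 \times S^2, S^1 \times \{p_n, p_s\})$ boundary components of $W$ to split the map into a composite of two pieces: one supported on the residual $(M, C)$ giving $\Pi_+$, and one supported on $(W, C_W)$. The core analytic task is to show that the $(W, C_W)$-piece contributes the identity map. As in the earlier proof, one checks that $\mathrm{Image}\bigl(H^2(W \setminus C_W, \partial W \setminus C_W; \Z) \to H^2(W \setminus C_W; \Z)\bigr) = 0$, so that every finite energy ASD connection on $(W, C_W)$ is flat, and verifies that for each boundary component $F \subset \partial W$, the inclusion $\pi_1(F \setminus F \cap C_W) \to \pi_1(W \setminus C_W)$ is an isomorphism. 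These two facts together force all three boundary flat connections to be gauge equivalent, and pin down a unique gauge equivalence class of flat connection on $(W, C_W)$. The sign arises from the compatibility of $J$ and the chosen orientation conventions, matching the sign in front of $\Pi_+$.

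Finally, one must account for the non-compactness of $\fv_+$, which contributes wall-crossing terms in the sense of \Cref{rmk:non-degen-transversality}. These correspond to configurations in which the limiting flat connection on one of the two $S^1 \times S^2$ ends reaches $\partial \fv_+ = \{-1, +1\} \cap \overline{\fv_+}$ while the other varies in $\fv_+$, with the metric family moving along the corresponding edge. By \Cref{prop:energy-ordered-non-degenerate}, these contributions assemble into an operator $N_+$ that is strictly upper triangular with respect to any energy-ordered basis of $C(Y, K)$, yielding the chain homotopy $\Pi_+^2 \simeq_{\fv_+} \Pi_+ - N_+$.

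The main obstacle is the topological analysis of $(W, C_W)$: choosing $\Gamma$ so that $W \setminus C_W$ deformation retracts onto a space with the correct cohomology (to kill compactly supported self-dual forms) and the correct fundamental group (so that $\pi_1$ of each punctured boundary sphere-times-circle injects isomorphically into $\pi_1(W \setminus C_W)$). The triangular arrangement of three arcs is forced by the parity constraint that each sphere meet $\Gamma$ in exactly two points, and the verification that this choice yields the required cohomological vanishing and fundamental-group isomorphism is the key novelty beyond the singular-free case of \Cref{claim:proj}.
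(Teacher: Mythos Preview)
Your proposal is correct and follows essentially the same approach as the paper's own proof: carve out the pair $(W, C_W) \cong (S^1 \times (S^3 \setminus 3B^3),\, S^1 \times \Gamma)$ with $\Gamma$ three arcs in a triangle, stretch along one boundary component, use the vanishing of the compactly supported cohomology and the $\pi_1$-isomorphism on each boundary component to conclude that the $(W, C_W)$-piece contributes a single flat connection (with sign fixed by the almost complex structure), and identify the wall-crossing terms from $\partial \fv_+$ as the strictly upper-triangular $N_+$ via \Cref{prop:energy-ordered-non-degenerate}. If anything, you are slightly more careful than the paper in phrasing the $\pi_1$ condition on the complement of the singular locus rather than on $W$ itself.
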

	
	\begin{proof}
		We will deal with $\Pi_+$ as the proofs are identical in both cases. As $\Pi_+^2$ is chain homotopic to the map represented by gluing two copies of $(M, C)$ along $(Y, K)$ and $-(Y, K)$ with the two $(S^2 \times S^1, \{p_n, p_s\} \times S^1)$ ends having their flat connections restricted to $\fv_+ \subset \fv$. We can now find a four-manifold $(W, S) \subset (M, C) \cup_{(Y, K)} (M, C)$ such that $(M, C) \cup_{(Y, K)} (M, C) \setminus (W, S) \cong (M, C)$, $\partial (W, S)$ is three copies of $(S^2 \times S^1, \{p_n, p_s\} \times S^1)$ with their appropriate orientations and $(W, S) \cong (S^1 \times (S^3 \setminus (B^3 \cup B^3 \cup B^3)), S^1 \times (I \cup I \cup I))$ where $I$ denotes an interval and the three arcs connect between two different $B^3$. Recall that by \Cref{lem:cgeo-orientation}, we have an almost complex structure on $(W, S)$ and we use this structure to orient moduli spaces on $(W, S)$ using the conventions in \Cref{sec:complex orientations}.
		
		We can now stretch along one of the components of $\partial W$ to break the manifold into two pieces, $M \cup_{(Y, K)} M \setminus W \cong M$ and $W$.
		
		First, we show that there is a unique ASD connection on $W$ which restricts to a given $[\theta] \in \fri$ on $\partial M$ when the other two boundary components of $W$ are allowed to any flat connections in $\fri$. By the topological description of $W$, we see that $\text{Image}(H^2(W, \partial W; \Z) \to H^2(W; \Z)) = 0$ and hence every finite energy ASD connection on $W$ is flat. Furthermore, if $F \subset \partial W$ is a connected component, $\pi_1 (F) \to \pi_1 (W)$ is an isomorphism. Hence, an ASD connection exists on $W$ only if all the flat connections on the boundary are gauge equivalent and in this case there is a unique gauge equivalence class of ASD connection. Finally the sign for $\Pi_+$ term is due to the fact that we are orienting our moduli spaces using almost complex structures (as in \Cref{sec:complex orientations}) and the almost complex structures are compatible by construction.
		
		Next, we have additional terms due to the non-compactness of $\fri_+$. These can be described as the ASD connections with the limiting flat connection in one of the boundary components being on $\partial \fri_+$ while the other one is in the interior and the metric varies over the one-dimensional family. That this is strictly upper triangular with respect to any energy ordered basis is due to \Cref{prop:energy-ordered-non-degenerate}.
	\end{proof}
	
	\bibliographystyle{alpha}
	\bibliography{polish}

\begin{thebibliography}{KMOS07}

\bibitem[Aro57]{uni}
N.~Aronszajn.
\newblock A unique continuation theorem for solutions of elliptic partial
  differential equations or inequalities of second order.
\newblock {\em J. Math. Pures Appl. (9)}, 36:235--249, 1957.

\bibitem[BD95a]{flo}
P.~J. Braam and S.~K. Donaldson.
\newblock Floer's work on instanton homology, knots and surgery.
\newblock In {\em The {F}loer memorial volume}, volume 133 of {\em Progr.
  Math.}, pages 195--256. Birkh\"{a}user, Basel, 1995.

\bibitem[BD95b]{mono}
P.~J. Braam and S.~K. Donaldson.
\newblock {\em Floer's work on instanton homology, knots and surgery}.
\newblock Birkh{\"a}user Basel, Basel, 1995.

\bibitem[BS18]{bs-stein}
John~A. Baldwin and Steven Sivek.
\newblock Stein fillings and {$\rm SU(2)$} representations.
\newblock {\em Geom. Topol.}, 22(7):4307--4380, 2018.

\bibitem[BS23]{bs-l-space}
John~A. Baldwin and Steven Sivek.
\newblock Instantons and {L}-space surgeries.
\newblock {\em J. Eur. Math. Soc. (JEMS)}, 25(10):4033--4122, 2023.

\bibitem[Cha04]{excision}
Benoit Charbonneau.
\newblock {\em Analytic aspects of periodic instantons}.
\newblock PhD thesis, MIT, 2004.

\bibitem[DFK02]{dym}
S.~K. Donaldson, M.~Furuta, and D.~Kotschick.
\newblock {\em Floer Homology Groups in Yang-Mills Theory}.
\newblock Cambridge Tracts in Mathematics. Cambridge University Press, 2002.

\bibitem[Don87]{Donaldson-orientations}
S.~K. Donaldson.
\newblock {The orientation of Yang-Mills moduli spaces and 4-manifold
  topology}.
\newblock {\em Journal of Differential Geometry}, 26(3):397 -- 428, 1987.

\bibitem[DW59]{dw}
A.~Dold and H.~Whitney.
\newblock Classification of oriented sphere bundles over a 4-complex.
\newblock {\em Annals of Mathematics}, 69(3):667--677, 1959.

\bibitem[Flo88]{floer}
Andreas Floer.
\newblock An instanton-invariant for {$3$}-manifolds.
\newblock {\em Comm. Math. Phys.}, 118(2):215--240, 1988.

\bibitem[FOOO09]{fooo}
Kenji Fukaya, Yong-Geun Oh, Hiroshi Ohta, and Kaoru Ono.
\newblock {\em Lagrangian intersection {F}loer theory: anomaly and
  obstruction.}, volume~46 of {\em AMS/IP Studies in Advanced Mathematics}.
\newblock American Mathematical Society, Providence, RI; International Press,
  Somerville, MA, 2009.
\newblock Surgery chapter at
  \href{https://www.math.kyoto-u.ac.jp/~fukaya/Chapter10071117.pdf}{https://www.math.kyoto-u.ac.jp/~fukaya/Chapter10071117.pdf}.

\bibitem[FS85]{fsorbi}
Ronald Fintushel and Ronald~J. Stern.
\newblock Pseudofree orbifolds.
\newblock {\em Ann. of Math. (2)}, 122(2):335--364, 1985.

\bibitem[FU91]{fu}
D.S. Freed and K.K. Uhlenbeck.
\newblock {\em Instantons and Four-Manifolds}.
\newblock Mathematical Sciences Research Institute Berkeley, Calif:
  Mathematical Sciences Research Institute publications. Springer New York,
  1991.

\bibitem[KM93]{kmsurf1}
P.~B. Kronheimer and T.~S. Mrowka.
\newblock Gauge theory for embedded surfaces. {I}.
\newblock {\em Topology}, 32(4):773--826, 1993.

\bibitem[KM95]{kmsurf2}
P.B. Kronheimer and T.S. Mrowka.
\newblock Gauge theory for embedded surfaces, ii.
\newblock {\em Topology}, 34(1):37--97, 1995.

\bibitem[KM07]{kmbook}
Peter Kronheimer and Tomasz Mrowka.
\newblock {\em Monopoles and Three-Manifolds}.
\newblock New Mathematical Monographs. Cambridge University Press, 2007.

\bibitem[KM11a]{kmknot}
P.~B. Kronheimer and T.~S. Mrowka.
\newblock Khovanov homology is an unknot-detector.
\newblock {\em Publ. Math. Inst. Hautes \'{E}tudes Sci.}, (113):97--208, 2011.

\bibitem[KM11b]{yaft}
P.~B. Kronheimer and T.~S. Mrowka.
\newblock {Knot homology groups from instantons}.
\newblock {\em Journal of Topology}, 4(4):835--918, 12 2011.

\bibitem[KM21]{kmbar}
P.~B. Kronheimer and T.~S. Mrowka.
\newblock Instantons and {B}ar-{N}atan homology.
\newblock {\em Compos. Math.}, 157(3):484--528, 2021.

\bibitem[KMOS07]{int-monopole}
P.~Kronheimer, T.~Mrowka, P.~Ozsv\'{a}th, and Z.~Szab\'{o}.
\newblock Monopoles and lens space surgeries.
\newblock {\em Ann. of Math. (2)}, 165(2):457--546, 2007.

\bibitem[Lin17]{lin}
Francesco Lin.
\newblock The surgery exact triangle in pin(2){\textendash}monopole floer
  homology.
\newblock {\em Algebraic \& Geometric Topology}, 17(5):2915--2960, sep 2017.

\bibitem[LY24]{torsion}
Zhenkun Li and Fan Ye.
\newblock 2-torsion in instanton floer homology, 2024.

\bibitem[MO22]{linksurgery}
Ciprian Manolescu and Peter Ozsvath.
\newblock Heegaard floer homology and integer surgeries on links, 2022.

\bibitem[OS05]{oz-sz}
Peter Ozsváth and Zoltán Szabó.
\newblock On the heegaard floer homology of branched double-covers.
\newblock {\em Advances in Mathematics}, 194(1):1--33, 2005.

\bibitem[Sca15]{sca}
Christopher~W. Scaduto.
\newblock {Instantons and odd Khovanov homology}.
\newblock {\em Journal of Topology}, 8(3):744--810, 06 2015.

\bibitem[Sei03]{seidel}
Paul Seidel.
\newblock A long exact sequence for symplectic {F}loer cohomology.
\newblock {\em Topology}, 42(5):1003--1063, 2003.

\bibitem[Sma65]{sard}
S.~Smale.
\newblock An infinite dimensional version of {S}ard's theorem.
\newblock {\em Amer. J. Math.}, 87:861--866, 1965.

\bibitem[Tau90]{taubes-casson}
Clifford~Henry Taubes.
\newblock Casson's invariant and gauge theory.
\newblock {\em J. Differential Geom.}, 31(2):547--599, 1990.

\end{thebibliography}
	
	\appendix
	
	\section{Time Independent Perturbations Transversality Proof}
	\label{appendix:perturbations}

	This appendix gives more details on the proof of \Cref{prop:mtransverse} closely following the exposition in \cite{fu}.
	\begin{proof}[Proof of \Cref{prop:mtransverse}]
		Note that $\bar{\Pa}$ is transverse to $0$ if every connection $A$ for $(A, \varphi) \in \Pa^{-1}(0)$ is irreducible and $\Pa$ is transverse to $0$. Hence, we will be done if we show $\Pa$ is transverse to $0$.

		Suppose $(A, \varphi) \in \Pa^{-1} (0)$ and the differential $D_{(A, \varphi)} \Pa : \lp[k, A_o]{\Lambda^1}{W} \times T\G_r \to \lp[k-1]{\Lambda^+}{W}$ is not surjective. Note that $D_{(A, \varphi)} \Pa |_{\lp[k, A_o]{\Lambda^1}{W}}$ has closed image with finite dimensional cokernel as this is just the operator $a \mapsto d_A^+ (a) + P_+(D_A \mathrm{Hol}_\pi (a))$. Hence, as  $\im D_{(A, \varphi)} \Pa \supset D_{(A, \varphi)} \Pa (\lp[k, A_o]{\Lambda^1}{W} \times \{ 0 \})$,  
		$D_{(A, \varphi)} \Pa$ has closed image with finite dimensional cokernel. Now, if the cokernel is non-zero, we can pick a $\Phi$ which is $L^2$-orthogonal to the image, \emph{a priori} $\Phi \in \lp[1-k]{\Lambda^+}{W}$. Let $\tilde{\Phi} = \varphi^*(\Phi)$ and $g = \varphi^*(g_0)$. Then, from $\Phi \perp_{L^2} D_{(A, \varphi)} (\lp[k, A_o]{\Lambda^1}{W})$ we have the following chain of equalities:
		\begin{align*}
			0 &= \int_{\check{W}} \< P_+((\varphi^{-1})^* (d_A a + D_A \mathrm{Hol}_\pi (a))), \Phi >_{g_0} \\
			&= \int_{\check{W}} \< d_A a + D_A \mathrm{Hol}_\pi (a), \varphi^*(\Phi) >_{\varphi^* g_0} \\
			&= \int_{\check{W}} \<a, d_A^* \tilde{\Phi} + (D_A \mathrm{Hol}_\pi)^* \tilde{\Phi}>_g.
		\end{align*}
		As $D_A \mathrm{Hol}_\pi (a) = 0$ if $\supp(a) \cap \supp (\pi) = \emptyset$, we conclude that $d_A^* \tilde{\Phi} = 0$ on $\check{W} \setminus (\check{S} \cup \supp (\pi))$. Now, $d_A^* : \lp[*]{\Lambda^{+_g}}{W} \to \lp[* - 1]{\Lambda^1}{W}$ is elliptic with $L^2_k$ coefficients we conclude that $\tilde{\Phi} \in L^2_{k-1}(U, \g_{P} \otimes \Lambda^{+_g})$ by standard boot strapping where $U$ is an open subset of $\check{W} \setminus (\check{S} \cup \supp (\pi))$ as in the hypothesis.
		
		Next, we exploit that $\Phi \perp_{L^2} D_{(A, \varphi)} \Pa (T\G_r)$:
		
		\begin{align*}
			0 &= \int_{\check{W}} \< P_+ \big((\varphi^{-1})^* (r^* F_A) \big), \Phi>_{g_0} \\
			&= \int_{\check{W}} \<r^* F_A, \tilde{\Phi} >_g.
		\end{align*}
		Here, $r \in T_{\Id} \G_r$. We conclude that $\<r^* F_A, \tilde{\Phi} >_g = 0$ on $\tilde{W} \setminus (\tilde{S} \cup \supp (\pi))$. Note that for any point $p$ in the latter subset, $r(p) \in \mathfrak{gl}(T_p \tilde{W})$ is arbitrary. We now appeal to the following lemma.
		
		\begin{lemma}[{\cite[Lemma 3.7]{fu}}] \label{ortho}
			Suppose $F \in \Lambda^2_-V^*\otimes W$ and $\phi \in \Lambda^2_+V^*\otimes W$ with $\<r^*F, \phi> = 0$ for every $r \in \mathfrak{gl}(V)$, then $\<F, \phi>_W \in \Lambda^2_-V^*\otimes \Lambda^2_+V^*$ vanishes. Viewing $F \in \Hom (\Lambda^2_-V^*, W)$ and $\phi \in \Hom (\Lambda^2_+V^*, W)$, the conclusion is that $\im F$ and $\im \phi$ are orthogonal. 
			\qed
		\end{lemma}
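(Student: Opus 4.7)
The plan is to view both sides of the hypothesis through the $\mathfrak{so}(V)$-equivariant isomorphism $\Lambda^2_- V^* \otimes \Lambda^2_+ V^* \cong \mathfrak{sym}_0(V)$ (traceless symmetric endomorphisms), which for $\dim V = 4$ identifies two irreducible $9$-dimensional representations of $\mathfrak{so}(4) \cong \mathfrak{so}(3) \oplus \mathfrak{so}(3)$. Concretely, using the metric to convert a $2$-form $\omega$ into a skew endomorphism $\tilde\omega$, the action $r^*\omega$ corresponds to $r\tilde\omega + \tilde\omega r^t$, and the pairing of $2$-forms corresponds to $-\tfrac{1}{2}\tr(\tilde\omega_1\tilde\omega_2)$. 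A direct manipulation then gives, in the scalar case $W = \mathbb{R}$,
\[
\langle r^*F, \phi\rangle = -\tfrac{1}{2}\tr\bigl(r(\tilde F\tilde\phi + \tilde\phi\tilde F)\bigr).
\]

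I would first reduce to the traceless symmetric component of $r$. Decompose $\mathfrak{gl}(V) = \mathfrak{so}(V) \oplus \mathbb{R}\cdot\Id \oplus \mathfrak{sym}_0(V)$: the skew part preserves the $\Lambda^2_\pm$ decomposition and the scalar part acts as a multiple of the identity on $2$-forms, so both pair trivially against the orthogonal decomposition $\Lambda^2_- \perp \Lambda^2_+$. Only $r \in \mathfrak{sym}_0(V)$ carries information. I then verify that $\tilde F\tilde\phi + \tilde\phi\tilde F$ is symmetric (clear from $\tilde F^t = -\tilde F$, $\tilde\phi^t = -\tilde\phi$) and traceless ($\tr(\tilde F\tilde\phi) = -2\langle F,\phi\rangle = 0$ since $F$ and $\phi$ lie in orthogonal components). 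Non-degeneracy of the trace form on $\mathfrak{sym}_0(V)$ then turns the vanishing of $\langle r^*F,\phi\rangle$ for all $r$ into $\tilde F\tilde\phi + \tilde\phi\tilde F = 0$.

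Next, I would promote the map $F\otimes\phi \mapsto \tilde F\tilde\phi + \tilde\phi\tilde F$ to an $SO(V)$-equivariant linear map $\Lambda^2_-\otimes \Lambda^2_+ \to \mathfrak{sym}_0(V)$ and show it is an isomorphism by Schur: both sides are irreducible under $SO(3)\times SO(3)$ with the same dimension, and an explicit check on the standard basis $e_1\wedge e_2 \pm e_3\wedge e_4$ exhibits a non-zero image, so the map is an isomorphism. Extending to $W$-valued forms is then formal: write $F = \sum_i F_i \otimes w_i$ and $\phi = \sum_j \phi_j \otimes w_j$ and expand bilinearly to obtain
\[
\langle r^*F,\phi\rangle = -\tfrac{1}{2}\tr\!\left(r\cdot \sum_{i,j}(\tilde F_i\tilde\phi_j + \tilde\phi_j\tilde F_i)\langle w_i,w_j\rangle_W\right).
\]
Vanishing for all $r \in \mathfrak{gl}(V)$ forces the inner sum, an element of $\mathfrak{sym}_0(V)$, to be zero. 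Under the isomorphism above, that sum is the image of $\sum_{i,j} F_i \otimes \phi_j\langle w_i,w_j\rangle_W = \langle F,\phi\rangle_W$, so $\langle F,\phi\rangle_W = 0$ in $\Lambda^2_-\otimes\Lambda^2_+$. Viewing $F$ and $\phi$ as linear maps out of $\Lambda^2_\mp$ using the inner products, this vanishing is precisely the orthogonality of $\im F$ and $\im\phi$ in $W$.

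The main obstacle is establishing the equivariant isomorphism $\Lambda^2_-\otimes\Lambda^2_+ \cong \mathfrak{sym}_0(V)$ cleanly with consistent sign conventions; once this is in hand, the identity for $\langle r^*F,\phi\rangle$ is routine linear algebra on a $4$-dimensional vector space and the $W$-valued statement follows by expanding in an orthonormal basis of $W$.
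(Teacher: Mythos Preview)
The paper does not prove this lemma: it is quoted from Freed--Uhlenbeck \cite[Lemma 3.7]{fu} and the statement ends with a bare \qed. So there is no proof in the paper to compare against.

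Your argument is correct. A few remarks. First, for $F\in\Lambda^2_-$ and $\phi\in\Lambda^2_+$ the associated skew endomorphisms $\tilde F$ and $\tilde\phi$ actually \emph{commute}, since under the identification $\Lambda^2 V^*\cong\mathfrak{so}(V)=\mathfrak{so}(3)_+\oplus\mathfrak{so}(3)_-$ they lie in the two commuting summands; so your symmetrised expression $\tilde F\tilde\phi+\tilde\phi\tilde F$ is just $2\tilde F\tilde\phi$, and the trace identity you wrote is immediate once conventions are fixed. Second, the Schur step is clean: both $\Lambda^2_-\otimes\Lambda^2_+$ and $\mathfrak{sym}_0(V)$ are the $(3,3)$ representation of $SU(2)\times SU(2)$, so nonzero-on-a-basis-vector suffices for the isomorphism. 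The passage to $W$-valued forms by expanding in an orthonormal basis of $W$ is routine, and the final reinterpretation of $\langle F,\phi\rangle_W=0$ as $\im F\perp\im\phi$ is just unwinding the identifications.

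For comparison, Freed--Uhlenbeck's original argument is a direct computation in an orthonormal coframe rather than an appeal to representation theory; your approach is more conceptual and avoids coordinates at the cost of invoking Schur's lemma.
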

		
		Now, $\tilde{\Phi}$ satisfies the elliptic equation $d_A^* \tilde{\Phi} = 0$ on $\check{W} \setminus (\check{S} \cup \supp (\pi))$ and $\tilde{\Phi}$ is not identically $0$ on it. By unique continuation principle, \cite{uni}, we have that $\tilde{\Phi}$ does not vanish identically on $U$. Similarly, $F_A$ satisfies $d_A^* F_A = \pm * d_A * F_A = \mp * d_A F_A = 0$ by the second Bianchi identity and so $F_A$ also does not vanish identically on $U$. 
		
		Pick any point $p \in U$ and suppose that $\tilde{\Phi} (p) \neq 0$ and $F_A$ has rank $2$ (it can not have rank $3 = \dim (\g_{\check{P}})_p$ due to \Cref{ortho} above). In this case, in a small neighbourhood of $p$, rank of $F_A$ is constantly $2$ and so $\tilde{\Phi} = \alpha \otimes u$ on this set where $\alpha$ and $u$ are functions taking values in $\Lambda^{+_g}$ and $\g_{\check{P}}$ respectively with $|u| = 1$. As $\tilde{\Phi}$ is self dual and satisfies $d_A^* \tilde{\Phi} = 0$, we have $d_A \tilde{\Phi} = 0$. This leads to the following equalities:
		
		\begin{align*}
			0 &= d_A \tilde{\Phi} = d \alpha \otimes u + \alpha \wedge d_A u \\
			\implies d \alpha &= - \alpha \wedge \<d_A u, u>  \\ 
			&= - \frac{1}{2} \alpha \wedge d \lvert u \rvert^2 = 0 \\
			\implies 0 &= \alpha \wedge d_A u  
		\end{align*}
		
		By \cite[Lemma 3.5]{fu}, we can choose a co-frame $\theta^i$ at the point $p$ so that 
		\[
		\alpha = \frac{|\alpha|}{2} (\theta^1 \wedge \theta^2 + \theta^3 \wedge \theta^4).
		\]
		Then, if 
		\[
		d_A u = \sum_i \theta^i \otimes w_i, \quad w_i \in \g_{\check{P}},
		\]
		we have 
		\begin{align*}
			0 &= \alpha \wedge d_A u \\
			&=  \frac{|\alpha|}{2} (\theta^1 \wedge \theta^2 + \theta^3 \wedge \theta^4) \wedge (\sum_i \theta^i \otimes w_i) \\
			&=  \frac{|\alpha|}{2} (\theta^1 \wedge \theta^2 \wedge \theta^3 \otimes w_3 + \theta^1 \wedge \theta^2 \wedge \theta^4 \otimes w_4 + \theta^3 \wedge \theta^4 \wedge \theta^1 \otimes w_1 + \theta^3 \wedge \theta^4 \wedge \theta^2 \otimes w_2).
		\end{align*}
		Thus, $w_i = 0$ and so $(d_A u)(p) = 0$. Additionally, \Cref{ortho} shows that $\<F_A, u>_{\g_{\check{P}}} = 0$. Hence, as $[F_A, u] = d_A(d_A u) = 0$, we have $u(p) = 0$ by \cite[Lemma 3.12]{fu} which contradicts that $\tilde{\Phi} (p) \neq 0$. Thus, the set of points where $F_A$ has rank at least $2$, an open set, is contained in $\{ \tilde{\Phi} = 0 \}$ which has empty interior by \cite{uni}. Hence, we conclude that in fact $F_A$ has rank $1$ on $U$.
		
		We consider the subset $\{p \in U \, | \, F_A (p) \neq 0\}$. On this set, the same reasoning as in the preceding paragraph, shows that $F_A = \sigma \otimes u$ for some functions $\sigma$ and $u$ taking values in $\Lambda^-$ and $\g_{\check{P}}$ respectively with $d_A u = 0$.
		
		Next, we claim that $U \setminus \{F_A = 0\}$ is connected. Then, we can extend $u$ to be defined on all of $U$ due to $d_A u = 0$. Suppose  $U \setminus \{F_A = 0\}$ is disconnected with one of the components being $U_1$, then we can define a anti-self dual 2-form $\Psi$ on $U$ such that $\Psi |_{U_1} = F_A$ and $\Psi|_{U \setminus  U_1} = 0$. An integration by parts shows that $\Psi$ defines a distribution on $U$ with first derivative a $L^2$ function and so, $\Psi \in L^2_1 (U, \Lambda^- \otimes \g_{\check{P}})$. Now, $d_A \Psi \in L^2(U, \Lambda^- \otimes \g_{\check{P}})$ vanishes on $U_1$ and $U \setminus (U_1 \cup \{F_A = 0\})$. As the union of the latter subsets is dense in $U$, we conclude $d_A \Psi = 0$. Ellipticity of $d_A : L^2_1(U, \Lambda^- \otimes \g_{\check{P}}) \to L^2(U, \Lambda^3 \otimes \g_{\check{P}})$ shows that $\Psi$ is $C^2$ (we need $k \gg 0$ here) after which since $\Psi$ vanishes on an open set the unique continuation principle \cite{uni} is violated.
		
		Hence, we finally have a $u \in L^2(U, \g_{\check{P}})$ with $d_A u = 0$ and $|u| = 1$. This contradicts that $A|_U$ is irreducible. 
	\end{proof}

\end{document}